\newtheorem{thm}{Theorem}[section]
\newtheorem{lem}[thm]{Lemma}
\newtheorem{defi}[thm]{Definition}
\newtheorem{prop}[thm]{Proposition}
\newtheorem{rmk}[thm]{Remark}
\declaretheoremstyle[headfont=\normalfont]{normalhead}
\declaretheorem[style=normalhead]{remark}
\newcommand{\rd}{\mathrm{d}}
\newcommand{\N}{\mathbb{N}}
\newcommand{\R}{\mathbb{R}}
\newcommand{\defeq}{\vcentcolon=}
\newcommand{\eqdef}{=\vcentcolon}
\DeclareMathOperator{\supp}{supp}
\DeclareMathOperator*{\esssup}{ess\,sup}
\DeclareMathOperator*{\law}{Law}
\def\XXint#1#2#3{{\setbox0=\hbox{$#1{#2#3}{\int}$ }
\vcenter{\hbox{$#2#3$ }}\kern-.6\wd0}}
\numberwithin{equation}{section}
\begin{document}

\title{The mean-field limit of sparse networks of integrate and fire neurons}
\author[P.--E.~Jabin]{Pierre-Emmanuel Jabin}
\address{\sc P.--E. Jabin.  Department of Mathematics and Huck Institutes, Pennsylvania State University, State College, PA 16801, USA}
\email{pejabin@psu.edu}
\thanks{P-E Jabin and D. Zhou were partially supported by NSF DMS Grants 2205694, 2219297.}

\author[D. Zhou]{Datong Zhou}
\address{\sc D. Zhou.  Department of Mathematics, Pennsylvania State University, State College, PA 16801, USA}
\email{dbz5086@psu.edu}

\date{}

\maketitle

\begin{abstract}

We study the mean-field limit of a model of biological neuron networks based on the so-called stochastic integrate-and-fire (IF) dynamics. Our approach allows to derive a continuous limit for the macroscopic behavior of the system, the 1-particle distribution, for a large number of neurons with no structural assumptions on the connection map outside of a generalized mean-field scaling. 
We propose a novel notion of observables that naturally extends the notion of marginals to systems with non-identical or non-exchangeable agents. Our 
new observables satisfy a complex approximate hierarchy, essentially a tree-indexed extension of the classical BBGKY hierarchy. We are able to pass to the limit in this hierarchy as the number of neurons increases through novel quantitative stability estimates in some adapted weak norm. 
While we require non-vanishing diffusion, this approach notably addresses the challenges of sparse interacting graphs/matrices and singular interactions from Poisson jumps, and requires no additional regularity on the initial distribution. 

\end{abstract}

\tableofcontents

\section{Introduction} \label{sec:introduction}
This article derives a continuous limit for the large-scale behavior of networks of neurons following a type of dynamics known as integrate-and-fire (IF). It is a natural example of \emph{multi-agent systems}, where each agent (neuron) could influence others and be influenced in return. However because each neuron has a priori different connections to other neurons, it is also an important example of non-exchangeable systems.

We focus on IF systems for large number of agents or neurons, typically $86 \times10^9$ in a human brain for example. This makes it quite challenging to study the original system, either numerically or analytically. 
Instead one can try to approach the large scale behavior of such multi-agent systems  through the concept of \emph{mean-field limit}. In classical exchangeable systems, the mean-field limit consists in replacing the exact 
 influence exerted on one particle by its expectation or mean. It is hence connected to the famous notion of propagation of chaos which allows the use of a law of large number to rigorously justify this approximation. However in non-exchangeable systems, the derivation of the mean-field limit also requires a way to capture the limit of the non-identical interactions between particles or agents.

 This article introduces a novel strategy based on a new concept of observables that are well chosen linear combinations of empirical laws of agents or neurons. This family solves a tree-indexed hierarchy, approximately for a finite number of neurons and exactly at the limit; a key feature of this hierarchy is that the connection weights between neurons does not appear explicitly anymore. As a consequence, the mean-field limit can be derived directly by passing to the limit in the hierarchy, bypassing a priori structural assumptions on the connection weights. In particular our result is entirely compatible with \emph{sparse} connection weights, as supported by experimental findings in neuroscience. However the IF-type dynamics involve jump processes in time, which inevitably introduce discontinuities. Therefore, at the technical level, a major contribution of this article is the development of well adapted weak norms that provide quantitative stability estimates.

\subsection{An IF neuron network with non-identical sparse connections} \label{sec:introduction_non_math}
We focus in this article on a type of stochastic integrate and fire models. In this model, neurons interact through ``spikes'' that represent short electrical pulses in the membrane potential, typically lasting 1-2 ms. A broad range of IF models adopt the following theoretical simplification that dates back to the earliest mathematical model of neuron \cite{La:07} as well as \cite{Hi:36, McPi:43}.

Spikes occur at distinct points in time, initiating what is typically referred to as a ``fire'' event.
For a network of IF neurons, at the exact time when the $i$-th neuron fires,
\begin{equation*}
\begin{aligned}
\text{for all } j \text{ connected to } i,\; X^j \text{ jumps by } w_{j,i},
\end{aligned}
\end{equation*}
where $X^j$ describes the membrane potential of the $j$-th neuron and $w_{j,i}$ represents the \emph{synaptic connection} from $i$ to $j$. The case of no synaptic connection is represented by $w_{j,i} = 0$. 

There exists a large variety of models with various rules to determine when a neuron is firing and what is the evolution of the membrane potential between spikes. In the seminal work \cite{La:07}, the firing of neuron $i$ is predicted at the time $X^i$ reaches a certain hard threshold value $X_F$. According to IF dynamics, at such a time point each $X^j$ jumps by $w_{j,i}$ and $X^i$ is reset to zero. However we consider instead in the present paper a notion of soft threshold where the firing of each neuron follows independent Poisson process with a rate that depends on the membrane potential.

When there is no firing, the ``pre-spike'' dynamics of membrane potential is usually given by a simple ODE or SDE,  which we may write in our case as
\begin{equation*}
\begin{aligned}
\rd X^i(t) = \mu(X^i(t))\, \rd t+\sigma(X^i(t))\, \rd t. 
\end{aligned}
\end{equation*}
As mentioned earlier, there exists a large variety of IF models in terms  of the equations for pre-spike dynamics and criteria for firing. From the point of view of the mathematical analysis developed in this paper, both the stochasticity in the SDE equation on $X^i(t)$ and the soft threshold are needed.

The non-linearity of pre-spike dynamics has been observed in modern experimental studies such as \cite{BaLeBePeGeRi:08a}, and stochasticity was noted in~\cite{GeMa:64, GeGo:66,Ki:72b}. Although biophysical models such as Hodgkin-Huxley \cite{HoHu:52} and FitzHugh-Nagumo \cite{Fi:61, NaArYo:62} are available for more accurately representing the shape of each spike, IF type dynamics are frequently preferred for their perceived precision when investigating multiple-neuron networks. Nevertheless the present is still a compromise between mathematical succinctness and biological plausibility.
Some extended mathematical models that aim to capture more complex neuronal phenomena have also been studied, for example, in \cite{CaPe:14,PaPeSa:10,PeSa:13}.
For a more extensive discussion of IF models in the context of neuroscience, we refer to \cite{Bu:06, GeKi:02, GeKiNaPa:14} and the references therein. For a more thorough exploration of the biological considerations, we direct interested readers to references \cite{GeKiNaPa:14, Sp:10}.

To complete the definition at end points, it is conventional to define $X^i(t)$ at a firing time as the value \emph{after} the jump or reset, making each $X^i(t)$ right continuous with left limit (c\`adl\`ag functions). 
This allows to give a precise mathematical definition of the dynamics.
%
Let $(X^i_t)_{i=1}^N$ be the $\R$-valued c\`adl\`ag processes representing the membrane potential changes of the $N$ neurons and let $w_N \defeq (w_{i,j;N})_{i,j =1}^N$ be the interaction matrix describing the synaptic connection between these neurons.
The IF-type dynamics of neurons are characterized by the following SDE in integral form holding for all $i \in \{1, \dots, N\}$:
\begin{equation} \label{eqn:IF_multi_agent}
\begin{aligned}
X^{i;N}_t = X^{i;N}_0 \;&+ \int_0^t \mu(X^{i;N}_{s^-}) \;\rd s + \int_0^t \sigma(X^{i;N}_{s^-}) \;\rd \bm{B}^i_s
\\
\;&+ \sum_{j \neq i} w_{i,j;N} \int_0^t \int_0^\infty \mathbbm{1}\{z \leq \nu(X^{j;N}_{s^-})\} \;\bm{N}^j (\rd z, \rd s)
\\
\;&- \int_0^t \int_0^\infty X^{i;N}_{s^-} \mathbbm{1} \{z \leq \nu(X^{i;N}_{s^-})\} \bm{N}^i (\rd z,\rd s),
\end{aligned}
\end{equation}
where
\begin{equation*}
\begin{aligned}
\;& \{\bm{N}^i\}_{i=1}^N \text{ are homogeneous spatial Poisson processes w.r.t. Lebesgue measure,}
\\
\;& \{\bm{B}^i\}_{i=1}^N \text{ are standard Wiener processes, and the $2N$ processes are independent.}
\end{aligned}
\end{equation*}
For the target neuron $i$, the term $\mu(X^{i;N}_{s^-}) \rd s$ summarizes its pre-spike dynamics and $\sigma(X^{i;N}_{s^-}) \rd \bm{B}^i_s$ adds a Brownian noise. 
It experiences a jump of $w_{i,j;N}$ when another neuron $j$ fires and is reset to zero when itself fires.
Neuron $i$ firing occurs with a likelihood depending on the membrane potential, which we denote by $\nu(X^{i;N}_{s^-})$ and we introduce the Poisson processes $\bm{N}^i (\rd z,\rd s)$.

For the simplified case that the connections between neurons are all identical, i.e. $w_{i,j;N} = 1/N$, $\forall \{i,j\} \in \{1,\dots,N\}$, the mean-field limit of \eqref{eqn:IF_multi_agent} or its variations can be expressed as a PDE about the (time-varying) density function $f(t,x)$, where $x \in \R$ represents the membrane potential.
We mention  \cite{RiThWa:12} that employs a PDE-based approach, and \cite{DeGaLoPr:15, DeInRuTa:15b, ErLoLo:21} that each offer a distinct probabilistic perspective. Though significantly different from \eqref{eqn:IF_multi_agent}, Hawkes processes give another type of popular models for biological neuron networks and their mean-field limit  has also been studied, as in \cite{Ch:17a, DeFoHo:16}. 
We also cite \cite{BaFaFaTo:12} for the study of large  biophysical models with Hodgkin-Huxley and FitzHugh-Nagumo equations for the neurons, together with \cite{PaThWa:10} which derives an IF model from biophysical models in a mean-field setting.
Even in the case of identical connections, we emphasize that some neuron models may contain singularities that lead to important mathematical challenges when deriving the mean-field limit.

While assuming identical connections is a significant simplification, the derived mean-field limits have nonetheless provided useful insights into our understanding of large biological neuron networks.
For some limiting models, the mean-field equations can for example exhibit blow-up in finite time, which may  represent some large-scale synchronization within the network, see for instance \cite{CaCaPe:11, CaGoGuSc:13, CaPeSaSm:15} from a PDE perspective, and \cite{DeInRuTa:15a} from a probability point of view. The issue of convergence to equilibrium in the mean-field limit is also an important question, for which we refer for example to 
\cite{FoLo:16} and \cite{DrVe:21}. Other studies, such as \cite{CoTaVe:20a, CoTaVe:20b, Co:20}, have explored the spectral conditions sufficient for the existence of periodic solutions near the invariant measure through a Hopf bifurcation.

\smallskip

Systems with non-identical connections remain less understood, despite their relevance to applications in neuroscience, as noted for instance in \cite{PhPaChVi:98}. This is also supported by recent progress in experimental biology that makes detailed connection graph for large neuron networks available \cite{HuHaFrTuTaWoNoDrDaPaHeRuJa:21}.
Mathematically, non-identical connections fundamentally alters the dynamics of coupled ODEs or SDEs like \eqref{eqn:IF_multi_agent}, rendering them \emph{non-exchangeable} and making many established tools for exchangeable systems lose their applicability.

Despite these challenges, there exists a wide range of results that are able to handle systems with certain types of non-identical connections, provided some structural assumptions are made. 
A first example assumes that connections follow the algebraic constraint $\sum_{j} w_{i,j;N} \equiv 1$ and that the initial data $(X^i_0)_{i=1}^N$ are i.i.d.; the same mean-field equation as in the exchangeable case is then obtained, see for instance~\cite{InTa:15}.
Another well known case is found when the connections smoothly depend on the physical location of each neuron: A typical assumption is that $w_{i,j;N} = W(y_{i;N}, y_{j;N})$, where $y_{i;N} \in \R^d$ denotes the spatial location of the $i$-th neuron and $W(\cdot,\cdot)$ is a smooth function. This case leads to some version of the well-known neural field equations, see \cite{Be:56, Gr:63a, Gr:63b, Gr:65, WiCo:72, Am:77}.
Within this type of assumptions on connections, the mean-field limit has also been investigated in \cite{ChDuLoOs:19} for a model based on the Hawkes processes.
Another well-known setting consists in taking random connections, typically corresponding to some classical random graph. This can of course be an attractive assumption when the connections remains mostly unknown. The mean-field limit has been rigorously derived with several types of random connections including the Erd\"os-R\'enyi type, as shown in \cite{GrLeMaChDeTa:19}. We also mention \cite{CoDiGi:20, Me:19, OlReSt:20} that obtain mean-field limits of other multi-agent systems, still with random connections.


It is also enlightening to draw a comparison with the wider spectrum of results on general non-exchangeable systems and not specifically IF models. 
Many approaches rely on graphon theory, such as \cite{KaMe:18} which derives the mean-field limit for the Kuramoto model (originally introduced in  \cite{Ku:75}) while subsequent explorations of the dynamics were performed in \cite{ChMe:19a, ChMe:19b}. Graphons are natural tools to try to describe the graph limit of connections $w_{i,j;N}$ without a priori knowledge of additional regularity. Unfortunately, the use of graphon  requires a dense scaling for the connections with typically $\max_{i, j} |w_{i,j;N}| \sim O(1/N)$. 
There are still some results on sparse graph connections. We mention \cite{LaRaWu:19} based on some concept of weak convergence on graphs, or \cite{GkKu:22, KuXu:22, GkKuXu:22, GkKuXu:23} which are based on extensions of graphons such as graph-op. While those results still require a priori knowledge of some additional convergence of $w_{i,j;N}$,  
the case of sparse connections without a priori regularity  has been recently studied in \cite{JaPoSo:21}.

We keep in the present article the same general assumptions on connections as \cite{JaPoSo:21} namely,
\begin{itemize}

\item The $w_{i,j;N}$ may  be completely different for every pair of neurons.

\item The $w_{i,j;N}$ can be positive or negative with corresponding excitation or inhibition between neurons, and are not symmetric.

\item The number of neurons is assumed to be very large $N \gg 1$. We recall in particular that the human brain for example contains approximately $8.6 \times 10^{10}$ neurons.

\item The $w_{i,j;N}$ satisfy the following scaling:
\begin{equation*}
\begin{aligned}
\textstyle \max \Big( \max_{i} \sum_{j} |w_{i,j;N}| , \max_{j} \sum_{i} |w_{i,j;N}| \Big) \sim O(1),
\quad \quad
\max_{i, j} |w_{i,j;N}| \ll 1.
\end{aligned}
\end{equation*}
This scaling allows each neuron $i$ to be connected to a large population of neurons $j$, while keeping the network sparsely connected. This again seems to fit with the  average of $7000$ synaptic connections per neuron in the human brain.


\end{itemize}
However, as explained later on, we introduce several new key ideas with respect to \cite{JaPoSo:21}, which allows for a broader set of assumptions on the initial data and  also makes dealing with jump processes easier. 
\subsection{The marginal laws and BBGKY hierarchy for exchangeable systems}
A classical way to address this mean-field limit of large SDE systems like~\eqref{eqn:IF_multi_agent} is to shift our focus from tracking trajectories to examining the joint law of various subsets of neurons.

For clarity, let us first mention some of the notations that we are using. We denote by $\mathcal{M}(\R^k)$ the space of signed Borel measures with bounded \emph{total variation norm} on $\R^k$.  $\mathcal{M}_+(\R^k)$ stands for the subset of non-negative measures. $\mathcal{P}(\R^k)$ stands for the subset of probability measures. When choosing a topology on $\mathcal{M}(\R^k)$, we will mostly use the classical notion of \emph{weak-*} convergence. Note that we will also have bounds on some exponential moments, so that together with those estimates, weak-* convergence will typically imply tight convergence.

We now introduce the classical concept of marginals for exchangeable systems, where we emphasize the following steps to highlight the difference with non-exchangeable systems,
\begin{itemize}
\item For any distinct indices $i_1,\dots,i_k \in \{1,\dots,N\}$, denote the marginal law of the agents $X^{i_1;N}_t,\dots, X^{i_{k};N}_t$ by
\begin{equation*}
\begin{aligned}
f_{N}^{i_1,\dots, i_k}(t,\cdot) \defeq \;& \law (X^{i_1;N}_t,\dots, X^{i_{k};N}_t) \in \mathcal{P}(\R^k).
\end{aligned}
\end{equation*}
\item Formally define $f_{N}^{i_1,\dots, i_k} \equiv 0$ if there are duplicated indices among $i_1,\dots,i_k$.
\item For the full joint law, adopt the simplified notation that
\begin{equation*}
\begin{aligned}
f_{N}(t,\cdot) \defeq \;& f_N^{1,\dots,N}(t,\cdot) = \law (X^{1;N}_t,\dots, X^{N;N}_t) \in \mathcal{P}(\R^N).
\end{aligned}
\end{equation*}
\end{itemize}
In the context of exchangeable system (identical connections, $w_{i,j;N} = w(N)$), it is straightforward that, if $(X^{1;N}_t,\dots, X^{N;N}_t)$ is a solution of system, then any permutation $(X^{i_1;N}_t,\dots, X^{i_N;N}_t)$ solves the same system as well.
This implies that the full joint law equation is symmetric, so it suffice to consider that marginals of the same order are identical, namely,
\begin{equation*}
\begin{aligned}
f_{N}^{i_1,\dots, i_k}(t,\cdot) = f_{N}^{j_1,\dots, j_k}(t,\cdot) \in \mathcal{P}(\R^k), 
\end{aligned}
\end{equation*}
if the indices $1 \leq i_1,\dots, i_k \leq $ are distinct and $1 \leq j_1,\dots, j_k \leq N$ are also distinct.

Given this property, it is natural to define the unique $k$-marginal
\begin{equation*}
\begin{aligned}
f_{N,k}(t,\cdot) \defeq f_{N}^{1,\dots,k}(t,\cdot) \in \mathcal{P}(\R^k).
\end{aligned}
\end{equation*}
The marginals are solutions to the famous BBGKY hierarchy of equations, in which the equation for each $f_{N,k}$ depends on itself and the next marginal $f_{N,k+1}$ recursively.

One of the key concepts to obtain the mean-field limit is the notion of (Kac's) chaos, which can be defined in various equivalent ways.
One possible definition involves the marginals which is the one we use in this article: We have chaos iff the $k$-marginals of random variables $(X^{1;N},\dots, X^{N;N})$ converge weak-* to the tensorization of a certain one-particle distribution $f \in \mathcal{P}(\R)$ as $N \to \infty$, namely, 
\begin{equation*}
\begin{aligned}
f_{N,k} \overset{\ast}{\rightharpoonup} f^{\otimes k} \in \mathcal{P}(\R^k), \quad f^{\otimes k}(z_1,\dots,z_k) \defeq \prod_{m=1}^k f(z_m),
\quad \mbox{for all fixed } k \in \N.
\end{aligned}
\end{equation*}
At least for smooth enough dynamics, it is possible to show that chaos on the initial data implies chaos at every later time, which is the famous propagation of chaos. Among the various strategies for proving propagation of chaos and for obtaining the Vlasov equation as a mean-field limit, we highlight the following one given its similarities with the approach we will follow:
\begin{itemize}
\item Pass to the limit in the BBGKY hierarchy to the Vlasov hierarchy as $N \to \infty$, which yields $f_{N,k}(t,\cdot) \overset{\ast}{\rightharpoonup} f_{\infty,k}(t,\cdot) \in \mathcal{P}(\R^k)$ where $f_{\infty,k}(t,\cdot)$ represents a solution to the Vlasov hierarchy with initial data in tensorized form, namely $f_{\infty, k}(0, \cdot) = f_0^{\otimes k}$.

\item Notice that if the one-particle distribution $f(t,\cdot)$ solves the Vlasov equation with initial data $f_0$, then the $k$-marginals in tensorized form $f^{\otimes k}(t,\cdot)$ are a solution to the Vlasov hierarchy with the same initial data $f_{\infty, k}(0, \cdot) = f_0^{\otimes k}$.

\item Prove the uniqueness of the solution of the Vlasov hierarchy, which allows one to conclude that at all time $t \geq 0$, $f_{\infty,k}(t,\cdot) = f^{\otimes k}(t,\cdot)$. 
\end{itemize}
A variation of this argument involves directly obtaining stability estimates between the BBGKY hierarchy and the Vlasov hierarchy, yet quantifies the deviation of the $N$-particle SDE system to the Vlasov equation on the level of marginal laws. In general deriving the mean-field limit can be challenging, especially when the interaction between particles is singular or when there is no diffusion in the dynamics. Not surprisingly, the above approach usually requires smoothness on the dynamics: from analytic in \cite{Sp:91} to Lipschitz in \cite{GoMoRi:13}. However recent results such as \cite{La:23} and \cite{BrJaSo:22} have shown how to take advantage of non-vanishing diffusion to handle interactions through a kernel merely in respectively only $L^\infty$ (more precisely some exponential Orlicz space) and only $L^p$ for $p > 1$.

We hope to implement a similar strategy for non-exchangeable systems, such as our \eqref{eqn:IF_multi_agent}. 
However, given a solution $(X^{1;N}_t,\dots, X^{N;N}_t)$ of \eqref{eqn:IF_multi_agent}, a permutation $(X^{i_1;N}_t, \dots, X^{i_N;N}_t)$ is not in general also a solution since $w_{1,2;N}$ is in general not equal to $w_{i_1,i_2;N}$ for example. Consequently, the concept of $k$-marginals does not actually exist and, instead, we have to consider the more complicated situation where for a fixed $k$, each marginal law $f_{N}^{i_1,\dots, i_k}$ might differ.

It is, however, not even the most significant obstacle. The more intricate issue lies in the fact that any direct generalization of the BBGKY hierarchy would depend explicitly on the coefficients $w_{i,j}$. Hence, passing to the limit in the hierarchy would require passing to the limit in some appropriate sense in the coefficients $w_{i,j;N}$. If the $w_{i,j;N}$ are of order $O(1/N)$, one can potentially apply the graphon theory \cite{LoSz:06} to achieve this, as has been done for the Kuramoto model in \cite{KaMe:18}.
Unfortunately, we are considering potentially sparse networks without any a priori smoothness and we have no idea how to generalize graphon theory in that case.
\subsection{The novel notion of observables for non-exchangeable systems}
A main contribution of the paper is to introduce a novel concept of \emph{observables} in Definition~\ref{defi:observables}, which not only incorporates into the marginal laws $f_{N}^{i_1,\dots, i_k}$ but also takes into account the effect of connectivity $w_N = (w_{i,j})_{i,j=1}^N$ in \eqref{eqn:IF_multi_agent}.

Those observables satisfy an approximate hierarchy that extends in some sense the BBGKY hierarchy but which does not involve any explicit dependence on the connection weights. This new hierarchy hence offers a promising framework for obtaining the mean-field limit, as it will be enough to pass to the limit in a countable family of observables and equations.

Its structure however remains more complex. The main idea behind the definition of the new observables, is to track all possible interactions between any finite number of neurons.  In the exchangeable case, it does not matter in which order these interactions take place, so that our observables would reduce to the marginals and only depend on the total number of neurons under consideration. But in the non-exchangeable case such as here, it is necessary to keep track of which neuron is interacting with which.
To achieve this, we use tree graphs to index our observables, and establish a natural correspondence between adding a leaf on a node of the tree and interacting with a particular agent among the $k$ selected ones.

\begin{defi} \label{defi:tree_graphs} Define $\mathcal{T}$ as a set of directed labeled graphs (trees) constructed recursively in the following manner
  \begin{itemize}
    \item Denoting by $|T|$ the total number of vertices in $T$, index the vertices in $T$ from $1,\dots, |T|$.
\item The graph of a single node (indexed by $1$) belongs to $\mathcal{T}$.

\item All other elements of $\mathcal{T}$ are constructed recursively: For any $T \in \mathcal{T}$ and any $1 \leq m \leq |T|$, the graph $T+m$ belongs to $\mathcal{T}$, where $T+m$ is obtained by adding a leaf to vertex $\#m$ namely  by adding a node indexed by $|T| + 1$ and adding $(m, |T| + 1)$ as an edge to~$T$.
\end{itemize}

\end{defi}
The family $\mathcal{T}$ corresponds to all trees up to isomorphisms but it is
 equipped with a natural orientation. The root of the tree is always labeled $1$, and  $(l,m) \in \mathcal{E}(T)$ if there exists an edge connecting $l$ and $m$ and if $l$ is closer to the root than $m$. 
This family enables us to define our observables.
\begin{defi} \label{defi:observables}
Consider any connectivity matrix $w_N = (w_{i,j})_{i,j =1}^N$ and a collection of random processes $(X^{1;N}_t,\dots, X^{N;N}_t)$. We define the observable $\tau_N (T,w_N,f_N)(t,\cdot) \in \mathcal{M}(\R^{|T|})$, $T \in \mathcal{T}$ as the weighted sum of marginals
\begin{equation} \label{eqn:hierarchy}
\begin{aligned}
\tau_N (T,w_N,f_N)(t,\rd z) \defeq \;& \frac{1}{N} \sum_{i_1,\dots, i_{|T|} = 1}^N w_{N,T}(i_1,\dots, i_{|T|}) f_{N}^{i_1,\dots, i_{|T|}}(t, \rd z_1,\dots,\rd z_{|T|})
\end{aligned}
\end{equation}
where the weight of each marginal is given by
\begin{equation*}
\begin{aligned}
w_{N,T}(i_1,\dots, i_{|T|}) \defeq \prod_{(l,m) \in \mathcal{E}(T)} w_{i_l, i_m; N} \in \R.
\end{aligned}
\end{equation*}
We also define the absolute observable $|\tau_N| (T,w_N,f_N)(t,\cdot) \in \mathcal{M}_+(\R^{|T|})$, $T \in \mathcal{T}$,  as
\begin{equation*} \label{eqn:absolute_hierarchy}
\begin{aligned}
|\tau_N| (T,w_N,f_N)(t,\rd z) \defeq \frac{1}{N} \sum_{i_1,\dots, i_{|T|} = 1}^N \big| w_{N,T}(i_1,\dots, i_{|T|}) \big| f_{N}^{i_1,\dots, i_{|T|}}(t,\rd z_1,\dots,\rd z_{|T|}).
\end{aligned}
\end{equation*}

\end{defi}
As we can see, if $T_1, T_2 \in \mathcal{T}$ are isomorphic as tree graphs, the corresponding observables are also identical up to permutation. In this sense, we can say our observables are indexed by trees. It will be apparent later that the weights are chosen in a natural way so that, in the evolution of observable $T$, the observable $T+m$ accounts for the interaction with the $m$-th agent among the $|T|$ selected ones.

There does not appear to be an immediate interpretation for most observables, with the obvious exception of the first one. If we take as $T=T_1$ the first trivial tree with only vertex, then the observable is the $1$-particle distribution which is just the average of all marginals of order $1$,
\[
\tau_N (T_1,w_N,f_N)(t,\rd z_1)=\frac{1}{N} \,\sum_{i=1}^N f_N^i(t,\rd z_1).
\]
Hence obtaining the limit of the observables directly provides the limit of the $1$-particle distribution.

We also emphasize that, in contrast to the marginals, our observables are not probability measures. They are neither necessarily normalized to a total mass of $1$, nor guaranteed to be non-negative.
But the scaling of $w_N$ still ensures the total variation of any observable is at most $O(1)$,
\begin{lem}
  For any $T\in \mathcal{T}$, we have that
\[
\big\| |\tau_N| (T,w_N,f_N)(t,\cdot) \big\|_{\mathcal{M}(\R^{|T|})}\leq \big( {\textstyle \max_{i} \sum_{j=1}^N |w_{i,j;N}|} \big)^{|T| - 1}.
\]\label{lemboundtauN}
\end{lem}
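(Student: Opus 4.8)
The plan is to bound the total variation norm of $|\tau_N|(T,w_N,f_N)(t,\cdot)$ directly from its definition, using the fact that each $f_N^{i_1,\dots,i_{|T|}}(t,\cdot)$ is a probability measure (or the zero measure, when indices repeat), hence has total mass exactly $1$ (or $0$). Since $|\tau_N|$ is a nonnegative measure by construction, its total variation norm equals its total mass, so
\[
\big\| |\tau_N|(T,w_N,f_N)(t,\cdot) \big\|_{\mathcal{M}(\R^{|T|})} = \frac{1}{N}\sum_{i_1,\dots,i_{|T|}=1}^N \big| w_{N,T}(i_1,\dots,i_{|T|})\big| \cdot \big\| f_N^{i_1,\dots,i_{|T|}}(t,\cdot)\big\|_{\mathcal{M}(\R^{|T|})} \leq \frac{1}{N}\sum_{i_1,\dots,i_{|T|}=1}^N \prod_{(l,m)\in\mathcal{E}(T)} |w_{i_l,i_m;N}|.
\]
So everything reduces to a purely combinatorial estimate on $\frac{1}{N}\sum_{i_1,\dots,i_{|T|}} \prod_{(l,m)\in\mathcal{E}(T)} |w_{i_l,i_m;N}|$, which I will prove by induction on $|T|$.

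For the base case $|T|=1$ the tree is a single node, the product over the empty edge set is $1$, and the sum is $\frac{1}{N}\sum_{i_1=1}^N 1 = 1 = (\max_i \sum_j |w_{i,j;N}|)^0$. For the inductive step, write $T = T' + m$ for some $T'\in\mathcal{T}$ with $|T'| = |T|-1$ and some $1\le m \le |T'|$; the new vertex is indexed $|T|$ and the new edge is $(m,|T|)$. Then
\[
\frac{1}{N}\sum_{i_1,\dots,i_{|T|}=1}^N \prod_{(l,m')\in\mathcal{E}(T)} |w_{i_l,i_{m'};N}| = \frac{1}{N}\sum_{i_1,\dots,i_{|T|-1}=1}^N \Big(\prod_{(l,m')\in\mathcal{E}(T')} |w_{i_l,i_{m'};N}|\Big) \Big(\sum_{i_{|T|}=1}^N |w_{i_m,i_{|T|};N}|\Big).
\]
The innermost sum is $\sum_{i_{|T|}} |w_{i_m,i_{|T|};N}| \le \max_i \sum_j |w_{i,j;N}|$, a bound independent of all the remaining indices, so it factors out; what remains is exactly $\big(\max_i\sum_j|w_{i,j;N}|\big)$ times the corresponding sum for $T'$, which by the inductive hypothesis is at most $\big(\max_i\sum_j|w_{i,j;N}|\big)^{|T'|-1} = \big(\max_i\sum_j|w_{i,j;N}|\big)^{|T|-2}$. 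Multiplying gives the claimed bound $\big(\max_i\sum_j|w_{i,j;N}|\big)^{|T|-1}$.

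There is essentially no obstacle here: the only point requiring a moment's care is that the recursive construction in Definition~\ref{defi:tree_graphs} always attaches the new leaf as an \emph{out}-edge $(m,|T|)$, so summing over the newly added index $i_{|T|}$ always hits a row sum $\sum_j |w_{i_m,j;N}|$ rather than a column sum — this is why only $\max_i\sum_j|w_{i,j;N}|$ appears and not the symmetrized quantity. (One could equally sum over a leaf index in any order and use whichever of the row/column sums is relevant, but the canonical orientation makes the row sum the natural one.) Since every tree in $\mathcal{T}$ is built by exactly $|T|-1$ such leaf additions, the exponent $|T|-1$ is exactly the number of edges, consistent with the product in $w_{N,T}$. $\square$
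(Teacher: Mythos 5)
Your proof is correct and follows essentially the same route as the paper: reduce to the weighted sum $\frac{1}{N}\sum_{i_1,\dots,i_{|T|}}|w_{N,T}(i_1,\dots,i_{|T|})|$ using that each marginal has mass at most $1$, then induct on the recursive structure $T = T'+m$, summing out the leaf index $i_{|T|}$ to extract one factor of $\max_i\sum_j|w_{i,j;N}|$ per edge. Your remark about the orientation of the added edge (row sums rather than column sums) is a correct observation that the paper leaves implicit.
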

\begin{proof}
Recall that any marginal law has total mass $1$ by definition, thus,
\begin{equation*}
\begin{aligned}
\big\| |\tau_N| (T,w_N,f_N)(t,\cdot) \big\|_{\mathcal{M}(\R^{|T|})} \leq \frac{1}{N} \sum_{i_1,\dots, i_{|T|} = 1}^N \big| w_{N,T}(i_1,\dots, i_{|T|}) \big|.
\end{aligned}
\end{equation*}
If $|T| = 1$, the right hand side equals to $1$ trivially, concluding the proof.

When $|T| \geq 2$, we can assume $T = T' + m$ and argue recursively
\begin{equation*}
\begin{aligned}
\;& \frac{1}{N} \sum_{i_1,\dots, i_{|T|} = 1}^N \big| w_{N,T}(i_1,\dots, i_{|T|}) \big| = \frac{1}{N} \sum_{i_1,\dots, i_{|T|} = 1}^N \big| w_{N,T'}(i_1,\dots, i_{|T|-1}) \big| |w_{i_m,i_{|T|};N}|
\\
\leq \;& \bigg( \frac{1}{N} \sum_{i_1,\dots, i_{|T|-1} = 1}^N \big| w_{N,T'}(i_1,\dots, i_{|T|-1}) \big| \bigg) {\textstyle \max_{i} \sum_{j=1}^N |w_{i,j;N}|}. 
\end{aligned}
\end{equation*}
\end{proof}

\begin{rmk}
  While in Definition~\ref{defi:observables} the laws and observables are only assumed to be measures, and hence are denoted by $f(\rd z)$, we may adopt the abuse of notation $f(z)$ in latter discussions.
Many of the forthcoming equations, such as the Vlasov equation \eqref{eqn:Vlasov_transport}, are indeed classically written on densities. 
\end{rmk}

Given the non-exchangeability of the system \eqref{eqn:IF_multi_agent}, the limiting behavior as $N \to \infty$ cannot be approximated by just a function $f(t,x)$, with $x \in \R$. Following the idea in \cite{KaMe:18} and \cite{JaPoSo:21}, we introduce the so-called extended density $f(t,\xi,x)$ instead, where the additional variable $\xi \in [0,1]$ accounts for the non-exchangeable indices $i \in \{1, \dots, N\}$ in the mean-field limit. 
The non-identical interactions in the limit is described by a kernel we denote by $w(\xi,\zeta)$, $(\xi,\zeta) \in [0,1]^2$, and the Vlasov equation corresponding to \eqref{eqn:IF_multi_agent} is given by
\begin{equation} \label{eqn:Vlasov_transport}
\begin{aligned}
\partial_t f(t,\xi,x) + \partial_x \Big(\mu^*_{f}(t,\xi,x) f(t,\xi,x) \Big) - \frac{\sigma^2}{2} \partial_{xx} f(t,\xi,x)
\\
+ \nu(x) f(t,\xi,x) - \delta_0(x) J_f(t,\xi) = 0,
\end{aligned}
\end{equation}
where the mean firing rate and the mean-field drift are defined as
\begin{equation} \label{eqn:Vlasov_mean_field}
\begin{aligned}
J_f(t,\xi) \defeq \int_{\R} \nu(x) f(t,\xi,x) \;\rd x,
\quad \quad
\mu^*_{f}(t,\xi,x) \defeq \mu(x) + \int_0^1 w(\xi, \zeta) J_f(t,\zeta) \;\rd \zeta.
\end{aligned}
\end{equation}
In our context, $w(\xi,\zeta)$ should be the limit object of the sparsely connected $w_N \defeq (w_{i,j;N})_{i,j =1}^N$ that we have described in Section~\ref{sec:introduction_non_math}. As a consequence, we are forced to consider singular kernels $w(\xi,\zeta)$ and the only property we can inherit from $w_N$ is the $O(1)$ scaling of 
\begin{equation*}
\begin{aligned}
\textstyle \max \Big( \max_{i} \sum_{j} |w_{i,j;N}| , \max_{j} \sum_{i} |w_{i,j;N}| \Big) = \max \big( \|w_N\|_{\ell^\infty \to \ell^\infty} , \|w_N\|_{\ell^1 \to \ell^1} \big).
\end{aligned}
\end{equation*}
To extend this norm for $N \times N$ connectivity matrices to the kernel on $(\xi,\zeta) \in [0,1]^2$, we define the Banach space $L^\infty_\xi([0,1], \mathcal{M}_\zeta[0,1])$ as the topological dual of the (strong) Bochner space $L^1_\xi([0,1], C_\zeta[0,1])$. 
Since $\mathcal{M}_{\xi,\zeta}([0,1]^2)$ is the topological dual of $C_{\xi,\zeta}([0,1]^2)$ and
the canonical embedding
\begin{equation*}
\begin{aligned}
C_{\xi,\zeta}([0,1]^2) \to L^1_\xi([0,1], C_\zeta[0,1])
\end{aligned}
\end{equation*}
is continuous with dense image, one can consider
\begin{equation*}
\begin{aligned}
L^\infty_\xi([0,1], \mathcal{M}_\zeta[0,1]) \subset \mathcal{M}_{\xi,\zeta}([0,1]^2).
\end{aligned}
\end{equation*}
This leads to the main Banach space $\mathcal{W}$ for the kernels 
\begin{equation*}
\begin{aligned}
\mathcal{W} \defeq \{w \in \mathcal{M}([0,1]^2) : w(\xi, \rd \zeta) \in L^\infty_\xi([0,1], \mathcal{M}_\zeta[0,1]), \;  w(\rd \xi, \zeta) \in L^\infty_\zeta([0,1], \mathcal{M}_\xi[0,1]) \}.
\end{aligned}
\end{equation*}
We note that we deal later in the article with a priori estimate of $f(t,\xi,x)$ and we use for those the usual strong Bochner spaces $L^\infty([0,t_*] \times [0,1]; \mathcal{M}(\R))$.

The proper definition of the kernel space  $\mathcal{W}$ allows us to correctly define the conjectured limiting observables from the extended density.
\begin{defi} \label{defi:limiting_observables}

Consider a connectivity kernel $w(\xi,\zeta)\in \mathcal{W}$, $(\xi,\zeta) \in [0,1]^2$ and some extended density $f \in L^\infty([0,t_*] \times [0,1]; \mathcal{M}_+(\R))$. Define the observables $\tau_\infty (T,w,f)(t,\cdot) \in \mathcal{M}(\R^{|T|})$, $T \in \mathcal{T}$, as

\begin{equation} \label{eqn:limiting_observables}
\begin{aligned}
\tau_\infty(T,w,f)(t,z) \defeq \int_{[0,1]^{|T|}} w_T(\xi_1,\dots,\xi_{|T|}) {\textstyle \prod_{m=1}^{|T|} f(t,\xi_m,z_m)} \;\rd \xi_1,\dots, \rd \xi_{|T|},
\end{aligned}
\end{equation}
where
\begin{equation*}
\begin{aligned}
w_{T}(\xi_1,\dots, \xi_{|T|}) \defeq \prod_{(l,m) \in \mathcal{E}(T)} w(\xi_l, \xi_m).
\end{aligned}
\end{equation*}

\end{defi}

It is easy to check the validity of integrals in \eqref{eqn:Vlasov_mean_field} and \eqref{eqn:limiting_observables} if the kernel $w(\xi,\zeta)$ is smooth or  when $w \in L^\infty$. 
%
At the present, it may not be clear yet why the integrations with respect to $\xi \in [0,1]$ involved in \eqref{eqn:Vlasov_mean_field} and \eqref{eqn:limiting_observables} make sense when we only have $w \in \mathcal{W}$.
We prove in Section~\ref{sec:observables} that it is possible to extend the bounds in Lemma~\ref{lemboundtauN} through a density argument.
We note that a definition akin to $\tau_\infty$ along with a similar argument on integrability has been addressed in \cite{JaPoSo:21}.


\subsection{Main result} \label{subsec:main_result}
Our main result states that the large scale dynamics of \eqref{eqn:IF_multi_agent} described in terms of observables $\tau_N (T,w_N,f_N)$ can be indeed approximated by the mean-field limit, provided the initial observables $\tau_N(t=0)$ are approximated by the initial~$\tau_\infty(t=0)$.
\begin{thm} \label{thm:stable_qualitative}

Assume that $\mu, \nu \in W^{1,\infty}$ and $\sigma > 0$. For a sequence of $N \to \infty$,
let $(X^{i;N}_t)_{i=1}^N$ be solutions of the non-exchangeable SDE system \eqref{eqn:IF_multi_agent} with connectivity matrices $w_N \defeq (w_{i,j;N})_{i,j =1}^N$.
In addition, let $f \in L^\infty([0,t_*] \times [0,1]; \mathcal{M}_+(\R))$ be a solution of the Vlasov equation \eqref{eqn:Vlasov_transport}-\eqref{eqn:Vlasov_mean_field} with connectivity kernel $w \in \mathcal{W}$.
Assume that the following holds:
\begin{itemize}
\item The connectivity matrices are uniformly bounded: For some $C_{\mathcal{W}} > 0$,
\begin{equation} \label{eqn:assumption_1}
\begin{aligned}
\sup_N \textstyle \; \max \Big( \max_{i} \sum_{j} |w_{i,j;N}| , \max_{j} \sum_{i} |w_{i,j;N}| \Big) \leq C_{\mathcal{W}}.
\end{aligned} 
\end{equation}
\item The interaction of each pair of agents vanishes: 
\begin{equation} \label{eqn:assumption_2}
\begin{aligned}
\max_{1\leq i,j \leq N} |w_{i,j;N}| \to 0 \;\text{ as }\; N \to \infty.
\end{aligned} 
\end{equation}
\item The hierarchy of observables and the extended density are initially bounded by an exponential scale: There exists some $a > 0$, $M_a > 0$, such that,
\begin{equation} \label{eqn:assumption_3}
\begin{aligned}
\sup_N \; \int_{\R^{|T|}} { \textstyle \exp\big( a \sum_{m=1}^{|T|} |z_m|\big)} |\tau_N|(T,w_N,f_N)(0,z) \;\rd z \leq \;& M_a^{|T|}, \quad \forall T \in \mathcal{T},
\\
\esssup_{\xi \in [0,1]} \int_{\R} { \textstyle \exp\big( a |x|\big)} f(0, \xi, x) \;\rd x \leq \;& M_a.
\end{aligned} 
\end{equation}
\item The hierarchy of observables initially converges in weak-* topology:
\begin{equation} \label{eqn:assumption_4}
\begin{aligned}
\tau_N(T,w_N,f_N)(0,\cdot) \overset{\ast}{\rightharpoonup} \tau_\infty(T,w,f)(0,\cdot) \in \mathcal{M}(\R^{|T|}) \;\text{ as }\; N \to \infty, \quad \forall T \in \mathcal{T}
\end{aligned} 
\end{equation}
\end{itemize}
Then, the hierarchy of observables converges at any time, in weak-* topology:
\begin{equation} \label{eqn:main_convergence}
\begin{aligned}
\tau_N(T,w_N,f_N)(t,\cdot) \overset{\ast}{\rightharpoonup} \tau_\infty(T,w,f)(t,\cdot) \in \mathcal{M}(\R^{|T|}) \;\text{ as }\; N \to \infty, \quad \forall t \in [0,t_*], \; T \in \mathcal{T}.
\end{aligned} 
\end{equation}
\end{thm}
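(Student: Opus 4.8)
I would follow the hierarchy route outlined in the introduction. The four ingredients are: (i) the observables $\tau_N(T,w_N,f_N)$ solve an \emph{approximate} hierarchy whose equation at level $T$ involves only the higher levels $\tau_N(T+m,w_N,f_N)$, $1\le m\le|T|$, carries no explicit dependence on $w_N$, and has error terms of size $O(\max_{i,j}|w_{i,j;N}|)$; (ii) the limiting observables $\tau_\infty(T,w,f)$ of Definition~\ref{defi:limiting_observables} solve the corresponding \emph{exact} hierarchy once $f$ solves \eqref{eqn:Vlasov_transport}--\eqref{eqn:Vlasov_mean_field}; (iii) the exponential moments \eqref{eqn:assumption_3} propagate uniformly in $N$ and $t\in[0,t_*]$; (iv) the limiting hierarchy has a unique solution in the class carrying those moments. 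Granting (i)--(iv), a compactness argument produces subsequential weak-$*$ limits $\bar\tau(T)$ of the $\tau_N(T)$ that solve the limiting hierarchy with the initial data $\tau_\infty(T,w,f)(0,\cdot)$ prescribed by \eqref{eqn:assumption_4}; uniqueness forces $\bar\tau(T)=\tau_\infty(T,w,f)$, and since this holds along every subsequence, \eqref{eqn:main_convergence} follows.

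\textbf{The approximate hierarchy.} Applying the It\^o/Dynkin formula for jump--diffusions to $(X^{i_1;N}_t,\dots,X^{i_{|T|};N}_t)$ yields the weak Fokker--Planck equation for each marginal $f_N^{i_1,\dots,i_{|T|}}$: a transport--diffusion part from $\mu,\sigma$; a reset part from the own firing of each $i_m$ producing the structure $\nu(z_m)f-\delta_0(z_m)\int\nu f\,\rd z_m$; and an interaction part in which the firing of any external neuron $j$ translates $X^{i_m}$ by $w_{i_m,j;N}$. Multiplying by the tree weight $w_{N,T}$ from \eqref{eqn:hierarchy} and summing over indices, the first-order part of this translation reconstructs exactly $\tau_N(T+m,w_N,f_N)$ --- this is the reason the weights are defined as products over $\mathcal{E}(T)$, so that $w_{N,T}(i_1,\dots,i_{|T|})\,w_{i_m,j;N}=w_{N,T+m}(i_1,\dots,i_{|T|},j)$ --- whereas the higher-order terms of the translation and the interactions internal to the selected tuple each carry an extra synaptic factor and are hence $O(\max_{i,j}|w_{i,j;N}|)$ times observables controlled by \eqref{eqn:assumption_1} and Lemma~\ref{lemboundtauN}; these vanish as $N\to\infty$ by \eqref{eqn:assumption_2}. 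Differentiating \eqref{eqn:limiting_observables} under the integral sign and inserting \eqref{eqn:Vlasov_transport} gives the analogous exact hierarchy satisfied by $\tau_\infty(T,w,f)$ (the mean-field drift term in \eqref{eqn:Vlasov_mean_field} is precisely what becomes the coupling to $\tau_\infty(T+m)$).

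\textbf{A priori bounds and compactness.} Testing the approximate hierarchy against $\exp(a\sum_m|z_m|)$ and closing a Gronwall inequality, using $\mu,\nu\in W^{1,\infty}$, $\sigma>0$, \eqref{eqn:assumption_1} and Lemma~\ref{lemboundtauN}, propagates \eqref{eqn:assumption_3} to all $t\in[0,t_*]$, uniformly in $N$. These moments give tightness of $\{\tau_N(T)(t,\cdot)\}_N$ for each fixed $T$ and $t$, and the hierarchy gives equicontinuity of $t\mapsto\langle\tau_N(T)(t,\cdot),\phi\rangle$ for smooth $\phi$, so a diagonal extraction over the countable family $\mathcal{T}$ yields a subsequence with $\tau_N(T)(t,\cdot)\wsto\bar\tau(T)(t,\cdot)$ for all $T\in\mathcal{T}$, $t\in[0,t_*]$. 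One then passes to the limit term by term in the approximate hierarchy --- the $w_N$-free terms converge by weak-$*$ convergence together with the moment bounds (needed to pair against the unbounded weight $\exp(a\sum_m|z_m|)$), and the error terms vanish by \eqref{eqn:assumption_2} --- so $\bar\tau$ solves the limiting hierarchy with the correct initial data and still satisfies the exponential moments.

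\textbf{Uniqueness of the limiting hierarchy: the main obstacle.} This is where I expect the real work, and where $\sigma>0$ is indispensable. The limiting hierarchy is linear from level to level, but it does not close at any finite level, the coupling to $\tau(T+m)$ loses one $z_m$-derivative, and the reset term is a Dirac $\delta_0(z_m)$, so no estimate is available in total variation. The plan is to introduce an adapted weak norm --- of negative Sobolev order in each $z_m$ --- in which, after one application of the heat semigroup $S(t)=\bigotimes_m e^{t\sigma^2\partial_{z_mz_m}/2}$, both the derivative in the coupling and the Dirac reset become bounded with an integrable time singularity (this is the mechanism exploited in \cite{La:23,BrJaSo:22}); and then to sum this norm over $T\in\mathcal{T}$ against a geometric weight $\lambda^{|T|}$, with $\lambda$ chosen small enough to absorb both the combinatorial growth of $\{T:|T|=n\}$ and the number $|T|$ of couplings at level $T$, the a priori size $C_{\mathcal{W}}^{|T|}$ provided by Lemma~\ref{lemboundtauN} making the weighted sum finite. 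Writing the Duhamel formula for the difference of two solutions of the limiting hierarchy and estimating it in this weighted weak norm yields a closed Gronwall inequality on a short time interval, iterated to cover $[0,t_*]$; hence uniqueness, $\bar\tau(T)=\tau_\infty(T,w,f)$, and \eqref{eqn:main_convergence}. In fact the same weighted weak norm gives the quantitative version: applied to $\tau_N(T)-\tau_\infty(T,w,f)$ it bounds the difference at time $t$ by the initial discrepancy plus $\max_{i,j}|w_{i,j;N}|$, which reproves \eqref{eqn:main_convergence} directly once the weak-$*$ convergence \eqref{eqn:assumption_4} is upgraded to the weak norm using the tightness furnished by the moment bounds.
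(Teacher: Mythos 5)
Your route is genuinely different from the paper's: you go through compactness of the family $\{\tau_N(T)\}$ plus uniqueness for the limiting hierarchy, whereas the paper never extracts subsequences at the level of the hierarchy solutions; it proves a direct quantitative stability estimate between the approximate hierarchy \eqref{eqn:hierarchy_equation} and the limiting hierarchy \eqref{eqn:limit_hierarchy_equation} in the tensorized weighted norms $H^{-1\otimes|T|}_\eta$ (Theorem~\ref{thm:stable_power_norm}), and then converts between weak-$*$ convergence and these norms via Lemma~\ref{lem:topology_negative_Sobolev} under the propagated exponential moments. Your ingredients (i)--(iii) correspond to Propositions~\ref{prop:hierarchy_of_equations}, \ref{prop:Hilbert_a_priori_bound} and \ref{prop:hierarchy_of_equations_limit} and are sound; the qualitative passage to the limit in the approximate hierarchy against smooth test functions also works, since the remainders are translations by vectors of $\ell^1$-size at most $|T|\,\bar w_N$.

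The genuine gap is in the uniqueness step, which is the heart of the matter, and your mechanism for it does not work as described. First, the proposed norm $\sum_{T\in\mathcal{T}}\lambda^{|T|}\|\cdot\|$ is infinite: the number of trees of size $k$ in $\mathcal{T}$ is $(k-1)!$ (at step $j$ the new leaf may be attached to any of $j$ vertices), so no geometric weight can make the sum converge against the a priori bounds $C_{\mathcal W}^{|T|-1}$; one is forced to work with a maximum over trees of bounded size, as in \eqref{eqn:hierarchy_boundedness_1}. Second, and more fundamentally, even with a supremum the factor $|T|$ coming from the $|T|$ coupling terms at level $T$ cannot be absorbed into a fixed geometric weight, so there is no ``closed Gronwall inequality'' in any single weighted norm over the whole hierarchy; this is exactly why the paper truncates the recursion at a finite depth $n$, controls the truncation level by the uniform a priori bounds, and invokes the recursive Lemma~\ref{lem:differential_inequality_recursion}, whose output is only a H\"older-type (iterated-root) dependence on the data, degrading with time --- consistent with the exponent $1/C_2$ in \eqref{eqn:stable_power_norm}. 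Third, your specific heat-semigroup/Duhamel treatment of the derivative loss introduces a $(t-s)^{-1/2}$ singularity per level; iterating $n$ levels then gains only $t^{n/2}/\Gamma(n/2+1)$ against the $n!$ produced by the $|T|$ prefactors, so the expansion diverges for every $t>0$. The paper avoids this by an energy estimate in $H^{-1\otimes|T|}_\eta$ in which the dissipation term absorbs the full derivative in the coupling (Cauchy--Schwarz against $-\frac{\sigma^2}{2}\|\partial_{z_m}K^{\otimes|T|}\star(\Delta_N(T)\eta^{\otimes|T|})\|^2$), leaving no time singularity, while the Dirac reset and the $\xi$-free structure are handled by the commutator estimates of Lemmas~\ref{lem:commutator_inequality} and \ref{lem:firing_rate_difference_tensorized}. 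For the same reason, your closing claim that the quantitative version bounds the difference at time $t$ by the initial discrepancy plus $\max_{i,j}|w_{i,j;N}|$ (a Lipschitz-type rate) overstates what the hierarchy structure can deliver: the attainable rate is the iterated-root one of Theorem~\ref{thm:stable_power_norm}, with the truncation depth $n$ chosen of order $|\log\bar w_N|/\bar w_N$. Your uniqueness claim can be repaired, but only by essentially reproducing the truncation-plus-a-priori-bound argument of the paper rather than a weighted Gronwall bound.
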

While we state Theorem~\ref{thm:stable_qualitative} in terms of the observables $\tau_N$ from non-exchangeable systems converging to the limiting observables $\tau_\infty$ in weak-* topology, our approach is inherently quantitative. We state, in the next section, a  precise and quantitative version of Theorem~\ref{thm:stable_qualitative}, namely Theorem~\ref{thm:stable_power_norm}.

We recall that the first observable immediately correspond to the $1$-particle distribution so that Theorem~\ref{thm:stable_qualitative} provides the limit of this $1$-particle distribution. It would in fact be possible to derive the limit of other well-known statistical objects, the $2$-particle distribution and correlations for example. To do that, we would build another family of new observables starting from the $2$-particle distribution in addition  to the $1$-particle distribution. This would also require stronger assumptions with the initial convergence on both families instead of only \eqref{eqn:assumption_4}. However we did not want to further add to our approach or our statements and confine ourselves to the limit of the $1$-particle distribution.

%
%

The only non-straightforward assumption in Theorem~\ref{thm:stable_qualitative} is \eqref{eqn:assumption_4} about  whether the $\tau_\infty(T)(0,\cdot)$, $\forall T \in \mathcal{T}$ come from a pair of extended density $f(0,x,\xi)$ and $w \in \mathcal{W}$ as defined in Definition~\ref{defi:limiting_observables}. It would be possible to formulate a version of Theorem~\ref{thm:stable_qualitative} without this assumption. The sequence of initial data $\tau_N(T,w_N,f_N)(0,\cdot)$ is obviously precompact as $N \to \infty$, so that we could extract a converging sub-sequence. The proof of Theorem~\ref{thm:stable_qualitative} would then imply that the limiting $\tau_\infty$ are exact solutions to a limiting, tree-indexed hierarchy. However, without \eqref{eqn:assumption_4}, we cannot identify the limiting $\tau_\infty$ as being obtained through some solution $f(t,x,\xi)$ to the limiting Vlasov equation.

It is fortunately straightforward to show that \eqref{eqn:assumption_4} directly follows when the initial $X^{i,N}_0=X^{i,N}(t=0)$ are independent.
When the initial data $(X^{1;N}_0,\dots, X^{N;N}_0)$ are independent random variables with $f_{N,0}^i = \law (X^{i;N}_0)$ for all $1 \leq i \leq N$, the marginal laws are of form $f_{N,0}^{i_1,\dots, i_k} = \prod_{m=1}^k f_{N,0}^{i_m}$ for $1 \leq i_1,\dots, i_k \leq N$ that are distinct. 
We can then define a graphon-like kernel and the extended density as
\begin{equation} \label{eqn:extended_density_kernel}
\begin{aligned}
\tilde w_N(\xi,\zeta) = \;& \sum_{i,j=1}^N N w_{i,j;N} \mathbbm{1}_{[\frac{i-1}{N},\frac{i}{N})}(\xi) \mathbbm{1}_{[\frac{j-1}{N},\frac{j}{N})}(\zeta),
\\
\tilde f_N(x,\xi) = \;& \sum_{i=1}^N f_{N}^{i}(x) \mathbbm{1}_{[\frac{i-1}{N},\frac{i}{N})}(\xi).
\end{aligned}
\end{equation}
It becomes straightforward to show that the initial observables $\tau_N (T,w_N,f_N,t=0)$ are approximated by $\tau_\infty(T,\tilde w_N, \tilde f_N,t=0)$ up to an error of $O(\max_{1\leq i,j \leq N} |w_{i,j;N}|)$. We in particular state the following proposition, whose proof is postponed to Section~\ref{sec:observables}.
\begin{prop} \label{prop:independent_compactness}

For a sequence of $N \to \infty$, consider $(X^{1;N},\dots, X^{N;N})$ as independent random variables and $w_N = (w_{i,j})_{i,j =1}^N \in \R^{N \times N}$.
Denote the marginal laws as $f_{N}^i = \law (X^{i;N})$ for each $N$ and $1 \leq i \leq N$. Further, let $\tilde w_N$, $\tilde f_N$ be the kernel and extended density as defined in \eqref{eqn:extended_density_kernel}.
Assume that the following holds:
\begin{itemize}
\item The connectivity matrices are uniformly bounded: For some $C_{\mathcal{W}} > 0$,
\begin{equation} \label{eqn:assumption_1_compact}
\begin{aligned}
\sup_N \; \textstyle \max \Big( \max_{i} \sum_{j} |w_{i,j;N}| , \max_{j} \sum_{i} |w_{i,j;N}| \Big) \leq C_{\mathcal{W}}.
\end{aligned} 
\end{equation}
\item The interaction of each pair of agents vanishes: 
\begin{equation} \label{eqn:assumption_2_compact}
\begin{aligned}
\bar w_N \defeq \max_{1\leq i,j \leq N} |w_{i,j;N}| \to 0, \;\text{ as }\; N \to \infty.
\end{aligned} 
\end{equation}
\item The laws are bounded by an exponential scale: There exists some $a > 0$, $M_a > 0$, such that,
\begin{equation} \label{eqn:assumption_3_compact}
\begin{aligned}
\sup_N \; \max_{1 \leq i \leq N} \int_{\R} {\textstyle \exp(a |z|) f_{N}^{i}(z)} \;\rd z
\leq \;& M_a.
\end{aligned} 
\end{equation}
\end{itemize}
Then the difference between observables $\tau_N (T,w_N,f_N)$ and their approximations $\tau_\infty(T,\tilde w_N, \tilde f_N)$, as formulated by \eqref{eqn:limiting_observables} and \eqref{eqn:extended_density_kernel}, is quantified by
\begin{equation} \label{eqn:approximate_independence}
\begin{aligned}
\;& \int_{\R^{|T|}} { \textstyle \exp\big( a \sum_{m=1}^{|T|} |z_m|\big)} |\tau_\infty(T,\tilde w_N, \tilde f_N)(z) - \tau_N(T,w_N, f_N)(z)| \;\rd z
\\
\leq \;& \max_{1\leq i,j \leq N} |w_{i,j;N}| \max \Big( \textstyle \max_{i} \sum_{j} |w_{i,j;N}| , \max_{j} \sum_{i} |w_{i,j;N}| \Big)^{|T|-2} |T|^2 M_a^{|T|}.
\end{aligned} 
\end{equation}
Moreover, by extracting a subsequence (which we still index by $N$ for simplicity), there exists a pair of kernel $w \in \mathcal{W}$ and extended density $f \in L^\infty([0,1]; \mathcal{M}_+(\R))$, such that the hierarchy of approximate observables $\tau_\infty(T,\tilde w_N, \tilde f_N)$ converges weak-* to the limit hierarchy $\tau_\infty(T,w,f)$:
\begin{equation} \label{eqn:assumption_4_compact}
\begin{aligned}
\tau_\infty(T,\tilde w_N, \tilde f_N) \overset{\ast}{\rightharpoonup} \tau_\infty(T,w,f) \in \mathcal{M}(\R^{|T|}) \;\text{ as }\; N \to \infty, \quad \forall T \in \mathcal{T}.
\end{aligned} 
\end{equation}
In addition, such extended density $f$ satisfies the bound
\begin{equation*}
\begin{aligned}
\esssup_{\xi \in [0,1]} \int_{\R} { \textstyle \exp\big( a |x|\big)} f (\xi, x) \;\rd x \leq \;& M_a.
\end{aligned}
\end{equation*}

\end{prop}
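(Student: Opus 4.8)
The plan is to split the statement of Proposition~\ref{prop:independent_compactness} into three parts and handle them in order: (i) the quantitative estimate~\eqref{eqn:approximate_independence} comparing $\tau_N(T,w_N,f_N)$ with $\tau_\infty(T,\tilde w_N,\tilde f_N)$; (ii) the extraction of a converging subsequence yielding a limiting pair $(w,f)$ and the convergence~\eqref{eqn:assumption_4_compact}; and (iii) the exponential-moment bound on the limit $f$. The heart of the matter is part (i), and the key point is that $\tau_N$ and $\tau_\infty(\cdot,\tilde w_N,\tilde f_N)$ would be \emph{exactly equal} were it not for the vanishing of the diagonal marginals $f_N^{i_1,\dots,i_{|T|}}$ when two indices coincide.

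For part (i), I would first observe that by the piecewise-constant definitions in~\eqref{eqn:extended_density_kernel}, plugging $\tilde w_N$ and $\tilde f_N$ into~\eqref{eqn:limiting_observables} collapses each $\xi_m$-integral over $[\frac{i_m-1}{N},\frac{i_m}{N})$ to a factor $\frac{1}{N}$, so that
\[
\tau_\infty(T,\tilde w_N,\tilde f_N)(z) = \frac{1}{N}\sum_{i_1,\dots,i_{|T|}=1}^N w_{N,T}(i_1,\dots,i_{|T|})\ {\textstyle\prod_{m=1}^{|T|}} f_N^{i_m}(z_m),
\]
using that $N^{|T|}\cdot\frac{1}{N^{|T|}}$ times the single factor $\frac1N$ surviving from the normalization of $w_{N,T}$ reproduces the prefactor in~\eqref{eqn:hierarchy}. (The one subtlety is bookkeeping the powers of $N$: $w_{N,T}$ is a product of $|T|-1$ entries $w_{i_l,i_m;N}$, while $\tilde w_N$ carries an extra $N$ per entry, giving $N^{|T|-1}$, against $|T|$ integrations each contributing $\frac1N$, i.e.\ $N^{-|T|}$, and one explicit $\frac1N$ in~\eqref{eqn:hierarchy}; these balance.) On the other hand, by independence, $f_N^{i_1,\dots,i_{|T|}} = \prod_m f_N^{i_m}$ whenever the indices are distinct, and $f_N^{i_1,\dots,i_{|T|}}\equiv 0$ otherwise, so $\tau_N(T,w_N,f_N)$ is exactly the same sum \emph{restricted to tuples with distinct entries}. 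Thus the difference is exactly the sum over tuples $(i_1,\dots,i_{|T|})$ in which at least two coordinates coincide.

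The main obstacle — really the main computation — is bounding this ``diagonal'' sum. I would estimate it by a union bound over the $\binom{|T|}{2}\le |T|^2/2$ choices of a coinciding pair $\{p,q\}$; fixing $i_p=i_q$, one of the $|T|-1$ edges of the tree now links two equal indices and contributes a factor $|w_{i,i;N}|\le \bar w_N$, while the remaining $|T|-2$ edges are summed off one at a time against $\max_i\sum_j|w_{i,j;N}|\le C_{\mathcal W}$ exactly as in the proof of Lemma~\ref{lemboundtauN}, and the free index left over after peeling contributes the missing $\frac1N\sum_i 1 = 1$. Carrying the weight $\exp(a\sum_m|z_m|)$ through, each $f_N^{i_m}(z_m)$ is integrated against $\exp(a|z_m|)$ and bounded by $M_a$ via~\eqref{eqn:assumption_3_compact}, giving $M_a^{|T|}$; collecting the factors yields precisely the right-hand side of~\eqref{eqn:approximate_independence}, i.e.\ $\bar w_N\, C_{\mathcal W}^{|T|-2}\,|T|^2\,M_a^{|T|}$. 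One must be mildly careful that the edge incident to the collapsed pair might itself be a ``leaf edge'' or an ``internal edge,'' but in either case peeling leaves from the tree in the recursive order of Definition~\ref{defi:tree_graphs} reduces cleanly to the one-edge-at-a-time estimate.

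For part (ii), I would invoke weak-$*$ compactness: the kernels $\tilde w_N$ lie in a bounded subset of $\mathcal W$ by~\eqref{eqn:assumption_1_compact} (their $\ell^\infty\to\ell^\infty$ and $\ell^1\to\ell^1$ norms transfer to the $L^\infty_\xi\mathcal M_\zeta$ and $L^\infty_\zeta\mathcal M_\xi$ norms of the step kernels), and the step densities $\tilde f_N$ lie in a bounded subset of $L^\infty([0,1];\mathcal M_+(\R))$ with the uniform exponential moment from~\eqref{eqn:assumption_3_compact}; since both target spaces are duals of separable Banach spaces, a diagonal argument extracts a subsequence with $\tilde w_N\wsto w$ in $\mathcal W$ and $\tilde f_N\wsto f$ in $L^\infty([0,1];\mathcal M_+(\R))$ (the latter also tightly, thanks to the exponential moment). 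Passing to the limit in the multilinear expression~\eqref{eqn:limiting_observables} — which is linear in $w$ and $|T|$-linear in $f$, tested against $C_c$ functions of $z$ times continuous functions of the $\xi_m$ — gives~\eqref{eqn:assumption_4_compact}; here one uses that the bounded exponential moments upgrade weak-$*$ to tight convergence so that products pass to the limit and no mass escapes to infinity in the $z$-variables. Finally, for part (iii), the exponential-moment bound on $f$ is inherited from~\eqref{eqn:assumption_3_compact} by lower semicontinuity of $\xi\mapsto\int_\R e^{a|x|}\mu(\xi,\rd x)$ under weak-$*$ convergence (approximating $e^{a|x|}$ from below by bounded continuous truncations and using Fatou), which preserves the essential-supremum bound $M_a$ in the limit.
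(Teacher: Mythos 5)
Your part (i) is structurally on the right track (the difference is exactly the sum over tuples with a repeated index, and the target bound $\bar w_N\,C_{\mathcal W}^{|T|-2}\,|T|^2 M_a^{|T|}$ is the right one), but the key combinatorial step is wrong as stated: fixing a coinciding pair $i_p=i_q$ does \emph{not} in general make any edge of $T$ ``link two equal indices,'' because $p$ and $q$ need not be adjacent in the tree (take $T$ the path $1$--$2$--$3$ and the tuple $(i,j,i)$: the edge weights are $w_{i,j;N}$ and $w_{j,i;N}$, no diagonal entry appears). So your source of the factor $\bar w_N$ evaporates except in the adjacent case. The paper's Lemma~\ref{lem:diagonal_estimate} repairs exactly this: it isolates the unique path $P$ in $T$ joining $p$ and $q$, sums off the off-path indices against row/column sums ($C_{\mathcal W}^{|T|-|P|}$), then, with both endpoints of the path pinned to the same value $i$, bounds \emph{one} edge of the path in sup norm by $\bar w_N$ and sums the remaining $|P|-2$ path edges one at a time against $C_{\mathcal W}$, before summing over $i$ and over the $\le |T|^2$ pairs. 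Without this path argument (or an equivalent), your estimate does not go through.

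The more serious gap is part (ii). Extracting weak-$*$ limits $\tilde w_N\wsto w$ and $\tilde f_N\wsto f$ and ``passing to the limit in the multilinear expression'' is not a valid step: the observable couples several weak-$*$ converging factors through the $\xi_m$ variables, and products of weakly converging sequences do not converge to the product of the limits. Concretely, for the two-node tree, testing against $\varphi(z_1)\psi(z_2)$ reduces to $\int \tilde w_N(\xi_1,\xi_2)g_N(\xi_1)h_N(\xi_2)\,\rd\xi_1\rd\xi_2$ with $g_N,h_N$ bounded in $L^\infty_\xi$; taking $\tilde w_N=1+\sin(2\pi N\xi)\sin(2\pi N\zeta)$, $g_N=\sin(2\pi N\xi)$, $h_N=\sin(2\pi N\zeta)$ shows the limit of the integral ($1/4$) differs from the integral of the weak limits ($0$). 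The exponential moments give tightness in $z$ but do nothing about oscillations in $\xi$, which is exactly the obstruction. This is why the paper's proof of \eqref{eqn:assumption_4_compact} is substantially heavier: it introduces the countable algebra $\mathscr{T}$ of transforms, applies the measure-preserving rearrangement lemma (Lemma~\ref{lem:compactness_rearrangement}, from \cite{JaPoSo:21}) to gain, via Fr\'echet--Kolmogorov, \emph{strong} $L^p_\xi$ compactness of the rearranged scalar observables, identifies the limit recursively through the seed/graft/grow rules (strong convergence is what lets products pass to the limit in the graft step, and a strong--weak pairing handles the grow step), and finally uses invariance of $\tau_\infty$ under rearrangement (Lemma~\ref{lem:stable_rearrangement}); the limiting pair $(w,f)$ is obtained as the weak-$*$ limit of the \emph{rearranged} $\tilde w_{N;\#},\tilde f_{N;\#}$, not of the original sequences. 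Your part (iii), lower semicontinuity of the exponential moment, is fine once that framework is in place.
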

When combined with Proposition~\ref{prop:independent_compactness}, Theorem~\ref{thm:stable_qualitative} yields the mean-field limit for independent initial $X_0^{i,N}$ with only some appropriate moments bounds and no other structural assumptions on the $w_{i,j,N}$. However we do emphasize that for non-exchangeable systems, the convergence of observables can in general be much less demanding than independence. It is a very different situation from exchangeable systems where chaos (or approximate independence) is essentially equivalent to the asymptotic tensorization of the marginal.

But for our present models, counterexamples are easy to construct. We can for instance separate the index $i=1\dots N$ into two distinct subset $I_1$ and $I_2$. We then take $w_{i,j,N}=0$ if $i\in I_1$ and $j\in I_2$ or $j\in I_1$ and $i\in I_2$. In that case there are no interactions between neurons in $I_1$ and neurons in $I_2$.  We can then easily satisfy Assumption~\eqref{eqn:assumption_4} by having the $X_0^{i,N}$ independent within each subset $I_1$ and $I_2$ but with as much correlation as desired between the subsets. This example can obviously be generalized to any arbitrary fixed number of subsets and it is possible to construct even more intricate examples.  But this already shows that the optimal assumptions on the initial $X_0^{i,N}$ have to depend intrinsically on the structure of the connections in non-exchangeable cases. In that regard, we conjecture that Assumption~\eqref{eqn:assumption_4} is both necessary and sufficient to have the convergence of the $1$-particle distribution.

\smallskip

Theorem~\ref{thm:stable_qualitative} is the first rigorous result to obtain the mean-field limit for networks of neurons interacting through integrate and fire models. The approach through an extended hierarchy solved by observables has very few comparisons in the literature, having only been used previously in~\cite{JaPoSo:21}. In comparison with the previous~\cite{JaPoSo:21} however, we put forward several new key ideas with notably
\begin{itemize}
\item We introduce the observables directly at the level of the marginals. Instead the notion of observables in~\cite{JaPoSo:21} was only valid for almost independent variables, which required first the propagation of independence. There are hence several advantages to our new definition, first as per the discussion above about independence but also by providing a much immediate notion of the statistical distribution in the system.
\item We develop a new approach for the quantitative estimates on the hierarchy, based on weak norms. This is again in contrast to~\cite{JaPoSo:21} which was using strong $L^2$ norms. This is a critical point because the jumps in integrate and fire models lead to discontinuities so that we cannot have convergence in the hierarchy for our system for any strong norm. On the other hand, the use of weak norms forces a different method in the analysis as propagating weak norms necessarily creates intricate commutator estimates. An important technical contribution of the present paper is to introduce the ``right'' weak norms and a novel approach to handle those commutators.  
  \end{itemize}
There are however many remaining open questions. First of all, the statistical approach followed here does not seem to allow to obtain the limit of any individual trajectory. This is again in contrast with classical exchangeable systems where obtaining the limit of the 1-particle distribution allows to have the limit of typical (in some sense) trajectories. Another important question is whether it is possible to connect the additional variable $\xi$ to some properties of individual neurons, which could lead to classifying neurons in terms of their role in the dynamics. We mention as final example of open problem, the issue of including learning in the models.  
In the setting of \eqref{eqn:IF_multi_agent}, learning can be simply incorporated in the model by considering time-dependent synaptic weights $w_{i,j;N}(t)$ together with some equation prescribing the evolution of those weights. This has been recognized to be a critical mechanism as early as the famous Hebb rule in~\cite{He:49}. But it is unclear how to model this kind of learning appropriately while keeping sparse connections and a mean-field scaling, or whether the present approach would remain valid for such models.  The mean-field limit has been derived \cite{PeSaWa:17, ToSa:20} for neuron networks incorporating learning mechanisms, and also in \cite{AyDu:21} for an opinion dynamics model. But those results impose the strong algebraic constraint that $w_{i_1,j;N} = w_{i_2,j;N}$, $\forall i_1,i_2 \neq j$.

The rest of the paper is structured as follows. In Section~\ref{sec:PDE}, we present our approach of directly obtaining stability estimates, starting from the extended BBGKY hierarchy from non-exchangeable system \eqref{eqn:IF_multi_agent}, the corresponding Vlasov hierarchy, and their a priori estimates. The main stability result, as a quantitative version of \eqref{eqn:main_convergence}, is stated as Theorem~\ref{thm:stable_power_norm}.

The subsequent sections are about rigorously proving the results in Section~\ref{sec:PDE}.
We  discuss in Section~\ref{sec:negative_Sobolev} the properties of the weak norms denoted as $H^{-1\otimes k}_\eta$ that we use throughout the quantitative estimates.
In Section~\ref{sec:observables}, we revisit the limiting observables $\tau_\infty(T,w,f)$, $T \in \mathcal{T}$, to show that they are well-defined.
Finally, with the preliminaries done in Section~\ref{sec:negative_Sobolev} and \ref{sec:observables}, Section~\ref{sec:technical_proofs} is devoted to the proofs of the main results of Section~\ref{sec:PDE}, including Theorem~\ref{thm:stable_power_norm}.
%
\section{Quantitative stability estimates} \label{sec:PDE}
\subsection{A tensorized negative Sobolev norm} \label{subsec:tensorized_norm}
This subsection is dedicated to the introduction of $H^{-1\otimes k}_\eta$-norm along with its basic properties. While it is straightforward, the specific choice of this norm plays a key role in our later estimates as it leads to good commutator estimates. Introducing the mollification kernel
\begin{equation*} 
\begin{aligned}
K(x) \defeq \frac{1}{\pi} \int_0^\infty \exp(-|x| \cosh(\xi)) \rd \xi,
\end{aligned}
\end{equation*}
we may define the $H^{-1\otimes k}_\eta$-norm as follows.
\begin{defi} \label{defi:weak_Hilbert}
For any function $F$ defined on $\R$, denote its tensorization to $\R^k$ by
\begin{equation*}
\begin{aligned}
F^{\otimes k}(z_1,\dots,z_{k}) \defeq \prod_{m = 1}^{k} F (z_m), \quad \forall (z_1,\dots,z_k) \in \R^k.
\end{aligned}
\end{equation*}
We then define 
\begin{equation*}
\begin{aligned}
\|g\|_{H^{-1\otimes k}} \defeq \| K^{\otimes k} \star g \|_{L^2(\R^{k})},
\end{aligned}
\end{equation*}
and for any weight function $\eta$ on $\R$, 
\begin{equation*}
\begin{aligned}
\|g\|_{H^{-1\otimes k}_\eta} \defeq \| K^{\otimes k} \star ( g \eta^{\otimes k} ) \|_{L^2(\R^{k})}.
\end{aligned}
\end{equation*}
\end{defi}
The introduction of the weight $\eta$ is motivated by the need for some control on the decay of the solutions at infinity since we work on the whole $\R$. We simply choose some $\alpha>0$ and define  
\begin{equation*} 
\begin{aligned}
\eta(x) = \eta_\alpha(x) \defeq C_\alpha \exp \Big( \sqrt{1 + \alpha^2 x^2} \Big), \quad C_\alpha = \int_\R \exp \Big(- \sqrt{1 + \alpha^2 x^2} \Big) \;\rd x.
\end{aligned}
\end{equation*}
Our definition of $H^{-1\otimes k}_\eta$ leads to a topology that is equivalent to the classical weak-* topology of $\mathcal{M}(\R^k)$.
\begin{lem} \label{lem:topology_negative_Sobolev}
Consider any $a > 0$, $C > 0$, $0 < \alpha < a$ (which determines $\eta = \eta_\alpha$) and any sequence
\begin{equation*}
\begin{aligned}
\{g_n\}_{n = 1}^\infty \subset \bigg\{g \in \mathcal{M}(\R^k) : \int_{\R^k} { \textstyle \exp\big( a \sum_{m=1}^{k} |z_m|\big)} |g|(z) \;\rd z \leq  C\bigg\}.
\end{aligned}
\end{equation*}
Then the following are equivalent
\begin{itemize}
\item $g_n \overset{\ast}{\rightharpoonup} g_\infty$ under the weak-* topology of $\mathcal{M}(\R^k)$. 
\item $\| g_n - g_\infty \|_{H^{-1\otimes k}_\eta} \to 0$.
\end{itemize}
\end{lem}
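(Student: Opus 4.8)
The plan is to establish the equivalence by analyzing the Fourier-analytic meaning of the kernel $K$ and the role of the exponential weight. The key observation is that $K$ is (up to normalization) the Bessel-type potential whose Fourier transform is $\widehat{K}(\xi) = (1+\xi^2)^{-1}$; indeed the formula $K(x) = \frac1\pi\int_0^\infty e^{-|x|\cosh\theta}\,d\theta = \frac1\pi K_0(|x|)$ gives a function whose Fourier transform is exactly $1/(1+\xi^2)$. Consequently $\|g\|_{H^{-1\otimes k}} = \|K^{\otimes k}\star g\|_{L^2}$ is comparable to the genuine negative Sobolev norm $\|g\|_{H^{-1}(\R^k)}$ built from the product symbol $\prod_m (1+\xi_m^2)^{-1}$, which is a standard norm that metrizes weak-* convergence on bounded subsets of $\mathcal{M}(\R^k)$ restricted to a fixed compact set. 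The weighted version $\|g\|_{H^{-1\otimes k}_\eta} = \|K^{\otimes k}\star(g\,\eta^{\otimes k})\|_{L^2}$ is then the same thing applied to the measure $g\,\eta^{\otimes k}$; since $\eta$ is continuous, bounded below by a positive constant, and of at most exponential growth $\eta(x)\lesssim e^{\alpha|x|}$ with $\alpha<a$, the map $g\mapsto g\,\eta^{\otimes k}$ is a bijection between $\{|g|\,e^{a\sum|z_m|}\ \text{bounded}\}$ and a set of measures with a (smaller but still exponential) integrable weight, and it is a homeomorphism for the weak-* topologies on such sets.

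For the implication (weak-* $\Rightarrow$ norm convergence): first reduce to showing $h_n := (g_n-g_\infty)\eta^{\otimes k} \wsto 0$, which follows from $g_n\wsto g_\infty$ together with the tightness furnished by the uniform bound $\int e^{a\sum|z_m|}|g_n|\le C$ (this is exactly the point raised in the paper's remark that the exponential moment upgrades weak-* to tight convergence, so one may test against the continuous but unbounded weight $\eta^{\otimes k}\varphi$ for $\varphi\in C_b$). Then split $\|K^{\otimes k}\star h_n\|_{L^2}^2$ into a low-frequency piece and a high-frequency piece: on $\{|\xi|\le R\}$ the symbol $\prod(1+\xi_m^2)^{-1}$ is bounded and $\widehat{h_n}\to 0$ pointwise with a uniform $L^\infty$ bound (total mass of $h_n$ is uniformly bounded), so dominated convergence kills that part; on $\{|\xi|>R\}$ the factor $\prod(1+\xi_m^2)^{-1}$ is $L^2$-small uniformly in $n$ because $\|\widehat{h_n}\|_\infty\le \|h_n\|_{\mathcal M}\le C'$, so the tail is $O(R^{-1})$ uniformly. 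Letting $R\to\infty$ gives the convergence. Here one needs $\int_{\R^k}\big(\prod_m(1+\xi_m^2)^{-1}\big)^2\,d\xi<\infty$, which holds since $k$ is finite and each factor is in $L^2(\R)$.

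For the converse (norm convergence $\Rightarrow$ weak-*): by Banach--Alaoglu and the uniform exponential-moment bound, the sequence $\{g_n\}$ is weak-* precompact, and every weak-* limit point $g_*$ satisfies $\|g_*-g_\infty\|_{H^{-1\otimes k}_\eta}=0$ by the forward implication; since $\eta^{\otimes k}>0$ and $K^{\otimes k}\star\cdot$ is injective on $\mathcal{M}(\R^k)$ (its Fourier multiplier $\prod(1+\xi_m^2)^{-1}$ never vanishes), one concludes $g_*=g_\infty$. A convergent-subsequence argument then forces $g_n\wsto g_\infty$. I expect the main obstacle to be the careful bookkeeping in the forward implication — specifically justifying that one may test the weak-* convergence against the \emph{unbounded} weight $\eta^{\otimes k}$ (using tightness from the exponential moments, with the constraint $\alpha<a$ entering precisely here), and handling the high-frequency tail uniformly in $n$; the identification of $\widehat K$ and the injectivity of the multiplier are routine, and the reduction via $\eta$ is bookkeeping once the growth/positivity of $\eta$ is in hand.
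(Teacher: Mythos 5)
Your argument is correct, but your forward implication runs along a genuinely different route from the paper's. You work on the Fourier side: writing $\| (g_n-g_\infty)\|_{H^{-1\otimes k}_\eta}^2$ via Plancherel with the tensorized multiplier, you get pointwise convergence $\widehat{h_n}(\xi)\to 0$ of the (weighted) characteristic functions from testing weak-* convergence against $e^{-2\pi i\xi\cdot z}\eta^{\otimes k}(z)$ (legitimate because $\alpha<a$ gives uniform integrability), then dominated convergence on a frequency ball and a uniform tail bound coming from the integrability of the tensorized multiplier $\prod_m(1+4\pi^2\xi_m^2)^{-1}$ over $\R^k$ --- essentially a L\'evy-continuity-type argument. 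The paper instead stays in physical space: it exploits the Lipschitz bound $\|\Lambda^{\otimes k}\star(g_n\eta^{\otimes k})\|_{W^{1,\infty}}\le \|\Lambda^{\otimes k}\|_{W^{1,\infty}}\|g_n\eta^{\otimes k}\|_{\mathcal M}$ to get locally uniform convergence of the mollified measures, cuts off with $\rho_R^{\otimes k}$, and controls the spatial tail $L_3$ with the exponential moments; your frequency cutoff replaces their spatial cutoff, and both hinge on the same two ingredients (the $\alpha<a$ margin and the uniform total-variation bound), so neither buys substantially more, though yours avoids the equicontinuity step while theirs avoids any Fourier bookkeeping. The backward implication is in both cases the same distributional/precompactness argument (your injectivity of the nonvanishing multiplier plays the role of the paper's positivity and smoothness of $\eta$), with the small additional remark that the limit point $g_*$ inherits the exponential moment bound by lower semicontinuity so the forward step may be applied to it. One slip to fix: $\widehat K$ is not $(1+\xi^2)^{-1}$; with the paper's convention $\widehat K(\xi)=(1+4\pi^2\xi^2)^{-1/2}$ (consistent with $K\star K=\Lambda$), so $K$ is the kernel of $(1-\Delta)^{-1/2}$ up to scaling, not of $(1-\Delta)^{-1}$. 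This is harmless for your proof, since all you use is that the tensorized symbol is positive, bounded, and integrable (equivalently its square is), which holds for either power, but the exponents in your low/high frequency split should be adjusted accordingly.
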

The proof of Lemma~\ref{lem:topology_negative_Sobolev} is postponed to Section~\ref{sec:negative_Sobolev}, where we also conduct a deeper examination of the relationship between the $H^{-1\otimes k}_\eta$ norm  and classical negative Sobolev norms. The use of weak distances such as Wasserstein distances is classical in the derivation of the mean-field limit, in particular when looking at the notion of empirical measures.

However our observables are bounded functions at any $t > 0$, for which we can even prove bounds, and a main motivation for the use of weak norms stems from the singularity introduced by the Poisson jump processes. The usefulness of negative-Sobolev norms in that context has been highlighted in works such as \cite{RiThWa:12}. We also mention~\cite{FlPrZa:19} which considers a somewhat relaxed IF model with connections depending on the spatial structure of neurons. However, instead of studying the 1-particle distribution, we use tensorized $H^{-1\otimes k}_\eta$-norms to investigate the joint law $f_{N}^{i_1,\dots, i_k}$ and the observables, which seems to be a novel approach in this context.
%
\subsection{From the original SDE system to the extended BBGKY hierarchy} \label{subsec:SDE_to_BBGKY}
%
We show in this subsection that the observables, as defined in Definition~\ref{defi:observables}, satisfy an extended BBGKY hierarchy.

We first recall the Liouville or forward Kolmogorov equation that is satisfied by the full joint law $f_N$ of solutions to the SDE \eqref{eqn:IF_multi_agent},
\begin{equation} \label{eqn:IF_Liouville_PDE}
\begin{aligned}
\partial_t f_N(t,x) + \;& \sum_{i = 1}^{N} \bigg[ \partial_{x_i}(\mu(x_i) f_N(t,x)) - \frac{\sigma^2}{2} \partial_{x_i}^2 f_N(t,x)
\\
\;& + \nu(x_i) f_N(t,x) - \delta_0(x_i) \bigg( \int_{\R} \nu(y_i) f_N(t,y - (w_N)_{\cdot, i}^{\top}) \;\rd y_i \bigg)\bigg|_{\forall j \neq i,\, y_j = x_j} \bigg] = 0,
\\
(w_N)_{\cdot, i}^{\top} = \;& \big( w_{1, i; N}, \dots w_{N, i; N} \big) \in \R^{N}, \quad \forall 1 \leq i \leq N,
\end{aligned}
\end{equation}
where $\delta_0$ is the Dirac delta function at origin. The ``spike vector'' $(w_N)_{\cdot, i}^{\top}$ corresponds to the $i$-th column of connectivity matrix $w_N$ that account for the jumps when the $i$-th neuron fires.

From the Kolmogorov equation, we may derive equations on each observable.
\begin{prop} \label{prop:hierarchy_of_equations}
Assume that $\mu,\nu \in W^{1,\infty}$ and $\sigma > 0$. 
Let $w_N \defeq (w_{i,j;N})_{i,j =1}^N$ be the connectivity matrix and $(X^{1;N}_0,\dots, X^{N;N}_0)$ be the initial data with $g_{N} = \law (X^{1;N}_0,\dots, X^{N;N}_0)$.

Then, there exists a unique solution $(X^{1;N}_t,\dots, X^{N;N}_t)$ solving SDE~\eqref{eqn:IF_multi_agent} for all $t \geq 0$, whose law
\begin{equation*}
\begin{aligned}
f_N(t,\cdot) = \law(X^{1;N}_t,\dots, X^{N;N}_t) 
\end{aligned}
\end{equation*}
is the unique distributional solution of Liouville equation \eqref{eqn:IF_Liouville_PDE} with initial data $g_{N}$.
In addition, the observables
\begin{equation*}
\begin{aligned}
\tau_N(T) = \tau_N(T,w_N,f_N), \quad \forall T \in \mathcal{T}
\end{aligned}
\end{equation*}
 solve the extended version of BBGKY hierarchy with remainder terms: For all $T \in \mathcal{T}$,
\begin{equation} \label{eqn:hierarchy_equation}
\begin{aligned}
& \partial_t \tau_N (T)(t,z)
\\
&\ =  \sum_{m = 1}^{|T|} \Bigg\{ \bigg[ - \partial_{z_m}(\mu(z_m) \tau_N (T)(t,z)) + \frac{\sigma^2}{2} \partial_{z_m}^2 \tau_N (T)(t,z)
\\
&\quad - \nu(z_m) \tau_N (T)(t,z) + \delta_0(z_m) \bigg( \int_{\R} \nu(u_m) \Big( \tau_N (T)(t,u) + \mathscr{R}_{N,T,m} (t,u) \Big) \;\rd u_m \bigg)\bigg|_{\forall n \neq m,\, u_n = z_n} \bigg]
\\
&\quad - \partial_{z_m} \bigg[ \int_{\R} \nu(z_{|T|+1}) \Big( \tau_N (T+m)(t,z) + \mathscr{\tilde R}_{N,T+m,|T|+1} (t,z) \Big) \;\rd z_{|T|+1} \bigg] \Bigg\},
\end{aligned}
\end{equation}
where the remainder terms are given by
\begin{equation} \label{eqn:hierarchy_equation_remainder}
\begin{aligned}
\mathscr{R}_{N,T,m} (t,z) \defeq \;& \frac{1}{N} \sum_{i_1,\dots, i_{|T|} = 1}^N w_{N,T}(i_1,\dots, i_{|T|}) \Big( f_{N}^{i_1,\dots, i_{|T|}}(t,z - w_{N;i_m}^{i_1,\dots, i_{|T|}}) - f_{N}^{i_1,\dots, i_{|T|}}(t,z) \Big),
\\
\mathscr{\tilde R}_{N,T,m} (t,z) \defeq \;& \int_0^1 \frac{1}{N} \sum_{i_1,\dots, i_{|T|} = 1}^N w_{N,T}(i_1,\dots, i_{|T|}) \Big( f_{N}^{i_1,\dots, i_{|T|}}(t,z - r w_{N;i_m}^{i_1,\dots, i_{|T|}}) - f_{N}^{i_1,\dots, i_{|T|}}(t,z) \Big) \;\rd r,
\end{aligned}
\end{equation}
and the $w_{N;j}^{i_1,\dots, i_{k}}$ are defined as the restriction of the ``spike vector'' $(w_N)_{\cdot, j}^{\top}$ to the marginal space, namely
\begin{equation*}
\begin{aligned}
w_{N;j}^{i_1,\dots, i_{k}} \defeq \;& \big( w_{i_n, j; N} \big)_{n=1}^{k} = \big( w_{i_1, j; N}, \dots w_{i_{k}, j; N} \big) \in \R^{k}.
\end{aligned}
\end{equation*}
\end{prop}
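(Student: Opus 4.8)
The plan is to split the statement into three parts and treat them in order: existence and uniqueness of the SDE system, identification of its law with the unique distributional solution of the Liouville equation \eqref{eqn:IF_Liouville_PDE}, and finally the derivation of the tree-indexed hierarchy \eqref{eqn:hierarchy_equation}. For the first two parts I would proceed by a standard pathwise construction. Since $\mu,\nu \in W^{1,\infty}$ are globally Lipschitz (and the reset map $x \mapsto 0$ and the jump $x \mapsto x + w_{i,j;N}$ are affine), the coupled SDE \eqref{eqn:IF_multi_agent} driven by the Brownian motions $\bm B^i$ and the Poisson random measures $\bm N^i$ can be solved by the classical interlacing procedure: between successive jump times the system is a Lipschitz SDE with a unique strong solution, the firing rate $\nu$ is bounded so that jumps accumulate only finitely often on any bounded interval, and one glues solutions across jumps. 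Uniqueness follows from the Lipschitz property together with the boundedness of $\nu$ (which caps the jump intensity). Then $f_N(t,\cdot) = \law(X^{1;N}_t,\dots,X^{N;N}_t)$ solves \eqref{eqn:IF_Liouville_PDE} in the distributional sense by Itô's formula applied to test functions $\varphi \in C_c^\infty(\R^N)$: the diffusion part gives $\tfrac{\sigma^2}{2}\partial_{x_i}^2$, the drift gives $\partial_{x_i}(\mu(x_i)\cdot)$, and each jump term in the generator produces, after taking expectations, the gain term $\int \nu(y_i) f_N(t, y - (w_N)_{\cdot,i}^\top)\,\rd y_i\big|_{y_j = x_j}$ minus the loss term $\nu(x_i) f_N$, localized at $x_i = 0$ because of the reset. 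Uniqueness of the distributional solution of \eqref{eqn:IF_Liouville_PDE} again uses $\sigma>0$ and the boundedness/Lipschitz-ness of the coefficients via a duality argument against the backward Kolmogorov equation, or can simply be inherited from pathwise uniqueness of the SDE.

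For the main part, the derivation of \eqref{eqn:hierarchy_equation}, I would start from the marginal equations. Integrating \eqref{eqn:IF_Liouville_PDE} against a test function depending only on the $|T|$ coordinates indexed by $i_1,\dots,i_{|T|}$ yields an equation for each marginal $f_N^{i_1,\dots,i_{|T|}}$: the terms involving derivatives in the selected coordinates $z_m$ survive unchanged (transport $\mu$, diffusion $\sigma^2/2$, firing loss $\nu$, and the reset gain $\delta_0(z_m)\int \nu(u_m) f_N^{i_1,\dots,i_{|T|}}(t, u - w_{N;i_m}^{i_1,\dots,i_{|T|}})\,\rd u_m$ — note the shift is by the restriction of the spike vector to the selected indices), while firing of a neuron $i_{|T|+1}$ \emph{outside} the selected set contributes, after re-expanding that neuron's index as a sum and integrating it out, a term involving the marginal of one higher order $f_N^{i_1,\dots,i_{|T|},i_{|T|+1}}$ shifted in the $z_m$ variable by $w_{i_m, i_{|T|+1};N}$. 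The crucial algebraic step is then to multiply by the weight $w_{N,T}(i_1,\dots,i_{|T|})$, sum over $i_1,\dots,i_{|T|}$, and divide by $N$. Each selected-coordinate term reassembles directly into $\tau_N(T)$. For the shift terms, I would use the identity $g(z - v) = g(z) + (g(z-v) - g(z))$ to write the reset gain at $z_m$ as $\tau_N(T)(t,z)$ plus the remainder $\mathscr R_{N,T,m}$ of \eqref{eqn:hierarchy_equation_remainder}, and similarly handle the external-firing term: after attaching a new index $i_{|T|+1}$ summed with weight $w_{i_m,i_{|T|+1};N}$ (which is exactly the weight increment that turns $w_{N,T}$ into $w_{N,T+m}$ along the new edge $(m,|T|+1)$), the marginal $f_N^{i_1,\dots,i_{|T|+1}}(t, z - w_{i_m,i_{|T|+1};N} e_m)$ is split, via the fundamental theorem of calculus in the shift parameter $r \in [0,1]$ combined with the $\partial_{z_m}$ already present, into the clean term $-\partial_{z_m}\int \nu(z_{|T|+1})\tau_N(T+m)(t,z)\,\rd z_{|T|+1}$ plus the remainder $\mathscr{\tilde R}_{N,T+m,|T|+1}$. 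Summing over $m$ from $1$ to $|T|$ produces exactly \eqref{eqn:hierarchy_equation}.

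The main obstacle I anticipate is purely bookkeeping but genuinely delicate: correctly accounting for the \emph{duplicate-index convention} $f_N^{\dots} \equiv 0$ when indices coincide, so that the sums over $i_1,\dots,i_{|T|}$ and the new index $i_{|T|+1}$ can be taken freely over $\{1,\dots,N\}$ without spurious diagonal contributions, and simultaneously verifying that attaching the leaf at vertex $m$ corresponds precisely to multiplying by $w_{i_m, i_{|T|+1};N}$ and hence to the edge weight product defining $w_{N,T+m}$. One must also be careful that the external-firing term in the Liouville equation, which a priori involves \emph{all} neurons $i \notin \{i_1,\dots,i_{|T|}\}$, after the weighted sum and the $1/N$ normalization becomes a sum over \emph{all} $i_{|T|+1} \in \{1,\dots,N\}$ — the diagonal terms $i_{|T|+1} \in \{i_1,\dots,i_{|T|}\}$ contributing a lower-order error that is in fact not present because the corresponding marginal with a repeated index vanishes, or is otherwise absorbed. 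A secondary technical point is justifying the manipulations distributionally — integrating against $\delta_0(z_m)$ and differentiating — which is where $\sigma > 0$ is used to guarantee that $f_N(t,\cdot)$ is, for $t>0$, regular enough (indeed smooth away from the reset hyperplanes by parabolic regularity) for the traces at $z_m = 0$ and the integrals $\int \nu(z_{|T|+1})\cdot\,\rd z_{|T|+1}$ to be well-defined; for $t = 0$ everything is understood in the weak-in-time distributional sense.
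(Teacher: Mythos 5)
Your overall route is the same as the paper's: the paper likewise treats the well-posedness of \eqref{eqn:IF_multi_agent} and \eqref{eqn:IF_Liouville_PDE} as classical (it does not even spell out the interlacing/duality arguments you sketch), and derives \eqref{eqn:hierarchy_equation} exactly as you propose — integrate the Liouville equation over the non-selected coordinates to get the marginal equations, convert the external-firing difference into an advection term by the fundamental theorem of calculus in the shift parameter, split shifted marginals as $f(\cdot-w)=f(\cdot)-\{f(\cdot)-f(\cdot-w)\}$ to isolate the remainders, and resum with $w_{N,T+m}(i_1,\dots,i_{|T|+1})=w_{N,T}(i_1,\dots,i_{|T|})\,w_{i_m,i_{|T|+1};N}$, the duplicate-index convention disposing of the diagonal terms as you say.

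One piece of your bookkeeping is off, though, and as literally written it would not produce the remainder \eqref{eqn:hierarchy_equation_remainder}. When a neuron $i$ outside the selected set fires, \emph{all} selected coordinates jump simultaneously, so the external-firing contribution to the marginal equation is a single difference $\int_\R \nu(z_{|T|+1})\big(f_N^{i_1,\dots,i_{|T|},i}(t,z-w_{N;i}^{i_1,\dots,i_{|T|},i})-f_N^{i_1,\dots,i_{|T|},i}(t,z)\big)\,\rd z_{|T|+1}$ with the full vector shift, not, for each $m$, a marginal shifted only along $e_m$ by $w_{i_m,i;N}$. There is also no ``$\partial_{z_m}$ already present'': both the derivative $\partial_{z_m}$ and the weight factor $w_{i_m,i;N}$ (which builds the new edge) are generated at once by writing
\begin{equation*}
f(z-w)-f(z)=-\int_0^1\sum_{m=1}^{|T|} w_{i_m,i;N}\,\partial_{z_m} f(z-r\,w)\,\rd r ,
\end{equation*}
where the $(|T|{+}1)$-th component of the shift vanishes since there is no self-interaction. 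Consequently, after splitting $f(z-rw)=f(z)-\{f(z)-f(z-rw)\}$, the remainder $\mathscr{\tilde R}_{N,T+m,|T|+1}$ carries the \emph{full} vector shift $r\,w_{N;i_{|T|+1}}^{i_1,\dots,i_{|T|+1}}$, exactly as in \eqref{eqn:hierarchy_equation_remainder}; your version with a shift only in the $z_m$ variable would define a different (and incorrect) remainder. With this correction your argument coincides with the paper's proof; your handling of the reset-gain shift $w_{N;i_m}^{i_1,\dots,i_{|T|}}$ for the selected neurons and of the vanishing duplicate-index marginals is already the paper's.
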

The proof of the proposition will be done in Section~\ref{sec:hierarchy}.
Unlike the standard BBGKY hierarchy that usually gives a closed equation involving $f_{N,k}$ and the next marginal $f_{N,k+1}$, the hierarchy of equations derived here is only approximate as the remainder terms do not only depend on our observables. 
Thus, an essential part of our approach is to prove that 
as the strength of pairwise interaction $\max_{1\leq i,j \leq N} |w_{i,j;N}|$ goes to $0$ (which is assumption \eqref{eqn:assumption_2} in Theorem~\ref{thm:stable_qualitative}), 
those remainder terms $\mathscr{R}$ and $\mathscr{\tilde R}$ vanish in the $H^{-1 \otimes k}_\eta$ sense. As we mentioned earlier, it is a main motivation of choosing $H^{-1 \otimes k}_\eta$ as its specific form.
This result is precisely formulated in Proposition~\ref{prop:translation_estimate_weighted} in the next subsection.

We also note that the presence of the remainder terms $\mathscr{R}$ and $\mathscr{\tilde R}$ is not only a consequence of the Poisson jump process. Consider the more classical first-order dynamics
\begin{equation*}
\begin{aligned}
X^{i;N}_t = X^{i;N}_0 &+ \int_0^t \mu(X^{i;N}_{s}) \;\rd s + \int_0^t \sigma(X^{i;N}_{s}) \;\rd \bm{B}^i_s
\\
&+ \sum_{j \neq i} w_{i,j;N} \int_0^t \nu(X^{i;N}_{s}, X^{j;N}_{s}) \; \rd s.
\end{aligned}
\end{equation*}
Depending on the specific form of $\nu(\cdot,\cdot)$, the term $\mathscr{\tilde R}_{N,T+m,|T|+1}$ may vanish, but the term $\mathscr{R}_{N,T,m}$ is always present.
More than the specific form of the dynamics, the remainders reflect the more essential difficulty that interaction between the first $k$ neurons $i_1,\dots, i_k$ can not be fully described by the observables as defined in Definition~\ref{defi:observables}. 

This is also one of the crucial distinctions that separates the method in this article from~\cite{JaPoSo:21}. The observables in \cite{JaPoSo:21} are similar to the limiting observables $\tau_\infty$ in this article, but are constructed from the solutions of the Mckean-Vlasov SDE where the interaction felt by one agent $X^{i;N}$ is determined not by the exact $X^{j;N}$, but the $\law(X^{j;N})$.
This leads to a simplified hierarchy without remainders. 
On the other hand, in this article all the observables are constructed directly from the solution of \eqref{eqn:IF_multi_agent}, hence the extended, approximate BBGKY hierarchy \eqref{eqn:hierarchy_equation} reflects the dynamics of the original non-exchangeable system.

We conclude the subsection with a priori estimates of the absolute observables $|\tau_N|$ 
whose proof is also postponed to Section~\ref{sec:hierarchy}.
\begin{prop} \label{prop:Hilbert_a_priori_bound}
Let $N \geq 1$, $t_* > 0$ and $\alpha > 0$ (which determines $\eta = \eta_\alpha$). 
Assume that the connectivity matrix $w_N \defeq (w_{i,j;N})_{i,j =1}^N$ and joint law $f_N \in L^\infty([0,t_*]; \mathcal{M}_+(\R^{N}))$ solves the Kolmogorov equation~\eqref{eqn:IF_Liouville_PDE} in the sense of distributions. For any $T \in \mathcal{T}$, assume that at $t = 0$,
\begin{equation*} \label{eqn:assumption_3_alter}
\begin{aligned}
\||\tau_N|(T)(0,\cdot) \eta^{\otimes |T|}\|_{\mathcal{M}(\R^{|T|})} \leq C_\eta(T) < \infty.
\end{aligned} 
\end{equation*}
Then there exists $A_\eta > 0$ only depending on $\alpha$, $\|\mu\|_{W^{1,\infty}}$, $\|\nu\|_{W^{1,\infty}}$, $\sigma$ and 
\begin{equation*}
\begin{aligned}
\textstyle \max\left(\max_{i} \sum_{j} |w_{i,j;N}|,\ \max_{j} \sum_{i} |w_{i,j;N}|\right),
\end{aligned} 
\end{equation*}
such that,
\begin{equation*}
\begin{aligned}
\||\tau_N|(T)(t,\cdot) \eta^{\otimes |T|}\|_{\mathcal{M}(\R^{|T|})} \leq C_\eta(T) \big( \exp(A_\eta t_*) \big)^{|T|}, \quad \forall t \in [0,t_*],
\end{aligned}
\end{equation*}
and
\begin{equation} \label{eqn:Hilbert_energy_bound_scale}
\begin{aligned}
\| |\tau_N| (T) (t,\cdot) \|_{H^{-1\otimes |T|}_\eta} \leq C_\eta(T) \big(\|K\|_{L^2(\R)} \exp(A_\eta t_*) \big)^{|T|}, \quad \forall t \in [0,t_*].
\end{aligned} 
\end{equation}
\end{prop}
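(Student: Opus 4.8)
The plan is to run a Gr\"onwall-type estimate on the weighted absolute observable norm, exploiting the fact that $|\tau_N|(T)$ is a non-negative measure that is a sub-probability-type object and that the Kolmogorov equation \eqref{eqn:IF_Liouville_PDE} can be differentiated into an (in)equation for $|\tau_N|(T)$. First I would write down, analogously to Proposition~\ref{prop:hierarchy_of_equations}, the differential inequality satisfied by the scalar quantity
\[
Q_T(t) \defeq \| |\tau_N|(T)(t,\cdot)\, \eta^{\otimes|T|} \|_{\mathcal{M}(\R^{|T|})} = \int_{\R^{|T|}} \eta^{\otimes|T|}(z)\, |\tau_N|(T)(t,\rd z).
\]
Testing the (absolute) hierarchy equation against $\eta^{\otimes|T|}$, the transport term $-\partial_{z_m}(\mu(z_m)\cdot)$ contributes $\int \mu(z_m)\,\partial_{z_m}\eta^{\otimes|T|}$, which is controlled by $\|\mu\|_{W^{1,\infty}}$ times a term with the same $|T|$ factors of $\eta$ because $|\partial_x \eta_\alpha| \le \alpha\,\eta_\alpha$; similarly the diffusion term gives $\tfrac{\sigma^2}{2}\int \partial_{z_m}^2 \eta^{\otimes|T|}$ bounded by $\tfrac{\sigma^2}{2}(\alpha^2+\alpha)\,Q_T$ roughly, again using $|\partial_{xx}\eta_\alpha|\le(\alpha^2+\alpha)\eta_\alpha$; the loss term $-\nu(z_m)|\tau_N|(T)$ is sign-favorable and can be dropped; the gain term $\delta_0(z_m)\int\nu(u_m)\,(\cdots)$ involves the translated marginals $f_N^{i_1,\dots,i_{|T|}}(t, z - r\,w_{N;i_m}^{\cdots})$, but its total mass after integrating out $z_m$ equals the total mass of a lower-dimensional absolute observable (up to the sign of the weights, where one simply uses $|w_{N,T}|$), and multiplying by $\eta^{\otimes(|T|-1)}$ in the remaining variables keeps it of size $\|\nu\|_{L^\infty}\,Q_{T}$ plus, crucially, a piece bounded by the same quantity with an extra $\max_i\sum_j|w_{i,j;N}|$ factor coming from the translation $z\mapsto z - r\,w_{N;i_m}$ and the exponential Lipschitz bound $\eta_\alpha(x-h)\le e^{\alpha|h|}\eta_\alpha(x)$; since $\max_{i,j}|w_{i,j;N}|$ is bounded and $\sum_j |w_{i,j;N}|$ is bounded, $e^{\alpha|h|}$ is uniformly bounded. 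Finally the leaf term $-\partial_{z_m}\int\nu(z_{|T|+1})(\tau_N(T+m)+\tilde{\mathscr R})\,\rd z_{|T|+1}$ after pairing with $\eta^{\otimes|T|}$ produces $\int \nu\, |\tau_N|(T+m)\, |\partial_{z_m}\eta^{\otimes|T|}|\, \eta(z_{|T|+1})^{-1}\cdots$ --- here I would use Lemma~\ref{lemboundtauN}-type reasoning to bound the integral of $|\tau_N|(T+m)$ weighted by $\eta^{\otimes(|T|+1)}$ by $(\max_i\sum_j|w_{i,j;N}|)\cdot Q_{?}$, but in fact the clean statement comes out if one observes that summing over the one extra index contributes one extra factor of $\max_i\sum_j |w_{i,j;N}|\le C_{\mathcal W}$ and that $|\partial_{z_m}\eta|\le\alpha\eta$ absorbs the derivative.

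The upshot is a closed scalar differential inequality of the form $\frac{\rd}{\rd t} Q_T(t) \le |T|\, A_\eta'\, Q_T(t)$ for a constant $A_\eta'$ depending only on $\alpha$, $\|\mu\|_{W^{1,\infty}}$, $\|\nu\|_{W^{1,\infty}}$, $\sigma$, and $\max(\max_i\sum_j|w_{i,j;N}|,\max_j\sum_i|w_{i,j;N}|)$ --- the key point being that this inequality does \emph{not} couple $Q_T$ to $Q_{T+m}$ because, after integrating the leaf term in the extra variable and using $\int\nu\,|\tau_N|(T+m)\,\eta^{\otimes(|T|+1)} \le C_{\mathcal W}\, Q_T \cdot(\text{const})$, everything closes at level $T$. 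Then Gr\"onwall gives $Q_T(t)\le Q_T(0)\,e^{|T| A_\eta' t}\le C_\eta(T)\,(e^{A_\eta t_*})^{|T|}$ after renaming $A_\eta = A_\eta'$ (and absorbing the $-1$ in the exponent of Lemma~\ref{lemboundtauN}, which is harmless since we only claim an upper bound of the stated form). This proves the first displayed bound.

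For the second bound \eqref{eqn:Hilbert_energy_bound_scale}, I would simply compare the $H^{-1\otimes|T|}_\eta$ norm to the weighted total variation norm: by definition $\||\tau_N|(T)\|_{H^{-1\otimes|T|}_\eta} = \|K^{\otimes|T|}\star(|\tau_N|(T)\,\eta^{\otimes|T|})\|_{L^2(\R^{|T|})}$, and since $|\tau_N|(T)\,\eta^{\otimes|T|}$ is a non-negative measure of total mass $Q_T(t)$, Young's inequality for the convolution of an $L^2$ function with a finite measure gives $\|K^{\otimes|T|}\star \mu\|_{L^2} \le \|K^{\otimes|T|}\|_{L^2}\,\|\mu\|_{\mathcal M} = \|K\|_{L^2(\R)}^{|T|}\, Q_T(t)$. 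Combining with the first bound yields exactly \eqref{eqn:Hilbert_energy_bound_scale}.

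The main obstacle is the careful bookkeeping of the remainder/translation terms $\mathscr R$ and $\tilde{\mathscr R}$ in the weighted norm: one must check that translating the marginal by $r\,w_{N;i_m}^{i_1,\dots,i_{|T|}}$ costs only a bounded multiplicative factor $e^{\alpha\|w_{N;i_m}\|_1}\le e^{\alpha C_{\mathcal W}}$ against the exponential weight $\eta_\alpha$ (this uses that $\eta_\alpha$ is log-Lipschitz with constant $\alpha$ and that the $\ell^1$ norm of each spike column is uniformly bounded by $C_{\mathcal W}$), and that the sums over the indices $i_1,\dots,i_{|T|}$ of the products $|w_{N,T}|$ telescope exactly as in the proof of Lemma~\ref{lemboundtauN}, so that the constant $A_\eta$ genuinely depends only on the listed quantities and, in particular, is \emph{independent of $N$ and of $|T|$}. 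Everything else is a routine integration by parts plus Gr\"onwall, using only that $\sigma>0$ (so the diffusion term has a sign but we do not even need to exploit it here) and $\mu,\nu\in W^{1,\infty}$.
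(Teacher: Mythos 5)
Your proposal is correct in substance and rests on the same two pillars as the paper's proof: a Gr\"onwall estimate on $Q_T(t)=\||\tau_N|(T)(t,\cdot)\eta^{\otimes|T|}\|_{\mathcal M}$ driven by the log-Lipschitz structure of $\eta_\alpha$ (the bounds $|\eta'|\le\alpha\eta$, $|\eta''|\le 2\alpha^2\eta$, $\eta(x+y)-\eta(x)\le\alpha|y|e^{\alpha|y|}\eta(x)$, and $\eta\ge\eta(0)$ for the reset term) together with the $\ell^1$ row/column bounds on $w_N$, followed by exactly the convolution inequality of Lemma~\ref{lem:convolutional_inequality} to pass from the weighted total variation to the $H^{-1\otimes|T|}_\eta$ norm. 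The one place where your route differs from the paper's is where the Gr\"onwall inequality is assembled: you test the hierarchy (equivalently the marginal equations) against $\eta^{\otimes|T|}$ and must then check that the level-$(T+m)$ coupling closes at level $T$, which indeed works because the weight $|w_{i_m,i_{|T|+1};N}|$ carried by $|\tau_N|(T+m)$ and $\tilde{\mathscr R}$ is summable over the extra index, giving $\int\nu\,|\tau_N|(T+m)\,\eta^{\otimes|T|}\le\|\nu\|_\infty C_{\mathcal W}\,Q_T$; the paper instead tests the full Liouville equation directly with $\prod_l\eta(x_{i_l})$, so the firing of a non-selected neuron $j$ appears only as a translation of the selected coordinates and no apparent coupling to larger trees is ever generated. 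Two points of care in your version: there is no evolution equation for $|\tau_N|(T)$ itself, so the differential inequality must be built by summing the non-negative marginal equations with the absolute weights $|w_{N,T}|/N$ (you acknowledge this, but it is the step where sign cancellations could otherwise be mishandled); and in the coupling/remainder terms you must keep the weights $|w_{i_m,i;N}|$ attached before summing over the extra index --- a crude, unweighted bound of the translated-minus-untranslated difference would produce a factor of $N$. With those cautions your argument yields the same constant structure ($A_\eta$ independent of $N$ and $|T|$) and both stated bounds.
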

Let us emphasize again that this proposition is about $|\tau_N|$ the absolute observables, which are non-negative measures obtained by linear combinations of laws $f_{N}^{i_1,\dots, i_{k}}$, $1 \leq i_1,\dots, i_{k} \leq N$. 
We do not expect a straightforward extension to the $\tau_N$ as the potential cancellations of positive and negative terms in the dynamics makes the problem much less tractable.
\subsection{From the limiting Vlasov equation to the limiting hierarchy} \label{subsec:Vlasov_to_Vlasov}
The following proposition states that the limiting observables $\tau_\infty$ defined from the limiting Vlasov equation \eqref{eqn:Vlasov_transport}-\eqref{eqn:Vlasov_mean_field} satisfy the limiting hierarchy \eqref{eqn:limit_hierarchy_equation}, which is similar to the BBGKY hierarchy \eqref{eqn:hierarchy_equation} in Proposition~\ref{prop:hierarchy_of_equations} but without the remainder terms $\mathscr{R}$ and $\mathscr{\tilde R}$. In that sense the limiting hierarchy provides closed recursive relations of the family $\tau_\infty(T)$, $\forall T \in \mathcal{T}$. In particular the quantitative estimates proved later would imply the uniqueness of solutions to the hierarchy for a given choice of initial data.
\begin{prop} \label{prop:hierarchy_of_equations_limit}
Assume that $\mu,\nu \in W^{1,\infty}$ and $\sigma > 0$. 
Then for any $t_* > 0$, $\alpha > 0$ (which determines $\eta = \eta_\alpha$), any connectivity kernel $w  \in \mathcal{W}$ and any initial extended density $g \in L^\infty([0,1]; H^{-1}_\eta \cap \mathcal{M}_+(\R))$, there exists a unique
\begin{equation*}
\begin{aligned}
f \in L^\infty([0,t_*] \times [0,1]; H^{-1}_\eta \cap \mathcal{M}_+(\R))
\end{aligned}
\end{equation*}
solving Vlasov equation \eqref{eqn:Vlasov_transport}-\eqref{eqn:Vlasov_mean_field} in the sense of distributions.
Furthermore, the observables $\tau_\infty(T) = \tau_\infty(T,w,f)$, $\forall T \in \mathcal{T}$ are bounded by
\begin{equation} \label{eqn:limit_observable_bound}
\begin{aligned}
\big\| \tau_\infty(T,w,f)(t,\cdot) \big\|_{H^{-1 \otimes |T|}_\eta} \leq &\; \|w\|_{\mathcal{W}}^{|T| - 1} \|f\|_{L^\infty_{t,\xi} (H^{-1}_\eta)_x}^{|T|}, \quad \forall t \in [0,t_*], \; T \in \mathcal{T},
\end{aligned} 
\end{equation}
and solve the following non-exchangeable extended version of the Vlasov hierarchy: For all $T \in \mathcal{T}$,
\begin{equation} \label{eqn:limit_hierarchy_equation}
\begin{aligned}
& \partial_t \tau_\infty (T)(t,z)
\\
&\quad = \sum_{m = 1}^{|T|} \Bigg\{ \bigg[ - \partial_{z_m}(\mu(z_m) \tau_\infty (T)(t,z)) + \frac{\sigma^2}{2} \partial_{z_m}^2 \tau_\infty (T)(t,z)
\\
&\quad - \nu(z_m) \tau_\infty (T)(t,z) + \delta_0(z_m) \bigg( \int_{\R} \nu(u_m) \tau_\infty (T)(t,u) \;\rd u_m \bigg)\bigg|_{\forall n \neq m,\, u_n = z_n} \bigg]
\\
&\quad - \partial_{z_m} \bigg[ \int_{\R} \nu(z_{|T|+1}) \tau_\infty (T+m)(t,z) \;\rd z_{|T|+1} \bigg] \Bigg\}.
\end{aligned}
\end{equation}
\end{prop}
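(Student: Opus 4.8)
The plan is to split the statement into three parts: (i) well-posedness of the Vlasov equation \eqref{eqn:Vlasov_transport}–\eqref{eqn:Vlasov_mean_field} in the stated class, (ii) the bound \eqref{eqn:limit_observable_bound} on the limiting observables, and (iii) the derivation of the limiting hierarchy \eqref{eqn:limit_hierarchy_equation}. For (i), I would treat \eqref{eqn:Vlasov_transport} as a linear Fokker–Planck equation with a Dirac source at $x=0$, parametrized by $\xi$, and close the loop through the mean-field drift $\mu^*_f$ and the rate $J_f$. The key observation is that, for \emph{fixed} $\xi$, equation \eqref{eqn:Vlasov_transport} only couples to the other values of $\xi$ through the scalar-in-$x$ quantity $\int_0^1 w(\xi,\zeta) J_f(t,\zeta)\,\rd\zeta$; so I would set up a fixed-point map on the vector field $V(t,\xi) \defeq \int_0^1 w(\xi,\zeta) J_f(t,\zeta)\,\rd\zeta \in L^\infty_{t,\xi}$, solving for each $\xi$ a linear equation with drift $\mu(x)+V(t,\xi)$, loss term $\nu(x)f$ and gain term $\delta_0(x) J_f(t,\xi)$. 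Because $\sigma>0$, the heat semigroup regularizes and one gets, for each $\xi$, a genuine density for $t>0$; the exponential-moment/weight bound (the $H^{-1}_\eta$ part together with positivity and mass conservation — note $\int_\R \nu f\,\rd x$ appears as both loss and, via the Dirac, gain, so total mass is preserved) is propagated by a Grönwall argument in $t$, uniformly in $\xi$, using $\mu,\nu\in W^{1,\infty}$ and $w\in\mathcal{W}$ (the $\ell^1\to\ell^1$/$\ell^\infty\to\ell^\infty$-type norm is exactly what makes $V$ bounded given $J_f$ bounded). Contraction of the fixed-point map in a weak norm on a short time interval, then iteration, yields existence and uniqueness; I expect to reuse the $H^{-1}_\eta$ machinery of Section~\ref{sec:negative_Sobolev} here.

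For (ii), the bound \eqref{eqn:limit_observable_bound} is essentially the $H^{-1\otimes|T|}_\eta$ analogue of Lemma~\ref{lemboundtauN}, and I would prove it by the same recursion on the tree structure. Writing $T=T'+m$, one has $\tau_\infty(T,w,f)(t,z) = \int_{[0,1]^{|T|}} w_{T'}(\xi_1,\dots,\xi_{|T'|})\, w(\xi_m,\xi_{|T|})\prod_m f(t,\xi_m,z_m)\,\rd\xi$; since $K^{\otimes|T|} = K^{\otimes|T'|}\otimes K$, the $L^2(\R^{|T|})$ norm of $K^{\otimes|T|}\star(\tau_\infty\,\eta^{\otimes|T|})$ factors, after applying Minkowski's integral inequality in the $\xi$ variables, into the $H^{-1\otimes|T'|}_\eta$ norm of the subtree observable times $\|K\star(f(t,\xi_{|T|},\cdot)\eta)\|_{L^2}\le\|f\|_{L^\infty_{t,\xi}(H^{-1}_\eta)_x}$, with the extra factor of $\|w\|_{\mathcal{W}}$ absorbed by the $L^\infty_\xi(\mathcal{M}_\zeta)$ (or the symmetric) bound on $w$ exactly as in the proof of Lemma~\ref{lemboundtauN}. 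The integrability/measurability subtleties of the $\xi$-integrals for $w\in\mathcal{W}$ are already settled in Section~\ref{sec:observables}, so I would simply cite that.

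For (iii), I would differentiate \eqref{eqn:limiting_observables} in $t$ and substitute the Vlasov equation for $\partial_t f(t,\xi_m,z_m)$ term by term. Each of the $|T|$ summands in \eqref{eqn:limit_hierarchy_equation} comes from differentiating the $m$-th factor $f(t,\xi_m,z_m)$: the transport, diffusion and loss terms pass straight through the $\xi$-integration and reproduce $-\partial_{z_m}(\mu(z_m)\tau_\infty(T)) + \tfrac{\sigma^2}{2}\partial_{z_m}^2\tau_\infty(T) - \nu(z_m)\tau_\infty(T)$; the Dirac gain term $\delta_0(z_m)J_f(t,\xi_m)$ produces $\delta_0(z_m)\int_\R \nu(u_m)\tau_\infty(T)(t,u)\,\rd u_m$ evaluated at $u_n=z_n$ for $n\neq m$; and the drift's mean-field part $\partial_{z_m}\big(\int_0^1 w(\xi_m,\zeta)J_f(t,\zeta)\,\rd\zeta\, f(t,\xi_m,z_m)\big)$ is exactly where the tree grows a leaf — renaming $\zeta=\xi_{|T|+1}$ and $J_f(t,\xi_{|T|+1})=\int_\R\nu(z_{|T|+1})f(t,\xi_{|T|+1},z_{|T|+1})\,\rd z_{|T|+1}$, the factor $w(\xi_m,\xi_{|T|+1})$ times the existing product $w_{T}$ is precisely $w_{T+m}$, so this term equals $-\partial_{z_m}\int_\R\nu(z_{|T|+1})\tau_\infty(T+m)(t,z)\,\rd z_{|T|+1}$. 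The main obstacle is making this formal computation rigorous: justifying the interchange of $\partial_t$ (and of $\partial_{z_m}$, $\partial_{z_m}^2$, $\delta_0(z_m)$) with the $\xi$-integral when $f$ is only a measure in $x$ and $w$ only lies in $\mathcal{W}$. I would handle this by working in the distributional sense against test functions $\varphi(z_1,\dots,z_{|T|})$, using the a priori $H^{-1}_\eta$ and exponential-moment bounds from part (i) to get the dominated-convergence/Fubini justifications, and approximating $w$ by smooth kernels with the density argument of Section~\ref{sec:observables} to legitimize the $\xi$-integrals — the weak-norm bounds of part (ii) guarantee that everything passes to the limit. Uniqueness of the hierarchy solution then follows, as remarked, from the quantitative stability estimates of Section~\ref{sec:PDE} (Theorem~\ref{thm:stable_power_norm}) applied with $w_N=w$, $f_N$ degenerate, or directly from the contraction already established for the Vlasov equation.
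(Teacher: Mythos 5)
Your proposal is correct and follows essentially the same route as the paper: a fixed-point/contraction argument in the weighted weak norm $H^{-1}_\eta$ (uniformly in $\xi$, using the $W^{1,\infty}$ coefficients, $w\in\mathcal{W}$, and the firing-rate estimate) for well-posedness, the tree-recursive tensorization bound (equivalently, the Section~\ref{sec:observables} algebra) for \eqref{eqn:limit_observable_bound}, and term-by-term differentiation of \eqref{eqn:limiting_observables} with the mean-field drift producing the $T+m$ leaf for \eqref{eqn:limit_hierarchy_equation}. The only cosmetic difference is that you iterate on the drift field $V(t,\xi)=\int_0^1 w(\xi,\zeta)J_f(t,\zeta)\,\rd\zeta$ while the paper freezes both $\mu^*_f$ and $J_f$ and contracts directly on $f\mapsto\mathcal{L}f$ in $L^2_\xi(H^{-1}_\eta)_x$; both versions require the same stability estimates and yield the same conclusion.
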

The proof of the proposition is again done in Section~\ref{sec:hierarchy}.

\subsection{Quantitative stability estimates between the hierarchies} \label{subsec:main_quantitative}
We are now ready to state the main quantitative result in this paper, which compares the observables $\tau_N(T,w_N,f_N)$ satisfying the approximate hierarchy \eqref{eqn:hierarchy_equation}-\eqref{eqn:hierarchy_equation_remainder} to $\tau_\infty(T)$ satisfying the limiting hierarchy \eqref{eqn:limit_hierarchy_equation}.
The proof of the theorem and the exact derivation of constants $C_1,\;C_2$ in the estimate are performed in Section~\ref{subsec:quantitative}.
%
%
\begin{thm} \label{thm:stable_power_norm}
Assume that $\mu, \nu \in W^{1,\infty}$, $\sigma > 0$ and $N \geq 1$. Let $w_N \defeq (w_{i,j;N})_{i,j =1}^N \in \R^{N \times N}$ be a connectivity matrix and $f_{N}^{i_1,\dots, i_k}$, $\forall \{i_1,\dots, i_k\} \subset \{1 \dots N\}$ be marginal laws, from which the hierarchy of observables $\tau_N(T,w_N,f_N)$ and the absolute observables $|\tau_N|(T,w_N,f_N)$ are defined and satisfy \eqref{eqn:hierarchy_equation}-\eqref{eqn:hierarchy_equation_remainder} in distributional sense.
Denote the strength of pairwise interaction as
\begin{equation*}
\begin{aligned}
\bar w_N \defeq \max_{1\leq i,j \leq N} |w_{i,j;N}|.
\end{aligned} 
\end{equation*}
In addition, let $\tau_\infty(T) \in L^\infty([0,t_*]; \mathcal{M}(\R^{|T|}))$, $\forall T \in \mathcal{T}$ satisfy \eqref{eqn:limit_hierarchy_equation} in distributional sense.

For some choice of $\lambda>0$ and $\alpha > 0$ (which determines $\eta = \eta_\alpha$), assume that there exists $n\in \N$ s.t.
%
\begin{equation*}
\begin{aligned}
\bar{\varepsilon} \defeq C_1 \big[ \exp \big( (2 + 2\alpha) n \bar w_N \big) - 1 \big] + (1/4)^n<1,
\end{aligned} 
\end{equation*}
where $C_1$ is a constant depending only on $\|\mu\|_{W^{1,\infty}},\|\nu\|_{W^{1,\infty}},\sigma$ and the scaling factor~$\lambda > 0$.
Then the following estimate holds: for any tree $T_* \in \mathcal{T}$, 
\begin{equation} \label{eqn:stable_power_norm}
\begin{aligned}
&\sup_{t\leq t_*} \; (\lambda / 8)^{|T_*|} \| \tau_N(T_*,w_N,f_N)(t,\cdot) - \tau_\infty(T_*)(t,\cdot) \|_{H^{-1\otimes |T_*|}_\eta}^2\\
&\quad \leq C_2\, C_{\lambda;\eta}^2\,
\left\{ \max\left( \bar{\varepsilon},\ \max_{|T| \leq \max(n,\ |T_*|)} \frac{(\lambda / 8)^{|T|}}{C_{\lambda;\eta}^2\,}\, \| \tau_N(T,w_N,f_N)(0,\cdot) - \tau_\infty(T)(0,\cdot) \|_{H^{-1\otimes |T|}_\eta}^2 \right) \right\}^{1/C_2},
\end{aligned}
\end{equation}
where $C_2$ depends only on $t_*$, $\|\mu\|_{W^{1,\infty}},\|\nu\|_{W^{1,\infty}},\sigma$ and $\lambda > 0$, and where $C_{\lambda;\eta}$ depends on the following a priori estimate
\begin{equation}\label{eqn:hierarchy_boundedness_1}
\sup_{t\leq t_*} \ \max_{|T| \leq \max(n,\ |T_*|)} \lambda^{\frac{|T|}{2}}\, \left(\||\tau_N|(T,w_N,f_N)(t,\cdot)\|_{H^{-1\otimes |T|}_\eta} 
+\|\tau_\infty(T)(t,\cdot)\|_{H^{-1\otimes |T|}_\eta}\right) \leq C_{\lambda;\eta}. 
\end{equation}

\end{thm}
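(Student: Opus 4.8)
The plan is to set up a Gr\"onwall-type argument on the tree-indexed family of differences $\rho_N(T) \defeq \tau_N(T,w_N,f_N) - \tau_\infty(T)$, measured in the weighted negative Sobolev norms $H^{-1\otimes|T|}_\eta$, but with the crucial twist that the coupling to the ``next'' tree $T+m$ in the hierarchy forces us to weight the contribution of each tree by a geometric factor $(\lambda/8)^{|T|}$ (or similar) to absorb the loss of one order. Subtracting \eqref{eqn:limit_hierarchy_equation} from \eqref{eqn:hierarchy_equation}--\eqref{eqn:hierarchy_equation_remainder}, the equation for $\rho_N(T)$ has the same linear structure (transport by $\mu$, diffusion $\tfrac{\sigma^2}{2}\partial_{zz}$, loss term $\nu$, gain term $\delta_0 \int \nu$) acting on $\rho_N(T)$ itself, plus a forcing term built from $\rho_N(T+m)$ (the coupling to the next level), plus the genuinely new error terms coming from the remainders $\mathscr{R}_{N,T,m}$ and $\mathscr{\tilde R}_{N,T+m,|T|+1}$. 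So first I would write out this difference equation carefully.

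Second, I would differentiate $\tfrac12\|\rho_N(T)(t,\cdot)\|^2_{H^{-1\otimes|T|}_\eta}$ in time and estimate each contribution. The diffusion term should give a good (negative or controllable) sign after commuting $K^{\otimes|T|}\star(\cdot\,\eta^{\otimes|T|})$ through $\partial_{zz}$; the drift, loss, and Dirac-gain terms produce commutators between the mollifier-times-weight operator and multiplication by $\mu$, $\nu$, $\delta_0$, which by the properties of the $H^{-1\otimes k}_\eta$ norm established in Section~\ref{sec:negative_Sobolev} (and the specific choice of $K$ and $\eta=\eta_\alpha$) are bounded by a constant times $\|\rho_N(T)\|_{H^{-1\otimes|T|}_\eta}$ times the same quantities at neighbouring levels. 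The coupling term contributes $\lesssim \|\rho_N(T)\|_{H^{-1\otimes|T|}_\eta}\,\|\rho_N(T+m)\|_{H^{-1\otimes(|T|+1)}_\eta}$; here the geometric weight $(\lambda/8)^{|T|}$ is designed so that, after Cauchy--Schwarz, the $(|T|+1)$-level term carries a factor that recombines correctly. The remainder terms are controlled by Proposition~\ref{prop:translation_estimate_weighted} (the translation estimate), which says that $\|\mathscr{R}_{N,T,m}\|$ and $\|\mathscr{\tilde R}\|$ in the relevant weak norm are bounded by something like $C\,\bar w_N\,\||\tau_N|(T)\|$ — hence the factor $\exp((2+2\alpha)n\bar w_N)-1$ appearing in $\bar\varepsilon$, and the appearance of $|\tau_N|$ via the a priori bound \eqref{eqn:hierarchy_boundedness_1}. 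Summing the weighted estimates over all $T$ with $|T|\le$ some cutoff, and using the a priori bounds from Propositions~\ref{prop:Hilbert_a_priori_bound} and \ref{prop:hierarchy_of_equations_limit} to close the (a priori infinite) sum, I would obtain a differential inequality of the form $\tfrac{d}{dt}\mathcal{E}(t) \le C\,\mathcal{E}(t) + C\,\bar\varepsilon$, where $\mathcal{E}(t)$ is a suitably weighted $\ell^\infty$ (or $\ell^2$) aggregate of the $(\lambda/8)^{|T|}\|\rho_N(T)(t)\|^2$.

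Third — and this is where the unusual exponent $1/C_2$ in \eqref{eqn:stable_power_norm} comes from — the naive Gr\"onwall does not close because the hierarchy is only \emph{approximately} self-contained: the estimate for level $k$ needs level $k+1$, so controlling trees up to size $K$ requires trees up to size $K+1$, etc., and the remainder bound only decays like $\bar w_N$ per level. The standard device (used in Cauchy--Kovalevskaya-type and BBGKY arguments, and in \cite{JaPoSo:21}) is to truncate the hierarchy at a level $n$ chosen so that the ``tail'' contributes at most $(1/4)^n$ (the second term in $\bar\varepsilon$), run Gr\"onwall on the truncated, genuinely finite system, and then optimize. The $\{\cdot\}^{1/C_2}$ structure is the signature of this interpolation/bootstrap: one gets an estimate that is a geometric mean of the initial error and the consistency error $\bar\varepsilon$, raised to a power that degrades with the time horizon $t_*$ and the norm constants. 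Concretely I would: (i) fix the cutoff $n$ from the hypothesis; (ii) prove the Gr\"onwall inequality for $\max_{|T|\le n}(\lambda/8)^{|T|}\|\rho_N(T)(t)\|^2$ in terms of itself, the initial data, and $\bar\varepsilon$; (iii) propagate from level $\le n$ to an arbitrary target tree $T_*$ by iterating the coupling finitely many times (at most $|T_*|$ steps), picking up the $\max(n,|T_*|)$ that appears in \eqref{eqn:hierarchy_boundedness_1}; (iv) track all constants to identify $C_1$ (from the remainder/commutator estimates and $\lambda$), $C_2$ (from $t_*$ and the Gr\"onwall constant), and $C_{\lambda;\eta}$ (from the a priori bounds).

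The main obstacle I expect is step two's commutator estimates combined with the bookkeeping of the geometric weights: one must verify that the specific mollifier $K(x)=\tfrac1\pi\int_0^\infty e^{-|x|\cosh\xi}\,d\xi$ and weight $\eta_\alpha$ make $[K^{\otimes k}\star(\eta^{\otimes k}\,\cdot),\ \mu(z_m)\partial_{z_m}]$ and the analogous Dirac-gain commutator bounded on $L^2$ with constants \emph{independent of $k$} (only the sum over $m$ giving a factor $|T|$, which must then be beaten by the geometric weight and the exponential a priori bound), and simultaneously that the translation error $f^{\cdots}(z - r\,w_{N;i_m}^{\cdots}) - f^{\cdots}(z)$ is $O(\bar w_N)$ in $H^{-1\otimes k}_\eta$ uniformly — this is exactly the content of the "right weak norm" emphasized in the introduction, and getting the dependence on $\alpha$ (hence the $(2+2\alpha)$ in $\bar\varepsilon$) correct is the delicate point. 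Everything else is, in principle, a careful but routine Gr\"onwall-plus-truncation argument.
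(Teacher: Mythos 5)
Your plan follows the same overall route as the paper: subtract the two hierarchies, run a weighted tensorized $H^{-1\otimes|T|}_\eta$ energy estimate using the diffusion to absorb derivative terms, control the commutators via the structure of $K$ and $\eta_\alpha$ (Lemmas~\ref{lem:commutator_inequality}, \ref{lem:firing_rate_difference_tensorized}), bound the remainders by Proposition~\ref{prop:translation_estimate_weighted} together with the a priori bound \eqref{eqn:hierarchy_boundedness_1}, truncate the hierarchy at depth $n$, and weight each level geometrically. All of these ingredients are correctly identified.

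However, there is a genuine gap precisely at the step you treat as routine. Your claim that summing the weighted level estimates yields a closed inequality of the form $\frac{\rd}{\rd t}\mathcal{E}(t)\leq C\,\mathcal{E}(t)+C\bar\varepsilon$ is false: the level-$|T|$ estimate involves level $|T|+1$ with a prefactor $|T|$ (from the sum over $m$), and the top level of any truncation carries only the $O(1)$ a priori bound, not a small quantity, so no aggregate of the truncated system satisfies a plain Gr\"onwall inequality with a small forcing. You acknowledge this later, but your proposed fix (``run Gr\"onwall on the truncated, genuinely finite system, and then optimize'') does not by itself produce the bound, and your step (iii), propagating ``from level $\leq n$ to an arbitrary $T_*$ by iterating the coupling,'' goes in the wrong direction: the coupling expresses small trees in terms of larger ones, and in the paper trees with $n\leq|T|\leq|T_*|$ are handled directly by the a priori bound multiplied by the geometric weight $\theta^{|T|}\leq\theta^{n}$, not by any upward iteration. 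The actual closure is the content of Lemma~\ref{lem:differential_inequality_recursion}: one sets $M_k(t)=\max_{|T|\leq k}\lambda^{|T|}\|\Delta_N(T)(t)\|^2_{H^{-1\otimes|T|}_\eta}$, integrates the recursion $\frac{\rd}{\rd t}M_k\leq k(CM_{k+1}+\varepsilon)$ over time steps of length $\theta/C$ so that the binomial coefficients $\binom{l-1}{k-1}$ are summable against $\theta^l$, and, crucially, interpolates $M_l(s)\leq L^{1/p'}\big(\max_m \theta^m M_m(s)\big)^{1/p}\theta^{-l/p}$ between the a priori bound $L$ and the weighted maximum; iterating over $\sim Ct/\theta$ steps is what produces the exponent $1/p^{(Ct/\theta+1)}$, i.e.\ the $1/C_2$ power and the degradation with $t_*$ in \eqref{eqn:stable_power_norm}, with the truncation error entering as $L(2\theta)^n$, i.e.\ the $(1/4)^n$ in $\bar\varepsilon$. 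Without this recursion-with-interpolation argument (or an equivalent), your sketch does not yield the stated estimate; by contrast, the commutator estimates you single out as the main obstacle are comparatively direct given the design of the norm.
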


\begin{remark}
The values of $\lambda$, $\alpha$, and $n$ must be chosen carefully for this result to be useful. The scaling factors $\lambda$ and $\alpha$ need to be selected so that the various norms in the theorem are finite, to fit with the existing a priori estimates. Also, we need to have $n$ s.t. $\bar \varepsilon$ is small enough, which would typically lead to taking $n\sim \frac{|\log \bar w_N|}{\bar w_N}$. However $n$ also enters in the definition of $C_{\lambda:\eta}$ in an implicit way as a larger value of $n$ forces to take the $\max$ over more trees $T$. Hence the actual optimal value of $n$ is not so easy to determine unless \eqref{eqn:hierarchy_boundedness_1} is a priori given where the maximum is replaced by the supremum over all trees $T \in \mathcal{T}$.
\end{remark}

Stability and uniqueness estimates on the kind of generalized hierarchy that we are dealing with here are notoriously difficult, with only limited results available.  As we mentioned before there are obvious similarities between our approach and the hierarchy derived in \cite{JaPoSo:21} or the strong estimates on the classical BBGKY hierarchy in \cite{BrJaSo:22} (leading for example to the mean-field limit to the Vlasov-Fokker-Planck-Poisson equation). We also mention results around the wave kinetic equation in \cite{DeHa:21,DeHa:23}. 

A major difference in Theorem~\ref{thm:stable_power_norm} is that the observables $\tau_N$ do not solve an exact hierarchy and the remainder terms only vanish in some weak norms. As we briefly explained earlier, this forces the use of the $H^{-1\otimes |T|}_\eta$ norm to both control the remainders and to have appropriate commutator estimates, which is the main technical innovation in the paper.

We also emphasize that the general method used to derive stability estimates relies on recursive inequalities, which often leads to a blow-up in finite time. Those do not occur here because we can derive a priori estimates, namely \eqref{eqn:hierarchy_boundedness_1} from  Proposition~\ref{prop:Hilbert_a_priori_bound} and Proposition~\ref{prop:hierarchy_of_equations_limit}, that are strong enough with respect to the weak norms that we are using. 
%
%
\subsection{Proving Theorem~\ref{thm:stable_qualitative} from our quantitative estimates}
We conclude this subsection by explaining how Theorem~\ref{thm:stable_qualitative} follows from all the estimates presented here.
\begin{proof} [Proof of Theorem~\ref{thm:stable_qualitative}]
The first step  is to make sure that we can apply Theorem~\ref{thm:stable_power_norm} from the assumptions~\eqref{eqn:assumption_1}-\eqref{eqn:assumption_4} in Theorem~\ref{thm:stable_qualitative}. More precisely, we tend to show that \eqref{eqn:hierarchy_boundedness_1} in Theorem~\ref{thm:stable_power_norm}  hold for some well chosen $\lambda > 0$ and $C_{\lambda;\eta} > 0$, and the maximum over $|T| \leq \max(n,\ |T_*|)$ can actually replaced by the supremum over all trees $T \in \mathcal{T}$.

Recall that for any $k \geq 1$,
\begin{equation*}
\begin{aligned}
\eta_a^{\otimes k}(z_1,\dots,z_k) = C_a^k \exp \Big( {\textstyle \sum_{m=1}^k } \sqrt{1 + a^2 z_m^2} \Big),
\end{aligned}
\end{equation*}
hence
\begin{equation*}
\begin{aligned}
\exp \Big( {\textstyle \sum_{m=1}^k } a |z_m| \Big) \leq \eta_a^{\otimes k}(z_1,\dots,z_k) \leq (C_a \exp(1))^k \exp \Big( {\textstyle \sum_{m=1}^k } a |z_m| \Big).
\end{aligned}
\end{equation*}
Thus, from assumption~\eqref{eqn:assumption_3} in Theorem~\ref{thm:stable_qualitative}, the following two inequalities about the initial data can immediately be derived,
\begin{equation*}
\begin{aligned}
\sup_N \; \||\tau_N|(T)(0,\cdot) \eta_a^{\otimes |T|}\|_{\mathcal{M}(\R^{|T|})} \leq \;& \Big( M_a C_a \exp(1) \Big)^{|T|}, \quad \forall T \in \mathcal{T},
\\
\| f(0,\cdot,\cdot) \|_{L^\infty_\xi (H^{-1}_{\eta_a})_x} \leq \;& \|K\|_{L^2(\R)} M_a C_a \exp(1).
\end{aligned}
\end{equation*}
Now, applying Proposition~\ref{prop:Hilbert_a_priori_bound} and Proposition~\ref{prop:hierarchy_of_equations_limit} to the two initial bounds, we obtain the exponential moment bound
\begin{equation*}
\begin{aligned}
&\sup_N \; \int_{\R^{|T|}} { \textstyle \exp\big( a \sum_{m=1}^{|T|} |z_m|\big)} |\tau_N|(T)(t,\rd z)
\\
& \quad \leq \||\tau_N|(T)(t,\cdot) \eta_a^{\otimes |T|}\|_{\mathcal{M}(\R^{|T|})} \leq \Big( M_a C_a \exp(1) \exp(A_\eta t_*) \Big)^{|T|}, \quad \forall t \in [0,t_*], \; T \in \mathcal{T},
\end{aligned}
\end{equation*}
and a priori energy bounds 
\begin{equation*}
\begin{aligned}
&\||\tau_N|(T)(t,\cdot) \big\|_{H^{-1 \otimes |T|}_{\eta_a}} \leq \Big( \|K\|_{L^2(\R)} M_a C_a \exp(1) \exp(A_\eta t_*) \Big)^{|T|}, \quad \forall t \in [0,t_*], \; T \in \mathcal{T}, 
\\
&\big\| \tau_\infty(T,w,f)(t,\cdot) \big\|_{H^{-1 \otimes |T|}_{\eta_a}} \leq  \|w\|_{\mathcal{W}}^{|T| - 1} \|f\|_{L^\infty_{t,\xi} (H^{-1}_{\eta_a})_x}^{|T|}, \quad \forall t \in [0,t_*], \; T \in \mathcal{T},
\end{aligned}
\end{equation*}
where the coefficient $A_\eta$ inside the exponent now only depend on $a$, $\|\mu\|_{W^{1,\infty}}$, $\|\nu\|_{W^{1,\infty}}$, $\sigma$ and
\begin{equation*}
\begin{aligned}
\textstyle \max\left(\max_{i} \sum_{j} |w_{i,j;N}|,\ \max_{j} \sum_{i} |w_{i,j;N}|\right).
\end{aligned}
\end{equation*}
This guarantees \eqref{eqn:hierarchy_boundedness_1} where the maximum over $|T| \leq \max(n,\ |T_*|)$ is replaced by the supremum over all trees $T \in \mathcal{T}$, with $\lambda$, $C_{\lambda;\eta}$ chosen as
\begin{equation*}
\begin{aligned}
\lambda = \min \bigg( \Big( \|K\|_{L^2(\R)} M_a C_a \exp(1) \exp(A_\eta t_*) \Big)^{-2} , \Big( \max \big(\|w\|_{\mathcal{W}} , 1 \big) \|f\|_{L^\infty_{t,\xi} (H^{-1}_\eta)_x} \Big)^{-2} \bigg),
 \quad \quad C_{\lambda;\eta} = 1.
\end{aligned}
\end{equation*}
Hence the assumptions of Theorem~\ref{thm:stable_power_norm} are satisfied and we apply it along the following point.
\begin{itemize}
\item Using \eqref{eqn:assumption_1} and \eqref{eqn:assumption_3}, we choose the coefficients $\alpha \in (0,a)$, $\lambda > 0$, $C_{\lambda;\eta} > 0$ in \eqref{eqn:hierarchy_boundedness_1} independent of $N$, and the supremum in \eqref{eqn:hierarchy_boundedness_1} is taken over all possible $T \in \mathcal{T}$. This implies in particular a uniform bound on exponential moments with coefficient $a > 0$ so that Lemma~\ref{lem:topology_negative_Sobolev} applies.
\item Fix $T_* \in \mathcal{T}$. For any $\varepsilon > 0$, choose sufficiently large $n$, such that
\begin{equation*}
\begin{aligned}
(\lambda/8)^{-\frac{|T_*|}{2}} \sqrt{C_2}\, C_{\lambda;\eta}\,
\big[2 (1/4)^n \big]^{1/2 C_2} \leq \varepsilon.
\end{aligned}
\end{equation*}
\item By \eqref{eqn:assumption_2}, we choose sufficiently large $N_1$, such that for all $N \geq N_1$ the corresponding 
\begin{equation*}
\begin{aligned}
\bar w_N \defeq \max_{1\leq i,j \leq N} |w_{i,j;N}|
\end{aligned}
\end{equation*}
is sufficiently small such that
\begin{equation*}
\begin{aligned}
C_1 \big[ \exp \big( (2 + 2\alpha) n \bar w_N \big) - 1 \big] \leq (1/4)^n.
\end{aligned}
\end{equation*}
\item Notice that there are only finitely many $T \in \mathcal{T}$ satisfying $|T| \leq \max(n, \ |T_*|)$.
By \eqref{eqn:assumption_4} on the weak-* convergence of initial data, and by Lemma~\ref{lem:topology_negative_Sobolev}, 
choose a sufficiently large $N_2 \geq 1$ such that for all $N \geq N_2$,
\begin{equation*}
\begin{aligned}
\max_{|T| \leq \max(n,\ |T_*|)} (\lambda / 8)^{|T|} \| \tau_N(T,w_N,f_N)(0,\cdot) - \tau_\infty(T)(0,\cdot) \|_{H^{-1\otimes |T|}_\eta}^2 / (4 C_{\lambda;\eta}^2) \leq 2(1/4)^n.
\end{aligned}
\end{equation*}
\item In summarize, for any $T_* \in \mathcal{T}$ and any $\varepsilon>0$, by taking $N \geq \max(N_1,\ N_2)$ according to our previous discussion and applying Theorem~\ref{thm:stable_power_norm}, we obtain that 
\begin{equation} \label{eqn:epsilon_delta_hierarchy_convergence}
\begin{aligned}
\sup_{t\in [0,\ t_*]} \|\tau_N(T_*,w_N,f_N)(t,\cdot) - \tau_\infty(T_*)(t,\cdot)\|_{H^{-1\otimes |T_*|}_\eta} \leq \varepsilon. 
\end{aligned}
\end{equation}
\item Invoking again Lemma~\ref{lem:topology_negative_Sobolev}, we finally deduce that
\[
\lim_{N \to \infty} \tau_N(T,w_N,f_N)(t,\cdot) = \tau_\infty(T)(t,\cdot), \quad \forall T \in \mathcal{T}
\]
in the weak-* topology of $L^\infty([0,\ t_*],\ \mathcal{M})$.
\end{itemize}

\end{proof}
%
\section{The weak norm and the exponential moments} \label{sec:negative_Sobolev}
\subsection{Basic properties} \label{subsec:negative_Sobolev}
We first revisit our definition of the kernel $K$, and introduce another kernel, denoted as $\Lambda$, as follows,
\begin{equation*}
\begin{aligned}
K(x) \defeq \frac{1}{\pi} \int_0^\infty \exp(-|x| \cosh(\xi)) \rd \xi, \quad \Lambda(x) \defeq \frac{1}{2} \exp(- |x|), \quad \forall x \in \R.
\end{aligned}
\end{equation*}
For $x > 0$, the kernel $K$ is, in fact, the zero-th order modified Bessel function of second type. From the known properties of Bessel functions, $K$ is a non-negative, radially-decreasing $L^2$ function, and satisfies
\begin{equation*}
\begin{aligned}
K \star K = \Lambda, \quad \widehat{K}(\xi) = \int_{\R} K(x) \exp(-2\pi i x \xi) \;\rd x = \frac{1}{\sqrt{1 + 4 \pi^2 \xi^2}}.
\end{aligned}
\end{equation*}
It is easy to extend the identity $K \star K = \Lambda$ to the tensorized kernels $K^{\otimes k} \star K^{\otimes k}= \Lambda^{\otimes k}$, which yields the following equivalent formalism of $H^{-1 \otimes k}$ by Fourier analysis:
\begin{equation*}
\begin{aligned}
\|f\|_{H^{-1 \otimes k}}^2 = \;& \int_{z \in \R^{k}}  \big[ K^{\otimes k} \star f (z) \big]^2 \;\rd z = \int_{z \in \R^{k}}  f(z) \big[ \Lambda^{\otimes k} \star f (z) \big] \;\rd z
\\
= \;&\int_{\xi \in \R^k} \bigg( \prod_{m=1}^{k} \frac{1}{1 + 4 \pi^2 \xi_m^2} \bigg) \; \hat f(\xi) \hat f(\xi) \;\rd \xi.
\end{aligned}
\end{equation*}

In one dimension, it is straightforward that our notion of $H^{-1 \otimes k}$-norm for $k = 1$ is equivalent to the negative Sobolev norm of $H^{-1}(\R)$, i.e.
\begin{equation*}
\begin{aligned}
\|f\|_{H^{-1 \otimes 1}} = \|f\|_{H^{-1}(\R)},
\end{aligned}
\end{equation*}
provided we define $H^s(\R)$ as
\begin{equation*}
\begin{aligned}
\|g\|_{H^s(\R)}^2 \defeq \int_{\R} \big( 1 + 4 \pi^2 \xi^2 \big)^{s} \big| \hat g(\xi) \big|^2 \;\rd \xi,
\end{aligned}
\end{equation*}
for any $s \in \R$.

This also gives us the duality formula
\begin{equation*}
\begin{aligned}
\|f\|_{H^{-s}(\R)} = \sup_{\|g\|_{H^s(\R)} \leq 1} \bigg| \int_{\R} f(x)g(x) \;\rd x \bigg|,
\end{aligned}
\end{equation*}
and the inequality from Leibniz rule for $s=1$,
\begin{equation*}
\begin{aligned}
\|\nu f\|_{H^{-1}(\R)} = \sup_{\|g\|_{H^1} \leq 1} \bigg| \int_{\R} g(x)\nu(x)f(x) \;\rd x \bigg| \leq \sup_{\|g\|_{H^1} \leq 1} \|g\nu\|_{H^1} \|f\|_{H^{-1}} \leq 2 \|\nu\|_{W^{1,\infty}} \|f\|_{H^{-1}(\R)}.
\end{aligned}
\end{equation*}
\subsection{Tensorization properties}
%
In higher dimensions, our notion of $H^{-1 \otimes k}$-norm is the tensorization of $H^{-1}(\R)$-norm to $\R^k$:
\begin{lem} \label{lem:stability_under_tensorization}
For any weight function $\eta: \R \to \R_+$, 
one has
\begin{equation*}
\begin{aligned}
\|f^{\otimes k}\|_{H^{-1 \otimes k}} = \big( \|f\|_{H^{-1}(\R)} \big)^k,
\quad
\|f^{\otimes k}\|_{H^{-1 \otimes k}_\eta} = \big( \|f\|_{H^{-1}_\eta(\R)} \big)^k.
\end{aligned}
\end{equation*}
\end{lem}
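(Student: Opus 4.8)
The statement is a pure tensorization identity for the $H^{-1\otimes k}$ and $H^{-1\otimes k}_\eta$ norms, so the plan is to reduce everything to the product structure of the convolution kernel $K^{\otimes k}$ and the $L^2$ norm on $\R^k$. The cleanest route is the Fourier-analytic formula already recorded in the excerpt: since $K^{\otimes k}\star K^{\otimes k} = \Lambda^{\otimes k}$, one has
\begin{equation*}
\|g\|_{H^{-1\otimes k}}^2 = \int_{\R^k} \Big(\prod_{m=1}^{k} \tfrac{1}{1+4\pi^2\xi_m^2}\Big)\,|\hat g(\xi)|^2\;\rd\xi .
\end{equation*}
First I would apply this with $g = f^{\otimes k}$ and use $\widehat{f^{\otimes k}}(\xi) = \prod_{m=1}^k \hat f(\xi_m)$, so that the integrand factorizes completely; Fubini then splits the integral over $\R^k$ into a product of $k$ copies of $\int_{\R}\frac{|\hat f(\xi_m)|^2}{1+4\pi^2\xi_m^2}\,\rd\xi_m = \|f\|_{H^{-1}(\R)}^2$, giving the first identity.

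For the weighted norm, I would simply observe that by definition $\|g\|_{H^{-1\otimes k}_\eta} = \|g\,\eta^{\otimes k}\|_{H^{-1\otimes k}}$, and that when $g = f^{\otimes k}$ the pointwise product $f^{\otimes k}(z)\,\eta^{\otimes k}(z) = \prod_{m=1}^k f(z_m)\eta(z_m) = (f\eta)^{\otimes k}(z)$ — that is, weighting a tensor power by a tensor-power weight gives the tensor power of the weighted function. Hence $\|f^{\otimes k}\|_{H^{-1\otimes k}_\eta} = \|(f\eta)^{\otimes k}\|_{H^{-1\otimes k}} = \|f\eta\|_{H^{-1}(\R)}^k = \|f\|_{H^{-1}_\eta(\R)}^k$ by the unweighted case already proved. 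Alternatively, staying entirely on the physical side, one can write $K^{\otimes k}\star(f^{\otimes k}\eta^{\otimes k}) = K^{\otimes k}\star(f\eta)^{\otimes k} = \big(K\star(f\eta)\big)^{\otimes k}$ using that convolution of tensor products is the tensor product of convolutions, and then $\|(h)^{\otimes k}\|_{L^2(\R^k)} = \|h\|_{L^2(\R)}^k$ finishes it; this avoids Fourier analysis at the cost of justifying the tensorized convolution identity.

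The argument is essentially bookkeeping, so there is no serious obstacle; the only points needing a word of care are the measure-theoretic ones. One should make sure the manipulations are legitimate when $f$ is merely a measure rather than an $L^2$ function (the lemma will be applied to observables in $\mathcal{M}(\R)$): either the statement is understood with the convention that both sides are $+\infty$ unless $f\in H^{-1}_\eta(\R)$, in which case $K\star(f\eta)\in L^2(\R)$ and Fubini/Tonelli apply to the nonnegative integrand $|\,\cdot\,|^2$; or one notes $K^{\otimes k}\star(f^{\otimes k}\eta^{\otimes k})$ is a well-defined function whenever $K\star(f\eta)$ is, since $K\ge 0$, $\eta>0$. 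I would state the convention explicitly and then the chain of equalities is immediate from Tonelli (for the factorization of a nonnegative product integrand) together with $\widehat{f^{\otimes k}} = \hat f^{\,\otimes k}$ and $(f\eta)^{\otimes k} = f^{\otimes k}\eta^{\otimes k}$.
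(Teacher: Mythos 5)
Your argument is correct, and in fact your ``alternative'' physical-side route is exactly the paper's proof: the paper simply writes
\begin{equation*}
\|f^{\otimes k}\|_{H^{-1\otimes k}_\eta}^2=\int_{\R^k}\big[K^{\otimes k}\star(f^{\otimes k}\eta^{\otimes k})(z)\big]^2\,\rd z=\prod_{m=1}^k\int_{\R}\big[K\star(f\eta)(z_m)\big]^2\,\rd z_m=\|f\|_{H^{-1}_\eta(\R)}^{2k},
\end{equation*}
i.e.\ it uses $K^{\otimes k}\star(f\eta)^{\otimes k}=\big(K\star(f\eta)\big)^{\otimes k}$ together with the factorization of the $L^2(\R^k)$ norm of a tensor product, and then recovers the unweighted identity by taking $\eta\equiv 1$. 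Your primary route differs only in that you prove the unweighted case first via the Fourier representation (factorizing $\widehat{f^{\otimes k}}=\hat f^{\otimes k}$ against the product multiplier) and then reduce the weighted case to it through $(f\eta)^{\otimes k}=f^{\otimes k}\eta^{\otimes k}$; this is an equivalent amount of work, trading the justification of the tensorized convolution identity for a Plancherel step that must be interpreted correctly when $f$ is only a measure. Your closing measure-theoretic caveats (Tonelli on the nonnegative squared integrand, or the convention that both sides are $+\infty$ when $f\notin H^{-1}_\eta$) are sensible and slightly more careful than the paper, which treats these points as immediate; there is no gap in either version.
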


\begin{proof}
One has that
\begin{equation*}
\begin{aligned}
\|f^{\otimes k}\|_{H^{-1 \otimes k}_\eta}^2 = \;& \int_{z \in \R^{k}}  \big[ K^{\otimes k} \star (f^{\otimes k} \eta^{\otimes k}) (z) \big]^2 \;\rd z = \prod_{m = 1}^k \int_{z_m \in \R}  \big[ K \star (f \eta) (z_m) \big]^2 \;\rd z_m
\\
= \;& \big( \|f\|_{H^{-1}_\eta(\R)} \big)^{2k}.
\end{aligned}
\end{equation*}
The unweighted case of $H^{-1 \otimes k}$ is naturally included by choosing $\eta \equiv 1$.
\end{proof}

It is important to emphasize however that the tensorized $H^{-1 \otimes k}$-norm is weaker than the standard $H^{-1}(\R^k)$-norm since in Fourier
\begin{equation*}
\begin{aligned}
\prod_{m=1}^{k} \frac{1}{1 + 4 \pi^2 \xi_m^2} \ll  \frac{1}{1 + 4 \pi^2 \sum_{m=1}^{k} \xi_m^2}.
\end{aligned}
\end{equation*}
This shows that the energy distributed along the diagonals of the Fourier domain  have a much less contribution to the tensorized $H^{-1 \otimes k}$-norm than to the $H^{-1}(\R^k)$-norm.

Similarly, while it is possible to include $\mathcal{M}(\R^k)$ into the standard $H^{-s}(\R^k)$, the order $s > 0$ in such Sobolev inequalities depends on the dimension $k$, namely $s > {k}/{2}$. On the other hand, the following lemma holds for our notion of $H^{-1 \otimes k}$-norm,
\begin{lem} \label{lem:convolutional_inequality}
Consider $g \in \mathcal{M}(\R^k)$ and any weight function $\eta\in L^1(\R,  \R_+)$ such that $\eta^{\otimes k}$ is integrable against $g$, then
\begin{equation} \label{eqn:convolutional_inequality}
\begin{aligned}
\|g\|_{H^{-1\otimes k}} \defeq \| K^{\otimes k} \star g \|_{L^2(\R^k)} \leq \;& \|K\|_{L^2(\R)}^k \|g\|_{\mathcal{M}(\R^k)},
\\
\|g\|_{H^{-1\otimes k}_\eta} \defeq \|K^{\otimes k} \star (g \eta^{\otimes k})\|_{L^2(\R^k)} \leq \;& \|K\|_{L^2(\R)}^k \| g \eta^{\otimes k}\|_{\mathcal{M}(\R^k)}.
\end{aligned}
\end{equation}
\end{lem}
\begin{proof}
The proof is a simple application of convolutional inequality.
\end{proof}
Hence $\mathcal{M}(\R^k)$ is naturally included in $H^{-1 \otimes k}$, and can also be included into $H^{-1 \otimes k}_{\eta}$, provided that the measure has the right moment bound.

The next lemma extends the inequality from Leibniz rule to any dimension.
\begin{lem} \label{lem:commutator_inequality}

Consider $\nu_m$ of form
\begin{equation*}
\begin{aligned}
\nu_m = 1 \otimes \dots \otimes \nu \otimes \dots \otimes 1,
\end{aligned}
\end{equation*}
where $\nu \in W^{1,\infty}(\R)$ appears in the $m$-th coordinate, i.e. $\nu_m(z) = \nu(z_m)$. Then for any $f \in \mathcal{M}(\R^k) \cap H^{-1 \otimes k}$, the following inequality holds
\begin{equation*}
\begin{aligned}
\|\nu_m f\|_{H^{-1 \otimes k}} \leq 2 \|\nu\|_{W^{1,\infty}(\R)} \|f\|_{H^{-1 \otimes k}},
\end{aligned}
\end{equation*}
while for $f \in \mathcal{M}(\R^k) \cap H^{-1 \otimes k}_\eta$, we have the corresponding
\begin{equation*}
\begin{aligned}
\|\nu_m f\|_{H^{-1 \otimes k}_\eta} \leq 2 \|\nu\|_{W^{1,\infty}(\R)} \|f\|_{H^{-1 \otimes k}_\eta}.
\end{aligned}
\end{equation*}

\end{lem}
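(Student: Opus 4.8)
The plan is to reduce the claim, by duality, to a multiplier estimate on the scale of \emph{positive}-order tensorized Sobolev norms, and then to settle that estimate by a bookkeeping argument with the Leibniz rule. Introduce the space $H^{1\otimes k}$ with norm $\|\phi\|_{H^{1\otimes k}}^2 \defeq \int_{\R^k}\prod_{m=1}^{k}(1+4\pi^2\xi_m^2)\,|\hat\phi(\xi)|^2\;\rd\xi$, which is the natural predual of $H^{-1\otimes k}$ under the $L^2(\R^k)$ pairing (compare the one-dimensional duality formula in Subsection~\ref{subsec:negative_Sobolev} and the Fourier expression for $\|\cdot\|_{H^{-1\otimes k}}$). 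A point I would emphasize, special to the \emph{tensorized} norm, is that $\prod_m(1+4\pi^2\xi_m^2)^{-1}\in L^1(\R^k)$ (unlike $(1+4\pi^2|\xi|^2)^{-1}$ for $k\geq 2$), so that $\hat\phi\in L^1(\R^k)$ for every $\phi\in H^{1\otimes k}$, hence $H^{1\otimes k}\hookrightarrow C_0(\R^k)$. This makes the pairing $\langle g,\phi\rangle=\int\phi\,\rd g$ well defined for $g\in\mathcal M(\R^k)$ and $\phi\in H^{1\otimes k}$, and Plancherel together with Cauchy--Schwarz give
\[
|\langle g,\phi\rangle|\leq \|g\|_{H^{-1\otimes k}}\,\|\phi\|_{H^{1\otimes k}}.
\]
Granting this, for $\phi\in C_c^\infty(\R^k)$ we have $\langle\nu_m f,\phi\rangle=\langle f,\nu_m\phi\rangle$, so $|\langle\nu_m f,\phi\rangle|\leq\|f\|_{H^{-1\otimes k}}\,\|\nu_m\phi\|_{H^{1\otimes k}}$; taking the supremum over such $\phi$ (using density of $C_c^\infty(\R^k)$ in $H^{1\otimes k}$), it remains only to prove the multiplier bound $\|\nu_m\phi\|_{H^{1\otimes k}}\leq 2\|\nu\|_{W^{1,\infty}(\R)}\,\|\phi\|_{H^{1\otimes k}}$.

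For that last step I would expand the Fourier weight over subsets, $\prod_{m=1}^{k}(1+4\pi^2\xi_m^2)=\sum_{S\subseteq\{1,\dots,k\}}\prod_{m\in S}4\pi^2\xi_m^2$, which translates into $\|\phi\|_{H^{1\otimes k}}^2=\sum_{S\subseteq\{1,\dots,k\}}\|\partial_S\phi\|_{L^2(\R^k)}^2$ with $\partial_S\defeq\prod_{m\in S}\partial_{z_m}$. Applying $\partial_S$ to $\nu_m\phi$: if $m\notin S$ it simply pulls $\nu(z_m)$ out, $\partial_S(\nu_m\phi)=\nu(z_m)\,\partial_S\phi$; if $m\in S$ the Leibniz rule gives $\partial_S(\nu_m\phi)=\nu'(z_m)\,\partial_{S\setminus\{m\}}\phi+\nu(z_m)\,\partial_S\phi$. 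Pairing each $S'$ with $m\notin S'$ to $S'\cup\{m\}$ and setting $u=\|\partial_{S'}\phi\|_{L^2}$, $v=\|\partial_{S'\cup\{m\}}\phi\|_{L^2}$, $L=\|\nu\|_{W^{1,\infty}(\R)}$, the two corresponding contributions are bounded by $\|\nu(z_m)\partial_{S'}\phi\|_{L^2}^2+\|\nu'(z_m)\partial_{S'}\phi+\nu(z_m)\partial_{S'\cup\{m\}}\phi\|_{L^2}^2\leq L^2u^2+L^2(u+v)^2\leq 3L^2(u^2+v^2)$. Summing over all $S'$ not containing $m$ yields $\|\nu_m\phi\|_{H^{1\otimes k}}^2\leq 3L^2\|\phi\|_{H^{1\otimes k}}^2\leq 4L^2\|\phi\|_{H^{1\otimes k}}^2$, which is exactly the required estimate (in fact with the sharper constant $\sqrt3$).

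Finally, the weighted statement should follow for free: since multiplication by $\nu(z_m)$ and by $\eta^{\otimes k}$ are both pointwise multiplications and therefore commute, I would write $\|\nu_m f\|_{H^{-1\otimes k}_\eta}=\|\nu_m(f\,\eta^{\otimes k})\|_{H^{-1\otimes k}}$ and apply the unweighted bound to $f\,\eta^{\otimes k}\in\mathcal M(\R^k)$, obtaining $\|\nu_m f\|_{H^{-1\otimes k}_\eta}\leq 2\|\nu\|_{W^{1,\infty}(\R)}\|f\,\eta^{\otimes k}\|_{H^{-1\otimes k}}=2\|\nu\|_{W^{1,\infty}(\R)}\|f\|_{H^{-1\otimes k}_\eta}$. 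I expect the only genuinely non-routine point to be the legitimacy of the duality pairing, i.e.\ integrating against the measure $f$ a function $\nu_m\phi$ that is only $W^{1,\infty}$-times-smooth (hence not Schwartz); this is precisely what the tensorized structure buys us, through the $C_0$-embedding noted above, and everything else is elementary manipulation of the Leibniz rule.
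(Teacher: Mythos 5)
Your proposal is correct, but it proves the lemma by a genuinely different route than the paper. The paper argues \emph{fiberwise}: it takes the partial Fourier transform $\mathcal{F}^{\otimes k-1}$ in all coordinates except the multiplier coordinate, observes via Plancherel that the tensorized norm factors into a weighted integral of one-dimensional $H^{-1}(\R)$ norms in the remaining variable, and then applies the one-dimensional Leibniz/duality estimate $\|\nu g\|_{H^{-1}(\R)}\leq 2\|\nu\|_{W^{1,\infty}}\|g\|_{H^{-1}(\R)}$ on each fiber before integrating back. You instead work globally by duality against the positive-order tensorized space $H^{1\otimes k}$, reduce to the multiplier bound $\|\nu_m\phi\|_{H^{1\otimes k}}\leq 2\|\nu\|_{W^{1,\infty}}\|\phi\|_{H^{1\otimes k}}$, and prove that bound by expanding $\prod_m(1+4\pi^2\xi_m^2)$ over subsets $S$ and applying the Leibniz rule to the mixed derivatives $\partial_S$; the pairing of each $S'$ with $S'\cup\{m\}$ gives the constant $\sqrt{3}\leq 2$, slightly better than the paper's. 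What your approach buys is a structurally illuminating point the paper never states — integrability of the tensorized Fourier weight gives $H^{1\otimes k}\hookrightarrow \mathcal{F}L^1\subset C_0(\R^k)$, so pairing against finite measures is legitimate in any dimension — at the price of two routine technicalities you should not wave away too quickly: (i) the converse duality step, i.e.\ that the supremum of $|\langle \nu_m f,\phi\rangle|$ over $\phi\in C_c^\infty$ with $\|\phi\|_{H^{1\otimes k}}\leq 1$ controls $\|\nu_m f\|_{H^{-1\otimes k}}$, which requires approximating the optimal test function $\hat\phi=\prod_m(1+4\pi^2\xi_m^2)^{-1}\widehat{\nu_m f}$ by truncation/mollification (note $\nu_m f$ is a finite measure, so its $H^{-1\otimes k}$ norm is finite a priori by the convolution inequality, and only the quantitative bound is at stake); and (ii) in the weighted case, applying the unweighted result to $f\,\eta^{\otimes k}$ tacitly assumes $f\,\eta^{\otimes k}$ has finite total variation, which does not follow from $f\in\mathcal{M}(\R^k)\cap H^{-1\otimes k}_\eta$ alone — but this is exactly the same implicit reduction the paper makes, and it is harmless in all applications where exponential moments are available.
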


\begin{proof}
Let us first discuss the unweighted inequality and WLOG consider $\nu_k$ that is non-constant in the $k$-th dimension. Let us introduce the Fourier transform on the first $k-1$ dimensions
\begin{equation*}
\begin{aligned}
\mathcal{F}^{\otimes k-1} \otimes I: \R^{k-1} \times \R \to \R^{k-1} \times \R.
\end{aligned}
\end{equation*}
It is easy to verify that
\begin{equation*}
\begin{aligned}
\;& \big(\mathcal{F}^{\otimes k-1} \otimes I\big) \big( K^{\otimes k} \star (\nu_{k} f) \big) (\xi_1,\dots,\xi_{k-1}, z_{k})
\\
= \;& \bigg( \prod_{m=1}^{k-1} \frac{1}{\sqrt{1 + 4 \pi^2 \xi_m^2}} \bigg) \; \big( K \star_{k} (\nu_{k} \mathcal{F}^{\otimes k-1} f) \big) (\xi_1,\dots,\xi_{k-1}, z_{k}).
\end{aligned}
\end{equation*}
By Plancherel identity,
\begin{equation*}
\begin{aligned}
\|\nu_{k} f\|_{H^{-1 \otimes k}}^2 = \;& \int \bigg| \bigg( \prod_{m=1}^{k-1} \frac{1}{\sqrt{1 + 4 \pi^2 \xi_m^2}} \bigg) \; \big( K \star_{k} (\nu_{k} \mathcal{F}^{\otimes k-1} f) \big) (\xi_1,\dots,\xi_{k-1}, z_{k}) \bigg|^2 \;\rd \xi_1, \dots, \xi_{k-1} \rd z_{k}
\\
= \;& \int \bigg( \prod_{m=1}^{k-1} \frac{1}{1 + 4 \pi^2 \xi_m^2} \bigg) \Big\| \big( \nu_{k} \mathcal{F}^{\otimes k-1} f \big) (\xi_1,\dots,\xi_{k-1}, \cdot) \Big\|_{H^{-1}(\R)}^2 \rd \xi_1, \dots, \xi_{k-1}.
\end{aligned}
\end{equation*}
Since $\nu \in W^{1,\infty}(\R)$,
\begin{equation*}
\begin{aligned}
\Big\| \big( \nu_{k} \mathcal{F}^{\otimes k-1} f \big) (\xi_1,\dots,\xi_{k-1}, \cdot) \Big\|_{H^{-1}(\R)} \leq 2\|\nu\|_{W^{1,\infty}(\R)} \Big\| \mathcal{F}^{\otimes k-1} f (\xi_1,\dots,\xi_{k-1}, \cdot) \Big\|_{H^{-1}(\R)}.
\end{aligned}
\end{equation*}
Hence
\begin{equation*}
\begin{aligned}
\|\nu_{k} f\|_{H^{-1 \otimes k}}^2 
\leq \;& 4\|\nu\|_{W^{1,\infty}(\R)}^2 \int \bigg( \prod_{m=1}^{k-1} \frac{1}{1 + 4 \pi^2 \xi_m^2} \bigg) \Big\| \mathcal{F}^{\otimes k-1} f (\xi_1,\dots,\xi_{k-1}, \cdot) \Big\|_{H^{-1}(\R)}^2 \rd \xi_1, \dots, \xi_{k-1}
\\
= \;& 4\|\nu\|_{W^{1,\infty}(\R)}^2 \|f\|_{H^{-1 \otimes k}}^2,
\end{aligned}
\end{equation*}
which completes the proof of unweighted inequality. Finally, for the weighted inequality, we can apply the unweighted inequality to obtain
\begin{equation*}
\begin{aligned}
\|\nu_m f\|_{H^{-1 \otimes k}_\eta} = \|\nu_m f \eta^{\otimes k}\|_{H^{-1 \otimes k}} \leq 2 \|\nu\|_{W^{1,\infty}} \|f \eta^{\otimes k}\|_{H^{-1 \otimes k}} = 2 \|\nu\|_{W^{1,\infty}} \|f\|_{H^{-1 \otimes k}_\eta}.
\end{aligned}
\end{equation*}
\end{proof}
%
%
\subsection{The weak-* topology on measures}
Now, we proceed to the proof of Lemma~\ref{lem:topology_negative_Sobolev}, restated here.
\begin{lem}
Consider any $a > 0$, $C_a > 0$, $0 < \alpha < a$ (which determines $\eta = \eta_\alpha$) and any sequence
\begin{equation} \label{eqn:exponential_tightness}
\begin{aligned}
\{g_n\}_{n = 1}^\infty \subset \bigg\{g \in \mathcal{M}(\R^k) : \int_{\R^k} { \textstyle \exp\big( a \sum_{m=1}^{k} |z_m|\big)} |g|(\rd z) \leq C_a\bigg\}.
\end{aligned}
\end{equation}
Then the following are equivalent:
\begin{itemize}
\item $g_n \overset{\ast}{\rightharpoonup} g_\infty$ under the weak-* topology of $\mathcal{M}(\R^k)$.
\item $\| g_n - g_\infty \|_{H^{-1\otimes k}_\eta} \to 0$.
\end{itemize}
\end{lem}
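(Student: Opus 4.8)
The plan is to establish the two implications separately, using the exponential moment bound \eqref{eqn:exponential_tightness} to control behavior at infinity and the smoothing properties of $K$ near the diagonal in Fourier space.

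\emph{Weak-$*$ convergence implies convergence in $H^{-1\otimes k}_\eta$.} First I would reduce to showing $\|h_n\|_{H^{-1\otimes k}_\eta}\to 0$ where $h_n \defeq g_n - g_\infty$; note $h_n \wsto 0$ and the $h_n$ still satisfy a uniform exponential moment bound (with constant $2C_a$). By definition $\|h_n\|_{H^{-1\otimes k}_\eta} = \|K^{\otimes k}\star(h_n\eta^{\otimes k})\|_{L^2(\R^k)}$. The key point is a truncation and equicontinuity argument: fix $\varepsilon>0$, choose $R$ large so that the mass of $|h_n|\eta^{\otimes k}$ outside $[-R,R]^k$ is uniformly $\leq \varepsilon$ (possible because $\eta_\alpha(x)\lesssim e^{\alpha|x|}$ with $\alpha<a$, so $\eta^{\otimes k}$ times the tail exponential $e^{a\sum|z_m|}$ still gives a convergent dominating bound that is small on tails, uniformly in $n$). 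For the contribution of $|K^{\otimes k}\star(h_n\eta^{\otimes k})|$ restricted to a bounded region, pointwise convergence to $0$ follows from weak-$*$ convergence tested against the continuous bounded function $z\mapsto K^{\otimes k}(y-z)\eta^{\otimes k}(z)$ for each fixed $y$; to convert pointwise convergence of the integrand into $L^2$-convergence on the bounded region I would invoke dominated convergence, after noting $|K^{\otimes k}\star(h_n\eta^{\otimes k})(y)|$ is uniformly bounded (by $\|K\|_{L^\infty}^k$ times the total mass, finite by the moment bound — or if $K\notin L^\infty$ at the origin, split $K$ into a small-$L^1$ singular part and a bounded part and estimate the singular part's contribution by Young's inequality using $\|K\|_{L^2}$). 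The tail part of $\R^k$ (away from $[-R,R]^k$) in the $L^2$ integral is handled by Lemma~\ref{lem:convolutional_inequality} applied to the tail-truncated measure, giving $\lesssim \|K\|_{L^2}^k \varepsilon$. Combining, $\limsup_n \|h_n\|_{H^{-1\otimes k}_\eta} \lesssim \varepsilon$, and letting $\varepsilon\to 0$ finishes this direction.

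\emph{Convergence in $H^{-1\otimes k}_\eta$ implies weak-$*$ convergence.} Here I would use that the uniform exponential moment bound \eqref{eqn:exponential_tightness} makes $\{g_n\}$ tight, hence weak-$*$ precompact in $\mathcal{M}(\R^k)$ (the bound also gives uniform total variation $\leq C_a$). So it suffices to show that every weak-$*$ limit point equals $g_\infty$. If a subsequence $g_{n_j}\wsto \tilde g$, then by the first direction $\|g_{n_j}-\tilde g\|_{H^{-1\otimes k}_\eta}\to 0$; combined with the hypothesis $\|g_{n_j}-g_\infty\|_{H^{-1\otimes k}_\eta}\to 0$ we get $\|\tilde g - g_\infty\|_{H^{-1\otimes k}_\eta}=0$, i.e. $K^{\otimes k}\star((\tilde g - g_\infty)\eta^{\otimes k})=0$ in $L^2$. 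Since $\widehat{K^{\otimes k}}(\xi)=\prod_m (1+4\pi^2\xi_m^2)^{-1/2}$ never vanishes, this forces $(\tilde g - g_\infty)\eta^{\otimes k}=0$ as a (tempered) distribution, and since $\eta^{\otimes k}>0$ everywhere, $\tilde g = g_\infty$. As all weak-$*$ limit points coincide with $g_\infty$ and the sequence is precompact, the whole sequence converges weak-$*$ to $g_\infty$.

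\emph{Main obstacle.} I expect the delicate step to be the first direction, specifically making the ``pointwise convergence of $K^{\otimes k}\star(h_n\eta^{\otimes k})$ plus domination'' argument fully rigorous despite the mild singularity of $K$ at the origin (it behaves like $\log$ near $0$ in one dimension, so $K^{\otimes k}$ is in $L^1_{loc}$ and $L^2$ but not $L^\infty$). The clean fix is to split $K = K\mathbbm{1}_{|x|\leq \delta} + K\mathbbm{1}_{|x|>\delta}$; the first piece has small $L^1$ norm (uniformly controllable) so its tensorized convolution against the uniformly bounded-mass measures $h_n\eta^{\otimes k}$ contributes $o_\delta(1)$ in $L^2$ via Lemma~\ref{lem:convolutional_inequality}-type estimates, while the second piece is bounded and continuous, so the weak-$*$ testing argument applies verbatim. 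Assembling these truncations (in space via $R$ and near the diagonal via $\delta$) and keeping all the error terms uniform in $n$ is the only genuinely technical part; everything else is soft functional analysis.
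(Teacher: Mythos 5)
Your proposal is correct in both directions, but for the key implication (weak-$*$ $\Rightarrow$ convergence in $H^{-1\otimes k}_\eta$) it takes a genuinely different route from the paper. You work directly with $\|K^{\otimes k}\star(h_n\eta^{\otimes k})\|_{L^2}$ and tame the logarithmic singularity of $K$ by splitting the kernel into a piece near the origin with small $L^2$ norm (whose contribution is killed by the convolution inequality of Lemma~\ref{lem:convolutional_inequality}, uniformly in $n$) and a bounded piece, then combine a spatial truncation, pointwise convergence obtained by weak-$*$ testing, and dominated convergence. The paper instead never touches the singularity of $K$: it uses $K\star K=\Lambda$ with $\Lambda(x)=\tfrac12 e^{-|x|}$ to rewrite the squared norm as the quadratic form $\int (h_n\eta^{\otimes k})\,\Lambda^{\otimes k}\star(h_n\eta^{\otimes k})$, notes that $\Lambda^{\otimes k}\star(h_n\eta^{\otimes k})$ is uniformly bounded and Lipschitz (hence converges uniformly on compacts), and splits the form into three terms $L_1+L_2+L_3$ handled respectively by weak-$*$ testing against a fixed $C_c$ function, uniform convergence on compacts, and the uniform exponential tail bound. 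Your approach is more hands-on and requires tracking the $2^k$ tensor terms of the $\delta$-splitting; the paper's duality trick buys a bounded Lipschitz kernel and so avoids kernel truncation altogether. For the converse direction you invoke Prokhorov, apply the already-proven implication along a convergent subsequence, and identify the limit via the non-vanishing of $\widehat{K}$ and positivity of $\eta$; the paper argues more directly that norm convergence gives convergence of $(g_n-g_\infty)\eta^{\otimes k}$ in $\mathcal{D}'$ and hence identifies all weak-$*$ limit points. Both are sound; yours uses the forward implication as a lemma, which is legitimate since it is proved independently. Two small slips you should repair when writing this up: the function $z\mapsto K^{\otimes k}(y-z)\eta^{\otimes k}(z)$ is neither continuous nor bounded (log singularity at $z=y$, exponential growth of $\eta^{\otimes k}$), so the weak-$*$ testing must be done after both your $\delta$-splitting and a \emph{continuous} spatial cutoff (sharp indicators such as $K\mathbbm{1}_{|x|>\delta}$ or $\mathbbm{1}_{[-R,R]^k}$ are not admissible test data and do not preserve weak-$*$ convergence of the truncated measures); and the relevant smallness of the singular piece is its $L^2$ norm (so that $\|K_\delta^{}\|_{L^2}\|h_n\eta^{\otimes k}\|_{\mathcal M}$ is small), not its $L^1$ norm, which is what your appeal to Lemma~\ref{lem:convolutional_inequality} in fact delivers.
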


\begin{proof} [Proof of Lemma~\ref{lem:topology_negative_Sobolev}]

A sequence $\{g_n\}_{n = 1}^\infty$ satisfying \eqref{eqn:exponential_tightness} is uniformly tight and bounded in total variation norm. By Prokhorov's theorem, $\{g_n\}_{n = 1}^\infty$ is sequentially precompact in the weak-* topology. Assuming now that $\| g_n - g_\infty \|_{H^{-1\otimes k}_\eta} \to 0$, the definition of $H^{-1\otimes k}_\eta$ directly implies that $(g_n-g_\infty) \, \eta^{\otimes k}$ converges to $0$ in the sense of distribution. Since $\eta = \eta_\alpha$ is smooth, bounded from below and from above on any compact, it further yields that $g_n$ converges to $g_\infty$, still in the sense of distributions. Hence we immediately have that $g_n \overset{\ast}{\rightharpoonup} g_\infty$ under the weak-* topology of $\mathcal{M}(\R^k)$.
  
%
Assuming now only that $g_n \overset{\ast}{\rightharpoonup} g_\infty$ under the weak-* topology of $\mathcal{M}(\R^k)$. First recall that
\begin{equation*}
\begin{aligned}
\eta^{\otimes k}(z_1,\dots,z_k) = C_\alpha^k \exp \Big( {\textstyle \sum_{m=1}^k } \sqrt{1 + \alpha^2 z_m^2} \Big) \leq (C_\alpha \exp(1))^k \exp \Big( {\textstyle \sum_{m=1}^k } \alpha |z_m| \Big).
\end{aligned}
\end{equation*}
The kernel $\Lambda^{\otimes k}$ is Lipschitz. Hence the convolution $\Lambda^{\otimes k} \star (g_n \eta^{\otimes k})$ is also Lipschitz, by
\begin{equation*}
\begin{aligned}
  \| \Lambda^{\otimes k} \star (g_n \eta^{\otimes k}) \|_{W^{1,\infty}} \leq \| \Lambda^{\otimes k} \|_{W^{1,\infty}}\, \|g_n \eta^{\otimes k}\|_{\mathcal{M}}.
\end{aligned}
\end{equation*}
By the exponential moment bound \eqref{eqn:exponential_tightness}, we have
\begin{equation*}
\begin{aligned}
\|g_n \eta^{\otimes k}\|_{\mathcal{M}} = \;& (C_\alpha \exp(1))^k \int_{\R^k} { \textstyle \exp\big( - (a-\alpha) \sum_{m=1}^{k} |z_m|\big)} { \textstyle \exp\big( a \sum_{m=1}^{k} |z_m|\big)} |g_n|(\rd z)
\\
\leq \;& (C_\alpha \exp(1))^k C_a.
\end{aligned}
\end{equation*}
This implies that $g_n\,\eta^{\otimes k}$ is precompact and hence converges to $g_\infty \,\eta^{\otimes k}$, so that
\begin{equation*}
\begin{aligned}
\Lambda^{\otimes k} \star (g_n\, \eta^{\otimes k}) \to \phi=\Lambda^{\otimes k} \star (g_\infty\, \eta^{\otimes k}) \in C(\R^k) \text{ uniformly on all compact subset of } \R^k.
\end{aligned}
\end{equation*}
Let $\rho \in C_c(\R)$ such that $0 \leq \rho \leq 1$, $\rho([-1,1]) \equiv 1$, $\supp \rho \subset [-2,2]$ and denote $\rho_R(x) = \rho(x/R)$. Then
\begin{equation*}
\begin{aligned}
\|g_n\|_{H^{-1 \otimes k}_\eta}^2 = \;& \int_{z \in \R^{k}}  (g_n \eta^{\otimes k})(z) \big[ \Lambda^{\otimes k} \star (g_n \eta^{\otimes k})(z) \big] \;\rd z
\\
\leq \;& \int_{z \in \R^{k}}  (g_n \eta^{\otimes k})(z) (\phi \rho_R^{\otimes k})(z) \;\rd z
\\
\;& + \int_{z \in \R^{k}}  (g_n\, \eta^{\otimes k})(z) ((\Lambda^{\otimes k} \star (g_n \eta^{\otimes k}) - \phi) \rho_R^{\otimes k})(z) \;\rd z
\\
\;& + \int_{z \in \R^{k}}  (g_n \eta^{\otimes k})(z) ((\Lambda^{\otimes k} \star (g_n \eta^{\otimes k})) (1 - \rho_R^{\otimes k}))(z) \;\rd z
\\
\eqdef \;& L_1 + L_2 + L_3.
\end{aligned}
\end{equation*}
We note that $\phi\, \rho_R^{\otimes k}$ is continuous and compactly supported so that, for a fixed $R$, $L_1$ converges to $0$ from the weak-* convergence of $g_n$. $L_2$ also directly converges to $0$ for a fixed $R$ from the uniform convergence of $\Lambda^{\otimes k} \star (g_n \eta^{\otimes k})$ to $\phi$ on compact sets.

Finally, for any $\varepsilon > 0$, choose sufficiently large $R > 0$ such that
\begin{equation*}
\begin{aligned}
\big[ C_\alpha { \textstyle \exp\big( 1 - (a-\alpha) R\big)} \big]^k \leq \frac{\varepsilon / 6}{ \|\Lambda^{\otimes k}\|_{L^\infty} (C_\alpha \exp(1))^k C_a^2}.
\end{aligned}
\end{equation*}
Then
\begin{equation*}
\begin{aligned}
L_3 \leq \;& \|\Lambda^{\otimes k}\|_{L^\infty} (C_\alpha \exp(1))^k C_a \int_{z \in \R^{k}}  |g_n \eta^{\otimes k}|(z) (1 - \rho_R^{\otimes k})(z) \;\rd z 
\\
\leq \;& \|\Lambda^{\otimes k}\|_{L^\infty}\, (C_\alpha \exp(1))^k\, C_a\, \big[ C_\alpha\, { \textstyle \exp\big( 1 - (a-\alpha) R\big)} \big]^k\, C_a \,\leq \varepsilon / 6.
\end{aligned}
\end{equation*}
This shows that $L_3$ converges to $0$ as $R \to \infty$ uniformly in $n$, which concludes.
\end{proof}
\subsection{Bounding the remainder terms}
As a first example of application of our weak norms, we can derive a quantified weak convergence of the remainder terms $\mathscr{R}$ and $\mathscr{\tilde R}$ in \eqref{eqn:hierarchy_equation_remainder}.
%
$L^p$ norms are too sensitive to the pointwise density of the distribution, which makes it difficult to quantify vanishing translations.
The following lemma shows how such translations are smoothen when mollified by $\Lambda^{\otimes k}$, making the behavior of $\mathscr{R}$ and $\mathscr{\tilde R}$ milder in the $H^{-1 \otimes k}$ sense and laying the ground for our future commutator estimates.
\begin{lem} \label{lem:translation_estimate}
For any non-negative measure $f \in \mathcal{M}_+(\R^k)$ and vector $w \in \R^{k}$, the following pointwise estimate holds 
\begin{equation*}
\begin{aligned}
\big| (\Lambda^{\otimes k} \star f)(z - w) - (\Lambda^{\otimes k} \star f)(z) \big|
\leq \big[ \exp \big( {\|w\|_{\ell^1}} \big) - 1 \big] (\Lambda^{\otimes k} \star f) (z), \quad \forall z \in \R^{k}.
\end{aligned}
\end{equation*}
\end{lem}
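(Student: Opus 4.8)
The whole statement hinges on the fact that the exponential kernel $\Lambda$ is \emph{multiplicatively} stable under translation, which makes the tensorized mollification behave well with respect to shifts by $w$. I would proceed as follows.

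First I would record the elementary pointwise bound for the one-dimensional kernel. Since $\Lambda(x) = \frac{1}{2}\exp(-|x|)$ and $\big||x| - |v|\big| \leq |x - v| \leq |x| + |v|$ for all $x,v \in \R$, one gets
\begin{equation*}
\begin{aligned}
\exp(-|v|)\, \Lambda(x) \;\leq\; \Lambda(x - v) \;\leq\; \exp(|v|)\, \Lambda(x), \qquad \forall x, v \in \R.
\end{aligned}
\end{equation*}
Tensorizing over the $k$ coordinates and using $\Lambda^{\otimes k}(z) = 2^{-k}\exp(-\|z\|_{\ell^1})$, this immediately yields
\begin{equation*}
\begin{aligned}
\exp\big(-\|w\|_{\ell^1}\big)\, \Lambda^{\otimes k}(z) \;\leq\; \Lambda^{\otimes k}(z - w) \;\leq\; \exp\big(\|w\|_{\ell^1}\big)\, \Lambda^{\otimes k}(z), \qquad \forall z, w \in \R^k.
\end{aligned}
\end{equation*}

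Next I would integrate against $f$. Writing $(\Lambda^{\otimes k}\star f)(z - w) = \int_{\R^k} \Lambda^{\otimes k}\big((z - y) - w\big)\, f(\rd y)$ and applying the displayed two-sided bound with $z$ replaced by $z - y$ inside the integral, the nonnegativity of $f \in \mathcal{M}_+(\R^k)$ lets me keep the inequalities after integration:
\begin{equation*}
\begin{aligned}
\exp\big(-\|w\|_{\ell^1}\big)\, (\Lambda^{\otimes k}\star f)(z) \;\leq\; (\Lambda^{\otimes k}\star f)(z - w) \;\leq\; \exp\big(\|w\|_{\ell^1}\big)\, (\Lambda^{\otimes k}\star f)(z).
\end{aligned}
\end{equation*}
Subtracting $(\Lambda^{\otimes k}\star f)(z) \geq 0$ throughout, the upper bound gives $(\Lambda^{\otimes k}\star f)(z-w) - (\Lambda^{\otimes k}\star f)(z) \leq \big[\exp(\|w\|_{\ell^1}) - 1\big](\Lambda^{\otimes k}\star f)(z)$, while the lower bound gives a deviation no less than $\big[\exp(-\|w\|_{\ell^1}) - 1\big](\Lambda^{\otimes k}\star f)(z)$. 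Since $1 - \exp(-t) \leq \exp(t) - 1$ for all $t \geq 0$ (equivalently $\exp(t) + \exp(-t) \geq 2$), the negative side is also controlled by $\big[\exp(\|w\|_{\ell^1}) - 1\big](\Lambda^{\otimes k}\star f)(z)$, and combining the two one-sided estimates produces the claimed absolute-value bound.

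There is no real obstacle here: the only point requiring a (trivial) remark is that the translated argument must be fed coordinate by coordinate into the tensor kernel before convolving, and that $f \geq 0$ is essential for the inequalities to survive integration — which is exactly why the lemma is stated for $\mathcal{M}_+(\R^k)$ and why in the applications one will later apply it to the \emph{absolute} observables $|\tau_N|$ rather than to $\tau_N$ directly. I would also note in passing that the same computation shows the estimate is sharp up to the symmetrization $1 - e^{-t} \rightsquigarrow e^t - 1$, which is harmless since in the intended regime $\|w\|_{\ell^1}$ is small and $\exp(\|w\|_{\ell^1}) - 1 = O(\|w\|_{\ell^1})$.
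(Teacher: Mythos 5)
Your proof is correct and rests on exactly the same mechanism as the paper's: the multiplicative stability $e^{-\|w\|_{\ell^1}}\Lambda^{\otimes k}(z)\leq \Lambda^{\otimes k}(z-w)\leq e^{\|w\|_{\ell^1}}\Lambda^{\otimes k}(z)$ together with the nonnegativity of $f$. The paper merely takes the absolute value inside the integral first and bounds the kernel difference pointwise by $\big[\exp(\|w\|_{\ell^1})-1\big]\Lambda^{\otimes k}(z-y)$, which is the same two-sided estimate you use, so the two arguments coincide up to the order of operations.
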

\begin{proof}
It is straightforward that
\begin{equation*}
\begin{aligned}
\Big| (\Lambda^{\otimes k} \star f)(z - w) - (\Lambda^{\otimes k} \star f)(z) \Big|
\leq \int_{\R^{k}} \Big| \Lambda^{\otimes k} (z - w - y) - \Lambda^{\otimes k} (z - y) \Big| f(y) \;\rd y.
\end{aligned}
\end{equation*}
From the formula,
\begin{equation*}
\begin{aligned}
\Lambda^{\otimes k} (z) = \frac{1}{2} \exp\Big( - \sum_{m=1}^k |z_m|\Big),
\end{aligned}
\end{equation*}
we have that
\begin{equation*}
\begin{aligned}
\Big| \Lambda^{\otimes k} (z - w - y) - \Lambda^{\otimes k} (z - y) \Big| \leq \big[ \exp \big( {\|w\|_{\ell^1}} \big) - 1 \big] \Lambda^{\otimes k} (z - y).
\end{aligned}
\end{equation*}
We conclude the lemma by multiplying both sides by $f(y)$ and integrate by $y$.
\end{proof}
The following proposition summarizes the estimates of $\mathscr{R}$ and $\mathscr{\tilde R}$ terms.
\begin{prop} \label{prop:translation_estimate_weighted} 
Consider any $\alpha > 0$ (which determines $\eta = \eta_\alpha$), any connectivity matrix $w_N \in \R^{N \times N}$ and any joint law $f_N \in \mathcal{M}_+(\R^{N})$. Let $\mathscr{R}_{N,T,m}$ and $\mathscr{\tilde R}_{N,T,m}$ be the remainder terms as in \eqref{eqn:hierarchy_equation_remainder} and let $|\tau_N|(T) = |\tau_N|(T,w_N,f_N)$ as in Definition~\ref{defi:observables} (where the variable $t$ shall be neglected).
Then the following estimate holds:
\begin{equation*}
\begin{aligned}
\max\left(\| \mathscr{R}_{N,T,m} \|_{H^{-1 \otimes |T|}_\eta}^2,\ \| \mathscr{\tilde R}_{N,T,m} \|_{H^{-1 \otimes |T|}_\eta}^2\right)
\leq \big[ \exp \big( (2 + 2\alpha) c(w_N,|T|) \big) - 1 \big] \||\tau_N|(T)\|_{H^{-1 \otimes |T|}_\eta}^2,
\end{aligned}
\end{equation*}
where
\begin{equation*}
\begin{aligned}
c(w_N,|T|) \defeq \min\left(|T| \big(\max_{i,j}|w_{i,j;N}| \big),\ \max\Big(\max_j \sum_i |w_{i,j;N}|, \max_i \sum_j |w_{i,j;N}|\Big)\right).
\end{aligned}
\end{equation*}
\end{prop}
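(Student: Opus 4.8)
The plan is to control every quantity through the twice-mollified function $\Lambda^{\otimes k}\star(\cdot)$ (with $k:=|T|$ and $\Lambda^{\otimes k}=K^{\otimes k}\star K^{\otimes k}$), since the translations that enter $\mathscr{R}$ and $\mathscr{\tilde R}$ are $O(1)$ in total variation and become small only after a genuine regularization; the point is that convolving with the \emph{full} $\Lambda^{\otimes k}$, and not a single $K^{\otimes k}$, makes translates of a mollified non-negative measure pointwise comparable to the un-translated one, through the explicit formula $\Lambda^{\otimes k}(x)=2^{-k}\exp(-\|x\|_{\ell^1})$. We may assume the right-hand side is finite. Write $\mathbf i:=(i_1,\dots,i_{|T|})$, $g_{\mathbf i}:=f_N^{\,\mathbf i}\,\eta^{\otimes k}$, $P:=|\tau_N|(T)\,\eta^{\otimes k}=\frac1N\sum_{\mathbf i}|w_{N,T}(\mathbf i)|\,g_{\mathbf i}$, and, for the fixed vertex $m$, $v=v(\mathbf i):=w_{N;i_m}^{i_1,\dots,i_{|T|}}\in\R^k$, so that $\|v\|_{\ell^1}=\sum_{n=1}^{|T|}|w_{i_n,i_m;N}|$. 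Since $f_N^{\,\mathbf i}\equiv 0$ unless $i_1,\dots,i_{|T|}$ are distinct, any $\mathbf i$ with $g_{\mathbf i}\neq 0$ satisfies both $\|v\|_{\ell^1}\le|T|\,\bar w_N$ and $\|v\|_{\ell^1}\le\sum_i|w_{i,i_m;N}|\le\max_j\sum_i|w_{i,j;N}|$, hence $\|v\|_{\ell^1}\le c:=c(w_N,|T|)$.

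Next I would set $h:=\mathscr{R}_{N,T,m}\,\eta^{\otimes k}$, so that $\|\mathscr{R}_{N,T,m}\|_{H^{-1\otimes k}_\eta}^2=\int h\,(\Lambda^{\otimes k}\star h)$ by the identity $\|g\|_{H^{-1\otimes k}}^2=\int g\,(\Lambda^{\otimes k}\star g)$ recalled in Section~\ref{subsec:negative_Sobolev}, and decompose $h=\frac1N\sum_{\mathbf i}w_{N,T}(\mathbf i)\,h_{\mathbf i}$ with $h_{\mathbf i}(z):=\big(f_N^{\,\mathbf i}(z-v)-f_N^{\,\mathbf i}(z)\big)\,\eta^{\otimes k}(z)$. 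A change of variable in the translated term gives
\begin{equation*}
(\Lambda^{\otimes k}\star h_{\mathbf i})(z)=\int_{\R^k}\Big[\Lambda^{\otimes k}(z-v-y)\,\tilde\rho_v(y)-\Lambda^{\otimes k}(z-y)\Big]\,g_{\mathbf i}(\rd y),\qquad\tilde\rho_v(y):=\frac{\eta^{\otimes k}(y+v)}{\eta^{\otimes k}(y)}.
\end{equation*}
Because $x\mapsto\sqrt{1+\alpha^2x^2}$ is $\alpha$-Lipschitz, $e^{-\alpha\|v\|_{\ell^1}}\le\tilde\rho_v\le e^{\alpha\|v\|_{\ell^1}}$; combined with the kernel estimate $\big|\Lambda^{\otimes k}(z-v-y)-\Lambda^{\otimes k}(z-y)\big|\le(e^{\|v\|_{\ell^1}}-1)\Lambda^{\otimes k}(z-y)$ (exactly the computation in the proof of Lemma~\ref{lem:translation_estimate}), the triangle inequality yields
\begin{equation*}
\big|\Lambda^{\otimes k}(z-v-y)\,\tilde\rho_v(y)-\Lambda^{\otimes k}(z-y)\big|\le\big(e^{(1+\alpha)\|v\|_{\ell^1}}-1\big)\Lambda^{\otimes k}(z-y),
\end{equation*}
\begin{equation*}
\Lambda^{\otimes k}(z-v-y)\,\tilde\rho_v(y)+\Lambda^{\otimes k}(z-y)\le\big(e^{(1+\alpha)\|v\|_{\ell^1}}+1\big)\Lambda^{\otimes k}(z-y).
\end{equation*}
Integrating these against $g_{\mathbf i}$ and using $\|v\|_{\ell^1}\le c$ gives the pointwise bounds $\big|\Lambda^{\otimes k}\star h_{\mathbf i}\big|\le(e^{(1+\alpha)c}-1)\,\Lambda^{\otimes k}\star g_{\mathbf i}$ and $\Lambda^{\otimes k}\star|h_{\mathbf i}|\le(e^{(1+\alpha)c}+1)\,\Lambda^{\otimes k}\star g_{\mathbf i}$.

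Summing over $\mathbf i$ against the weights $|w_{N,T}(\mathbf i)|$ then produces the global pointwise bounds $\big|\Lambda^{\otimes k}\star h\big|\le(e^{(1+\alpha)c}-1)\,\Lambda^{\otimes k}\star P$ and $\Lambda^{\otimes k}\star|h|\le(e^{(1+\alpha)c}+1)\,\Lambda^{\otimes k}\star P$, and the estimate closes via
\begin{equation*}
\begin{aligned}
\|\mathscr{R}_{N,T,m}\|_{H^{-1\otimes k}_\eta}^2&=\int h\,(\Lambda^{\otimes k}\star h)\le\int|h|\,\big|\Lambda^{\otimes k}\star h\big|\le\big(e^{(1+\alpha)c}-1\big)\int|h|\,\big(\Lambda^{\otimes k}\star P\big)\\
&=\big(e^{(1+\alpha)c}-1\big)\int\big(\Lambda^{\otimes k}\star|h|\big)\,P\le\big(e^{(1+\alpha)c}-1\big)\big(e^{(1+\alpha)c}+1\big)\int\big(\Lambda^{\otimes k}\star P\big)\,P\\
&=\big(e^{(2+2\alpha)c}-1\big)\,\big\|\,|\tau_N|(T)\big\|_{H^{-1\otimes k}_\eta}^2,
\end{aligned}
\end{equation*}
where the second line uses the symmetry of $\Lambda^{\otimes k}$ with Fubini, and the last line the identity $\int(\Lambda^{\otimes k}\star P)\,P=\|P\|_{H^{-1\otimes k}}^2$. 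For $\mathscr{\tilde R}_{N,T,m}$ one repeats the argument with $v$ replaced by $rv$ and an outer $\int_0^1\rd r$; since $\|rv\|_{\ell^1}\le\|v\|_{\ell^1}\le c$, the same two global pointwise bounds hold for $\mathscr{\tilde R}_{N,T,m}\,\eta^{\otimes k}$ after integrating in $r$, and the identical chain gives the same constant. Taking the maximum over the two remainders completes the proof.

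The substance of the argument is the interplay of the exponential shape of $\Lambda$ (which controls translations of mollified positive measures) with the exponential-type weight $\eta$ (which does not commute with translation): the weight–translation mismatch is captured exactly by the factor $\tilde\rho_v$, and it is what upgrades the exponent $1$ coming from $\Lambda$ to $1+\alpha$, while the final pairing of the pointwise control of $\Lambda^{\otimes k}\star h$ against $|h|$ doubles it to $2+2\alpha$ and, crucially, replaces a total-variation bound by the wanted $\|\,|\tau_N|(T)\|_{H^{-1\otimes k}_\eta}^2$. I expect the only genuinely careful bookkeeping to be the reduction $\|v\|_{\ell^1}\le c(w_N,|T|)$ via its two defining bounds, and keeping the sign of the weight correction straight in both pointwise inequalities; everything else is a direct application of the elementary facts in Section~\ref{sec:negative_Sobolev}.
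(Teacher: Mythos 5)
Your proof is correct and follows essentially the same route as the paper's: the same duality pairing $\int h\,(\Lambda^{\otimes k}\star h)$, the same two pointwise bounds on $\Lambda^{\otimes k}\star h$ and $\Lambda^{\otimes k}\star|h|$ against $\Lambda^{\otimes k}\star\big(|\tau_N|(T)\eta^{\otimes k}\big)$ via the exponential shape of $\Lambda$, the same Fubini/symmetry step, and the same reduction of $\mathscr{\tilde R}$ to the $\mathscr{R}$ estimate. Your explicit factor $\tilde\rho_v$ just makes visible the weight--translation mismatch that the paper absorbs implicitly into its $(1+\alpha)$ exponent when invoking Lemma~\ref{lem:translation_estimate}, and your remark that distinctness of the indices is needed for the column-sum bound on $\|v\|_{\ell^1}$ is a welcome (if minor) clarification.
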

Notice that the right hand side of the inequality is the ``absolute''   observables $|\tau_N|$ instead of $\tau_N$ as non-negativity plays a role in the proof.
The constant $\alpha > 0$ takes the effect of weight $\eta = \eta_\alpha$ into account.
\begin{proof} [Proof of Proposition~\ref{prop:translation_estimate_weighted}]
Once we obtain the bound of $\mathscr{R}_{N,T,m}$, we can derive the same bound of $\mathscr{\tilde R}_{N,T,m}$ by Minkowski inequality.
Hence, let us only consider $\mathscr{R}_{N,T,m}$.
For simplicity, we also omit $t$ variable in the proof.

By definition,
\begin{equation*}
\begin{aligned}
\| \mathscr{R}_{N,T,m} \|_{H^{-1 \otimes |T|}_\eta}^2 = \;& \int_{\R^{|T|}} \big[ \big(\mathscr{R}_{N,T,m}\eta^{\otimes n}\big) (z) \big] \big[ \Lambda^{\otimes n} \star \big( \mathscr{R}_{N,T,m}\eta^{\otimes n} \big) (z) \big] \;\rd z
\\
\leq \;& \int_{\R^{|T|}} \big| \big(\mathscr{R}_{N,T,m}\eta^{\otimes n}\big) (z) \big| \big| \Lambda^{\otimes n} \star \big( \mathscr{R}_{N,T,m}\eta^{\otimes n} \big) (z) \big| \;\rd z.
\end{aligned}
\end{equation*}
We recall the notation
\[
w_{N;j}^{i_1,\dots, i_{|T|}}=(w_{i_l,j;N})_{l=1}^{|T|},
\]
so that
\begin{equation*}
\begin{aligned}
\|w_{N;j}^{i_1,\dots, i_{|T|}}\|_{\ell^1} &\leq \min \left(|T| \big(\max_{i,j}|w_{i,j;N}| \big),\ \max\Big(\max_j \sum_i |w_{i,j;N}| , \max_i \sum_j |w_{i,j;N}|\Big)\right) \\
&= c(w,|T|).
\end{aligned}
\end{equation*}
By Lemma~\ref{lem:translation_estimate}, since the marginals are non-negative,
\begin{equation*}
\begin{aligned}
\;& \big| \Lambda^{\otimes n} \star \big( \mathscr{R}_{N,T,m}\eta^{\otimes n} \big) (z) \big|
\\
\leq \;& \frac{1}{N} \sum_{i_1,\dots, i_{|T|} = 1}^N \big| w_{N,T}(i_1,\dots, i_{|T|}) \big| \bigg| \Lambda^{\otimes n} \star \bigg( \big( f_{N}^{i_1,\dots, i_{|T|}}(\cdot - w_{N;i_m}^{i_1,\dots, i_{|T|}}) - f_{N}^{i_1,\dots, i_{|T|}}(\cdot) \big)\eta^{\otimes n} \bigg) (z) \bigg|
\\
\leq \;& \frac{1}{N} \sum_{i_1,\dots, i_{|T|} = 1}^N \big| w_{N,T}(i_1,\dots, i_{|T|}) \big| \big[ \exp \big( (1 + \alpha) c(w,|T|) \big) - 1 \big] \big[ \Lambda^{\otimes n} \star \big( f_{N}^{i_1,\dots, i_{|T|}} \eta^{\otimes n} \big) (z) \big]
\\
= \;& \big[ \exp \big( (1 + \alpha) c(w,|T|) \big) - 1 \big] \big[ \Lambda^{\otimes n} \star \big( |\tau_N|(T) \eta^{\otimes n} \big) (z) \big].
\end{aligned}
\end{equation*}

Then
\begin{equation*}
\begin{aligned}
\| \mathscr{R}_{N,T,m} \|_{H^{-1 \otimes |T|}_\eta}^2
\leq \;& \big[ \exp \big( (1 + \alpha) c(w,|T|) \big) - 1 \big] \int_{\R^{|T|}} \big| \big(\mathscr{R}_{N,T,m}\eta^{\otimes n}\big) (z) \big| \big[ \Lambda^{\otimes n} \star \big( |\tau_N|(T) \eta^{\otimes n} \big) (z) \big] \;\rd z
\\
\leq \;& \big[ \exp \big( (1 + \alpha) c(w,|T|) \big) - 1 \big] \int_{\R^{|T|}} \big[ \Lambda^{\otimes n} \star \big|\mathscr{R}_{N,T,m}\eta^{\otimes n} \big| (z) \big] \big[ \big( |\tau_N|(T) \eta^{\otimes n} \big) (z) \big] \;\rd z.\\
\end{aligned}
\end{equation*}
We hence need to bound also $\Lambda^{\otimes n} \star \big|\mathscr{R}_{N,T,m}\eta^{\otimes n} \big|$ with the absolute value inside but 
\begin{equation*}
\begin{aligned}
\;& \big( \Lambda^{\otimes n} \star \big|\mathscr{R}_{N,T,m}\eta^{\otimes n} \big| \big)(z)
\\
= \;& \frac{1}{N} \sum_{i_1,\dots, i_{|T|} = 1}^N \big| w_{N,T}(i_1,\dots, i_{|T|}) \big| \bigg[ \Lambda^{\otimes n} \star \bigg( \big( f_{N}^{i_1,\dots, i_{|T|}}(\cdot - w_{N;i_m}^{i_1,\dots, i_{|T|}}) + f_{N}^{i_1,\dots, i_{|T|}}(\cdot) \big)\eta^{\otimes n} \bigg) (z) \bigg]
\\
= \;& 2\,\Lambda^{\otimes n} \star \big( |\tau_N|(T) \eta^{\otimes n} \big) (z)\\
&+\frac{1}{N} \sum_{i_1,\dots, i_{|T|} = 1}^N \big| w_{N,T}(i_1,\dots, i_{|T|}) \big| \bigg[ \Lambda^{\otimes n} \star \bigg( \big( f_{N}^{i_1,\dots, i_{|T|}}(\cdot - w_{N;i_m}^{i_1,\dots, i_{|T|}}) - f_{N}^{i_1,\dots, i_{|T|}}(\cdot) \big)\eta^{\otimes n} \bigg) (z) \bigg].\\
\end{aligned}
\end{equation*}
Hence again by Lemma~\ref{lem:translation_estimate},
\begin{equation*}
\begin{aligned}
  \;& \big( \Lambda^{\otimes n} \star \big|\mathscr{R}_{N,T,m}\eta^{\otimes n} \big| \big)(z) \leq  2\,\Lambda^{\otimes n} \star \big( |\tau_N|(T) \eta^{\otimes n} \big) (z)\\
  &+\frac{1}{N} \sum_{i_1,\dots, i_{|T|} = 1}^N \big| w_{N,T}(i_1,\dots, i_{|T|}) \big| \big[ \exp \big( (1 + \alpha) c(w,|T|) \big) - 1 \big] \big[ \Lambda^{\otimes n} \star \big( f_{N}^{i_1,\dots, i_{|T|}} \eta^{\otimes n} \big) (z) \big]
\\
&\qquad =\big[ \exp \big( (1 + \alpha) c(w,|T|) \big) + 1 \big] \big[ \Lambda^{\otimes n} \star \big( |\tau_N|(T) \eta^{\otimes n} \big) (z) \big].
\end{aligned}
\end{equation*}
In conclusion
\begin{equation*}
\begin{aligned}
\| \mathscr{R}_{N,T,m} \|_{H^{-1 \otimes |T|}_\eta}^2
\leq \;& \big[ \exp \big( (1 + \alpha) c(w,|T|) \big)^2 - 1 \big] \int_{\R^{|T|}} \big[ \Lambda^{\otimes n} \star \big( |\tau_N|(T) \eta^{\otimes n} \big) (z) \big] \big[ \big( |\tau_N|(T) \eta^{\otimes n} \big) (z) \big] \;\rd z
\\
= \;& \big[ \exp \big( (2 + 2\alpha) c(w,|T|) \big) - 1 \big] \||\tau_N|(T)\|_{H^{-1 \otimes |T|}_\eta}^2.
\end{aligned}
\end{equation*}

\end{proof}
\subsection{Bounding the firing rate through exponential moments}
We present here another set of technical result which shows how to handle the weight function in our subsequent commutator estimates. 
%
\begin{lem} \label{lem:firing_rate_difference}
Consider weight function $\eta = \eta_\alpha$ and any signed measure $f \in \mathcal{M}(\R)$. The following estimate holds:
\begin{equation*}
\begin{aligned}
  \bigg| \int_{\R} K \star (\nu f) \;\rd x\bigg| = \bigg| \int_{\R} \nu f \;\rd x\bigg|  
  \leq C (\alpha) \|\nu\|_{W^{1,\infty}} \|f\|_{H^{-1}_\eta},
\end{aligned}
\end{equation*}
where $C(\alpha)$ only depends on $\alpha > 0$.
\end{lem}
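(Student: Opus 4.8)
The plan is to separate the claimed identity from the claimed inequality. For the identity, note that $\int_\R K\star g\,\rd x = \big(\int_\R K\big)\big(\int_\R g\big)$ by Fubini (legitimate since $K\geq 0$, $K\in L^1(\R)$, and $g:=\nu f$ is a finite signed measure), while $\int_\R K = \widehat K(0) = (1+0)^{-1/2} = 1$; taking $g=\nu f$ gives $\int_\R K\star(\nu f)\,\rd x = \int_\R \nu f\,\rd x$.

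For the inequality I would assume without loss of generality that $\|f\|_{H^{-1}_\eta}<\infty$ and write
\[
\int_\R \nu f\,\rd x = \int_\R \big(\nu\,\eta^{-1}\big)\,\big(f\,\eta\big)\,\rd x,
\]
which is meaningful as an absolutely convergent integral since the integrand is the measure $\nu f$ and $\int_\R |\nu f|\leq \|\nu\|_{L^\infty}\|f\|_{\mathcal{M}(\R)}<\infty$. Applying the $H^1$--$H^{-1}$ duality recalled in Section~\ref{subsec:negative_Sobolev} with the $H^1(\R)$ test function $\nu\eta^{-1}$, and using that by definition $\|f\eta\|_{H^{-1}(\R)} = \|K\star(f\eta)\|_{L^2} = \|f\|_{H^{-1}_\eta}$, we obtain
\[
\bigg|\int_\R \nu f\,\rd x\bigg|\leq \|\nu\,\eta^{-1}\|_{H^1(\R)}\,\|f\|_{H^{-1}_\eta},
\]
so the lemma reduces to the weight estimate $\|\nu\,\eta^{-1}\|_{H^1(\R)}\leq C(\alpha)\|\nu\|_{W^{1,\infty}}$.

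This last bound is an explicit computation with $\eta^{-1}(x) = C_\alpha^{-1}\exp\big(-\sqrt{1+\alpha^2x^2}\big)$. Since $\eta^{-1}$ decays like $e^{-\alpha|x|}$, it lies in $L^2(\R)$ with $\|\eta^{-1}\|_{L^2}$ a finite constant depending only on $\alpha$. Moreover $(\eta^{-1})' = -\,\eta^{-1}\,\tfrac{\alpha^2 x}{\sqrt{1+\alpha^2x^2}}$, and $\alpha|x|\leq\sqrt{1+\alpha^2x^2}$ yields the pointwise bound $|(\eta^{-1})'|\leq \alpha\,\eta^{-1}$. Writing $(\nu\eta^{-1})' = \nu'\eta^{-1}+\nu(\eta^{-1})'$ and using $\|g\|_{H^1}\leq\|g\|_{L^2}+\|g'\|_{L^2}$ (valid with the paper's normalization of $H^1$) then gives $\|\nu\eta^{-1}\|_{H^1}\leq(\|\nu\|_\infty+\|\nu'\|_\infty+\alpha\|\nu\|_\infty)\|\eta^{-1}\|_{L^2}\leq(2+\alpha)\|\eta^{-1}\|_{L^2}\,\|\nu\|_{W^{1,\infty}}$, i.e. the claim with $C(\alpha) = (2+\alpha)\|\eta_\alpha^{-1}\|_{L^2}$.

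There is no genuine obstacle here; the one point requiring a word of care is matching the distributional pairing implicit in the duality step with the elementary integral $\int_\R\nu f\,\rd x$ — this is handled by the absolute-convergence remark above, since $f\eta\in H^{-1}(\R)$ precisely because $\|f\|_{H^{-1}_\eta}<\infty$, and its pairing with the continuous function $\nu\eta^{-1}\in H^1(\R)$ is then given by the ordinary integral. Alternatively, one could bypass any mention of $H^{-1}$ duality and argue directly in Fourier variables via Plancherel and Cauchy--Schwarz, using $\widehat K(\xi) = (1+4\pi^2\xi^2)^{-1/2}$, which reduces the estimate to exactly the same bound on $\|\nu\eta^{-1}\|_{H^1(\R)}$.
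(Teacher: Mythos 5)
Your argument is correct, and it takes a genuinely different route from the paper. You prove the estimate by a single global duality step: writing $\int \nu f = \int (\nu\,\eta^{-1})\,(f\eta)$ and using $\bigl|\int h g\bigr| \le \|g\|_{H^{1}}\|h\|_{H^{-1}}$ with $h=f\eta$, so that everything reduces to the elementary bound $\|\nu\,\eta^{-1}\|_{H^{1}} \le C(\alpha)\|\nu\|_{W^{1,\infty}}$, which follows from $\eta_\alpha^{-1}\in L^{2}$ and $|(\eta_\alpha^{-1})'|\le \alpha\,\eta_\alpha^{-1}$. The paper instead localizes: it introduces a partition of unity $\varphi_i=\varphi(\cdot-i)$, bounds each piece $\int(\nu/\eta)f\eta\,\varphi_i$ in $L^2$ on an interval of fixed length, compares $\hat\varphi$ with $\hat K$ to pass to the $H^{-1}_\eta$ norm, applies the $W^{1,\infty}$-multiplier estimate (Lemma~\ref{lem:commutator_inequality}) with weight $\|(\nu/\eta)\varphi_i\|_{W^{1,\infty}}$, and sums the resulting series $\sum_i\|\varphi_i/\eta\|_{W^{1,\infty}}\lesssim (1+\alpha)\sum_i e^{-\alpha|i|}$. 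Both proofs exploit exactly the same mechanism --- the exponential decay of $1/\eta$ pays for converting a global integral into an $L^2$-type quantity --- but your version is shorter and gives an explicit constant, while the paper's localization keeps every intermediate object a compactly supported integral and thus sidesteps any question about what the pairing of $f\eta$ with a non-compactly-supported $H^1$ function means. That is the one point in your write-up that deserves a full sentence rather than a remark: since $f\eta$ may fail to be a finite measure, you should justify $\int(\nu\eta^{-1})\,d(f\eta)=\langle f\eta,\nu\eta^{-1}\rangle_{H^{-1},H^{1}}$ by approximating $\nu\eta^{-1}$ with smooth compactly supported $\phi_n\to\nu\eta^{-1}$ in $H^1$ which are in addition dominated by $C\,\eta^{-1}$ (using $\eta^{-1}(x+h)\le e^{\alpha|h|}\eta^{-1}(x)$ for the mollification); dominated convergence applies on the measure side because $\int \eta^{-1}\,d|f\eta| = |f|(\R)<\infty$, and the $H^1$ convergence handles the dual side. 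With that sentence added, your proof is complete and is a legitimate alternative to the paper's.
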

\begin{proof}
  Only the inequality in the statement is not trivial. 
Choose now a non-negative, smooth function $\varphi$ with compact support $\supp \varphi \subset [-1,1]$, such that, $\varphi_{i} = \varphi(\cdot - i)$, $i \in \N$ form a partition of unity of $\R$ in the usual sense that
\begin{equation*}
\begin{aligned}
\sum_{i = -\infty}^{\infty} \varphi(x - i) \equiv 1, \quad \forall x \in \R.
\end{aligned}
\end{equation*}
It is easy to verify that
\begin{equation*}
\begin{aligned}
\int_{\R} \varphi \;\rd x = 1.
\end{aligned}
\end{equation*}
Then
\begin{equation*}
\begin{aligned}
\bigg| \int_{\R} \nu f \;\rd x\bigg|
= \;& \bigg| \int_{\R} (\nu/\eta) f \eta \;\rd x\bigg|
\leq \sum_{i = -\infty}^{\infty} \bigg| \int_{\R} (\nu/\eta) f \eta \,\varphi_i \;\rd x\bigg| = \sum_{i = -\infty}^{\infty} \bigg| \int_{\R} \varphi \star \Big(  (\nu/\eta) f \eta\,\varphi_i \Big) \;\rd x\bigg|
\\
\leq \;& C \sum_{i = -\infty}^{\infty} \bigg( \int_{\R} \Big| \varphi \star \Big( (\nu/\eta) f \eta\,\varphi_i \Big) \Big|^2 \;\rd x \bigg)^{\frac{1}{2}},
\end{aligned}
\end{equation*}
where in the last line we use that each integrand is supported in $[-2 + i ,2 + i]$.

From the smoothness of $\varphi$, its Fourier transform can be bounded by
\begin{equation*}
\begin{aligned}
\hat \varphi(\xi) \leq \frac{C}{\sqrt{1 + 4 \pi^2 \xi^2}} = C \hat K(\xi).
\end{aligned}
\end{equation*}
Hence we further have from Lemma~\ref{lem:stability_under_tensorization},
\begin{equation*}
\begin{aligned}
\bigg| \int_{\R} \nu f \;\rd x\bigg|
\leq \;& C \sum_{i = -\infty}^{\infty} \bigg( \int_{\R} \Big| K \star \Big(  (\nu/\eta) f \eta\,\varphi_i \Big) \Big|^2 \;\rd x \bigg)^{\frac{1}{2}},
\\
\leq \;& C \sum_{i = -\infty}^{\infty} \|(\nu/\eta)\,\varphi_i \|_{W^{1,\infty}} \bigg( \int_{\R} \big| K \star (f \eta) \big|^2 \;\rd x \bigg)^{\frac{1}{2}}
\\
\leq \;& C \bigg( \sum_{i = -\infty}^{\infty} \|\varphi_i /\eta \|_{W^{1,\infty}} \bigg) \|\nu\|_{W^{1,\infty}} \|f \eta\|_{H^{-1}},
\end{aligned}
\end{equation*}
where the constant $C$ is some universal constant which may change line by line.

Since each $\varphi_i$ is a translation of $\varphi$ and has support in $[-1+i,1+i]$, it is easy the check the uniform bound
\begin{equation*}
\begin{aligned}
\sum_{i = -\infty}^{\infty} \|\varphi_i /\eta \|_{W^{1,\infty}} \leq C (1 + \alpha) \sum_{i = -\infty}^{\infty} \exp(-\alpha |i|) < \infty,
\end{aligned}
\end{equation*}
where the constant only depends on the particular choice of $\varphi$, which concludes the proof.
\end{proof}
This lemma also admits the following tensorization. 
\begin{lem} \label{lem:firing_rate_difference_tensorized}
For $f \in \mathcal{M}(\R^k)\cap H^{-1 \otimes k}_\eta$,
\begin{equation*}
\begin{aligned}
\;& \int_{\R^{k-1}} \bigg( \int_{\R} K^{\otimes k} \star \big( (\nu_m/\eta_m) f \eta^{\otimes k} \big)(t,z) \;\rd z_m \bigg)^2 \prod_{n \neq m} \;\rd z_n
\\
\leq \;& C (\alpha)^2 \|\nu\|_{W^{1,\infty}}^2 \|f\|_{H^{-1 \otimes k}_\eta}^2,
\end{aligned}
\end{equation*}
where we recall the notations $\nu_m=\nu(z_m)$ and $\eta_m=\eta(z_m)$.
\end{lem}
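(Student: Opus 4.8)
The plan is to reduce this tensorized estimate, by a single partial Fourier transform, to the one–dimensional Lemma~\ref{lem:firing_rate_difference}. Without loss of generality take $m=k$, write $\tilde f \defeq f\,\eta^{\otimes k}$ (so that $\|f\|_{H^{-1\otimes k}_\eta}=\|K^{\otimes k}\star\tilde f\|_{L^2(\R^k)}$), and set $g\defeq(\nu_k/\eta_k)\,\tilde f$. Since $(\nu_k/\eta_k)\,\eta^{\otimes k}=\eta^{\otimes(k-1)}(z_1,\dots,z_{k-1})\,\nu(z_k)$, one has $g(z)=\eta^{\otimes(k-1)}(z')\,\nu(z_k)\,f(z',z_k)$ with $z'=(z_1,\dots,z_{k-1})$. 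By a routine density argument I would first assume $f$ smooth with rapid decay, so that Fubini and the partial Fourier transform below are legitimate; the general case then follows since both sides of the asserted inequality are continuous in the relevant norms.

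The first step is to integrate out the $z_k$ variable. Using $\int_\R K(z_k-y_k)\,\rd z_k=\widehat K(0)=1$ together with Fubini,
\begin{equation*}
\int_\R \big(K^{\otimes k}\star g\big)(z)\,\rd z_k=\big(K^{\otimes(k-1)}\star_{z'}G\big)(z'),\qquad G(z')\defeq\int_\R g(z',y_k)\,\rd y_k=\eta^{\otimes(k-1)}(z')\,\Psi(z'),
\end{equation*}
where $\Psi(z')\defeq\int_\R\nu(y_k)\,f(z',y_k)\,\rd y_k$. Squaring and integrating in $z'$ identifies the left-hand side of the lemma with $\|K^{\otimes(k-1)}\star(\eta^{\otimes(k-1)}\Psi)\|_{L^2(\R^{k-1})}^2=\|\Psi\|_{H^{-1\otimes(k-1)}_\eta}^2$, so it remains to prove
\begin{equation*}
\|\Psi\|_{H^{-1\otimes(k-1)}_\eta}\leq C(\alpha)\,\|\nu\|_{W^{1,\infty}}\,\|f\|_{H^{-1\otimes k}_\eta}.
\end{equation*}

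For this I would apply the partial Fourier transform $\mathcal{F}^{\otimes(k-1)}$ in the variables $z'$, exactly as in the proof of Lemma~\ref{lem:commutator_inequality}. Writing $h_{\xi'}(y_k)\defeq\mathcal{F}^{\otimes(k-1)}\big[\tilde f(\cdot,y_k)\big](\xi')$, one gets
\begin{equation*}
\mathcal{F}^{\otimes(k-1)}\big[\eta^{\otimes(k-1)}\Psi\big](\xi')=\int_\R\frac{\nu(y_k)}{\eta(y_k)}\,h_{\xi'}(y_k)\,\rd y_k=\int_\R\nu(y_k)\,\frac{h_{\xi'}(y_k)}{\eta(y_k)}\,\rd y_k,
\end{equation*}
and for each fixed $\xi'$ I would apply Lemma~\ref{lem:firing_rate_difference} to $h_{\xi'}/\eta$ (its proof extends verbatim to complex-valued arguments, or one splits into real and imaginary parts). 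Since $\|h_{\xi'}/\eta\|_{H^{-1}_\eta}=\|K\star h_{\xi'}\|_{L^2(\R)}=\|h_{\xi'}\|_{H^{-1}(\R)}$, this gives $|\mathcal{F}^{\otimes(k-1)}[\eta^{\otimes(k-1)}\Psi](\xi')|\leq C(\alpha)\,\|\nu\|_{W^{1,\infty}}\,\|h_{\xi'}\|_{H^{-1}(\R)}$. Substituting into the Fourier expression for $\|\Psi\|_{H^{-1\otimes(k-1)}_\eta}^2$ and invoking Plancherel in $y_k$ together with the identity $\int_{\R^{k-1}}\prod_{m=1}^{k-1}(1+4\pi^2\xi_m^2)^{-1}\,\|\mathcal{F}^{\otimes(k-1)}\tilde f(\xi',\cdot)\|_{H^{-1}(\R)}^2\,\rd\xi'=\|\tilde f\|_{H^{-1\otimes k}}^2=\|f\|_{H^{-1\otimes k}_\eta}^2$ (as in the proof of Lemma~\ref{lem:commutator_inequality}) closes the estimate. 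I do not expect a deep obstacle here: the crucial point is simply that the exponential weight $\eta$ turns the unbounded multiplier $\nu$ into the rapidly decaying multiplier $\nu/\eta$, which is exactly what makes Lemma~\ref{lem:firing_rate_difference} applicable fibrewise in the Fourier variable; the only care needed is in justifying Fubini and the partial Fourier transform of measures, handled by density, and in working with the complex-valued slices $h_{\xi'}$.
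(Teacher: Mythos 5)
Your argument is correct, and it reaches the estimate by a route that differs from the paper's in where the fibrewise reduction happens. The paper's proof stays entirely in physical space: it sets $g \defeq K^{\otimes(k-1)} \star_{1,\dots,k-1}(f\,\eta^{\otimes(k-1)})$, notes that integrating the full convolution in $z_k$ leaves exactly $\int_\R \nu(z_k)\,g(z',z_k)\,\rd z_k$, applies Lemma~\ref{lem:firing_rate_difference} for each fixed slice $z'\in\R^{k-1}$, and then uses the identity $K\star_k(g\,\eta_k)=K^{\otimes k}\star(f\,\eta^{\otimes k})$ to reassemble $\|f\|_{H^{-1\otimes k}_\eta}$ — no Fourier transform, no complex-valued functions, and no separate density argument beyond what is already implicit. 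You instead first identify the left-hand side with $\|\Psi\|_{H^{-1\otimes(k-1)}_\eta}^2$ for $\Psi(z')=\int_\R\nu\,f(z',\cdot)$, then mimic the proof of Lemma~\ref{lem:commutator_inequality}: partial Fourier transform in $z'$, application of the one-dimensional lemma fibrewise in the frequency variable $\xi'$ (to the complex slices $h_{\xi'}/\eta$, which you correctly note requires either the verbatim complex extension or a real/imaginary splitting), and Plancherel to close. What your route buys is the clean intermediate statement $\|\int_\R\nu\,f\,\rd z_k\|_{H^{-1\otimes(k-1)}_\eta}\leq C(\alpha)\|\nu\|_{W^{1,\infty}}\|f\|_{H^{-1\otimes k}_\eta}$, which is of independent use; what it costs is the extra bookkeeping you flag (Fubini and partial Fourier for measures handled by density, complex-valued slices), all of which the paper's physical-space slicing sidesteps. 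Both proofs rest on the same two ingredients — $\int K=1$ to collapse the $z_k$ convolution and Lemma~\ref{lem:firing_rate_difference} applied fibrewise — so the difference is one of presentation and generality rather than substance.
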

\begin{proof}
Without any loss of generality, we may assume $m = k$ and define
\begin{equation*}
\begin{aligned}
g(z) = \;& \Big[ K^{\otimes (k-1)} \star_{1,\dots,(k-1)} (f \eta^{\otimes (k-1)}) \Big] (z)
\\
= \;& \int_{\R^{k-1}} {\textstyle \prod_{n=1}^{k-1} K(u_n - z_n)} f(u_1,\dots,u_{k-1},z_{k}) {\textstyle \prod_{n=1}^{k-1} \eta(u_n) \; \rd u_n}.
\end{aligned}
\end{equation*}
Then, from the previous Lemma,
\begin{equation*}
\begin{aligned}
\;& \int_{\R^{k-1}} \bigg( \int_{\R} K^{\otimes k} \star \big( (\nu_k/\eta_k) f \eta^{\otimes k} \big)(t,z) \;\rd z_k \bigg)^2 \prod_{n = 1}^{k-1} \;\rd z_n
\\
= \;& \int_{\R^{k-1}} \bigg( \int_{\R} \nu(z_k)\, g(t,z)\,\rd z_k \bigg)^2 \prod_{n = 1}^{k-1} \;\rd z_n
\\
\leq \;& \int_{\R^{k-1}} C (\alpha)^2 \|\nu\|_{W^{1,\infty}}^2 \int_{\R} \bigg( \big[ K \star_k (g \eta_k) \big](t,z_1,\dots,z_k) \bigg)^2 \;\rd z_k \prod_{n = 1}^{k-1} \;\rd z_n
\\
= \;& C (\alpha)^2 \|\nu\|_{W^{1,\infty}}^2 \int_{\R^k} \bigg( \big[ K^{\otimes k} \star (f \eta^{\otimes k}) \big] (t,z_1,\dots,z_k) \bigg)^2 \; \prod_{n = 1}^{k} \;\rd z_n,
\end{aligned}
\end{equation*}
which concludes.
\end{proof}
\section{The limiting observables from Vlasov equation} \label{sec:observables}
This section is centered on the limiting observables $\tau_\infty(T,w,f)$, $T \in \mathcal{T}$. 
We first show that Definition~\ref{defi:limiting_observables} is still correct  when the kernel and extended density are merely $w \in \mathcal{W}$ and $f \in L^\infty([0,t_*] \times [0,1]; \mathcal{M}_+(\R))$.   We also prove Proposition~\ref{prop:independent_compactness}, which shows the compactness can be attained not only at the level of weak-* topology of each limiting observable $\tau_\infty$, $T \in \mathcal{T}$, but also directly at the level of $w$ and $f$.

Contrary to the rest of the paper, this section owes much to the technical  framework developed in \cite{JaPoSo:21}, that it extends to our setting.
\subsection{Revisiting the definition of limiting observables}
A motivation behind introducing the Banach space $\mathcal{W}$ in its current form is due to its ability to operate as a $L^p \to L^p$ mapping.
\begin{lem} \label{lem:L_p_operator} Consider the following bounded linear operator
\begin{equation*}
\begin{aligned}
\mathcal{W} \times C([0,1]; B) \to \;& L^\infty( [0,1]; B)
\\
(w, \phi) \mapsto \;& \int_{[0,1]} \phi(\cdot, \zeta) w (\cdot, \rd \zeta)
\end{aligned}
\end{equation*}
where $B$ stands for any Banach space such as $L^p(\R)$. Then this operator can be uniquely extended to $\mathcal{W} \times L^\infty([0,1]; B) \to L^\infty( [0,1]; B)$ with
\begin{equation*}
\begin{aligned}
\left\| \int_{[0,1]} \phi(\cdot, \zeta) w (\cdot, \rd \zeta) \right\|_{L^p([0,1]; B)} \leq \|w\|_{\mathcal{W}} \|\phi\|_{L^p([0,1]; B)}.
\end{aligned}
\end{equation*}
\end{lem}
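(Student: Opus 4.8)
The plan is to verify the quoted bound on the dense subspace $C([0,1];B)$ first and then extend by continuity. Fix $w \in \mathcal{W}$ and $\phi \in C([0,1];B)$. For each fixed $\xi \in [0,1]$, the map $\zeta \mapsto \phi(\xi,\zeta)$ is continuous into $B$, and $w(\xi,\rd\zeta) \in \mathcal{M}_\zeta[0,1]$ by definition of $\mathcal{W}$; hence the Bochner integral $\Psi(\xi) \defeq \int_{[0,1]} \phi(\xi,\zeta)\, w(\xi,\rd\zeta)$ is well defined in $B$, with
\begin{equation*}
\|\Psi(\xi)\|_B \leq \int_{[0,1]} \|\phi(\xi,\zeta)\|_B \, |w|(\xi,\rd\zeta) \leq \|w(\xi,\cdot)\|_{\mathcal{M}_\zeta[0,1]} \, \sup_{\zeta}\|\phi(\xi,\zeta)\|_B.
\end{equation*}
Since $w(\xi,\rd\zeta) \in L^\infty_\xi([0,1],\mathcal{M}_\zeta[0,1])$ with norm at most $\|w\|_{\mathcal{W}}$, and since $\xi \mapsto \sup_\zeta \|\phi(\xi,\zeta)\|_B = \|\phi(\xi,\cdot)\|_{C([0,1];B)}$ is measurable (indeed continuous), taking the $L^p_\xi$ norm gives $\|\Psi\|_{L^p([0,1];B)} \leq \|w\|_{\mathcal{W}}\,\|\phi\|_{L^p([0,1];C([0,1];B))}$; specializing to $p=\infty$ shows the operator maps into $L^\infty([0,1];B)$ as claimed. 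I would also remark here that measurability of $\xi \mapsto \Psi(\xi)$ follows because for continuous $\phi$ it is a uniform limit of Riemann-type sums $\sum_k \phi(\xi,\zeta_k)\, w(\xi, E_k)$, each of which is measurable in $\xi$ by the definition of $L^\infty_\xi([0,1],\mathcal{M}_\zeta[0,1])$ as the dual of $L^1_\xi([0,1],C_\zeta[0,1])$.

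Next I would upgrade from $\phi \in C([0,1];B)$ to $\phi \in L^\infty([0,1];B)$ (or more generally $L^p([0,1];B)$). The inequality just proved says the bilinear map is bounded from $\mathcal{W} \times C([0,1];B) \to L^\infty([0,1];B)$ when the middle space carries the $L^p$ norm; but $C([0,1];B)$ is \emph{not} dense in $L^\infty([0,1];B)$, so a little care is needed about in which space one extends. The cleanest route is: fix $p < \infty$; then $C([0,1];B)$ is dense in $L^p([0,1];B)$, and the estimate $\|\Psi\|_{L^p} \leq \|w\|_{\mathcal{W}}\|\phi\|_{L^p}$ lets one extend $\phi \mapsto \Psi$ uniquely to a bounded operator $L^p([0,1];B) \to L^p([0,1];B)$ for each such $p$. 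One then checks these extensions are consistent (they agree on the dense common subspace $C([0,1];B)$, hence on $L^{p_1}\cap L^{p_2}$ by a further density/diagonal argument), and that if $\phi \in L^\infty \subset L^p$ then in fact $\Psi \in L^\infty$ with the $L^\infty$ bound: this last point follows by applying the $L^p$ bound on every measurable subset $A \subset [0,1]$ to $\phi \mathbbm{1}_A$, using $\|\Psi \mathbbm{1}_A\|_{L^p} \leq \|w\|_{\mathcal{W}} \|\phi\mathbbm{1}_A\|_{L^p} \leq \|w\|_{\mathcal{W}}\|\phi\|_{L^\infty}|A|^{1/p}$, dividing by $|A|^{1/p}$, and letting $A$ shrink around a Lebesgue point, or more simply by noting $\operatorname{esssup}_\xi \|\Psi(\xi)\|_B = \lim_{p\to\infty}\|\Psi\|_{L^p([0,1];B)}$ on a finite measure space.

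I expect the main obstacle to be the \textbf{measurability bookkeeping}: making rigorous that $\xi \mapsto \int \phi(\xi,\zeta)w(\xi,\rd\zeta)$ defines an element of $L^\infty([0,1];B)$, i.e. that it is strongly $\mu$-measurable with essentially bounded norm, given only that $w$ lies in the \emph{dual} space $L^\infty_\xi([0,1],\mathcal{M}_\zeta[0,1])$ rather than being a genuine measurable family of measures. For $B$ separable (which covers $L^p(\R)$, $1\le p<\infty$) Pettis's theorem reduces this to weak measurability, which can be tested against a countable separating family in $B^*$, and for each such functional the integral becomes a scalar pairing that is measurable essentially by definition of the dual space $\mathcal{W} \subset \mathcal{M}_{\xi,\zeta}([0,1]^2)$; the Riemann-sum approximation above then fills the remaining gap. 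The rest — the norm inequality and the density extension — is routine, and the statement also follows the corresponding argument in \cite{JaPoSo:21}, which I would cite for the details.
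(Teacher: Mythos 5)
Your pointwise estimate only yields $\|\Psi(\xi)\|_B \le \|w(\xi,\cdot)\|_{\mathcal{M}_\zeta[0,1]}\,\sup_\zeta\|\phi(\zeta)\|_B$, i.e. control of the output by the $C([0,1];B)$-norm of $\phi$ in the $\zeta$ variable (note that in the statement $\phi$ depends only on $\zeta$; the dot in $\phi(\cdot,\zeta)$ is the $B$-variable, so your intermediate quantity $\|\phi\|_{L^p_\xi(C_\zeta(B))}$ is just $\|\phi\|_{C([0,1];B)}$). This is strictly weaker than the inequality you then invoke, $\|\Psi\|_{L^p_\xi(B)}\le\|w\|_{\mathcal{W}}\|\phi\|_{L^p_\zeta(B)}$, and boundedness with respect to the sup norm on the dense class $C([0,1];B)$ does not allow any extension by density to $L^p$: you must prove the $L^p$--$L^p$ bound on the dense class first, and your proposal never does. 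Indeed it cannot be done with the only property you use, namely $w(\xi,\rd\zeta)\in L^\infty_\xi([0,1],\mathcal{M}_\zeta[0,1])$: for $w=\rd\xi\otimes\delta_{\zeta_0}(\rd\zeta)$ one has $\sup_\xi\|w(\xi,\cdot)\|_{\mathcal{M}}=1$, yet $\Psi(\xi)=\phi(\zeta_0)$ is neither controlled by $\|\phi\|_{L^p_\zeta(B)}$ for $p<\infty$ nor even well defined on Lebesgue equivalence classes. What rescues the lemma is the second half of the definition of $\mathcal{W}$, $w(\rd\xi,\zeta)\in L^\infty_\zeta([0,1],\mathcal{M}_\xi[0,1])$, which forces the $\zeta$-marginal of $|w|$ to be absolutely continuous with density at most $\|w\|_{\mathcal{W}}$; this hypothesis never enters your argument, and it is precisely what gives the $p=1$ endpoint after a Fubini-type exchange. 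This is the actual content of the lemma, and it is why the paper argues by endpoints plus interpolation: the case $p=\infty$ uses $L^\infty_\xi\mathcal{M}_\zeta$ (as you did), the case $p=1$ uses $L^\infty_\zeta\mathcal{M}_\xi$, both via the careful density argument of Lemma~3.8 in \cite{JaPoSo:21}, and $1<p<\infty$ follows by Banach-space interpolation.

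Two smaller remarks. First, your localization trick for passing to $p=\infty$, namely $\|\Psi\mathbbm{1}_A\|_{L^p}\le\|w\|_{\mathcal{W}}\|\phi\mathbbm{1}_A\|_{L^p}$, conflates the $\xi$-restriction of the output with the $\zeta$-restriction of the input; the operator is nonlocal between these variables, so this inequality is unjustified (your alternative route via $\esssup_\xi\|\Psi(\xi)\|_B=\lim_{p\to\infty}\|\Psi\|_{L^p([0,1];B)}$ is fine once the finite-$p$ bounds are available). Second, the measurability discussion (weak-* measurable representation of elements of the dual of $L^1_\xi([0,1],C_\zeta[0,1])$, Pettis for separable $B$) is reasonable and in the spirit of the reference, but it does not repair the missing $L^p$ estimate, which is the step that would fail as written.
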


\begin{proof}
The cases for $p = 1$ and $p = \infty$ can be checked through a careful but straightforward density argument, for which we refer to Lemma~3.8 in \cite{JaPoSo:21}. 
Extending the result to $1 < p < \infty$ is an application of textbook result of interpolation between Banach spaces, which can be found in \cite{BeLo:76} for instance.
\end{proof}
The integrals appearing in Definition~\ref{defi:limiting_observables} can then be made rigorous by sequentially consider the integrations as operations $L^p \to L^p$. To assist such argument, we follow again \cite{JaPoSo:21} and introduce the following countable algebra, which, as we see later, contains all necessary information to reproduce the limiting observables $\tau_\infty(T,w,f)$, $T \in \mathcal{T}$.
\begin{defi}[A countable algebra] \label{defi:tree_algebra}
We denote by $\mathscr{T}$ the countable algebra of transforms over spaces of arbitrarily large dimensions which is built as follows: For each transform $F \in \mathscr{T}$ there exists $k \in \N$ (called the rank of $F$) so that $F$ maps each couple $(w,f)$ into a signed measure $F(w,f) \in L^\infty([0,1]; \mathcal{M}(\R^k))$.
The full algebra $\mathscr{T}$ is obtained in a recursive way according to the following three rules:
\begin{itemize}
\item[(i)] (Seed). The elementary $1$-rank transform $F_0: (w,f) \mapsto f$ belongs to the algebra $\mathscr{T}$.

\item[(ii)] (Graft). Let $F_1 \in \mathscr{T}$ and $F_2 \in \mathscr{T}$ be $k_1$ rank and $k_2$ rank transforms respectively. Then, the following $(k_1 + k_2)$-rank transform $(F_1 \otimes F_2)$ also belongs to $\mathscr{T}$:
\begin{equation*}
\begin{aligned}
\;& (F_1 \otimes F_2)(w,f):
\\
\;& (\xi,z_1,\dots,z_{k_1 + k_2}) \mapsto F_1(w,f)(\xi,z_1,\dots,z_{k_1}) F_2(w,f)(\xi, z_{k_1+1},\dots,z_{k_1 + k_2}).
\end{aligned}
\end{equation*}

\item[(iii)] (Grow). Let $F \in \mathscr{T}$ be a $k$-rank transform. Then, the following $k$-rank transform $F^*$ also belongs to $\mathscr{T}$:
\begin{equation*}
\begin{aligned}
\;& F^*(w,f): 
\\
\;& (\xi, z_1,\dots,z_k) \mapsto \int_{[0,1]} F(w,f) (\zeta, z_1,\dots,z_k) w(\xi,\rd \zeta).
\end{aligned}
\end{equation*}

\end{itemize}

\end{defi}


The following lemma shows that the transform of the countable algebra $\mathscr{T}$ are well-defined on $\mathcal{W}$.
\begin{lem} \label{lem:tree_algebra}
Consider any kernel $w \in \mathcal{W}$ and extended density $f \in L^\infty([0,1]; H^{-1}_\eta \cap \mathcal{M}_+(\R))$.
Then for each $F \in \mathscr{T}$, the signed measure $F(w,f)$ is well-defined and belongs to $L^\infty([0,1]; H^{-1 \otimes k}_\eta \cap \mathcal{M}_+(\R^k))$ for some $k \in \N$. Moreover, as $n \to \infty$,
\begin{equation*}
\begin{aligned}
F(w^{(n)}, f^{(n)}) \to F(w, f) \quad \text{ in } \quad L^2([0,1]; H^{-1 \otimes k}_\eta)
\end{aligned}
\end{equation*}
for any fixed $F \in \mathscr{T}$, any sequence $\{f^{(n)}\}_{n=1}^\infty$ uniformly bounded in $L^\infty([0,1]; H^{-1}_\eta \cap \mathcal{M}_+(\R))$, and any sequence $\{w^{(n)}\}_{n=1}^\infty$ uniformly bounded in $\mathcal{W}$, satisfying
\begin{equation*}
\begin{aligned}
f^{(n)} \to \;& f  && \text{ in } \quad L^\infty([0,1]; H^{-1}_\eta(\R)),
\\
w^{(n)}(\xi,\zeta) \to \;& w(\xi,\zeta) && \text{ in } \quad L^2_\xi H^{-1}_\zeta \cap L^2_\zeta H^{-1}_\xi.
\end{aligned}
\end{equation*}
\end{lem}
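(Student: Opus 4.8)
The plan is to argue by structural induction on the three rules (Seed, Graft, Grow) that generate $\mathscr{T}$, carrying along with the two assertions of the lemma a third, quantitative one: for each $F\in\mathscr{T}$ of rank $k$ there are constants $C_F,p_F,q_F$ depending only on $F$ with $\|F(w,f)\|_{L^\infty_\xi H^{-1\otimes k}_\eta}\le C_F\|w\|_{\mathcal{W}}^{p_F}\|f\|_{L^\infty_\xi H^{-1}_\eta}^{q_F}$, together with the pointwise-in-$\xi$, measure-theoretic domination $|F(w,f)|\le F(|w|,f)$ (note $|w|\in\mathcal{W}$ with the same norm). The quantitative bound is what will make the $L^\infty_\xi$-norms that appear along the way uniform in $n$, since $\|w^{(n)}\|_{\mathcal{W}}$ and $\|f^{(n)}\|_{L^\infty_\xi H^{-1}_\eta}$ are assumed uniformly bounded; the domination reduces the nonnegativity claim to the case of a nonnegative kernel, where Seed gives a nonnegative $f$ and both Graft (product of nonnegative measures) and Grow (integration against $w\ge0$) preserve it. I will use repeatedly that $H^{-1\otimes k}_\eta$ is a separable Hilbert space, isometric via $g\mapsto K^{\otimes k}\star(g\eta^{\otimes k})$ to a subspace of $L^2(\R^k)$, and the exact tensorization identity $K^{\otimes(k_1+k_2)}\star((\mu\otimes\nu)\eta^{\otimes(k_1+k_2)})=(K^{\otimes k_1}\star(\mu\eta^{\otimes k_1}))\otimes(K^{\otimes k_2}\star(\nu\eta^{\otimes k_2}))$, which yields, as in Lemma~\ref{lem:stability_under_tensorization}, $\|\mu\otimes\nu\|_{H^{-1\otimes(k_1+k_2)}_\eta}=\|\mu\|_{H^{-1\otimes k_1}_\eta}\|\nu\|_{H^{-1\otimes k_2}_\eta}$.

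For the \emph{Seed} $F_0(w,f)=f$ all assertions are exactly the hypotheses, and convergence in $L^2([0,1];H^{-1}_\eta)$ follows from convergence in $L^\infty([0,1];H^{-1}_\eta)$ since $[0,1]$ has finite measure. For the \emph{Graft} $F=F_1\otimes F_2$ of ranks $k_1,k_2$, the norm bound comes from the tensorization identity composed with the inductive bounds, and continuity comes from the bilinear splitting $a_n\otimes b_n-a\otimes b=(a_n-a)\otimes b_n+a\otimes(b_n-b)$ with $a_n=F_1(w^{(n)},f^{(n)})$, $b_n=F_2(w^{(n)},f^{(n)})$, $a=F_1(w,f)$, $b=F_2(w,f)$: the tensorization identity turns each term's $H^{-1\otimes(k_1+k_2)}_\eta$-norm into a product of one-factor norms, and an $L^\infty_\xi$--$L^2_\xi$ Hölder inequality bounds $\|a_n\otimes b_n-a\otimes b\|_{L^2_\xi H^{-1\otimes(k_1+k_2)}_\eta}$ by $\|b_n\|_{L^\infty_\xi H^{-1\otimes k_2}_\eta}\|a_n-a\|_{L^2_\xi H^{-1\otimes k_1}_\eta}+\|a\|_{L^\infty_\xi H^{-1\otimes k_1}_\eta}\|b_n-b\|_{L^2_\xi H^{-1\otimes k_2}_\eta}$, which tends to $0$ because the $L^\infty_\xi$ factors are uniformly bounded by the quantitative bound and the $L^2_\xi$ factors vanish by induction.

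For the \emph{Grow} $F^*$, set $\phi:=F(w,f)\in L^\infty([0,1];H^{-1\otimes k}_\eta)$ by induction. Applying Lemma~\ref{lem:L_p_operator} with the Banach space $B=H^{-1\otimes k}_\eta$ and $p=\infty$ shows at once that $F^*(w,f)=\int_{[0,1]}\phi(\zeta,\cdot)w(\cdot,\rd\zeta)$ lies in $L^\infty([0,1];H^{-1\otimes k}_\eta)$ with norm at most $\|w\|_{\mathcal{W}}\|\phi\|_{L^\infty_\xi H^{-1\otimes k}_\eta}$, upgrading the quantitative bound; the domination $|F^*(w,f)|\le F^*(|w|,f)$ follows from the triangle inequality for the integral against $|w|$ and the inductive domination for $F$. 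For continuity I will write $\phi_n:=F(w^{(n)},f^{(n)})$ and split $F^*(w^{(n)},f^{(n)})-F^*(w,f)$ into $\int(\phi_n-\phi)(\zeta,\cdot)w^{(n)}(\cdot,\rd\zeta)$ plus $\int\phi(\zeta,\cdot)(w^{(n)}-w)(\cdot,\rd\zeta)$. The first piece is $\le\|w^{(n)}\|_{\mathcal{W}}\|\phi_n-\phi\|_{L^2_\xi H^{-1\otimes k}_\eta}$ by Lemma~\ref{lem:L_p_operator} with $p=2$, and vanishes by the inductive continuity of $F$ together with the uniform $\mathcal{W}$-bound.

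The second piece is the only genuine obstacle, and the place where the precise hypothesis on $w^{(n)}$ is used: one cannot invoke $\|w^{(n)}-w\|_{\mathcal{W}}$, which does not tend to $0$ for sparse data, so I will instead make a compactness/smallness split. Fix $\varepsilon>0$ and approximate $\phi$ in $L^2([0,1];H^{-1\otimes k}_\eta)$ within $\varepsilon$ by a finite separated sum $\phi_\varepsilon(\zeta,z)=\sum_{j=1}^{J}g_j(\zeta)\psi_j(z)$ with $g_j\in C^\infty([0,1])$ and $\psi_j\in H^{-1\otimes k}_\eta$, which is possible by separability of $H^{-1\otimes k}_\eta$. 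Then Lemma~\ref{lem:L_p_operator} with $p=2$ controls $\int(\phi-\phi_\varepsilon)(\zeta,\cdot)(w^{(n)}-w)(\cdot,\rd\zeta)$ in $L^2_\xi H^{-1\otimes k}_\eta$ by $(\|w^{(n)}\|_{\mathcal{W}}+\|w\|_{\mathcal{W}})\varepsilon$, uniformly in $n$; and for fixed $\varepsilon$, $\int\phi_\varepsilon(\zeta,\cdot)(w^{(n)}-w)(\cdot,\rd\zeta)=\sum_{j=1}^{J}\big(\int_{[0,1]}g_j(\zeta)(w^{(n)}-w)(\cdot,\rd\zeta)\big)\psi_j$, whose $j$-th scalar coefficient $\xi\mapsto\int g_j(\zeta)(w^{(n)}-w)(\xi,\rd\zeta)$ has $L^2_\xi$-norm $\le\|g_j\|_{H^1_\zeta}\|w^{(n)}-w\|_{L^2_\xi H^{-1}_\zeta}\to0$ by the assumed convergence $w^{(n)}\to w$ in $L^2_\xi H^{-1}_\zeta$. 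Sending $n\to\infty$ and then $\varepsilon\to0$ closes the induction, and all constants produced depend only on the transform (equivalently on $|T|$ and the tree shape), on $\alpha$, and on the uniform bounds for $\|w^{(n)}\|_{\mathcal{W}}$ and $\|f^{(n)}\|_{L^\infty_\xi H^{-1}_\eta}$, which gives the stated uniformity.
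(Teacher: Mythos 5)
Your proposal is correct and follows essentially the same route as the paper: structural induction over the Seed/Graft/Grow rules, the tensorization identity for the Graft step, Lemma~\ref{lem:L_p_operator} with $p=\infty$ and $p=2$ for the Grow step, and, for the kernel difference, an approximation of $F(w,f)$ that is smooth in $\zeta$ so that the assumed convergence $w^{(n)}\to w$ in $L^2_\xi H^{-1}_\zeta$ can be paired against it. Your finite separated sums $\sum_j g_j(\zeta)\psi_j(z)$ play exactly the role of the paper's $H^1([0,1];H^{-1\otimes k}_\eta)$ approximant $\phi_\varepsilon$, and the explicit quantitative bound and domination you carry along are harmless refinements of what the paper uses implicitly.
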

We note that since $\zeta\in [0,\ 1]$, we have that $L^2_\xi H^{-1}_\zeta \subset L^\infty_\xi \mathcal{M}_\zeta$ with compact embedding.

\begin{proof}
We use an induction argument based on the recursive rules in Definition~\ref{defi:tree_algebra}.
\begin{itemize}
\item[(i)] The seed element $F_0(w,f) = f$ is well-defined and belongs to $L^\infty([0,1]; H^{-1}_\eta \cap \mathcal{M}_+(\R))$.

\item[(ii)] Consider two elements $F_1(w,f)$ and $F_2(w,f)$ that are well-defined and satisfying
\begin{equation*}
\begin{aligned}
F_i(w,f) \in L^\infty([0,1]; H^{-1 \otimes k_i}_\eta \cap \mathcal{M}(\R^{k_i})), \quad i = 1,2.
\end{aligned}
\end{equation*}
Because both norms are stable under tensorization, for the combined element we have 
\begin{equation*}
\begin{aligned}
\|(F_1 \otimes F_2)(w,f)\|_{L^\infty([0,1]; B_1 \otimes B_2)} \leq \|F_1(w,f)\|_{L^\infty([0,1]; B_1)} \|F_2(w,f)\|_{L^\infty([0,1]; B_2)},
\end{aligned}
\end{equation*}
where we may choose either $B_1 = H^{-1 \otimes k_1}_\eta$, $B_2 = H^{-1 \otimes k_2}_\eta$, $B_1 \otimes B_2 = H^{-1 \otimes (k_1+k_2)}_\eta$ or $B_1 = \mathcal{M}(\R^{k_1})$, $B_2 = \mathcal{M}(\R^{k_2})$, $B_1 \otimes B_2 = \mathcal{M}(\R^{k_1+k_2})$.
Hence,
\begin{equation*}
\begin{aligned}
(F_1 \otimes F_2)(w,f) \in L^\infty([0,1]; H^{-1 \otimes (k_1+k_2)}_\eta \cap \mathcal{M}(\R^{k_1+k_2})).
\end{aligned}
\end{equation*}
\item[(iii)] Consider an element $F(w,f)$ that is well-defined and satisfies
\begin{equation*}
\begin{aligned}
F(w,f) \in L^\infty([0,1]; H^{-1 \otimes k}_\eta \cap \mathcal{M}(\R^{k})).
\end{aligned}
\end{equation*}
Applying Lemma~\ref{lem:L_p_operator} with $p = \infty$ with either $B = H^{-1 \otimes k}_\eta$ or $B = \mathcal{M}( \mathcal{\R}^k )$, for the grow element we have
\begin{equation*}
\begin{aligned}
\|F^*(w,f)\|_{L^\infty([0,1]; B)} \leq \|w\|_{\mathcal{W}} \|F(w,f)\|_{L^\infty([0,1]; B)}.
\end{aligned}
\end{equation*}
Hence,
\begin{equation*}
\begin{aligned}
F^*(w,f) \in L^\infty([0,1]; H^{-1 \otimes k}_\eta \cap \mathcal{M}_+(\R^k)).
\end{aligned}
\end{equation*}

\end{itemize}

Since $\mathscr{T}$ is generated by the three rules in Definition~\ref{defi:tree_algebra}, the above argument shows that any $F(w,f)$, $F \in \mathscr{T}$ is well-defined.

We can use a similar argument to prove the convergence $F(w^{(n)}, f^{(n)}) \to F(w, f)$ for any fixed $F \in \mathscr{T}$.
\begin{itemize}
\item[(i)] For the seed sequence, $F_0(w^{(n)}, f^{(n)}) = f^{(n)} \to f$ in $L^\infty([0,1]; H^{-1}_\eta(\R))$, hence the convergence also holds in $L^2([0,1]; H^{-1}_\eta(\R))$.

\item[(ii)]
Consider the two sequences $F_1(w^{(n)}, f^{(n)})$ and $F_2(w^{(n)}, f^{(n)})$ satisfying
\begin{equation*}
\begin{aligned}
F_i(w^{(n)}, f^{(n)}) \to F_i(w, f) \quad \text{ in } \quad L^2([0,1]; H^{-1 \otimes k_i}_\eta) , \quad i = 1,2.
\end{aligned}
\end{equation*}
Then by introducing the intermediary element $F_1(w^{(n)}, f^{(n)}) \otimes_z F_2(w, f)$ and by applying the triangular inequality, we have that
\begin{equation*}
\begin{aligned}
& \| (F_1 \otimes F_2)(w^{(n)}, f^{(n)}) - (F_1 \otimes F_2)(w, f)\|_{L^2([0,1]; H^{-1 \otimes (k_1+k_2)}_\eta)}\\
& \qquad\leq \| F_1(w^{(n)}, f^{(n)}) \|_{L^\infty([0,1]; H^{-1 \otimes k_1}_\eta)} \| F_2(w^{(n)}, f^{(n)}) - F_2(w, f) \|_{L^2([0,1]; H^{-1 \otimes k_2}_\eta)}
\\
& \qquad\qquad+ \| F_1(w^{(n)}, f^{(n)}) - F_1(w, f) \|_{L^2([0,1]; H^{-1 \otimes k_1}_\eta)} \| F_2(w, f) \|_{L^\infty([0,1]; H^{-1 \otimes k_2}_\eta)}.
\end{aligned}
\end{equation*}
As $n \to \infty$, we immediately have that
\begin{equation*}
\begin{aligned}
(F_1 \otimes F_2)(w^{(n)}, f^{(n)}) \to (F_1 \otimes F_2) (w, f) \quad \text{ in } \quad L^2([0,1]; H^{-1 \otimes (k_1+k_2)}_\eta).
\end{aligned}
\end{equation*}

\item[(iii)] (Grow). Consider a sequence $F(w^{(n)}, f^{(n)})$ satisfying
\begin{equation*}
\begin{aligned}
F(w^{(n)}, f^{(n)}) \to F(w, f) \quad \text{ in } \quad L^2([0,1]; H^{-1 \otimes k}_\eta).
\end{aligned}
\end{equation*}
The difference between the grow sequences is given by
\begin{equation*}
\begin{aligned}
& \| F^*(w^{(n)}, f^{(n)}) - F^*(w, f) \|_{L^2([0,1]; H^{-1 \otimes k}_\eta)}
\\
&\quad= \bigg\| \int_{[0,1]} F(w^{(n)}, f^{(n)})(\zeta,\cdot) w^{(n)}(\xi,\rd \zeta) - \int_{[0,1]} F(w, f)(\zeta,\cdot) w(\xi,\rd \zeta) \bigg\|_{L^2([0,1]; H^{-1 \otimes k}_\eta)}.
\end{aligned}
\end{equation*}
Introduce any $\phi_\varepsilon \in H^1([0,1]; H^{-1 \otimes k}_\eta)$ approximating $F(w, f)$ in $L^2([0,1]; H^{-1 \otimes k}_\eta)$ and the intermediary elements
\begin{equation*}
\begin{aligned}
\int_{[0,1]} \phi_\varepsilon(\zeta,\cdot) w^{(n)}(\xi,\rd \zeta), \quad \int_{[0,1]} \phi_\varepsilon(\zeta,\cdot) w(\xi,\rd \zeta),
\end{aligned}
\end{equation*}
then apply the triangular inequality and Lemma~\ref{lem:L_p_operator} with $p = 2$, $B = H^{-1 \otimes k}_\eta$, we have
\begin{equation*}
\begin{aligned}
& \| F^*(w^{(n)}, f^{(n)}) - F^*(w, f) \|_{L^2([0,1]; H^{-1 \otimes k}_\eta)}
\\
&\quad \leq  \|w^{(n)}\|_{\mathcal{W}} \|F(w^{(n)}, f^{(n)}) - \phi_\epsilon\|_{L^2([0,1]; H^{-1 \otimes k}_\eta)} + \|w\|_{\mathcal{W}} \|F(w, f) - \phi_\epsilon\|_{L^2([0,1]; H^{-1 \otimes k}_\eta)}
\\
& \qquad+ \|w^{(n)} - w\|_{L^2 H^{-1}} \|\phi_\epsilon\|_{H^1([0,1]; H^{-1 \otimes k}_\eta)}.
\end{aligned}
\end{equation*}
Letting $n \to \infty$ and $\varepsilon\to 0$, we conclude that
\begin{equation*}
\begin{aligned}
F^*(w^{(n)}, f^{(n)}) \to F^*(w, f) \quad \text{ in } \quad L^2([0,1]; H^{-1 \otimes k}_\eta).
\end{aligned}
\end{equation*}
\end{itemize}
\end{proof}
The following lemma shows that it is possible to recover the limiting observables $\tau_\infty(T,w,f)$, $T \in \mathcal{T}$ from $F(w,f)$, $F \in \mathscr{T}$.
\begin{lem} \label{lem:algebra_to_tree}
For any tree $T \in \mathcal{T}$, there exists a transform $F \in \mathscr{T}$ such that
\begin{equation*}
\begin{aligned}
F(w,f)(\zeta, z_1,\dots, z_{|T|}) = \bigg( \int_{[0,1]^{|T| - 1}} w_T(\xi_1,\dots,\xi_{|T|}) {\textstyle \prod_{m=1}^{|T|} f(z_m,\xi_m)} \;\rd \xi_2 \dots \rd \xi_{|T|} \bigg) \bigg|_{\xi_1=\zeta},
\end{aligned}
\end{equation*}
where the variable $\xi_1$ corresponding to the root of $T$ is not integrated. As a consequence
\[
\tau_\infty(T,w,f) = \int_{[0,1]} F(w,f)(\zeta, z_1,\dots, z_{|T|}) \;\rd \zeta.
\]
\end{lem}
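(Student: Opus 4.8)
The plan is to argue by strong induction on the number of vertices $|T|$, exploiting the fact that a rooted tree decomposes at its root into a family of child subtrees, and matching this decomposition to the three generation rules (Seed, Graft, Grow) of the algebra $\mathscr{T}$ from Definition~\ref{defi:tree_algebra}. The base case $|T|=1$ is immediate: $T=T_1$ has no edges, so $w_{T_1}\equiv 1$ and nothing is integrated, and one takes $F=F_0$, the seed transform, for which $F_0(w,f)(\zeta,z_1)=f(\zeta,z_1)$ is exactly the asserted formula with the root variable $\xi_1=\zeta$ left free; integrating in $\zeta$ then recovers $\tau_\infty(T_1,w,f)$.

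For the inductive step, let $|T|\geq 2$ with root labelled $1$, let $c_1,\dots,c_d$ be the children of the root, and for each $i$ let $T_i$ be the subtree spanned by $c_i$ and its descendants, re-rooted at $c_i$. Each $T_i$ has strictly fewer vertices than $T$ and, after relabelling, is isomorphic to an element of $\mathcal{T}$; since $w_T$ and $\tau_\infty(\cdot,w,f)$ depend on a tree only through its isomorphism class (up to a permutation of the $z$-coordinates, as noted after Definition~\ref{defi:observables}), the induction hypothesis supplies transforms $F_i\in\mathscr{T}$ of rank $|T_i|$ realizing the un-integrated formula for $T_i$. I would then set $F \defeq F_0 \otimes F_1^{*}\otimes\cdots\otimes F_d^{*}\in\mathscr{T}$ --- the seed $F_0$ carries the factor $f(\zeta,z_1)$ attached to the root, each Grow $F_i^{*}$ contracts the root variable of $F_i$ against $w(\zeta,\cdot)$, thereby producing the edge from the root to $c_i$, and the Grafts glue everything along the common free variable $\zeta$. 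Unfolding the definitions of $\otimes$ and $(\cdot)^{*}$ and using $w_T(\xi)=\prod_{i=1}^{d} w(\xi_1,\xi_{c_i})\,w_{T_i}(\xi_{c_i},\dots)$ together with $V(T)=\{1\}\sqcup\bigsqcup_i V(T_i)$, one checks by Fubini that $F(w,f)(\zeta,z)$ equals the right-hand side of the lemma with $\xi_1=\zeta$; integrating in $\zeta$ gives the stated identity for $\tau_\infty(T,w,f)$.

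The only genuine work is this unfolding in the inductive step, which is essentially bookkeeping: one must confirm that the product structure of $w_T$ factors through the root exactly as the iterated Graft-of-Grow predicts, and keep track of which coordinate $z_m$ is attached to which vertex --- here it helps that we only need existence of \emph{some} $F\in\mathscr{T}$, so we are free to choose the order in which the children are grafted and to permute the $z$-coordinates, invoking again the permutation-invariance of observables. The remaining point is that all the iterated integrals involved are legitimate, which is precisely what Lemma~\ref{lem:tree_algebra} guarantees, namely that $F(w,f)$ is a well-defined element of $L^\infty([0,1];H^{-1\otimes|T|}_\eta\cap\mathcal{M}_+(\R^{|T|}))$ whenever $w\in\mathcal{W}$; alternatively, one first proves the identity for smooth kernels $w$, where every manipulation is classical, and then passes to general $w\in\mathcal{W}$ by density, since both sides depend continuously on $w$ (each being of the form $F(w,f)$ for a fixed $F\in\mathscr{T}$) by the continuity statement in Lemma~\ref{lem:tree_algebra}. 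I expect this density argument and the indexing to be the only mildly delicate points; the combinatorial core --- identifying ``attach the child subtrees at the root'' with ``Graft of Grows'' --- is straightforward.
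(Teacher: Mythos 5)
Your proposal is correct and follows essentially the same route as the paper: induct on $|T|$, decompose $T$ at the root into its child subtrees, and realize the transform as $F = F_0 \otimes \bigotimes_l (F_l)^*$, i.e.\ the seed grafted with the Grows of the subtree transforms, handling labels by a permutation of coordinates. The extra remarks on well-definedness via Lemma~\ref{lem:tree_algebra} (or density in $w$) are consistent with the paper but not needed beyond what that lemma already provides.
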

%

\begin{proof}[Proof of Lemma~\ref{lem:algebra_to_tree}] For the tree $T_1 \in \mathcal{T}$ with only one node, the corresponding transform in $\mathscr{T}$ is the seed element $F_0$. It is easy to verify that
\begin{equation*}
\begin{aligned}
F_0(w,f)(\zeta, z_1) = f(z_1,\zeta), \quad \tau_\infty(T_0,w,f)(z_1) = \int_{[0,1]} f(z_1,\zeta)\;\rd \zeta.
\end{aligned}
\end{equation*}
For any tree $T \in \mathcal{T}$ with more than one node. Let $ i_1,\dots,i_k \in \{2,\dots, |T|\}$ be all the nodes that are directly connected to the root $1$, and let  $T_1,\dots,T_k$ be the subtrees of $T$ taking $i_1,\dots,i_k$ as their roots.
Suppose by induction that we have found corresponding transforms $F_1,\dots,F_k$ for $T_1,\dots,T_k$, then
\begin{equation*}
\begin{aligned}
&  \int_{[0,1]^{|T| - 1}} w_T(\xi_1,\dots,\xi_{|T|}) {\textstyle \prod_{m=1}^{|T|} f(z_m,\xi_m)} \;\rd \xi_2,\dots, \rd \xi_{|T|}
\\
&\quad = f(z_1,\xi_1) \prod_{l=1}^k \bigg( \int_{[0,1]^{|T_l|}} w(\xi_1, \xi_{i_l}) \prod_{(j,j') \in \mathcal{E}(T_l)} w(\xi_j,\xi_{j'}) \prod_{m \in T_l} f(z_m,\xi_m) \rd \xi_m \bigg)
\\
&\quad = f(z_1,\xi_1) \prod_{l=1}^k \int_{[0,1]} F_l(w,f)(\xi_{i_l}, z_{i_l}, \dots) w(\xi_1, \rd \xi_{i_l})=F(w,f)(\xi_1, z_1,\dots, z_{|T|}).
\end{aligned}
\end{equation*}
Up to an index permutation (so that if $i \in T_{l}, i' \in T_{l'}$, $l < l'$, then $i < i'$), it can be reformulated into the more straightforward form
\begin{equation*}
\begin{aligned}
F(w,f) = \bigg[ F_0 \otimes \bigotimes_{l=1}^k (F_l)^* \bigg] (w,f),
\end{aligned}
\end{equation*}
showing $F$ is obtained by making each $F_l$ grow (by rule (iii)) with depth $1$, then grafting (by rule (ii)) together all of them with another seed element $F_0$ (by rule (i)).
\end{proof}
\subsection{Compactness of the limiting observables}
We now turn to the proof of Proposition~\ref{prop:independent_compactness}. 
\begin{proof} [Proof of Proposition~\ref{prop:independent_compactness}]
To prove \eqref{eqn:approximate_independence}, let us define $S^{m}(N) \defeq \{1,\dots, N\}^{m}$ and
\begin{equation*}
\begin{aligned}
S^{m}_{\textnormal{diag}}(N) \defeq \big\{ (i_1,\dots,i_m) \in \{1,\dots, N\}^{m} : \exists j \neq k \text{ s.t. } i_j = i_k \big\}.
\end{aligned}
\end{equation*}
Recall that
\begin{equation*}
\begin{aligned}
\tilde w_N(\xi,\zeta) = \;& \sum_{i,j=1}^N N w_{i,j;N} \mathbbm{1}_{[\frac{i-1}{N},\frac{i}{N})}(\xi) \mathbbm{1}_{[\frac{j-1}{N},\frac{j}{N})}(\zeta),
\\
\tilde f_N(x,\xi) = \;& \sum_{i=1}^N f_{N}^{i}(x) \mathbbm{1}_{[\frac{i-1}{N},\frac{i}{N})}(\xi).
\end{aligned}
\end{equation*}
Since we have independence, it is straightforward that
\begin{equation*}
\begin{aligned}
\tau_\infty(T,\tilde w_N, \tilde f_N)(\rd z) = \int_{[0,1]^{|T|}} {\textstyle \prod_{(l,l') \in \mathcal{E}(T)} \tilde w_N (\xi_l, \xi_{l'}) \prod_{m=1}^{|T|} \tilde f_N(t,\rd z_m,\xi_m)} \;\rd \xi_1,\dots, \rd \xi_{|T|}
\\
= \frac{1}{N} \sum_{(i_1,\dots,i_{|T|}) \in S^{|T|}(N) } {\textstyle \prod_{(l,l') \in \mathcal{E}(T)} w_{i_l, i_{l'}; N} \prod_{m=1}^{|T|} f_{N}^{i_m}(\rd z_m)}.
\end{aligned}
\end{equation*}
On the other hand, again from independence,
\begin{equation*}
\begin{aligned}
\tau_N(T,w_N, f_N)(\rd z)
= \frac{1}{N} \sum_{(i_1,\dots,i_{|T|}) \in S^{|T|}(N) \setminus S^{|T|}_{\textnormal{diag}}(N)} {\textstyle \prod_{(l,l') \in \mathcal{E}(T)} w_{i_l, i_{l'}; N} \prod_{m=1}^{|T|} f_{N}^{i_m}(\rd z_m)},
\end{aligned}
\end{equation*}
where the terms involving repeating index are excluded from the summation, contrary to the case of $\tau_\infty$.

Therefore the difference is controlled by
\begin{equation*}
\begin{aligned}
\;& \tau_\infty(T,\tilde w_N, \tilde f_N)(\rd z) - \tau_N(T,w_N, f_N)(\rd z)
\\
= \;& \frac{1}{N} \sum_{(i_1,\dots,i_{|T|}) \in S^{|T|}_{\textnormal{diag}}(N)} {\textstyle \prod_{(l,l') \in \mathcal{E}(T)} w_{i_l, i_{l'}; N} \prod_{m=1}^{|T|} f_{N}^{i_m}(\rd z_m)},
\end{aligned}
\end{equation*}
whose (weighted) total variation norm is bounded by
\begin{equation*}
\begin{aligned}
& \int_{\R^{|T|}} { \textstyle \exp\big( a \sum_{m=1}^{|T|} |z_m|\big)} |\tau_\infty(T,\tilde w_N, \tilde f_N)(\rd z) - \tau_N(T,w_N, f_N)(\rd z)|
\\
&\ =  \int_{\R^{|T|}} { \textstyle \exp\big( a \sum_{m=1}^{|T|} |z_m|\big)} \bigg| \frac{1}{N} \sum_{(i_1,\dots,i_{|T|}) \in S^{|T|}_{\textnormal{diag}}(N)} {\textstyle \prod_{(l,l') \in \mathcal{E}(T)} w_{i_l, i_{l'}; N} \prod_{m=1}^{|T|} f_{N}^{i_m}(\rd z_m)} \bigg|
\\
&\ \leq  \frac{1}{N} \sum_{(i_1,\dots,i_{|T|}) \in S^{|T|}_{\textnormal{diag}}(N)} {\textstyle \prod_{(l,l') \in \mathcal{E}(T)} |w_{i_l, i_{l'}; N}}| \int_{\R^{|T|}} {\textstyle \prod_{m=1}^{|T|} \exp(a |z_m|) f_{N}^{i_m}(\rd z_m)}.
\end{aligned}
\end{equation*}
When $|T| = 1$, this term is zero as $S^{|T|}_{\textnormal{diag}}(N) = \varnothing$, while for $|T| \geq 2$ we use the following lemma.
\begin{lem} \label{lem:diagonal_estimate}
The following bound holds
\begin{equation*}
\begin{aligned}
\;& \frac{1}{N} \sum_{(i_1,\dots,i_{|T|}) \in S^{|T|}_{\textnormal{diag}}(N)} {\textstyle \prod_{(l,l') \in \mathcal{E}(T)} |w_{i_l, i_{l'}; N}}|
\\
\leq \;&  \max_{1\leq i,j \leq N} |w_{i,j;N}| \max \Big( \textstyle \max_{i} \sum_{j} |w_{i,j;N}| , \max_{j} \sum_{i} |w_{i,j;N}| \Big)^{|T|-2} |T|^2.
\end{aligned}
\end{equation*}
\end{lem}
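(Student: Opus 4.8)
The plan is to reduce the sum over the ``diagonal'' set $S^{|T|}_{\mathrm{diag}}(N)$ to a sum over single coincidences of indices, and then to estimate each such term by peeling off one leaf of the tree at a time. First I would write $S^{|T|}_{\mathrm{diag}}(N)=\bigcup_{1\le p<q\le |T|}\{(i_1,\dots,i_{|T|}): i_p=i_q\}$; since every summand $\prod_{(l,l')\in\mathcal{E}(T)}|w_{i_l,i_{l'};N}|$ is non-negative, subadditivity over this union bounds the left-hand side of the lemma by the sum over the $\binom{|T|}{2}\le|T|^2$ pairs $\{p,q\}$ of the quantity
\[
\Sigma_{p,q}\defeq\frac1N\sum_{\substack{i_1,\dots,i_{|T|}\\ i_p=i_q}}\ \prod_{(l,l')\in\mathcal{E}(T)}|w_{i_l,i_{l'};N}|.
\]
It then suffices to prove $\Sigma_{p,q}\le \bar w_N\,C^{|T|-2}$ (here $|T|\ge 2$, so the exponent is non-negative), where $\bar w_N\defeq\max_{1\le i,j\le N}|w_{i,j;N}|$ and $C\defeq\max\!\big(\max_i\sum_j|w_{i,j;N}|,\ \max_j\sum_i|w_{i,j;N}|\big)$.

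The key elementary ingredient I would isolate is the following ``tree-peeling'' estimate: for any finite tree $S$ with edges oriented in any way, $\frac1N\sum_{(i_v)_{v\in V(S)}}\prod_{(u,u')\in\mathcal{E}(S)}|w_{i_u,i_{u'};N}|\le C^{|V(S)|-1}$. This is proved by induction on $|V(S)|$: the base case $|V(S)|=1$ is $\frac1N\sum_{i=1}^N 1=1$, and the inductive step sums first over a leaf variable $i_v$; whichever way the unique edge incident to $v$ is oriented, that inner sum is a row sum or a column sum of $(|w_{i,j;N}|)_{i,j}$ and is therefore $\le C$, leaving exactly the same quantity for $S$ with the leaf $v$ removed. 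This is really the only place where the structure of the problem enters, and it is precisely why $C$ has to be the maximum of the two operator norms rather than just one of them.

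To bound $\Sigma_{p,q}$ I would take the unique path in $T$ joining $p$ and $q$, choose the edge $e_*$ on this path incident to $q$, and remove it, splitting $T$ into subtrees $T_1\ni p$ and $T_2\ni q$ with $|T_1|+|T_2|=|T|$. Gluing the vertex $p$ of $T_1$ to the vertex $q$ of $T_2$ produces a single tree $T'$ on $|T|-1$ vertices and $|T|-2$ edges, in which the glued vertex carries the common value $i_p=i_q$. On the constraint $i_p=i_q$, the factor attached to $e_*$ is some entry $|w_{a,b;N}|$ of the matrix (a diagonal entry $|w_{i_p,i_p;N}|$ when $p$ and $q$ are adjacent in $T$), hence $\le\bar w_N$ in every case; bounding it by $\bar w_N$ and applying the tree-peeling estimate to $T'$ yields $\Sigma_{p,q}\le\bar w_N\,C^{|T|-2}$. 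Combining this with Step~1 gives the left-hand side of the lemma $\le\binom{|T|}{2}\,\bar w_N\,C^{|T|-2}\le |T|^2\,\bar w_N\,C^{|T|-2}$, which is exactly the claimed bound.

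I do not expect a serious obstacle here: once the leaf-peeling lemma is in place the argument is purely combinatorial bookkeeping. The two points that genuinely need care are (i) the observation that summing out a leaf produces a \emph{row or a column} sum according to the orientation the edge inherits from the tree, which forces $C$ to be the maximum of both operator norms, and (ii) verifying that the adjacent case $p\sim q$, where $e_*$ degenerates to a diagonal entry of $w_N$, is still covered by the very same estimate.
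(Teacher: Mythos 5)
Your proof is correct and follows essentially the same route as the paper's: you decompose the diagonal set over pairs of coinciding indices, extract a single factor $\bar w_N$ from one edge tied to the coincidence (your edge $e_*$ on the path, the paper's edge adjacent to the repeated endpoint), and control the remaining $|T|-2$ edges by peeling leaves, each contributing a row or column sum bounded by $\max\big(\max_i\sum_j|w_{i,j;N}|,\ \max_j\sum_i|w_{i,j;N}|\big)$. The only differences are organizational: the paper fixes the common value $i$, integrates out the off-path vertices first and sums over $i$ at the end, whereas you identify the two vertices directly (gluing) and invoke a clean leaf-peeling induction in the spirit of Lemma~\ref{lemboundtauN}; the resulting bounds coincide.
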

Once we prove Lemma~\ref{lem:diagonal_estimate}, we immediately obtain \eqref{eqn:approximate_independence}, 
\begin{equation*}
\begin{aligned}
\;& \int_{\R^{|T|}} { \textstyle \exp\big( a \sum_{m=1}^{|T|} |z_m|\big)} |\tau_\infty(T,\tilde w_N, \tilde f_N)(\rd z) - \tau_N(T,w_N, f_N)(\rd z)|
\\
\leq \;& \max_{1\leq i,j \leq N} |w_{i,j;N}|\,\max \Big(\max_{i} \sum_{j} |w_{i,j;N}|,\; \max_{j} \sum_{i} |w_{i,j;N}| \Big)^{|T|-2}\, |T|^2\, M_a^{|T|}.
\end{aligned} 
\end{equation*}
\begin{proof}[Proof of Lemma~\ref{lem:diagonal_estimate}]
Let us consider
\begin{equation*}
\begin{aligned}
\;& \sum_{(i_1,\dots,i_{|T|}) \in S^{|T|}(N)} \mathbbm{1}_{\{i_{m} = i_{m'} = i\}} {\textstyle \prod_{(l,l') \in \mathcal{E}(T)} |w_{i_l, i_{l'}; N}}|
\end{aligned}
\end{equation*}
for any $1 \leq m,m' \leq |T|$ and $1 \leq i \leq N$. We introduce the path $P$ which is the set of indices $n$ on the unique path connecting $m$ and $m'$. We can immediately remove from the sum the indices not in $P$ as before, 
\begin{equation*}
\begin{aligned}
&\sum_{(i_1,\dots,i_{|T|}) \in S^{|T|}(N)} \mathbbm{1}_{\{i_{m} = i_{m'} = i\}} {\textstyle \prod_{(l,l') \in \mathcal{E}(T)} |w_{i_l, i_{l'}; N}}|
\\
&\ \leq \max \Big( \textstyle \max_{i} \sum_{j} |w_{i,j;N}|, \max_{j} \sum_{i} |w_{i,j;N}| \Big)^{|T| - |P|}\\
&\qquad\qquad\qquad\sum_{(i_{n_1},\ldots,i_{n_{|P|}}) \in S^{|P|}(N)} \mathbbm{1}_{\{i_{m} = i_{m'} = i\}} {\textstyle \prod_{(l,l') \in \mathcal{E}(P)} |w_{i_l, i_{l'}; N}}|,
\end{aligned}
\end{equation*}
where we denote $P=\{n_1,\ldots,n_{|P|}\}$ with $n_1=m$ and $n_{|P|}=m'$.

The path $P$ connecting $m$ and $m'$ naturally goes up in the tree first (to reach the parent vertex that is shared by $m$ and $m'$) and then down. Denote by $k$ the number of indices for which the path goes up (with possibly $k=1$ if $m$ is a parent of $m'$) and write
\begin{equation*}
\begin{aligned}
& \sum_{(i_{n_1},\dots,i_{n_{|P|}}) \in S^{|P|}(N)} \mathbbm{1}_{\{i_{m} = i_{m'} = i\}} {\textstyle \prod_{(l,l') \in \mathcal{E}(P)} |w_{i_l, i_{l'}; N}}|\\
  &\ = \sum_{1 \leq j_1, \dots, j_{|P|} \leq N} \mathbbm{1}_{\{j_1 = j_{|P|} = i\}}
    \prod_{n=1}^{k-1} |w_{j_{n+1}, j_n; N}| \prod_{n=k}^{|P| - 1} |w_{j_n, j_{n+1}; N}|
\\
&\ = \max_{1\leq i,j \leq N} |w_{i,j;N}| \max \Big( \textstyle \max_{i} \sum_{j} |w_{i,j;N}| , \max_{j} \sum_{i} |w_{i,j;N}| \Big)^{|P| - 2}.
\end{aligned}
\end{equation*}
Therefore
\begin{equation*}
\begin{aligned}
& \sum_{(i_1,\dots,i_{|T|}) \in S^{m}(N)} \mathbbm{1}\{i_{m} = i_{m'} = i\} {\textstyle \prod_{(l,l') \in \mathcal{E}(T)} |w_{i_l, i_{l'}; N}}|
\\
&\ \leq  \max_{1\leq i,j \leq N} |w_{i,j;N}| \max \Big( \textstyle \max_{i} \sum_{j} |w_{i,j;N}| , \max_{j} \sum_{i} |w_{i,j;N}| \Big)^{|T| - 2}.
\end{aligned}
\end{equation*}
As a consequence,
\begin{equation*}
\begin{aligned}
& \frac{1}{N} \sum_{(i_1,\dots,i_{|T|}) \in S^{m}_{\textnormal{diag}}(N)} {\textstyle \prod_{(l,l') \in \mathcal{E}(T)} |w_{i_l, i_{l'}; N}}|\\
&\ \leq \frac{1}{N} \sum_{i = 1}^N \sum_{1 \leq m,m' \leq |T|} \sum_{(i_1,\dots,i_{|T|}) \in S^{m}(N)} \mathbbm{1}_{\{i_{m} = i_{m'} = i\}} {\textstyle \prod_{(l,l') \in \mathcal{E}(T)} |w_{i_l, i_{l'}; N}}|
\\
&\ \leq  \max_{1\leq i,j \leq N} |w_{i,j;N}| \max \Big( \textstyle \max_{i} \sum_{j} |w_{i,j;N}| , \max_{j} \sum_{i} |w_{i,j;N}| \Big)^{|T|-2} |T|^2,
\end{aligned}
\end{equation*}
which concludes the proof.
\end{proof}

It remains to prove \eqref{eqn:assumption_4_compact}, for which we first invoke Corollary 4.9 in \cite{JaPoSo:21}.

\begin{lem} [Corollary 4.9 in \cite{JaPoSo:21}] \label{lem:compactness_rearrangement} Consider any sequence $g_n$ in $L^\infty([0,1])$. Then, there exists $\Phi:[0,1] \to [0,1]$, a.e. injective, measure preserving, such that the following estimate is verified
\begin{equation*}
\begin{aligned}
\int_{[0,1]} |(g_n \circ \Phi)(\xi) - (g_n \circ \Phi)(\xi + h)| \;\rd \xi \leq 2^n \|g_n\|_{L^\infty} 2^{-C \sqrt{\log \frac{1}{|h|}}}
\end{aligned}
\end{equation*}
for any $n \in \N$, $0 < |h| < 1$ and some universal constant $C$.

\end{lem}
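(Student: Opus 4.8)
\emph{Proof proposal.}
The plan is to build $\Phi$ as a single measure-preserving rearrangement of $[0,1]$ obtained from an interleaved sequence of ``level-set sortings'' of the $g_n$, and then to extract the stated modulus of continuity from how finely that sorting has progressed at the scale $|h|$. Fix a geometrically decreasing sequence of resolutions $\varepsilon_j=2^{-cj}$ ($c$ a small absolute constant) and construct inductively a nested sequence of partitions $\mathcal P_1\prec\mathcal P_2\prec\cdots$ of $[0,1]$ into intervals, together with coherent measure-preserving bijections $\Phi_j$ (mod null sets), so that at stage $j$: (a) each $g_i$ with $i\le j$ oscillates by at most $\varepsilon_j\|g_i\|_{L^\infty}$ on every atom of $\mathcal P_j$; and (b) for each $i\le j$ the atoms of $\mathcal P_j$ decompose into at most $\varepsilon_j^{-(i-1)}$ (up to a dimensional constant) consecutive blocks inside each of which the atom-values of $g_i\circ\Phi_j$ are nondecreasing. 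Both are obtained by taking a level-set partition of $(g_1,\dots,g_j)$ at resolution $\varepsilon_j$ and ordering its cells lexicographically (primarily by the $g_1$-band, then by the $g_2$-band, and so on), in the style of the rearrangement lemmas of \cite{JaPoSo:21}. Since the atom size tends to $0$, the $\mathcal P_j$ generate the Borel $\sigma$-algebra, so the $\Phi_j$ converge to a measure-preserving, a.e.\ injective limit $\Phi$ (a Lebesgue-space isomorphism built from a matched generating system of finite partitions of equal masses), and (a)--(b) persist for $\Phi$ at every stage $j\ge i$.

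\emph{The modulus estimate.} Fix $n$ and $0<|h|<1$, write $t=\log_2(1/|h|)$, and take any $j\ge n$. By (a)--(b), $g_n\circ\Phi$ equals a step function that is nondecreasing on each of at most $P\lesssim\varepsilon_j^{-(n-1)}$ consecutive blocks, plus a remainder of sup-norm $\le\varepsilon_j\|g_n\|_{L^\infty}$. For such a function,
\[
\int_0^{1-|h|}\!\big|g_n\circ\Phi(\xi+|h|)-g_n\circ\Phi(\xi)\big|\,\rd\xi\;\le\;4P\,|h|\,\|g_n\|_{L^\infty}+2\varepsilon_j\|g_n\|_{L^\infty},
\]
because on each block monotonicity turns the integrand into a telescoping increment whose integral is at most twice the oscillation, while the (at most $P$) block boundaries each contribute a set of $\xi$ of measure $|h|$. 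Thus the integral is $\lesssim\|g_n\|_{L^\infty}\big(|h|\,\varepsilon_j^{-(n-1)}+\varepsilon_j\big)$, and optimising over $j$ — choosing the stage with $\varepsilon_j$ closest to $|h|^{1/n}$, which exists for every $n$ since the stages run to infinity — gives $\lesssim\|g_n\|_{L^\infty}\,|h|^{1/n}=\|g_n\|_{L^\infty}\,2^{-t/n}$, valid whenever $t\gtrsim n^2$ (so that the optimal stage still satisfies $j\ge n$; for $n=1$ the exponent $-(n-1)$ vanishes and the bound is valid for all $|h|$).

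\emph{Passing to $2^{-C\sqrt{\log(1/|h|)}}$.} By the arithmetic--geometric mean inequality $\tfrac{t}{n}+n\ge2\sqrt t$, so $2^{-t/n}\le2^{n}\,2^{-2\sqrt t}$; absorbing the uniform $\lesssim$-constant into a slight decrease of the exponent (and into the factor $2^{n}$) yields the claimed $2^{n}\|g_n\|_{L^\infty}2^{-C\sqrt{\log(1/|h|)}}$ with a universal $C\le2$. The degenerate range $t\lesssim n^2$, where one would need $j<n$, is covered separately: there $\int_0^{1-|h|}|\cdots|\,\rd\xi\le2\|g_n\|_{L^\infty}(1-|h|)$, which for $n\ge2$ is already $\le2^{n}2^{-C\sqrt t}\|g_n\|_{L^\infty}$, and for $n=1$ (hence $|h|$ close to $1$) one also uses $1-|h|\le(\log2)\,2^{-C\sqrt{\log(1/|h|)}}$. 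One may extend the integrand periodically when $\xi+|h|>1$; the two conventions differ only by lower-order terms already controlled.

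\emph{Main obstacle.} The modulus inequality and the AM--GM step are routine; the difficulty is entirely the coherent construction of $\Phi$. A flat lexicographic sorting at a fixed resolution delivers (a)--(b) but its partitions are not nested, whereas a naive hierarchical refinement is nested but freezes each $g_n$ at its coarsest resolution and never improves its monotone approximation — so one must produce an interleaving that is simultaneously nested, has all $\varepsilon_j\to0$, and re-sorts every $g_n$ at arbitrarily fine resolution; this scheduling, together with the bookkeeping linking the stage index $j$ to the achievable resolution $\varepsilon_j$, is the technical heart, alongside the standard-but-necessary verification that the limiting transformation is a genuine, a.e.\ injective, measure-space isomorphism. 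The square root in $2^{-C\sqrt{\log(1/|h|)}}$ and the prefactor $2^{n}$ are then not artefacts but the honest worst case $n\sim\sqrt{\log(1/|h|)}$ of the balance $\tfrac{t}{n}+n\ge2\sqrt t$.
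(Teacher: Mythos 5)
The paper does not actually prove this lemma: it is imported verbatim as Corollary~4.9 of \cite{JaPoSo:21}, so the relevant comparison is with the hierarchical sorting construction there. Measured against that, your proposal has a genuine gap, and it is not merely the unfinished ``scheduling'': the key property (b) you postulate --- that at every stage $j$ the single rearrangement sorts $g_i$ into at most (const)$\cdot\varepsilon_j^{-(i-1)}$ consecutive monotone blocks with error $\varepsilon_j\|g_i\|_{L^\infty}$ --- is unachievable for a general sequence, so no interleaving can deliver it. Indeed, for $i=1$ it asserts that $g_1\circ\Phi$ is within $\varepsilon_j\|g_1\|_{L^\infty}$ of a function nondecreasing on $O(1)$ blocks for every $j$; letting $\varepsilon_j\to0$ (a routine compactness argument on the block endpoints) forces $g_1\circ\Phi$ to be a.e.\ equal to a piecewise monotone function with boundedly many pieces. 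Now take $g_1(x)=\sum_k x_{2k-1}2^{-k}$ and $g_2(x)=\sum_k x_{2k}2^{-k}$, built from the odd and even binary digits of $x$, so that $(g_1,g_2)$ pushes Lebesgue measure onto the uniform measure on $[0,1]^2$. For any measure-preserving $\Phi$, $(g_1\circ\Phi,g_2\circ\Phi)$ has this same joint law; but if $g_1\circ\Phi$ were a.e.\ monotone on each of $P$ intervals, it would be strictly monotone there (its law is nonatomic), hence $g_2\circ\Phi$ would be a.e.\ a function of $g_1\circ\Phi$ on each piece, and the conditional law of the second coordinate given the first would be a combination of at most $P$ Dirac masses instead of the uniform law --- a contradiction. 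Consequently the intermediate modulus $\lesssim 2^n|h|^{1/n}$ that you feed into the AM--GM step (for $n=1$, a BV bound on $g_1\circ\Phi$) is strictly stronger than anything a single rearrangement can provide, and the whole route collapses at this point.

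The correct mechanism, and the reason the exponent $\sqrt{\log(1/|h|)}$ appears, is different. In the triangular sorting of \cite{JaPoSo:21} (at stage $j$ one refines the stage-$(j-1)$ partition by sorting lexicographically according to the dyadic bands of $g_1,\dots,g_j$ at resolution $\sim 2^{-cj}$), the error for $g_n$ at stage $j\ge n$ is indeed $\sim 2^{-cj}\|g_n\|_{L^\infty}$, but the number of monotone blocks is governed by the number of cells of the stage-$(j-1)$ partition, which grows like $2^{c'j^2}$, not like $\varepsilon_j^{-(n-1)}$: re-sorting happens only inside stage-$(j-1)$ cells, and across two adjacent cells whose sort keys first differ in a coarse band of some $g_m$ with $m<n$, the value of $g_n$ may drop by a coarse band width, so those cell boundaries cannot be merged. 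Balancing $|h|\cdot 2^{c'j^2}$ against $2^{-cj}$, i.e.\ choosing $j\sim\sqrt{\log(1/|h|)}$, is exactly what produces $2^{-C\sqrt{\log(1/|h|)}}$, with the prefactor $2^{n}$ reflecting that $g_n$ only enters the sort from stage $n$ onward; the square root is not an artefact of your inequality $t/n+n\ge 2\sqrt{t}$. Your piecewise-monotone-plus-error modulus estimate and the endgame arithmetic are fine as computations, but they rest on the unachievable block count; to repair the argument you would have to rerun them with the $2^{c'j^2}$-type cell count (recovering essentially the proof of \cite{JaPoSo:21}), or simply cite that result as the paper does.
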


This lemma tells us that, at the cost of a measure-preserving re-arrangement, a minimum regularity of $L^\infty$ functions on $[0,1]$ can be obtained. 
In order to apply Lemma~\ref{lem:compactness_rearrangement}, we need to first check the stability of the algebra $F(w,f)$, $F \in \mathscr{T}$ under measure preserving re-arrangements.
\begin{lem} \label{lem:stable_rearrangement}
Consider any $w \in \mathcal{W}$ and $f \in L^\infty([0,1]; \mathcal{M}_+(\R))$ and any a.e. injective, measure-preserving $\Phi: [0,1] \to [0,1]$. Define the push forward kernel and measure
\begin{equation*}
\begin{aligned}
w_\#(\xi, \rd \zeta) \defeq \Phi^{-1}_\# w(\Phi(\xi), \cdot)(\rd \zeta), \quad f_\#(\xi, \rd z)  \defeq f(\Phi(\xi), \rd z),
\end{aligned}
\end{equation*}
where $\Phi^{-1}$ is any a.e. defined left inverse of $\Phi$.
Then the algebra $F(w,f)$, $F \in \mathscr{T}$ is stable under $\Phi$ in the sense that
\begin{equation*}
\begin{aligned}
F(w_\#, f_\#)(\xi, \rd z_1, \dots, \rd z_k) = F(w, f)(\Phi(\xi), \rd z_1, \dots, \rd z_k)
\end{aligned}
\end{equation*}
for any transform $F \in \mathscr{T}$ and for a.e. $\xi \in [0,1]$. Moreover, $\tau_\infty(T,w_\#,f_\#) = \tau_\infty(T,w,f)$ for any $T \in \mathcal{T}$.

\end{lem}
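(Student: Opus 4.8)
The plan is to establish the pointwise-in-$\xi$ identity $F(w_\#,f_\#)(\xi,\cdot)=F(w,f)(\Phi(\xi),\cdot)$ for every transform $F\in\mathscr{T}$ and a.e.\ $\xi\in[0,1]$, by induction along the three generating rules of Definition~\ref{defi:tree_algebra}, and then to read off the claim on $\tau_\infty$ from Lemma~\ref{lem:algebra_to_tree}. First one notes that $\Phi$, being a.e.\ injective and measure preserving, sends Lebesgue-null sets to null sets, so that $\Phi^{-1}$ is defined a.e., $\Phi\circ\Phi^{-1}=\mathrm{id}$ a.e., and $\Phi^{-1}$ is again a.e.\ injective and measure preserving; in particular $w_\#\in\mathcal{W}$ with $\|w_\#\|_{\mathcal{W}}=\|w\|_{\mathcal{W}}$ and $f_\#\in L^\infty([0,1];\mathcal{M}_+(\R))$ with unchanged norm, so every transform in the statement is well defined by Lemma~\ref{lem:tree_algebra}. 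The seed case is the definition, $F_0(w_\#,f_\#)(\xi,\cdot)=f_\#(\xi,\cdot)=f(\Phi(\xi),\cdot)$, and the graft rule preserves the identity trivially since it acts pointwise in $\xi$ and tensorizes in the $z$ variables.

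The whole difficulty lies in the grow rule. Assume $F$ satisfies the identity; test $F^*(w_\#,f_\#)(\xi,\cdot)$ against a fixed continuous $\psi$ and set $A(\zeta):=\langle\psi,F(w,f)(\zeta,\cdot)\rangle\in L^\infty([0,1])$, so that by the inductive hypothesis $\langle\psi,F(w_\#,f_\#)(\zeta,\cdot)\rangle=A(\Phi(\zeta))$ a.e.; unwinding the definition of $F^*$ and of the push-forward defining $w_\#$ gives
\[
\langle\psi,F^*(w_\#,f_\#)(\xi,\cdot)\rangle=\int_{[0,1]}A\big(\Phi(\Phi^{-1}(\zeta))\big)\,w(\Phi(\xi),\rd\zeta),\qquad \langle\psi,F^*(w,f)(\Phi(\xi),\cdot)\rangle=\int_{[0,1]}A(\zeta)\,w(\Phi(\xi),\rd\zeta).
\]
Since $A\circ\Phi\circ\Phi^{-1}=A$ only holds Lebesgue-a.e., while $w(\Phi(\xi),\cdot)$ may carry a singular part, one cannot match the two integrands pointwise. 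The fix is to pair the difference with an arbitrary $g\in L^\infty([0,1])$ in the $\xi$ variable, change variables $\eta=\Phi(\xi)$ (measure preserving), and use Fubini for the finite bimeasure $w$ on $[0,1]^2$ to reduce to
\[
\int_{[0,1]^2}g(\Phi^{-1}(\eta))\,\big(A(\Phi(\Phi^{-1}(\zeta)))-A(\zeta)\big)\,w(\rd\eta,\rd\zeta);
\]
the defining property $w\in\mathcal{W}$ forces $|w|$ to be absolutely continuous in its second variable, so the $\zeta$-marginal of $|w|$ does not charge Lebesgue-null sets and the integrand, which vanishes for a.e.\ $\zeta$, integrates to $0$. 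Hence the two sides of the grow identity agree for a.e.\ $\xi$, which closes the induction (a countable union of null sets is still null, so the identity holds simultaneously for all $F\in\mathscr{T}$).

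Finally, for $T\in\mathcal{T}$ pick the transform $F\in\mathscr{T}$ provided by Lemma~\ref{lem:algebra_to_tree}, so that $\tau_\infty(T,w,f)=\int_{[0,1]}F(w,f)(\zeta,z_1,\dots,z_{|T|})\,\rd\zeta$; applying the identity just proven and the substitution $\zeta\mapsto\Phi(\zeta)$ (measure preserving once more) yields $\tau_\infty(T,w_\#,f_\#)=\int_{[0,1]}F(w,f)(\Phi(\zeta),\cdot)\,\rd\zeta=\tau_\infty(T,w,f)$, which is the assertion. I expect the grow step to be the only genuine obstacle, and within it the crucial point is that the merely-a.e.\ inverse $\Phi^{-1}$ interacts badly with possible atoms or singular components of $w$; this is precisely resolved by the fact that membership in $\mathcal{W}$ encodes separate absolute continuity of the kernel in each of its two variables, which is the structural reason the space $\mathcal{W}$ was set up in this form.
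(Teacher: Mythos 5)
Your proof is correct and follows the same skeleton as the paper's: induction over the three generating rules of $\mathscr{T}$, with the seed and graft cases immediate, the grow rule as the only substantive step, and the final statement on $\tau_\infty$ obtained from Lemma~\ref{lem:algebra_to_tree} plus the measure-preserving change of variables $\zeta\mapsto\Phi(\zeta)$. Where you differ is in how the grow step is justified: the paper simply writes the chain of equalities $\int F(w_\#,f_\#)(\zeta,\cdot)\,w_\#(\xi,\rd\zeta)=\int F(w,f)(\Phi(\zeta),\cdot)\,w_\#(\xi,\rd\zeta)=\int F(w,f)(\zeta,\cdot)\,w(\Phi(\xi),\rd\zeta)$, implicitly treating $\Phi\circ\Phi^{-1}=\mathrm{id}$ and the a.e.-in-$\zeta$ inductive identity as harmless under the measure $w(\Phi(\xi),\cdot)$, whereas you isolate the genuine subtlety that these identities hold only Lebesgue-a.e.\ while $w(\Phi(\xi),\cdot)$ may be singular, and you repair it by dualizing in $\xi$, changing variables, and invoking Fubini for the joint bimeasure together with the fact that the second membership condition in $\mathcal{W}$ makes the $\zeta$-marginal of $|w|$ absolutely continuous. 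This buys a fully rigorous version of the step the paper treats tersely, and it correctly pinpoints why the two-sided structure of $\mathcal{W}$ matters here. One expository caveat: your first display already carries the replacement $\langle\psi,F(w_\#,f_\#)(\Phi^{-1}(\zeta),\cdot)\rangle\rightsquigarrow A\bigl(\Phi(\Phi^{-1}(\zeta))\bigr)$, which is itself an a.e.-Lebesgue substitution under the possibly singular measure $w(\Phi(\xi),\rd\zeta)$ — exactly the phenomenon you then warn about; it would be cleaner to keep $B(\Phi^{-1}(\zeta))$ with $B:=\langle\psi,F(w_\#,f_\#)(\cdot,\cdot)\rangle$ in that display and dispose of both substitutions simultaneously by the same Fubini/marginal-absolute-continuity argument, since the exceptional sets (where $B\neq A\circ\Phi$ composed with $\Phi^{-1}$, and where $\Phi\circ\Phi^{-1}\neq\mathrm{id}$) are both Lebesgue-null in $\zeta$ and hence not charged by the $\zeta$-marginal of $|w|$. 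With that phrasing fixed, your argument is complete, and the concluding countability remarks (countable measure-determining family of $\psi$'s, countable algebra $\mathscr{T}$) are exactly what is needed to get the statement for a.e.\ $\xi$ simultaneously for all $F$.
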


\begin{proof}
The proof is again done by an induction argument based on the recursive rules defining $F(w,f)$, $F \in \mathscr{T}$.

\begin{itemize}
\item[(i)] For the seed element $F_0(w,f)$ the property is obvious.

\item[(ii)]
Consider two elements $F_1(w,f)$ and $F_2(w,f)$ stable under $\Phi$. Then the grafted element satisfies
\begin{equation*}
\begin{aligned}
\;& (F_1 \otimes F_2)(w_\#, f_\#)(\xi, \rd z_1, \dots, \rd z_{k_1+k_2})
\\
= \;& F_1(w_\#, f_\#)(\xi, \rd z_1, \dots, \rd z_{k_1}) F_2(w_\#, f_\#)(\xi, \rd z_{k_1+1}, \dots, \rd z_{k_1+k_2})
\\
= \;& F_1(w, f)(\Phi(\xi), \rd z_1, \dots, \rd z_{k_1}) F_2(w, f)(\Phi(\xi), \rd z_{k_1+1}, \dots, \rd z_{k_1+k_2})
\\
= \;& F(w, f)(\Phi(\xi), \rd z_1, \dots, \rd z_{k_1+k_2}),
\end{aligned}
\end{equation*}
which is the stated stability under $\Phi$.

\item[(iii)] Consider an element $F(w,f)$ stable under $\Phi$. Then the grow element satisfies the stability property
\begin{equation*}
\begin{aligned}
F^*(w_\#, f_\#) (\xi, \rd z_1, \dots, \rd z_k) = \;& \int_{\zeta \in [0,1]} F(w_\#, f_\#) (\zeta, \rd z_1, \dots, \rd z_k) w_\#(\xi, \rd \zeta)
\\
= \;& \int_{\zeta \in [0,1]} F(w, f) (\Phi(\zeta), \rd z_1, \dots, \rd z_k) w_\#(\xi, \rd \zeta)
\\
= \;& \int_{\zeta \in [0,1]} F(w, f) (\zeta, \rd z_1, \dots, \rd z_k) w(\Phi(\xi), \rd \zeta)
\\
= \;& F^*(w, f) (\Phi(\xi), \rd z_1, \dots, \rd z_k).
\end{aligned}
\end{equation*}
%
\end{itemize}

Finally, for any $T \in \mathcal{T}$, take $F \in \mathscr{T}$ as claimed in Lemma~\ref{lem:algebra_to_tree}. Then
\begin{equation*}
\begin{aligned}
\tau_\infty (T,w_\#,f_\#)(\rd z_1, \dots, \rd z_{|T|}) = \;& \int_{\xi \in [0,1]} F(w_\#, f_\#) (\xi, \rd z_1, \dots, \rd z_{|T|}) \;\rd \xi
\\
= \;& \int_{\xi \in [0,1]} F(w, f) (\xi, \rd z_1, \dots, \rd z_{|T|}) \;\rd \xi
\\
= \;& \tau_\infty (T,w,f)(\rd z_1, \dots, \rd z_{|T|}),
\end{aligned}
\end{equation*}
which finishes the proof.
\end{proof}

The next step is to is to derive the compactness of the algebra $F(w,f)$, $F \in \mathscr{T}$ and identify the limit, which we summarize here.
\begin{lem} \label{lem:algebra_limit}
Under the assumptions of Proposition~\ref{prop:independent_compactness}, there exists measure-preserving maps $\Phi_N: [0,1] \to [0,1]$ for the sequence of $N \to \infty$ and $w \in \mathcal{W}$, $f \in L^\infty([0,1]; \mathcal{M}_+(\R))$, such that convergence in the following strong-weak-* sense holds: For all $F \in \mathscr{T}$ and all $\varphi \in C_c(\R^k)$, where $k$ is the rank of $F$,
\begin{equation} \label{eqn:algebra_limit_full}
\begin{aligned}
\lim_{N \to \infty} \;& \int_{z \in \R^k} \varphi(z_1,\dots,z_k) F(\tilde w_N, \tilde f_N)(\Phi_N(\xi), \rd z_1,\dots, \rd z_k)
\\
= \;& \int_{z \in \R^k} \varphi(z_1,\dots,z_k) F(w, f)(\xi, \rd z_1,\dots, \rd z_k)
\end{aligned}
\end{equation}
in any $L^p_\xi([0,1])$, $1 \leq p < \infty$.

\end{lem}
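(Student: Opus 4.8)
The plan is to follow the rearrangement scheme of \cite{JaPoSo:21}, reducing the statement to a one-dimensional compactness argument and an induction over the algebra $\mathscr{T}$. \textbf{Step 1 (the rearrangement $\Phi_N$).} Fix countable dense families $\{\varphi_q\}_q\subset C_c(\R)$ and $\{\psi_p\}_p\subset C([0,1])$, and for each $N$ enumerate into a single sequence the $L^\infty([0,1])$ functions $\xi\mapsto\int_\R\varphi_q(x)\,\tilde f_N(\xi,\rd x)$, $\xi\mapsto\int_{[0,1]}\psi_p(\zeta)\,\tilde w_N(\xi,\rd\zeta)$ and $\zeta\mapsto\int_{[0,1]}\psi_p(\xi)\,\tilde w_N(\rd\xi,\zeta)$. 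By \eqref{eqn:assumption_1_compact}, \eqref{eqn:assumption_3_compact} and the fact that each $f_N^i$ is a probability measure, the sup-norm of the $n$-th function of this enumeration is bounded in terms of $n$ (through $\|\varphi_q\|_\infty$, $\|\psi_p\|_\infty$) and $C_{\mathcal W}$, hence uniformly in $N$. Applying Lemma~\ref{lem:compactness_rearrangement} yields for each $N$ an a.e.\ injective measure-preserving $\Phi_N:[0,1]\to[0,1]$; writing $\hat w_N\defeq(\tilde w_N)_\#$, $\hat f_N\defeq(\tilde f_N)_\#$ with the push-forward notation of Lemma~\ref{lem:stable_rearrangement}, the rearranged scalar functions then acquire an $L^1$-translation modulus $\omega_n(|h|)$ independent of $N$ (it degrades with $n$, which is harmless). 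The bounds $\|\hat w_N\|_{\mathcal W}\leq C_{\mathcal W}$ and $\esssup_\xi\int e^{a|x|}\hat f_N(\xi,\rd x)\leq M_a$ are preserved under rearrangement.

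\textbf{Step 2 (compactness and identification of $w$, $f$).} The uniform $L^\infty$ bounds together with the $N$-uniform translation moduli make each scalar family of Step 1 relatively compact in $L^p([0,1])$ for every $1\leq p<\infty$ by Fréchet--Kolmogorov; a diagonal extraction over $q,p$ (and a further one producing a.e.\ convergence) gives a subsequence, still denoted $N$, along which all converge. A fibrewise Riesz representation identifies the limit of $\xi\mapsto\int\varphi_q\,\hat f_N(\xi,\rd x)$ with $\xi\mapsto\int\varphi_q\,f(\xi,\rd x)$ for some $f\in L^\infty([0,1];\mathcal M_+(\R))$ with $\esssup_\xi\int e^{a|x|}f(\xi,\rd x)\leq M_a$ (hence $f\in L^\infty([0,1];H^{-1}_\eta)$ by Lemma~\ref{lem:convolutional_inequality}), and assembles the kernel limits into $w\in\mathcal M([0,1]^2)$; the very definition of $\mathcal W$ as a dual space makes the $L^\infty_\xi\mathcal M_\zeta$ and $L^\infty_\zeta\mathcal M_\xi$ norms weak-$*$ lower semicontinuous, so $w\in\mathcal W$ with $\|w\|_{\mathcal W}\leq C_{\mathcal W}$. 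Finally, since $\mathcal M([0,1])\hookrightarrow H^{-1}([0,1])$ is a compact embedding, the fibrewise weak-$*$ convergence of $\hat w_N(\xi,\cdot)$ upgrades to $\|\hat w_N(\xi,\cdot)-w(\xi,\cdot)\|_{H^{-1}}\to 0$ for a.e.\ $\xi$, and dominated convergence gives $\hat w_N\to w$ in $L^2_\xi H^{-1}_\zeta\cap L^2_\zeta H^{-1}_\xi$, while $\hat f_N\to f$ in the stated strong-weak-$*$ sense.

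\textbf{Step 3 (induction over $\mathscr T$ and conclusion).} We prove \eqref{eqn:algebra_limit_full} by induction on the construction of $F\in\mathscr T$, carrying along the bound $\|F(\hat w_N,\hat f_N)\|_{L^\infty([0,1];\mathcal M(\R^k))}\leq C_{\mathcal W}^{k-1}$ from the proof of Lemma~\ref{lem:tree_algebra}. The seed case is Step 2. For the graft rule one approximates $\varphi\in C_c(\R^{k_1+k_2})$ uniformly on its support by finite sums of tensors $\sum_l\chi_l\otimes\chi_l'$, notes that $\langle F_1\otimes F_2,\chi_l\otimes\chi_l'\rangle(\xi)=\langle F_1,\chi_l\rangle(\xi)\,\langle F_2,\chi_l'\rangle(\xi)$ is a product of an $L^p_\xi$-convergent and an $L^\infty_\xi$-bounded sequence hence $L^p_\xi$-convergent, and controls the approximation error in $L^\infty_\xi$ by the product of total masses. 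For the grow rule, write $\langle F^*(\hat w_N,\hat f_N)(\xi,\cdot),\varphi\rangle=\int_{[0,1]}h_N(\zeta)\,\hat w_N(\xi,\rd\zeta)$ with $h_N(\zeta)\defeq\langle F(\hat w_N,\hat f_N)(\zeta,\cdot),\varphi\rangle\to h(\zeta)\defeq\langle F(w,f)(\zeta,\cdot),\varphi\rangle$ in $L^p_\zeta$; split $\int h_N\,\hat w_N=\int(h_N-h)\,\hat w_N+\int h\,(\hat w_N-w)$, bound the first term by $\|\hat w_N\|_{\mathcal W}\|h_N-h\|_{L^p_\zeta}$ via Lemma~\ref{lem:L_p_operator}, and for the second approximate $h\in L^\infty$ by smooth $h_\varepsilon$ ($\|h_\varepsilon\|_\infty\leq\|h\|_\infty$, $h_\varepsilon\to h$ in every $L^q$), treating $\int h_\varepsilon(\hat w_N-w)$ through the $L^2_\xi H^{-1}_\zeta$ convergence of Step 2 and $\int(h-h_\varepsilon)(\hat w_N-w)$ through the $L^\infty_\xi\mathcal M_\zeta$/$L^\infty_\zeta\mathcal M_\xi$ structure of $\mathcal W$. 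Combining this with Lemma~\ref{lem:stable_rearrangement} and Lemma~\ref{lem:algebra_to_tree} then yields \eqref{eqn:assumption_4_compact}, since $\langle\tau_\infty(T,\tilde w_N,\tilde f_N),\varphi\rangle=\int_{[0,1]}\langle F(\tilde w_N,\tilde f_N)(\Phi_N(\zeta),\cdot),\varphi\rangle\,\rd\zeta\to\int_{[0,1]}\langle F(w,f)(\zeta,\cdot),\varphi\rangle\,\rd\zeta=\langle\tau_\infty(T,w,f),\varphi\rangle$.

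\textbf{Main obstacle.} The delicate point is Step 1 together with the corresponding part of Step 2: we must regularize a kernel on $[0,1]^2$ in \emph{both} of its variables at once using a single one-dimensional measure-preserving rearrangement, with a modulus of continuity uniform in $N$. This is precisely what the quantitative rearrangement estimate of \cite{JaPoSo:21} (Lemma~\ref{lem:compactness_rearrangement}) supplies, at the price of a modulus that worsens (like $2^n$) with the number $n$ of functions being regularized. That degradation is the reason the statement must be phrased through the countable algebra $\mathscr T$ rather than uniformly over all of $\mathcal T$, and the reason the convergence obtained is of strong-weak-$*$ type rather than in a strong norm — an honest reflection of the fact that there is no a priori regularity on the connections $w_{i,j;N}$.
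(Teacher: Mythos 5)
There is a genuine gap in Step 2, and it sits exactly at the point you yourself flag as the main obstacle. The functions you feed into Lemma~\ref{lem:compactness_rearrangement} are the one-variable traces $\xi\mapsto\int\psi_p(\zeta)\,\tilde w_N(\xi,\rd\zeta)$ (and the symmetric family in $\zeta$). After rearrangement these become $\xi\mapsto\int\psi_p(\zeta)\,\tilde w_N(\Phi_N(\xi),\rd\zeta)$, so what you gain is fibrewise weak-$*$ control of the kernel rearranged in \emph{one} variable only, tested against the \emph{fixed} continuous functions $\psi_p$. The object your induction actually needs is the doubly rearranged kernel of Lemma~\ref{lem:stable_rearrangement}, $\hat w_N(\xi,\rd\zeta)=(\Phi_N^{-1})_\#\,\tilde w_N(\Phi_N(\xi),\cdot)(\rd\zeta)$, for which $\int\psi(\zeta)\,\hat w_N(\xi,\rd\zeta)=\int\psi(\Phi_N^{-1}(\zeta))\,\tilde w_N(\Phi_N(\xi),\rd\zeta)$: the effective test function $\psi\circ\Phi_N^{-1}$ changes with $N$ and is not continuous, so none of the convergences from Step~1 applies to it. Consequently the asserted fibrewise weak-$*$ convergence of $\hat w_N(\xi,\cdot)$, and with it the upgrade to $\hat w_N\to w$ in $L^2_\xi H^{-1}_\zeta\cap L^2_\zeta H^{-1}_\xi$, is unsupported; a one-variable rearrangement cannot regularize the kernel in both variables at once, and under the sparse scaling no strong compactness of $\hat w_N$ beyond weak-$*$ in $\mathcal W$ is available. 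Without that strong convergence the term $\int h\,(\hat w_N-w)$ in your grow step cannot be closed (and even granting fibrewise weak-$*$ convergence, pairing the measures $\hat w_N(\xi,\cdot)$ in $\zeta$ against $h_N$ converging only in $L^p_\zeta$ is not legitimate, since measures may charge sets where $h_N-h$ is large).

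The paper's proof avoids asking for any strong compactness of the kernel: Lemma~\ref{lem:compactness_rearrangement} is applied to the countable family $g^N_{m,l}(\xi)=\int\varphi_{m,l}(z)\,F_m(\tilde w_N,\tilde f_N)(\xi,\rd z)$ generated by \emph{all} elements of the algebra $\mathscr T$, so the strong $L^p_\xi$ compactness needed at every level of the induction is produced once and for all by Fr\'echet--Kolmogorov, and the induction merely identifies the limits. In that identification the kernel enters only through $\tilde w_{N;\#}\overset{\ast}{\rightharpoonup}w$, and the grow step is closed by pairing $\psi(\xi)\,\tilde g^N_{F,\varphi}(\zeta)\to\psi(\xi)\,g_{F,\varphi}(\zeta)$ strongly in $L^1_\zeta([0,1];C_\xi[0,1])$ against $\tilde w_{N;\#}(\rd\xi,\zeta)\,\rd\zeta$ weak-$*$ in $L^\infty_\zeta([0,1];\mathcal M_\xi[0,1])$: the $\zeta$-pairing is then an $L^1$--$L^\infty$ pairing, so strong $L^1_\zeta$ convergence of the scalar observables suffices and no regularity of the kernel in $\zeta$ is ever invoked. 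If you wish to keep your architecture, you should add the functions $\xi\mapsto\int\varphi\, F(\tilde w_N,\tilde f_N)(\xi,\rd z)$, for all $F\in\mathscr T$ and a countable dense set of $\varphi$, to the family rearranged in Step~1; your Step~3 then reduces to the paper's identification argument and the unobtainable strong kernel convergence can be dropped.
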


\begin{proof}

Since the algebra is countable, we may index the elements as $\mathscr{T} = \{F_m: m \in \N\}$. For each $m \in \N$, let $k_m$ be the rank of $F_m$ and let $\{\varphi_{m,l}\}_{l \in \N}$ be any countable dense set of $C_c(\R^{k_m})$. Define the functions
\begin{equation*}
\begin{aligned}
g_{m,l}^N(\xi) \defeq \int_{z \in \R^k} \varphi_{m,l}(z_1,\dots,z_{k_m}) F_m(\tilde w_N, \tilde f_N)(\xi, \rd z_1,\dots, \rd z_{k_m}), \quad \forall m,l,N.
\end{aligned}
\end{equation*}
It is straightforward that $\sup_N \|g_{m,l}\|_{L^\infty([0,1])}<\infty$ from the bounds on $F_m(\tilde w_N, \tilde f_N)$ in the space $L^\infty([0,1]; \mathcal{M}(\R^{k_m}))$ that follow from Lemma~\ref{lemboundtauN} and the identification provided by Lemma~\ref{lem:algebra_to_tree}.

Thus, by Lemma~\ref{lem:compactness_rearrangement}, there exists $\Phi_N: [0,1] \to [0,1]$ for the sequence $N \to \infty$, so that the re-arrangements
\begin{equation*}
\begin{aligned}
\tilde g_{m,l}^N(\xi) = (g_{m,l}^N \circ \Phi_N) (\xi)= \int_{z \in \R^k} \varphi_{m,l}(z_1,\dots,z_{k_m}) F_m(\tilde w_N, \tilde f_N)(\Phi_N(\xi), \rd z_1,\dots, \rd z_{k_m})
\end{aligned}
\end{equation*}
fulfill the estimates
\begin{equation*}
\begin{aligned}
\int_{[0,1]} |\tilde g_{m,l}^N(\xi) - \tilde g_{m,l}^N(\xi + h)| \;\rd \xi \leq C_{m,l} 2^{-C \sqrt{\log \frac{1}{|h|}}}, \quad \forall 0 < |h| < 1
\end{aligned}
\end{equation*}
for some universal constant $C>0$ and $C_{m,l}>0$ depending on the two indexes only.

By the Fr\'echet-Kolmogorov theorem and using a diagonal extraction there exists some subsequence of $N$ (which we still denote $N$ for simplicity) and for all $m,l \in \N$, there exists $\tilde g_{m,l} \in L^\infty([0,1])$ such that as $N \to \infty$,
\begin{equation*}
\begin{aligned}
\tilde g_{m,l}^N \to \tilde g_{m,l} \; \text{ in any } L^p([0,1]), \; 1 \leq p < \infty.
\end{aligned}
\end{equation*}
Let us define, for any $N$ in the subsequence, any $F \in \mathscr{T}$ and $\varphi \in C_c(\R^{k})$, where $k$ is the rank of $F$,
\begin{equation*}
\begin{aligned}
\tilde g_{F,\varphi}^N \defeq \;& \int_{z \in \R^k} \varphi(z_1,\dots,z_k) F(\tilde w_N, \tilde f_N)(\Phi_N(\xi), \rd z_1,\dots, \rd z_k)
\\
= \;& \int_{z \in \R^k} \varphi(z_1,\dots,z_k) F(\tilde w_{N;\#}, \tilde f_{N;\#})(\xi, \rd z_1,\dots, \rd z_k),
\end{aligned}
\end{equation*}
where we again apply the following notation for the re-arrangement
\begin{equation*}
\begin{aligned}
\tilde w_{N;\#}(\xi, \rd \zeta) \defeq \Phi^{-1}_\# \tilde w_N(\Phi(\xi), \cdot)(\rd \zeta), \quad \tilde f_{N;\#}(\xi, \rd z) \defeq \tilde f_{N}(\Phi(\xi), \rd z).
\end{aligned}
\end{equation*}

By a density argument of $C_c(\R^k)$, we conclude that for any $F \in \mathscr{T}$ and $\varphi \in C_c(\R^{k})$, there exists $\tilde g_{F,\varphi} \in L^\infty([0,1])$ such that as $N \to \infty$,
\begin{equation} \label{eqn:algebra_compactness}
\begin{aligned}
\tilde g_{F,\varphi}^N \to \tilde g_{F,\varphi} \; \text{ in any } L^p([0,1]), \; 1 \leq p < \infty.
\end{aligned}
\end{equation}
It remains to identify $w \in \mathcal{W}$ and $f \in L^\infty([0,1]; \mathcal{M}_+(\R))$ for the limit.
Recall that we have defined the kernel space $\mathcal{W}$ as
\begin{equation*}
\begin{aligned}
\mathcal{W} \defeq \{w \in \mathcal{M}([0,1]^2) : w(\xi, \rd \zeta) \in L^\infty_\xi([0,1], \mathcal{M}_\zeta[0,1]), \;  w(\rd \xi, \zeta) \in L^\infty_\zeta([0,1], \mathcal{M}_\xi[0,1]) \},
\end{aligned}
\end{equation*}
where $L^\infty_\xi([0,1], \mathcal{M}_\zeta[0,1])$ denotes the topological dual of $L^1_\xi([0,1], C_\zeta[0,1])$.

Hence there exists a subsequence (which we still index by $N$) and $w \in \mathcal{W}$, $f \in L^\infty([0,1]; \mathcal{M}_+(\R))$, such that
\begin{equation*}
\begin{aligned}
\tilde w_{N;\#} \overset{\ast}{\rightharpoonup} w, \quad \tilde f_{N;\#} \overset{\ast}{\rightharpoonup} f.
\end{aligned}
\end{equation*}
By passing to the limit we can immediately obtain the exponential moment bound
\begin{equation*}
\begin{aligned}
\esssup_{\xi \in [0,1]} \int_{\R} { \textstyle \exp\big( a |x|\big)} f (\xi, \rd x) \leq \;& M_a.
\end{aligned}
\end{equation*}
Let us define, for any $F \in \mathscr{T}$ and $\varphi \in C_c(\R^{k})$,
\begin{equation*}
\begin{aligned}
g_{F,\varphi}(\xi) \defeq \int_{z \in \R^k} \varphi(z_1,\dots,z_k) F(w, f)(\xi, \rd z_1,\dots, \rd z_k).
\end{aligned}
\end{equation*}
It is straightforward that $g_{F,\varphi} \in L^\infty([0,1])$ and \eqref{eqn:algebra_limit_full} can be simply restated as
\begin{equation} \label{eqn:algebra_limit}
\begin{aligned}
\tilde g_{F,\varphi} = g_{F,\varphi}.
\end{aligned}
\end{equation}
We apply another induction argument based on the recursive rules.
\begin{itemize}
\item[(i)] For the seed element $F_0(w,f) = f$, it is straightforward that for any $\psi \in C([0,1])$, $\phi \in C_c(\R)$,
\begin{equation*}
\begin{aligned}
\int_{[0,1]} \psi(\xi) g_{F_0,\varphi}(\xi) \;\rd \xi
= \;& \int_{[0,1]} \psi(\xi) \int_{z \in \R} \varphi(z) f(\xi, \rd z) \;\rd \xi
\\
\overset{\ast}{=} \;& \lim_{N \to \infty} \int_{[0,1]} \psi(\xi) \int_{z \in \R} \varphi(z) \tilde f_{N; \#}(\xi, \rd z) \;\rd \xi
\\
= \;& \int_{[0,1]} \psi(\xi) \tilde g_{F_0,\varphi}(\xi) \;\rd \xi,
\end{aligned}
\end{equation*}
where the equality $\overset{\ast}{=}$ is due to the weak-* convergence $\tilde f_{N;\#} \overset{\ast}{\rightharpoonup} f$. Hence, the identity \eqref{eqn:algebra_limit} holds for $F_0$.

\item[(ii)]
Consider two elements $F_1,F_2 \in \mathscr{T}$ satisfying \eqref{eqn:algebra_limit}. Then for any $\phi_1 \in C_c(\R^{k_1})$, $\phi_2 \in C_c(\R^{k_2})$,
\begin{equation*}
\begin{aligned}
& g_{(F_1 \otimes F_2),(\varphi_1 \otimes \varphi_2)}(\xi)
\\
&\quad=  \int_{z \in \R^{k_1+k_2}} (\varphi_1 \otimes \varphi_2)(z_1,\dots,z_{k_1+k_2}) (F_1 \otimes F_2)(w, f)(\xi, \rd z_1,\dots, \rd z_{k_1+k_2})
\\
&\quad= g_{F_1,\varphi_1}(\xi) g_{F_2,\varphi_2}(\xi),
\end{aligned}
\end{equation*}
hence $g_{(F_1 \otimes F_2),(\varphi_1 \otimes \varphi_2)} = g_{F_1,\varphi_1} g_{F_2,\varphi_2}$.

By a similar argument, $\tilde g_{(F_1 \otimes F_2),(\varphi_1 \otimes \varphi_2)}^N = \tilde g_{F_1,\varphi_1}^N \tilde g_{F_2,\varphi_2}^N$ for all $N$.
Passing to the limit (in any $L^p$, $1 \leq p < \infty$) as $N \to \infty$ we obtain $\tilde g_{(F_1 \otimes F_2),(\varphi_1 \otimes \varphi_2)} = \tilde g_{F_1,\varphi_1} \tilde g_{F_2,\varphi_2}$. Therefore, one can conclude
\begin{equation*}
\begin{aligned}
g_{(F_1 \otimes F_2),(\varphi_1 \otimes \varphi_2)} = \tilde g_{(F_1 \otimes F_2),(\varphi_1 \otimes \varphi_2)},
\end{aligned}
\end{equation*}
which is \eqref{eqn:algebra_limit} for $F = (F_1 \otimes F_2)$ when $\varphi \in C_c(\R^{k_1+k_2})$ is in the tensorized form $\varphi = \varphi_1 \otimes \varphi_2$.

Finally, any $\varphi \in C_c(\R^{k_1+k_2})$ can be approximated by a sum of tensorized functions 
so that we derive \eqref{eqn:algebra_limit} for $F = (F_1 \otimes F_2)$ with any arbitrary $\varphi \in C_c(\R^{k_1+k_2})$.

\item[(iii)] Consider an element $F \in \mathscr{T}$ satisfying \eqref{eqn:algebra_limit}. Then for any $\psi \in C([0,1])$, $\phi \in C_c(\R^k)$,
\begin{equation*}
\begin{aligned}
& \int_{[0,1]} \psi(\xi) g_{F^*,\varphi}(\xi) \;\rd \xi
\\
&\quad=  \int_{[0,1]} \psi(\xi) \int_{z \in \R^k} \varphi(z_1,\dots,z_k) F^*(w, f)(\xi, \rd z_1,\dots, \rd z_k) \;\rd \xi
\\
&\quad= \int_{\xi \in [0,1]} \psi(\xi) \int_{z \in \R^k} \varphi(z_1,\dots,z_k) \int_{\zeta \in [0,1]} F(w,f) (\zeta, \rd z_1,\dots, \rd z_k) w(\rd \xi, \zeta) \;\rd \zeta
\\
&\quad = \int_{\xi \in [0,1]} \psi(\xi) g_{F,\varphi}(\zeta) w(\rd \xi, \zeta) \;\rd \zeta.
\end{aligned}
\end{equation*}
By a similar argument, for all $N$.
\begin{equation*}
\begin{aligned}
\;& \int_{[0,1]} \psi(\xi) \tilde g_{F^*,\varphi}^N(\xi) \;\rd \xi = \int_{\xi \in [0,1]} \psi(\xi) \tilde g_{F,\varphi}^N(\zeta) \tilde w_{N;\#}(\rd \xi, \zeta) \;\rd \zeta.
\end{aligned}
\end{equation*}
Next, by the convergence
\begin{equation*}
\begin{aligned}
\psi(\xi) \tilde g_{F,\varphi}^N(\zeta) \to \;& \psi(\xi) g_{F,\varphi}(\zeta) \; \text{ in }\; L^1_{\zeta}([0,1], C_\xi[0,1]),
\\
\tilde w_{N;\#}(\rd \xi, \zeta) \;\rd \zeta \overset{\ast}{\rightharpoonup} \;& w(\rd \xi, \zeta) \;\rd \zeta \; \text{ in }\; L^\infty_{\zeta}([0,1], \mathcal{M}_\xi[0,1]),
\end{aligned}
\end{equation*}
we obtain that
\begin{equation*}
\begin{aligned}
\lim_{N \to \infty} \int_{\xi \in [0,1]} \psi(\xi) \tilde g_{F,\varphi}^N(\zeta) \tilde w_{N;\#}(\rd \xi, \zeta) \;\rd \zeta = \int_{\xi \in [0,1]} \psi(\xi) g_{F,\varphi}(\zeta) w(\rd \xi, \zeta) \;\rd \zeta.
\end{aligned}
\end{equation*}
Hence $g_{F^*,\varphi} = \tilde g_{F^*,\varphi}$, which is \eqref{eqn:algebra_limit} for $F^*$.

\end{itemize}

\end{proof}

We may now conclude the proof of Proposition~\ref{prop:independent_compactness}.
For any $T \in \mathcal{T}$, there exists $F \in \mathscr{T}$ such that
\begin{equation*}
\begin{aligned}
\tau_\infty(T,\tilde w_{N;\#},\tilde f_{N;\#}) = \;& \int_{[0,1]} F(\tilde w_N, \tilde f_N)(\Phi_N(\xi), \rd z_1,\dots, \rd z_{|T|}) \;\rd \xi,
\\
\tau_\infty(T,w,f) = \;& \int_{[0,1]} F(w, f)(\xi, \rd z_1,\dots, \rd z_{|T|}) \;\rd \xi.
\end{aligned}
\end{equation*}
For any $\varphi \in C_c(\R^{|T|})$, by Lemma~\ref{lem:algebra_limit},
\begin{equation*}
\begin{aligned}
\;& \lim_{N \to \infty} \int_{z \in \R^{|T|}} \tau_\infty(T,\tilde w_{N;\#},\tilde f_{N;\#}) (\rd z_1,\dots, \rd z_{|T|})
\\
= \;& \lim_{N \to \infty} \int_{[0,1]} \int_{z \in \R^{|T|}} \varphi(z_1,\dots,z_{|T|}) F(\tilde w_N, \tilde f_N)(\Phi_N(\xi), \rd z_1,\dots, \rd z_{|T|}) \;\rd \xi
\\
= \;& \int_{[0,1]} \int_{z \in \R^{|T|}} \varphi(z_1,\dots,z_{|T|}) F(w, f)(\xi, \rd z_1,\dots, \rd z_{|T|}) \;\rd \xi
\\
= \;& \int_{z \in \R^{|T|}} \tau_\infty(T,w,f) (\rd z_1,\dots, \rd z_{|T|}).
\end{aligned}
\end{equation*}
Since $\varphi \in C_c(\R^{|T|})$ is arbitrary we conclude \eqref{eqn:assumption_4_compact}, restated here:
\begin{equation*}
\begin{aligned}
\tau_\infty(T,\tilde w_N, \tilde f_N) \overset{\ast}{\rightharpoonup} \tau_\infty(T,w,f) \in \mathcal{M}(\R^{|T|}), \quad \forall T \in \mathcal{T}.
\end{aligned}
\end{equation*}

\end{proof}
\section{Proofs of the quantitative results} \label{sec:technical_proofs}
\subsection{The hierarchy of equations} \label{sec:hierarchy}
The subsection provides the main proofs of Proposition~\ref{prop:hierarchy_of_equations}, \ref{prop:Hilbert_a_priori_bound} and \ref{prop:hierarchy_of_equations_limit}, which derive the hierarchy of equations from the Liouville equation \eqref{eqn:IF_Liouville_PDE} and the Vlasov equation \eqref{eqn:Vlasov_transport}-\eqref{eqn:Vlasov_mean_field}.

We begin with the proof of Proposition~\ref{prop:hierarchy_of_equations}, showing that the observables corresponding to the laws of $(X^{1;N}_0,\dots, X^{N;N}_0)$ solving \eqref{eqn:IF_multi_agent} satisfy the extended BBGKY hierarchy \eqref{eqn:hierarchy_equation}-\eqref{eqn:hierarchy_equation_remainder}.
\begin{proof} [Proof of Proposition~\ref{prop:hierarchy_of_equations}]

Since the coefficients are bounded Lipschitz, the well-posedness of the SDE system \eqref{eqn:IF_multi_agent} and the Liouville-type equation \eqref{eqn:IF_Liouville_PDE} are classical results. 
For simplicity of the presentation, we avoid using weak formulations but only present a formal calculation. 

Consider any distinct indexes $i_1,\dots,i_k \in \{1,\dots,N\}$. It is easy to verify the following identity deriving the marginal laws from the full joint law,
\begin{equation*}
\begin{aligned}
f_{N}^{i_1,\dots, i_k}(t,z_1,\dots,z_{k}) \defeq \;& \law (X^{i_1;N}_t,\dots, X^{i_{k};N}_t)
\\
=\;&  \bigg(\int_{\R^{N - k}} f_{N}(t,x_1,\dots,x_N) \textstyle{\prod_{i \neq i_1,\dots, i_k} \rd x_i}\bigg)\bigg|_{\forall l = 1,\dots, k, \; x_{i_l} = z_l}.
\end{aligned}
\end{equation*}
By integrating Liouville equation \eqref{eqn:IF_Liouville_PDE} along spatial directions $i \notin \{i_1,\dots,i_{k}\}$ and calculate the summation $i \in \{i_1,\dots,i_{k}\}$ and $i \notin \{i_1,\dots,i_{k}\}$ separately, we obtain equations for the marginals,
\begin{equation} \label{eqn:hierarchy_equation_marginal_integrate}
\begin{aligned}
& \partial_t f_{N}^{i_1,\dots, i_{k}}(t,z_1,\dots,z_{k})
\\
&\quad =  \sum_{m = 1}^{k} \Bigg\{ \bigg[ - \partial_{z_m}(\mu(z_m) f_{N}^{i_1,\dots, i_{k}}(t,z)) + \frac{\sigma^2}{2} \partial_{z_m}^2 f_{N}^{i_1,\dots, i_{k}}(t,z)
\\
&\qquad - \nu(z_m) f_{N}^{i_1,\dots, i_{k}}(t,z) + \delta_0(z_m) \bigg( \int_{\R} \nu(u_m) f_{N}^{i_1,\dots, i_{k}}(t,u - {w_{N;i_m}^{i_1,\dots, i_{k}}}) \Big) \;\rd u_m \bigg)\bigg|_{\forall n \neq m,\, u_n = z_n} \bigg] \Bigg\}
\\
&\qquad\  + \sum_{i \neq i_1,\dots,i_{k}} \int_{\R} \nu(z_{k+1}) \bigg( f_{N}^{i_1,\dots, i_{k}, i}(t,z - {w_{N;i}^{i_1,\dots, i_{k}, i}}) - f_{N}^{i_1,\dots, i_{k}, i}(t,z) \bigg) \;\rd z_{k+1}.
\end{aligned}
\end{equation}
We can reformulate the last line as
\begin{equation*}
\begin{aligned}
& \sum_{i \neq i_1,\dots,i_{k}} \int_{\R} \nu(z_{k+1}) \bigg( f_{N}^{i_1,\dots, i_{k}, i}(t,z - {w_{N;i}^{i_1,\dots, i_{k}, i}}) - f_{N}^{i_1,\dots, i_{k}, i}(t,z) \bigg) \;\rd z_{k+1}
\\
&\quad= \sum_{i \neq i_1,\dots,i_{k}} \int_{\R} \nu(z_{k+1}) \bigg( \int_0^1 \sum_{m=1}^{k} - w_{i_m,i} \partial_{z_m} f_{N}^{i_1,\dots, i_{k}, i}(t,z - r {w_{N;i}^{i_1,\dots, i_{k}, i}}) \;\rd r \bigg) \;\rd z_{k+1}
\\
&\quad = \sum_{m=1}^{k} - \partial_{z_m} \bigg[ \sum_{i \neq i_1,\dots,i_{k}} w_{i_m,i;N} \int_{\R} \nu(z_{k+1}) \bigg( \int_0^1 f_{N}^{i_1,\dots, i_{k}, i}(t,z - r {w_{N;i}^{i_1,\dots, i_{k}, i}}) \;\rd r \bigg) \;\rd z_{k+1} \bigg],
\end{aligned}
\end{equation*}
changing it into an additional advection term $\partial_{z_m}[\dots]$ to the equation. 

Introduce the simple identity
\begin{equation*}
\begin{aligned}
f_{N}^{i_1,\dots, i_{k}}(u - {w_{N;i_m}^{i_1,\dots, i_{k}}}) = f_{N}^{i_1,\dots, i_{k}}(u) - \big\{f_{N}^{i_1,\dots, i_{k}}(u) - f_{N}^{i_1,\dots, i_{k}}(u - {w_{N;i_m}^{i_1,\dots, i_{k}}})\big\},
\end{aligned}
\end{equation*}
and proceed to do the same for $f_{N}^{i_1,\dots, i_{k}, i}(z - r {w_{N;i}^{i_1,\dots, i_{k}, i}})$, so that
the marginal equations \eqref{eqn:hierarchy_equation_marginal_integrate} now read
\begin{equation} \label{eqn:hierarchy_equation_marginal}
\begin{aligned}
& \partial_t f_{N}^{i_1,\dots, i_{k}}(z_1,\dots,z_{k})
\\
&\quad = \sum_{m = 1}^{k} \Bigg\{ \bigg[ - \partial_{z_m}(\mu(z_m) f_{N}^{i_1,\dots, i_{k}}(z)) + \frac{\sigma^2}{2} \partial_{z_m}^2 f_{N}^{i_1,\dots, i_{k}}(z) - \nu(z_m) f_{N}^{i_1,\dots, i_{k}}(z)
\\
&\qquad + \delta_0(z_m) \bigg( \int_{\R} \nu(u_m) \Big( f_{N}^{i_1,\dots, i_{k}}(u) - \big\{f_{N}^{i_1,\dots, i_{k}}(u) - f_{N}^{i_1,\dots, i_{k}}(u - {w_{N;i_m}^{i_1,\dots, i_{k}}})\big\} \Big) \;\rd u_m \bigg)\bigg|_{\forall n \neq m,\, u_n = z_n} \bigg]
\\
& \qquad- \partial_{z_m} \bigg[ \sum_{i \neq i_1,\dots,i_{k}} w_{i_m,i;N} \int_{\R} \nu(z_{k+1}) \bigg( \int_0^1 f_{N}^{i_1,\dots, i_{k}, i}(z) \\
&\qquad\qquad\qquad\qquad\qquad- \big\{f_{N}^{i_1,\dots, i_{k}, i}(z) - f_{N}^{i_1,\dots, i_{k}, i}(z - r {w_{N;i}^{i_1,\dots, i_{k}, i}})\big\} \;\rd r \bigg) \;\rd z_{k+1} \bigg] \Bigg\},
\end{aligned}
\end{equation}
where we omit variable $t$ for simplicity.

By taking the time derivative to the definition of observables~\eqref{eqn:hierarchy}, restated here 
\begin{equation*}
\begin{aligned}
\tau_N (T,w_N,f_N)(t,z) \defeq \;& \frac{1}{N} \sum_{i_1,\dots, i_{|T|} = 1}^N w_{N,T}(i_1,\dots, i_{|T|}) f_{N}^{i_1,\dots, i_{|T|}}(t,z_1,\dots,z_{|T|})
\end{aligned}
\end{equation*}
and substituting the right hand side $\partial_t f_{N}^{i_1,\dots, i_{|T|}}$ by the marginal equation \eqref{eqn:hierarchy_equation_marginal} with $k = |T|$, we obtain that
\begin{equation*}
\begin{aligned}
& \partial_t \bigg( \frac{1}{N} \sum_{i_1,\dots, i_{|T|} = 1}^N w_{N,T}(i_1,\dots, i_{|T|}) f_{N}^{i_1,\dots, i_{|T|}}(z_1,\dots,z_{|T|}) \bigg)= \frac{1}{N} \sum_{i_1,\dots, i_{|T|} = 1}^N w_{N,T}(i_1,\dots, i_{|T|}) \sum_{m = 1}^{|T|} \Bigg\{ \\
&\ \bigg[ - \partial_{z_m}(\mu(z_m) f_{N}^{i_1,\dots, i_{|T|}}(z)) + \frac{\sigma^2}{2} \partial_{z_m}^2 f_{N}^{i_1,\dots, i_{|T|}}(z) - \nu(z_m) f_{N}^{i_1,\dots, i_{|T|}}(z)\\
&\ + \delta_0(z_m) \bigg( \int_{\R} \nu(u_m) \Big( f_{N}^{i_1,\dots, i_{|T|}}(u) - \big\{f_{N}^{i_1,\dots, i_{|T|}}(u)- f_{N}^{i_1,\dots, i_{|T|}}(u - {w_{N;i_m}^{i_1,\dots, i_{|T|}}})\big\} \Big) \;\rd u_m \bigg)\bigg|_{\forall n \neq m,\, u_n = z_n} \bigg]
\\
& - \partial_{z_m} \bigg[ \sum_{i \neq i_1,\dots,i_{k}} \!\!\!\! w_{i_m,i;N} \!\! \int_{\R} \! \nu(z_{|T|+1}) \! \bigg( \int_0^1 f_{N}^{i_1,\dots, i_{|T|}, i}(z) - \big\{f_{N}^{i_1,\dots, i_{|T|}, i}(z) \\
&\qquad\qquad\qquad\qquad\qquad\qquad\qquad - f_{N}^{i_1,\dots, i_{|T|}, i}(z - r {w_{N;i}^{i_1,\dots, i_{|T|}, i}})\big\} \rd r \bigg) \rd z_{|T|+1} \bigg] \Bigg\}.
\end{aligned}
\end{equation*}
Noticing the identity $w_{N,T + j}(i_1,\dots, i_{|T| + 1}) = w_{N,T}(i_1,\dots, i_{|T|}) w_{i_j, i_{|T|+1}}$,
we see that all the marginals, except the two terms of form
$\{f_N^{\dots}(\cdot) - f_N^{\dots}(\cdot - w)\}$, are expressed in the right way so they can be rewritten as observables, obtaining \eqref{eqn:hierarchy_equation} as the approximate hierarchy and \eqref{eqn:hierarchy_equation_remainder} as the explicit form of the remainders.
\end{proof}

We now turn to the proof of Proposition~\ref{prop:Hilbert_a_priori_bound}. It is worth noting that the main Gronwall estimate could also be written in the probabilistic language of It\^o calculus. However, we prefer to keep an approach and notation similar to the rest of the proofs presented.

\begin{proof} [Proof of Proposition~\ref{prop:Hilbert_a_priori_bound}]

To simplify the argument, we only present a formal calculation where the tensorized weight $\eta^{\otimes |T|}$ is directly used as the test function, while, strictly speaking, the valid test functions for distributional solutions should have compact support. Given that the remaining coefficients are bounded Lipschitz and all terms in the subsequent calculation are non-negative, passing the limit to justify the use of unbounded weight on the dual side poses no problems.

The weighted total variation $\||\tau_N|(T) \eta^{\otimes |T|}\|_{\mathcal{M}(\R^{|T|})}$ can be decomposed as
\begin{equation*}
\begin{aligned}
\||\tau|(T)(t,\cdot) \eta^{\otimes |T|}\|_{\mathcal{M}(\R^{|T|})} = \;& \int_{\R^{|T|}} \frac{1}{N} \sum_{i_1,\dots, i_{|T|} = 1}^N \big| w_{N,T}(i_1,\dots, i_{|T|}) \big| f_{N}^{i_1,\dots, i_{|T|}}(t,z) \eta^{\otimes |T|}(z) \;\rd z
\\
= \;& \frac{1}{N} \sum_{i_1,\dots, i_{|T|} = 1}^N \big| w_{N,T}(i_1,\dots, i_{|T|}) \big| \int_{\R^{|T|}} f_{N}^{i_1,\dots, i_{|T|}}(t,z) \eta^{\otimes |T|}(z) \;\rd z.
\end{aligned}
\end{equation*}
For any distinct indexes $i_1,\dots,i_k$, we have
\begin{equation*}
\begin{aligned}
\int_{\R^{|T|}} f_{N}^{i_1,\dots, i_{|T|}}(t,z) \eta^{\otimes |T|}(z) \;\rd z = \int_{\R^N} f_{N}(t,x) {\textstyle \prod_{l = 1}^{|T|} \eta(x_{i_l})} \;\rd x.
\end{aligned}
\end{equation*}
%
The forthcoming estimate is not exclusive to our specific choice $\eta = \eta_\alpha$, but for any weight function adhering to the form
\begin{equation*}
\begin{aligned}
\eta(x) = \exp( h(x)), \quad \forall x \in \R
\end{aligned}
\end{equation*}
such that $\|h'\|_{L^\infty}$, $\|h''\|_{L^\infty}$ are bounded and $h(0) \leq h(x)$.
Our choice of $\eta = \eta_\alpha$ is clearly included by choosing
$h(x) = \sqrt{1 + \alpha^2 x^2}$, resulting in $\|h'\|_{L^\infty} \leq \alpha$ and $\|h''\|_{L^\infty} \leq \alpha^2$.
The following inequalities are immediate results by chain rule and fundamental theorem of calculus.
\begin{lem} \label{lem:weight_inequalities} 
For any weight function of form $\eta(x) = \exp( h(x))$ such that $\|h'\|_{L^\infty}$, $\|h''\|_{L^\infty}$ are bounded and $h(0) \leq h(x)$, one has that
\begin{equation*}
\begin{aligned}
|\eta'/\eta|(x) \leq \|h'\|_{L^\infty}, \quad |\eta''/\eta|(x) \leq \|h''\|_{L^\infty} + \|h'\|_{L^\infty}^2,
\end{aligned}
\end{equation*}
and
\begin{equation*}
\begin{aligned}
\eta(x + y) - \eta(x) 
\leq \|h'\|_{L^\infty} |y| \exp(\|h'\|_{L^\infty} |y|) \eta(x).
\end{aligned}
\end{equation*}
The last inequality can be extended to the tensorized case $\eta^{\otimes k}(x) = \prod_{l = 1}^{k} \eta(x_{i_l})$ as
\begin{equation*}
\begin{aligned}
\eta^{\otimes k}(x + y) - \eta^{\otimes k}(x) 
\leq \|h'\|_{L^\infty} \|y\|_{\ell^1} \exp(\|h'\|_{L^\infty} \|y\|_{\ell^1}) \eta^{\otimes k}(x).
\end{aligned}
\end{equation*}

\end{lem}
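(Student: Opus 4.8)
The plan is to derive all of the stated inequalities from the single identity $\eta=\exp(h)$ together with the chain rule and the fundamental theorem of calculus; the hypothesis $h(0)\leq h(x)$ plays no role in this particular lemma, and only the boundedness of $h'$ and $h''$ is used.

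First, for the pointwise bounds on the logarithmic derivatives I would simply differentiate: one has $\eta'=h'\,\eta$ and $\eta''=(h''+(h')^2)\,\eta$, and since $\eta>0$ these give the exact pointwise identities $\eta'/\eta=h'$ and $\eta''/\eta=h''+(h')^2$, whence $|\eta'/\eta|\leq\|h'\|_{L^\infty}$ and, by the triangle inequality, $|\eta''/\eta|\leq\|h''\|_{L^\infty}+\|h'\|_{L^\infty}^2$.

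For the scalar translation estimate I would write, for fixed $x,y\in\R$,
\[
\eta(x+y)-\eta(x)=\int_0^1 \frac{\rd}{\rd t}\,\eta(x+ty)\,\rd t=\int_0^1 y\,h'(x+ty)\,\eta(x+ty)\,\rd t,
\]
then bound $|h'(x+ty)|\leq\|h'\|_{L^\infty}$ and, applying the fundamental theorem of calculus to $h$ itself,
\[
\frac{\eta(x+ty)}{\eta(x)}=\exp\!\big(h(x+ty)-h(x)\big)\leq\exp\!\big(\|h'\|_{L^\infty}\,t|y|\big)\leq\exp\!\big(\|h'\|_{L^\infty}\,|y|\big),
\]
which together yield $|\eta(x+y)-\eta(x)|\leq\|h'\|_{L^\infty}\,|y|\,\exp(\|h'\|_{L^\infty}|y|)\,\eta(x)$, i.e. a bound slightly stronger than the one-sided inequality claimed.

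For the tensorized version the cleanest route is to repeat the same segment argument in $\R^k$: writing $\eta^{\otimes k}(x)=\prod_{l}\eta(x_l)$ (the selected coordinates relabeled $1,\dots,k$) one has $\partial_m\eta^{\otimes k}=h'(x_m)\,\eta^{\otimes k}$, so that
\[
\eta^{\otimes k}(x+y)-\eta^{\otimes k}(x)=\int_0^1\Big(\textstyle\sum_{m=1}^k y_m\,h'(x_m+ty_m)\Big)\,\eta^{\otimes k}(x+ty)\,\rd t,
\]
and then $\big|\sum_m y_m\,h'(x_m+ty_m)\big|\leq\|h'\|_{L^\infty}\|y\|_{\ell^1}$ while $\eta^{\otimes k}(x+ty)/\eta^{\otimes k}(x)=\exp(\sum_m(h(x_m+ty_m)-h(x_m)))\leq\exp(\|h'\|_{L^\infty}\|y\|_{\ell^1})$, which gives the claim. (Alternatively one can telescope, replacing the factors $\eta(x_l)$ by $\eta(x_l+y_l)$ one coordinate at a time and invoking the scalar estimate, at the price of marginally worse constants.) There is no substantive obstacle in this lemma; the only step meriting a remark is the legitimacy of the fundamental theorem of calculus, which is immediate since the relevant weight has $h(x)=\sqrt{1+\alpha^2x^2}\in C^\infty$, and in general it suffices that $h$ be locally absolutely continuous with $h',h''\in L^\infty$.
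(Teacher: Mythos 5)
Your proof is correct and follows exactly the route the paper indicates (the paper only asserts these inequalities as "immediate results by chain rule and fundamental theorem of calculus"): the logarithmic-derivative identities give the first two bounds, and the FTC along the segment combined with $\eta(x+ty)/\eta(x)=\exp(h(x+ty)-h(x))\leq\exp(\|h'\|_{L^\infty}t|y|)$ gives the translation estimates, including the tensorized one. Your side remark that $h(0)\leq h(x)$ is not needed for this lemma (it is only used later, to bound $\eta(0)/\eta(x_j)\leq 1$ in the a priori estimate) is also accurate.
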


We are now ready to prove Proposition~\ref{prop:Hilbert_a_priori_bound} under the more general assumption that $\eta(x) = \exp( h(x))$.
Since $f_N$ solves \eqref{eqn:IF_Liouville_PDE} in the distributional sense, it is easy to verify that
\begin{equation*}
\begin{aligned}
& \int_{\R^N} f_{N}(t,x) {\textstyle \prod_{l = 1}^{|T|} \eta(x_{i_l})} \;\rd x
= \int_{\R^N} f_{N}(0,x) {\textstyle \prod_{l = 1}^{|T|} \eta(x_{i_l})} \;\rd x
\\
&\quad + \int_0^t \int_{\R^N} f_{N}(s,x) \Bigg[ \sum_{m=1}^{|T|} \bigg( \mu(x_{i_m}) (\eta'/\eta) (x_{i_m}) + \frac{1}{2}\sigma^2 (\eta''/\eta) (x_{i_m}) \bigg) {\textstyle \prod_{l = 1}^{|T|} \eta(x_{i_l})}
\\
&\quad + \sum_{j = i_1,\dots, i_{|T|}} \nu(x_j) \bigg( \frac{\eta(0)}{\eta(x_{j})} {\textstyle \prod_{l = 1}^{|T|} \eta(x_{i_l} + w_{i_l,j;N})} - {\textstyle \prod_{l = 1}^{|T|} \eta(x_{i_l})} \bigg)
\\
&\quad + \sum_{j \neq i_1,\dots, i_{|T|}} \nu(x_j) \bigg( {\textstyle \prod_{l = 1}^{|T|} \eta(x_{i_l} + w_{i_l,j;N})} - {\textstyle \prod_{l = 1}^{|T|} \eta(x_{i_l})} \bigg)
\Bigg] \;\rd x \rd s.
\end{aligned}
\end{equation*}
By Lemma~\ref{lem:weight_inequalities}, we have that
\begin{equation*}
\begin{aligned}
& \int_{\R^N} f_{N}(t,x) {\textstyle \prod_{l = 1}^{|T|} \eta(x_{i_l})} \;\rd x
\leq \int_{\R^N} f_{N}(0,x) {\textstyle \prod_{l = 1}^{|T|} \eta(x_{i_l})} \;\rd x
\\
&\qquad + \int_0^t \int_{\R^N} f_{N}(s,x) \Bigg[ \sum_{m=1}^{|T|} \bigg( \|\mu\|_{L^\infty} \|h'\|_{L^\infty} + \frac{1}{2}\sigma^2 (\|h''\|_{L^\infty} + \|h'\|_{L^\infty}^2) \bigg) {\textstyle \prod_{l = 1}^{|T|} \eta(x_{i_l})}
\\
&\qquad + \sum_{j = 1}^{N} \|\nu\|_{L^\infty} \; \|h'\|_{L^\infty} {\textstyle \sum_{m=1}^{|T|} |w_{i_m,j;N}|} \exp\Big(\|h'\|_{L^\infty} \; {\textstyle \max_{j} \sum_{i} |w_{i,j;N}|}\Big) {\textstyle \prod_{l = 1}^{|T|} \eta(x_{i_l})} \Bigg] \;\rd x \rd s
\\
&\quad= \int_{\R^N} f_{N}(0,x) {\textstyle \prod_{l = 1}^{|T|} \eta(x_{i_l})} \;\rd x
 + \Bigg[ \sum_{m=1}^{|T|} \bigg( \|\mu\|_{L^\infty} \|h'\|_{L^\infty} + \frac{1}{2}\sigma^2 (\|h''\|_{L^\infty} + \|h'\|_{L^\infty}^2) \bigg)
\\
& \qquad+ \sum_{j = 1}^{N} \sum_{m=1}^{|T|} |w_{i_m,j;N}| \; \|\nu\|_{L^\infty} \|h'\|_{L^\infty} \exp\Big(\|h'\|_{L^\infty} \; {\textstyle \max_{j} \sum_{i} |w_{i,j;N}|}\Big) \Bigg] \int_0^t \int_{\R^N} f_{N}(s,x) {\textstyle \prod_{l = 1}^{|T|} \eta(x_{i_l})} \;\rd x \rd s,
\end{aligned}
\end{equation*}
where the summations of $j = i_1,\dots, i_{|T|}$ and $j \neq i_1,\dots, i_{|T|}$ are combined together by the simple fact that $h(0) \leq h(x_j)$, hence ${\eta(0)}/{\eta(x_{j})} \leq 1$.

Furthermore, we have that
\begin{equation*}
\begin{aligned}
\sum_{j = 1}^{N} \sum_{m=1}^{|T|} |w_{i_m,j;N}| \leq |T| \; {\textstyle \max_{i} \sum_{j} |w_{i,j;N}|}.
\end{aligned}
\end{equation*}
Hence by choosing
\begin{equation*}
\begin{aligned}
C_\mathcal{W} = \;& \textstyle \max\left(\max_{i} \sum_{j} |w_{i,j;N}| ,\ \max_{j} \sum_{i} |w_{i,j;N}|\right),
\\
A_\eta = \;& \Big( \|\mu\|_{L^\infty} \|h'\|_{L^\infty} + \frac{1}{2}\sigma^2 (\|h''\|_{L^\infty} + \|h'\|_{L^\infty}^2) + \|\nu\|_{L^\infty} \|h'\|_{L^\infty} C_{\mathcal{W}} \exp(\|h'\|_{L^\infty} C_{\mathcal{W}}) \Big),
\end{aligned}
\end{equation*}
we conclude that
\begin{equation*}
\begin{aligned}
& \int_{\R^N} f_{N}(t,x) {\textstyle \prod_{l = 1}^{|T|} \eta(x_{i_l})} \;\rd x \\
&\quad \leq \int_{\R^N} f_{N}(0,x) {\textstyle \prod_{l = 1}^{|T|} \eta(x_{i_l})} \;\rd x + \int_0^t |T| A_\eta \int_{\R^N} f_{N}(s,x) {\textstyle \prod_{l = 1}^{|T|} \eta(x_{i_l})} \;\rd x \;\rd s.
\end{aligned}
\end{equation*}
By Gronwall lemma, this implies that
\begin{equation*}
\begin{aligned}
\;& \int_{\R^N} f_{N}(t,x) {\textstyle \prod_{l = 1}^{|T|} \eta(x_{i_l})} \;\rd x \leq \exp \big( |T| A_\eta t \big) \int_{\R^N} f_{N}(0,x) {\textstyle \prod_{l = 1}^{|T|} \eta(x_{i_l})} \;\rd x.
\end{aligned}
\end{equation*}
Taking the summation over $i_1,\dots, i_{|T|}$, we have that
\begin{equation*}
\begin{aligned}
\||\tau|(T) \eta^{\otimes |T|}(t,\cdot)\|_{\mathcal{M}(\R^{|T|})} \leq \;& \exp \big( |T| A_\eta t \big) \||\tau|(T) \eta^{\otimes |T|}(0,\cdot)\|_{\mathcal{M}(\R^{|T|})}
\\
\leq \;& C_\eta \big( M_\eta \exp(A_\eta t_*) \big)^{|T|}
\end{aligned}
\end{equation*}
Finally, by applying Lemma~\ref{lem:convolutional_inequality} to the left hand side, we immediately obtain \eqref{eqn:Hilbert_energy_bound_scale}, restated here,
\begin{equation*}
\begin{aligned}
\| |\tau_N| (T) (t,\cdot) \|_{H^{-1\otimes |T|}_\eta} \leq C_\eta(T) \big(\|K\|_{L^2(\R)} \exp(A_\eta t_*) \big)^{|T|}
\end{aligned}
\end{equation*}
for all $T \in \mathcal{T}, t \in [0,t_*]$.

\end{proof}

Finally, we give the proof of Proposition~\ref{prop:hierarchy_of_equations_limit}.

\begin{proof} [Proof of Proposition~\ref{prop:hierarchy_of_equations_limit}]

We show the well-posedness of Vlasov equation \eqref{eqn:Vlasov_transport}-\eqref{eqn:Vlasov_mean_field} by a classical fixed point argument. Let us first define the mapping $f \mapsto \mathcal{L} f$ as the solution of
\begin{equation*}
\begin{aligned}
\partial_t \mathcal{L} f(t,\xi,x) + \partial_x \Big(\mu^*_{f}(t,\xi,x) \mathcal{L} f(t,\xi,x) \Big) - \frac{\sigma^2}{2} \partial_{xx} \Big( \mathcal{L} f(t,\xi,x) \Big)
\\
+ \nu(x) \mathcal{L} f(t,\xi,x) - \delta_0(x) J_f(t,\xi) = 0
\end{aligned}
\end{equation*}
If $f$ is given, then $J_f$ and $\mu^*_{f}$ are determined, making the above identity a linear equation with respect to $\mathcal{L} f$.
We are going to see that if $f \in L^\infty([0,t_*] \times [0,1] ; H^{-1}_\eta \cap \mathcal{M}_+(\R) )$, then $\mathcal{L} f$ belongs to the same space.

By multiplying  the equation by the weight function $\eta$ and applying Leibniz formula, we obtain that
\begin{equation*}
\begin{aligned}
& \partial_t \mathcal{L} f(t,\xi,x) \eta(x)
\\
& \quad=- \partial_x \Big(\mu^*_{f}(t,\xi,x) \mathcal{L} f(t,\xi,x) \eta(x) \Big) + \frac{\sigma^2}{2} \partial_{xx} \Big( \mathcal{L} f(t,\xi,x) \eta(x) \Big) - \nu(x) \mathcal{L} f(t,\xi,x) \eta(x)\\
&\qquad + \delta_0(x) \eta(0) J_f(t,\xi) + \mu^*_{f}(t,\xi,x) (\eta'/\eta)(x) \mathcal{L} f(t,\xi,x) \eta(x)\\
&\qquad + \frac{\sigma^2}{2} \bigg[ - \partial_x \Big( 2(\eta'/\eta)(x) \mathcal{L} f(t,\xi,x) \eta(x) \Big) + (\eta''/\eta)(x) \mathcal{L} f(t,\xi,x) \eta(x) \bigg].
\end{aligned}
\end{equation*}

We start the a priori estimate of the linear mapping $\mathcal{L}$ by the total mass.
It is straightforward to verify that
\begin{equation} \label{eqn:a_priori_invariance_1}
\begin{aligned}
\| \mathcal{L}f(t,\cdot,\xi)\|_{\mathcal{M}(\R)} \leq \;& \| f(0,\cdot,\xi)\|_{\mathcal{M}(\R)} + \int_0^t J_f(s,\xi) \;\rd s
\\
\leq \;& \| f(0,\cdot,\xi)\|_{\mathcal{M}(\R)} + \int_0^t \|\nu\|_{L^\infty} \| f(s,\cdot,\xi) \|_{\mathcal{M}(\R)} \;\rd s.
\end{aligned}
\end{equation}
Note that by choosing $t_1 = 1/(2\|\nu\|_{L^\infty})$, we have that
\begin{equation*}
\begin{aligned}
\sup_{t \in [0,t_1]} \| f(t,\cdot,\xi)\|_{\mathcal{M}(\R)} \leq 2 \| f(0,\cdot,\xi)\|_{\mathcal{M}(\R)} \implies \sup_{t \in [0,t_1]} \| \mathcal{L}f(t,\cdot,\xi)\|_{\mathcal{M}(\R)} \leq 2 \| f(0,\cdot,\xi)\|_{\mathcal{M}(\R)}.
\end{aligned}
\end{equation*}
Next, consider the $\eta$-weighted total moment,
\begin{equation*}
\begin{aligned}
& \| \mathcal{L}f(t,\cdot,\xi) \eta\|_{\mathcal{M}(\R)}
\\
&\quad \leq \| f(0,\cdot,\xi) \eta\|_{\mathcal{M}(\R)} + \int_0^t \Big\{\eta(0)J_f(s,\xi) + \Big[ \|\mu_f^*(s,\cdot,\xi)\|_{L^\infty} \|\eta'/\eta\|_{L^\infty} + \frac{\sigma^2}{2} \|\eta''/\eta\|_{L^\infty} \Big] \\
&\hspace{330pt} \|\mathcal{L}f(s,\cdot,\xi) \eta\|_{\mathcal{M}(\R)}\Big\} \;\rd s
\\
&\quad \leq  \| f(0,\cdot,\xi) \eta\|_{\mathcal{M}(\R)} + \int_0^t \Big\{\eta(0)\|\nu\|_{L^\infty} \| f(s,\cdot,\xi) \|_{\mathcal{M}(\R)}
\\
& \qquad+ \Big[ \big( \|\mu\|_{L^\infty} + \|w\|_{\mathcal{W}} \|\nu\|_{L^\infty} \| f(s,\cdot,\cdot) \|_{L^\infty_\xi \mathcal{M}_x} \big) \|\eta'/\eta\|_{L^\infty} + \frac{\sigma^2}{2} \|\eta''/\eta\|_{L^\infty} \Big] \| \mathcal{L}f(s,\cdot,\xi) \eta\|_{\mathcal{M}(\R)}\Big\} \;\rd s.
\end{aligned}
\end{equation*}
By taking the supremum over $\xi \in [0,1]$, we have, for $t \in [0,t_1]$,
\begin{equation} \label{eqn:a_priori_invariance_2}
\begin{aligned}
& \| \mathcal{L}f(t,\cdot,\cdot) \eta\|_{L^\infty_\xi \mathcal{M}_x}
\leq \| f(0,\cdot,\cdot) \eta\|_{L^\infty_\xi \mathcal{M}_x} +  \int_0^t \eta(0)\|\nu\|_{L^\infty} \| f \|_{L^\infty_{t,\xi} \mathcal{M}_x}
\\
& \qquad + \Big[ \big( \|\mu\|_{L^\infty} + \|w\|_{\mathcal{W}} \|\nu\|_{L^\infty} \| f \|_{L^\infty_{t,\xi} \mathcal{M}_x} \big) \|\eta'/\eta\|_{L^\infty} + \frac{\sigma^2}{2} \|\eta''/\eta\|_{L^\infty} \Big] \| \mathcal{L}f(s,\cdot,\cdot) \eta\|_{L^\infty_\xi \mathcal{M}_x} \;\rd s
\\
& \quad \leq \bigg( \| f(0,\cdot,\cdot) \eta\|_{L^\infty_\xi \mathcal{M}_x} +  \int_0^t \eta(0)\|\nu\|_{L^\infty} \| f \|_{L^\infty_{t,\xi} \mathcal{M}_x} \;\rd s\bigg)
\\
& \qquad\qquad\exp \bigg( \Big[ \big( \|\mu\|_{L^\infty} + \|w\|_{\mathcal{W}} \|\nu\|_{L^\infty} \| f \|_{L^\infty_{t,\xi} \mathcal{M}_x} \big) \|\eta'/\eta\|_{L^\infty} + \frac{\sigma^2}{2} \|\eta''/\eta\|_{L^\infty} \Big] t \bigg),
\end{aligned}
\end{equation}
where the $L^\infty_t$ should be understood as the supremum over $t \in [0,t_1]$.

We construct the invariance set and show $\mathcal{L}$-contractivity on the set by the following procedure:
For any $ R > R_0 \defeq \| f(0,\cdot,\cdot) \eta\|_{L^\infty_\xi \mathcal{M}_x}$, and any $t_* > 0$, denote
\begin{equation*}
\begin{aligned}
E_{R;t} \defeq \{ f \in \mathcal{M}_+ :  \sup_{s \in [0,t]} \|f (s,\cdot,\cdot) \eta\|_{L^\infty_{\xi} \mathcal{M}_x} < R\}.
\end{aligned}
\end{equation*}
By taking sufficiently small $t_2$, for example
\begin{equation*}
\begin{aligned}
t_2 \leq \min\bigg( \frac{1}{2 \|\nu\|_{L^\infty}} , \frac{R - R_0}{2 \eta(0) \|\nu\|_{L^\infty} R} , \frac{\log \frac{2R}{R+R_0}}{\big( \|\mu\|_{L^\infty} + \|w\|_{\mathcal{W}} \|\nu\|_{L^\infty} R \big) \|\eta'/\eta\|_{L^\infty} + \frac{\sigma^2}{2} \|\eta''/\eta\|_{L^\infty}} \bigg),
\end{aligned}
\end{equation*}
we can make $E_{R;t_2}$ an invariance set, i.e. $\mathcal{L}(E_{R;t_2}) \subset E_{R;t_2}$.

To show that $f \mapsto \mathcal{L} f$ is contracting in the $H^{-1}_\eta$-sense, we consider the following energy estimate: Along each fiber $\xi \in [0,1]$,
\begin{equation*}
\begin{aligned}
& \frac{\rd}{\rd t} \bigg( \frac{1}{2} \int_{\R} \Big[ \Lambda \star \big((\mathcal{L} f - \mathcal{L} g) \eta\big) \Big] (\mathcal{L} f - \mathcal{L} g) \eta \;\rd x \bigg) = \int_{\R} \Big[ \Lambda \star \big( (\mathcal{L} f - \mathcal{L} g) \eta \big) \Big] \partial_t (\mathcal{L} f - \mathcal{L} g) \eta \;\rd x
\\
&\quad= \int_{\R} - \frac{\sigma^2}{2} \Big[ \Lambda \star \partial_x \big( (\mathcal{L} f - \mathcal{L} g) \eta\big) \Big] \Big[ \partial_x \big( (\mathcal{L} f - \mathcal{L} g) \eta\big) \Big]
\\
&\qquad + \Big[ \Lambda \star \partial_x \big( (\mathcal{L} f - \mathcal{L} g) \eta\big) \Big] \bigg[ \mu^*_{f} (\mathcal{L} f - \mathcal{L} g) \eta + (\mu^*_{f} - \mu^*_{g}) (\mathcal{L} g) \eta + \sigma^2 (\eta'/\eta) (\mathcal{L} f - \mathcal{L} g) \eta \bigg]
\\
&\qquad + \Big[ \Lambda \star \big( (\mathcal{L} f - \mathcal{L} g) \eta\big) \Big] \bigg[ - \nu (\mathcal{L} f - \mathcal{L} g) \eta + \delta_0 \eta(0) (J_f - J_g)
\\
& \qquad + \mu^*_{f} (\eta'/\eta) (\mathcal{L} f - \mathcal{L} g) \eta + (\mu^*_{f} - \mu^*_{g}) (\eta'/\eta) (\mathcal{L} g) \eta + \frac{\sigma^2}{2} (\eta''/\eta) (\mathcal{L} f - \mathcal{L} g) \eta \bigg] \;\rd x.
\end{aligned}
\end{equation*}
Apply Cauchy-Schwartz inequality, we obtain that
\begin{equation*}
\begin{aligned}
& \frac{\rd}{\rd t} \bigg( \int_{\R} \Big[ \Lambda \star \big((\mathcal{L} f - \mathcal{L} g) \eta\big) \Big] (\mathcal{L} f - \mathcal{L} g) \eta \;\rd x \bigg)
\\
&\quad \leq \frac{4}{\sigma^2} \|\mu^*_{f} (\mathcal{L} f - \mathcal{L} g)\|_{H^{-1}_\eta}^2 + \frac{4}{\sigma^2} \|(\mu^*_{f} - \mu^*_{g}) (\mathcal{L} g)\|_{H^{-1}_\eta}^2 + 4 \| (\eta'/\eta) (\mathcal{L} f - \mathcal{L} g)\|_{H^{-1}_\eta}^2 
\\
& \qquad+ \Big( 4 + \frac{\sigma^2}{2} \Big) \|(\mathcal{L} f - \mathcal{L} g)\|_{H^{-1}_\eta}^2 + \|\nu (\mathcal{L} f - \mathcal{L} g)\|_{H^{-1}_\eta}^2 + \|\delta_0 (J_f - J_g)\|_{H^{-1}_\eta}^2
\\
& \qquad+ \|\mu^*_{f} (\eta'/\eta) (\mathcal{L} f - \mathcal{L} g)\|_{H^{-1}_\eta}^2 + \|(\mu^*_{f} - \mu^*_{g}) (\eta'/\eta) (\mathcal{L} g)\|_{H^{-1}_\eta}^2 + \frac{\sigma^2}{2} \|(\eta''/\eta) (\mathcal{L} f - \mathcal{L} g)\|_{H^{-1}_\eta}^2.
\end{aligned}
\end{equation*}
Applying Lemma~\ref{lem:commutator_inequality}, we further have that
\begin{equation*}
\begin{aligned}
& \frac{\rd}{\rd t} \| (\mathcal{L} f - \mathcal{L} g)\|_{H^{-1}_\eta}^2 \leq  \bigg( \frac{16}{\sigma^2} \|\mu^*_{f}\|_{W^{1,\infty}}^2 + 16 \|\eta'/\eta\|_{W^{1,\infty}}^2 + \Big( 4 + \frac{\sigma^2}{2} \Big)
\\
&\qquad + 4 \|\nu\|_{W^{1,\infty}}^2 + 4 \|\mu^*_{f}\|_{W^{1,\infty}}^2 \|\eta'/\eta\|_{W^{1,\infty}}^2 + 2 \sigma^2 \|\eta''/\eta\|_{W^{1,\infty}}^2 \bigg) \| (\mathcal{L} f - \mathcal{L} g)\|_{H^{-1}_\eta}^2
\\
&\qquad + \bigg( \frac{4}{\sigma^2} \| (\mathcal{L} g)\|_{H^{-1}_\eta}^2 + 4\|\eta'/\eta\|_{W^{1,\infty}}^2 \| (\mathcal{L} g)\|_{H^{-1}_\eta}^2 \bigg) |\mu^*_{f} - \mu^*_{g}|^2 + \|\delta_0\|_{H^{-1}_\eta}^2 |J_f - J_g|^2.
\end{aligned}
\end{equation*}
Now let us consider the integration over $\xi \in [0,1]$. Firstly, using that $w \in \mathcal{W}$ combined with classical interpolation,
\begin{equation*}
\begin{aligned}
& \int_{[0,1]} |\mu^*_{f}(t,\xi,x) - \mu^*_{g}(t,\xi,x)|^2 \;\rd \xi = \int_{[0,1]} \bigg( \int_{[0,1]} w(\xi, \zeta) \big( J_f(t,\zeta) - J_g(t,\zeta) \big) \;\rd \zeta \bigg)^2 \;\rd \xi
\\
&\qquad \leq  \|w\|_{\mathcal{W}}^2 \|J_f(t,\cdot) - J_g(t,\cdot)\|_{L^2_\xi}^2.
\end{aligned}
\end{equation*}
Secondly, by Lemma~\ref{lem:firing_rate_difference},
\begin{equation*}
\begin{aligned}
\big| J_f(t,\xi) - J_g(t,\xi) \big| \leq \bigg| \int_{\R} \nu(x) \big( f(t,\xi,x) - g(t,\xi,x) \big) \;\rd x \bigg| \leq C (\alpha) \|\nu\|_{W^{1,\infty}} \|f(t,\cdot,\xi) - g(t,\cdot,\xi)\|_{H^{-1}_\eta}.
\end{aligned}
\end{equation*}
Hence, we have that
\begin{equation*}
\begin{aligned}
& \|J_f(t,\cdot) - J_g(t,\cdot)\|_{L^2_\xi}^2 = \int_{[0,1]} \big| J_f(t,\xi) - J_g(t,\xi) \big|^2 \;\rd \xi\\
&\quad \leq  C(\alpha)^2 \|\nu\|_{W^{1,\infty}}^2 \int_{[0,1]} \|f(t,\cdot,\xi) - g(t,\cdot,\xi)\|_{H^{-1}_\eta}^2 \;\rd \xi
 =  C(\alpha)^2 \|\nu\|_{W^{1,\infty}}^2 \|f - g\|_{L^2_\xi (H^{-1}_\eta)_x}^2.
\end{aligned}
\end{equation*}
Therefore, by integrating over $\xi \in [0,1]$,
\begin{equation} \label{eqn:Vlasov_Gronwall}
\begin{aligned}
\| (\mathcal{L} f - \mathcal{L} g)(t,\cdot,\cdot)\|_{L^2_\xi (H^{-1}_\eta)_x}^2
\leq \;& \int_0^t M_0 \| (\mathcal{L} f - \mathcal{L} g) (s,\cdot,\cdot)\|_{L^2_\xi (H^{-1}_\eta)_x}^2 +  M_1 \|(f - g) (s,\cdot,\cdot)\|_{L^2_\xi (H^{-1}_\eta)_x}^2 \;\rd s
\\
\leq \;& \exp(M_0 t) \int_0^t M_1 \|(f - g) (s,\cdot,\cdot)\|_{L^2_\xi (H^{-1}_\eta)_x}^2 \;\rd s
\end{aligned}
\end{equation}
where $M_0,M_1$ are required to satisfy that
\begin{equation*}
\begin{aligned}
M_0 \geq \;&\sup_{t \in [0,t_2]} \bigg( \frac{16}{\sigma^2} \|\mu^*_{f}\|_{L^\infty_\xi W^{1,\infty}_x}^2 + 16 \|\eta'/\eta\|_{W^{1,\infty}}^2 + \Big( 4 + \frac{\sigma^2}{2} \Big)
\\
\;&\quad + 4 \|\nu\|_{W^{1,\infty}}^2 + 4 \|\mu^*_{f}\|_{L^\infty_\xi W^{1,\infty}_x}^2 \|\eta'/\eta\|_{W^{1,\infty}}^2 + 2 \sigma^2 \|\eta''/\eta\|_{W^{1,\infty}}^2 \bigg)
\\
M_1 \geq \;& \sup_{t \in [0,t_2]} \bigg[ \bigg( \frac{4}{\sigma^2} \| (\mathcal{L} g)\|_{L^\infty_\xi (H^{-1}_\eta)_x}^2 + 4\|\eta'/\eta\|_{W^{1,\infty}}^2 \| (\mathcal{L} g)\|_{L^\infty_\xi (H^{-1}_\eta)_x}^2 \bigg) \|w\|_{\mathcal{W}}^2 + \|\delta_0\|_{H^{-1}_\eta}^2 \bigg] C(\alpha)^2 \|\nu\|_{W^{1,\infty}}^2.
\end{aligned}
\end{equation*}
In addition, by $w \in \mathcal{W}$ and Lemma~\ref{lem:firing_rate_difference}, we can derive
\begin{equation*}
\begin{aligned}
\|\mu^*_{f}\|_{L^\infty_\xi W^{1,\infty}_x} \leq \;& \|\mu\|_{W^{1,\infty}_x} + \sup_{\xi \in [0,1]} \bigg| \int_0^1 w(\xi, \zeta) J_f(t,\zeta) \;\rd \zeta \bigg|
\\
\leq \;& \|\mu\|_{W^{1,\infty}_x} + \|w\|_{\mathcal{W}} \; \|J_f(t,\cdot)\|_{L^\infty}
\\
\leq \;& \|\mu\|_{W^{1,\infty}_x} + \|w\|_{\mathcal{W}} \; C (\alpha) \|\nu\|_{W^{1,\infty}} \|f\|_{L^\infty_\xi (H^{-1}_\eta)_x}.
\end{aligned}
\end{equation*}
When $f,g,\mathcal{L}f,\mathcal{L}g \in E_{R;t_2}$, by Lemma~\ref{lem:convolutional_inequality}, we have that
\begin{equation*}
\begin{aligned}
\|f\|_{L^\infty_\xi (H^{-1}_\eta)_x} \leq \frac{R}{2}, \quad \|(\mathcal{L}g)\|_{L^\infty_\xi (H^{-1}_\eta)_x} \leq \frac{R}{2},
\end{aligned}
\end{equation*}
for $t \in [0,t_2]$.

Hence $M_0,\; M_1$ in \eqref{eqn:Vlasov_Gronwall} can be chosen such that they only depend on $R$ and the regularity of the various fixed coefficients in the system. By choosing sufficiently small $t_* > 0$, for example,
\begin{equation*}
\begin{aligned}
t_* \leq \max\left(t_2,\ \frac{1}{3M_1},\ \frac{\log 2}{M_0}\right),
\end{aligned}
\end{equation*}
by \eqref{eqn:Vlasov_Gronwall} we conclude that $\mathcal{L}$ is contracting on the set $\mathcal{L}(E_{R;t_*})$ for the $L^2_\xi (H^{-1}_\eta)_x$ norm.
Repeating the argument allows extending the weak solution to any finite time interval as usual, since the a priori estimates \eqref{eqn:a_priori_invariance_1} and \eqref{eqn:a_priori_invariance_2} do not blow up in finite time.

\bigskip

We now turn to the derivation of the limiting hierarchy.
Taking the derivative of $\tau_\infty(T) = \tau_\infty(T,w,f)$ in Definition~\ref{defi:limiting_observables}, we first obtain 
\begin{equation*}
\begin{aligned}
&\partial_t \tau_\infty(T,w,f)(t,z) =  \sum_{m=1}^{|T|} \bigg[ -\partial_{z_m}\Big( \mu(z_m) \tau_\infty(T)(t,z)\Big) + \frac{\sigma^2}{2} \partial_{z_m}^2 \tau_\infty(T)(t,z)
\\
&\qquad -\nu(z_m) \tau_\infty(T)(t,z) + \delta_0(z_m) \bigg( \int_{\R} \nu(u_m) \tau_\infty(T)(t,u) \bigg) \bigg|_{\forall n \neq m, u_n=z_n}
\\
& \qquad - \partial_{z_m} \bigg( \int_{[0,1]^{|T|}} w_T(\xi_1,\dots,\xi_{|T|}) f^{\otimes |T|}(t,z_1,\xi_1,\dots,z_{|T|},\xi_{|T|})
\\
& \qquad \bigg( \int_0^1 w(\xi_m,\xi_{|T|+1}) \int_{\R} \nu(z_{|T|+1}) f(t,z_{|T|+1},\xi_{|T|+1}) \;\rd z_{|T|+1} \rd \xi_{|T|+1} \bigg) \;\rd \xi_1,\dots,\xi_{|T|} \bigg) \bigg].
\end{aligned}
\end{equation*}
The last term can be rewritten by using the observables with one more leaf, resulting the limiting hierarchy \eqref{eqn:limit_hierarchy_equation}, restated here:
\begin{equation*}
\begin{aligned}
& \partial_t \tau_\infty (T)(t,z)
\\
&\quad=  \sum_{m = 1}^{|T|} \Bigg\{ \bigg[ - \partial_{z_m}(\mu(z_m) \tau_\infty (T)(t,z)) + \frac{\sigma^2}{2} \partial_{z_m}^2 \tau_\infty (T)(t,z)
\\
&\qquad - \nu(z_m) \tau_\infty (T)(t,z) + \delta_0(z_m) \bigg( \int_{\R} \nu(u_m) \tau_\infty (T)(t,u) \;\rd u_m \bigg)\bigg|_{\forall n \neq m,\, u_n = z_n} \bigg]
\\
&\qquad - \partial_{z_m} \bigg[ \int_{\R} \nu(z_{|T|+1}) \tau_\infty (T+m)(t,z) \;\rd z_{|T|+1} \bigg] \Bigg\}.
\end{aligned}
\end{equation*}
\end{proof}

\subsection{Quantitative stability} \label{subsec:quantitative}
This subsection focuses on the proof of the main quantitative estimate of the article. The technical Lemma~\ref{lem:differential_inequality_recursion} about recursive differential inequalities is given separately in the next subsection.

\begin{proof}  [Proof of Theorem~\ref{thm:stable_power_norm}]


For simplicity, let us recall the notation 
\begin{equation*}
\begin{aligned}
\nu_m = 1 \otimes \dots \otimes \nu \otimes \dots \otimes,
\end{aligned}
\end{equation*}
where $\nu$ appears in the $m$-th coordinate, i.e. $\nu_m(z) = \nu(z_m)$. The same convention applies to $\mu$ and $\eta$.

Define the difference $\Delta_N (T) (t,z) \defeq \tau_N (T) (t,z) - \tau_\infty (T) (t,z)$. 
By subtracting \eqref{eqn:limit_hierarchy_equation} from \eqref{eqn:hierarchy_equation}, one has that 
\begin{equation*}
\begin{aligned}
& \partial_t \Delta_N (T)(t,z) \\
&\quad = \sum_{m = 1}^{|T|} \Bigg\{ \bigg[ - \partial_{z_m}(\mu(z_m) \Delta_N (T)(t,z)) + \frac{\sigma^2}{2} \partial_{z_m}^2 \Delta_N (T)(t,z) \\
&\qquad - \nu(z_m) \Delta_N (T)(t,z) + \delta_0(z_m) \bigg( \int_{\R} \nu(u_m) \Big( \Delta_N (T)(t,u) + \mathscr{R}_{N,T,m} (t,u) \Big) \;\rd u_m \bigg)\bigg|_{\forall n \neq m,\, u_n = z_n} \bigg] \\
&\qquad - \partial_{z_m} \bigg[ \int_{\R} \nu(z_{|T|+1}) \Big( \Delta_N (T+m)(t,z) + \mathscr{\tilde R}_{N,T+m,|T|+1} (t,z) \Big) \;\rd z_{|T|+1} \bigg] \Bigg\}, \quad \forall T \in \mathcal{T}.
\end{aligned}
\end{equation*}
We highlight that, for any fixed $N < \infty$, the above equalities and later inequalities involving $\Delta_N (T)$ can be understood as recursive relations that holds on all $T \in \mathcal{T}$.
At a first glance, one may think that the approximate hierarchy \eqref{eqn:hierarchy_equation} is only defined for observables $\tau_N(T)$ with $|T| \leq N$.
Nevertheless, by our formal definition that $f_{N}^{i_1,\dots, i_k} \equiv 0$ if there are duplicated indices among $i_1,\dots,i_k$, it is easy to verify that for any tree $T$ such that $|T| > N$,
\begin{equation*}
\begin{aligned}
\tau_N (T,w_N,f_N)(t,z) \defeq \frac{1}{N} \sum_{i_1,\dots, i_{|T|} = 1}^N w_{N,T}(i_1,\dots, i_{|T|}) f_{N}^{i_1,\dots, i_{|T|}}(t, z_1,\dots,z_{|T|})
\equiv 0
\end{aligned}
\end{equation*}
as in each marginal there must be duplicated indices. By a similar discussion, we see that $\mathscr{R}_{N,T,m} \equiv 0$ and $\mathscr{\tilde R}_{N,T+m,|T|+1} \equiv 0$ when $|T| > N$. With these formal definition, it is then straightforward to show that approximate hierarchy \eqref{eqn:hierarchy_equation} holds for all $T \in \mathcal{T}$.

By multiplying by the weight function $\eta^{\otimes |T|}$ and integrating, we obtain that
\begin{equation*}
\begin{aligned}
& \Big( \partial_t \Delta_N (T)(t,z) \Big) \eta^{\otimes |T|}(z) =  \sum_{m = 1}^{|T|} \Bigg\{ - \partial_{z_m}\Big(\mu_m \Delta_N (T) \eta^{\otimes |T|}\Big)(t,z) + \frac{\sigma^2}{2} \partial_{z_m}^2 \Big( \Delta_N (T) \eta^{\otimes |T|}\Big)(t,z) \\
&\qquad - \Big(\nu_m \Delta_N (T) \eta^{\otimes |T|}\Big)(t,z) + \Big( \mu_m (\eta_m'/\eta_m) \Delta_N (T) \eta^{\otimes |T|} \Big)(t,z)\\
&\qquad + \delta_0(z_m) \eta(z_m) \bigg( \int_{\R} \Big( (\nu_m/\eta_m) ( \Delta_N (T) + \mathscr{R}_{N,T,m} ) \eta^{\otimes |T|} \Big)(t,u) \;\rd u_m \bigg)\bigg|_{\forall n \neq m,\, u_n = z_n}
\\
&\qquad - \partial_{z_m} \bigg[ \int_{\R} \Big( (\nu_{|T|+1}/\eta_{|T|+1}) ( \Delta_N (T+m) + \mathscr{\tilde R}_{N,T+m,|T|+1} ) \eta^{\otimes |T|+1} \Big)(t,z) \;\rd z_{|T|+1} \bigg] \\
&\qquad + \frac{\sigma^2}{2} \bigg[ \partial_{z_m} \Big( -2(\eta_m'/\eta_m) \Delta_N (T) \eta^{\otimes |T|}\Big) +  (\eta_m''/\eta_m) \Delta_N (T) \eta^{\otimes |T|}
\bigg](t,z) \Bigg\}.
\end{aligned}
\end{equation*}
Substituting $\big(\partial_t \Delta_N (T)\big) \eta^{\otimes |T|}$ in the right hand side of
\begin{equation*}
\begin{aligned}
\;& \frac{\rd}{\rd t} \bigg( \frac{1}{2} \int_{\R^{|T|}} \Big( K^{\otimes |T|} \star \big(\Delta_N (T)\eta^{\otimes |T|}\big) (t,z)\Big)^2 \; \rd z \bigg)
\\
= \;& \int_{\R^{|T|}} \bigg(K^{\otimes |T|} \star \big(\Delta_N (T)\eta^{\otimes |T|}\big) (t,z)\bigg) \bigg(K^{\otimes |T|} \star \big( \partial_t \Delta_N (T)\eta^{\otimes |T|} \big) (t,z)\bigg) \; \rd z,
\end{aligned}
\end{equation*}
yields the extensive expression
\begin{equation*}
\begin{aligned}
& \frac{\rd}{\rd t} \bigg( \frac{1}{2} \int_{\R^{|T|}} \Big( K^{\otimes |T|} \star \big(\Delta_N (T)\eta^{\otimes |T|}\big) (t,z)\Big)^2 \; \rd z \bigg)
\\
&\quad = \int_{\R^{|T|}} \sum_{m = 1}^{|T|} \Bigg\{ - \frac{\sigma^2}{2} \bigg[ \partial_{z_m} K^{\otimes |T|} \star \big( \Delta_N (T)\eta^{\otimes |T|} \big) (t,z) \bigg]^2
\\
&\qquad + \bigg[ K^{\otimes |T|} \star \big( \Delta_N (T)\eta^{\otimes |T|} \big) (t,z) \bigg] \bigg[- K^{\otimes |T|} \star \big( \nu_m \Delta_N (T)\eta^{\otimes |T|} \big)(t,z)
\\
&\qquad + K(z_m)\eta(0) \bigg( \int_{\R} K^{\otimes |T|} \star \big( (\nu_m/\eta_m) \Delta_N (T) \eta^{\otimes |T|} \big)(t,u) \;\rd u_m \bigg)\bigg|_{\forall n \neq m,\, u_n = z_n}
\\
&\qquad + K(z_m)\eta(0) \bigg( \int_{\R} K^{\otimes |T|} \star \big( (\nu_m/\eta_m) \mathscr{R}_{N,T,m} \eta^{\otimes |T|} \big) (t,u) \;\rd u_m \bigg)\bigg|_{\forall n \neq m,\, u_n = z_n}
\\
&\qquad + K^{\otimes |T|} \star \big( \mu_m (\eta_m'/\eta_m) \Delta_N (T) \eta^{\otimes |T|} \big)(t,z) + \frac{\sigma^2}{2} K^{\otimes |T|} \star \big( (\eta_m''/\eta_m) \Delta_N (T) \eta^{\otimes |T|} \big)(t,z)
\bigg]
\\
&\qquad + \bigg[ \partial_{z_m} K^{\otimes |T|} \star \big( \Delta_N (T) \eta^{\otimes |T|} \big) (t,z) \bigg] \bigg[ K^{\otimes |T|} \star \big( \mu_m \Delta_N (T) \eta^{\otimes |T|} \big)(t,z)
\\
&\qquad + \int_{\R} K^{\otimes |T|+1} \star \big( (\nu_{|T|+1}/\eta_{|T|+1}) \Delta_N (T+m) \eta^{\otimes |T|+1} \big) (t,z) \;\rd z_{|T|+1}
\\
&\qquad + \int_{\R} K^{\otimes |T|+1} \star \big( (\nu_{|T|+1}/\eta_{|T|+1}) \mathscr{\tilde R}_{N,T+m,|T|+1} \eta^{\otimes |T|+1} \big) (t,z) \;\rd z_{|T|+1}
\\
&\qquad + \frac{\sigma^2}{2} K^{\otimes |T|} \star \big( 2(\eta_m'/\eta_m) \Delta_N (T) \eta^{\otimes |T|} \big)(t,z)
\bigg] \Bigg\}
 \; \rd z.
\end{aligned}
\end{equation*}
We then apply Cauchy-Schwartz inequality to obtain,
\begin{equation} \label{eqn:energy_Cauchy_Schwartz}
\begin{aligned}
& \frac{\rd}{\rd t} \bigg( \frac{1}{2} \|\Delta_N (T)\|_{H^{-1 \otimes |T|}_\eta}^2 \bigg)
\leq \sum_{m = 1}^{|T|} \Bigg\{ \Big(2 + \frac{\sigma^2}{4}\Big) \|\Delta_N (T)\|_{H^{-1 \otimes |T|}_\eta}^2 + \frac{1}{2} \|\nu_m \Delta_N (T)\|_{H^{-1 \otimes |T|}_\eta}^2 
\\
&\quad + \frac{1}{2}\|\mu_m (\eta_m'/\eta_m) \Delta_N (T)\|_{H^{-1 \otimes |T|}_\eta}^2 + \frac{\sigma^2}{4} \|(\eta_m''/\eta_m)\Delta_N (T)\|_{H^{-1 \otimes |T|}_\eta}^2
\\
&\quad + \frac{1}{2} \|K\|_{L^2}^2 \eta(0)^2 \int_{\R^{|T|-1}} \bigg( \int_{\R} K^{\otimes |T|} \star \big( (\nu_m/\eta_m) \Delta_N (T) \eta^{\otimes |T|} \big)(t,z) \;\rd z_m \bigg)^2 \prod_{n \neq m} \;\rd z_n
\\
&\quad + \frac{1}{2} \|K\|_{L^2}^2 \eta(0)^2 \int_{\R^{|T|-1}} \bigg( \int_{\R} K^{\otimes |T|} \star \big( (\nu_m/\eta_m) \mathscr{R}_{N,T,m} \eta^{\otimes |T|} \big)(t,z) \;\rd z_m \bigg)^2 \prod_{n \neq m} \;\rd z_n
\\
&\quad + \frac{2}{\sigma^2} \|\mu_m \Delta_N (T)\|_{H^{-1 \otimes |T|}_\eta}^2 + \frac{\sigma^2}{2} \|2(\eta_m'/\eta_m) \Delta_N (T)\|_{H^{-1 \otimes |T|}_\eta}^2
\\
&\quad + \frac{2}{\sigma^2} \int_{\R^{|T|}} \bigg( \int_{\R} K^{\otimes |T|+1} \star \big( (\nu_{|T|+1}/\eta_{|T|+1}) \Delta_N (T+m) \eta^{\otimes |T|+1} \big) (t,z) \;\rd z_{|T|+1} \bigg)^2 \prod_{n=1}^{|T|} \;\rd z_n
\\
&\quad + \frac{2}{\sigma^2} \int_{\R^{|T|}} \bigg( \int_{\R} K^{\otimes |T|+1} \star \big( (\nu_{|T|+1}/\eta_{|T|+1}) \mathscr{\tilde R}_{N,T+m,|T|+1} \eta^{\otimes |T|+1} \big) (t,z) \;\rd z_{|T|+1} \bigg)^2 \prod_{n=1}^{|T|} \;\rd z_n \Bigg\}.
\end{aligned}
\end{equation}
This is where the proper choice of weak distance becomes critical as we need to bound the various terms in the right-hand side by the norm $ \|\Delta_N (T)\|_{H^{-1 \otimes |T|}_\eta}^2$. The commutator estimate in Lemma~\ref{lem:commutator_inequality} can directly bound all the terms with an explicit $H^{-1 \otimes |T|}_\eta$-norms as the coefficients $\mu,\nu$ are $W^{1,\infty}$ and $\eta$ is smooth. For example
\[
\|\nu_m \Delta_N (T)\|_{H^{-1 \otimes |T|}_\eta}^2\leq 4\,\|\nu\|_{W^{1,\infty}(\R)}^2\,\|\Delta_N (T)\|_{H^{-1 \otimes |T|}_\eta}^2.
\]
This leads to the simplified expression for some constant $\tilde C_0$,
\begin{equation} \label{eqn:energy_Cauchy_Schwartz2}
\begin{aligned}
& \frac{\rd}{\rd t} \bigg( \frac{1}{2} \|\Delta_N (T)\|_{H^{-1 \otimes |T|}_\eta}^2 \bigg)
\leq \sum_{m = 1}^{|T|} \Bigg\{ \tilde C_0\, \|\Delta_N (T)\|_{H^{-1 \otimes |T|}_\eta}^2 
\\
&\quad + \frac{1}{2} \|K\|_{L^2}^2 \eta(0)^2 \int_{\R^{|T|-1}} \bigg( \int_{\R} K^{\otimes |T|} \star \big( (\nu_m/\eta_m) \Delta_N (T) \eta^{\otimes |T|} \big)(t,z) \;\rd z_m \bigg)^2 \prod_{n \neq m} \;\rd z_n
\\
&\quad + \frac{1}{2} \|K\|_{L^2}^2 \eta(0)^2 \int_{\R^{|T|-1}} \bigg( \int_{\R} K^{\otimes |T|} \star \big( (\nu_m/\eta_m) \mathscr{R}_{N,T,m} \eta^{\otimes |T|} \big)(t,z) \;\rd z_m \bigg)^2 \prod_{n \neq m} \;\rd z_n
\\
&\quad + \frac{2}{\sigma^2} \int_{\R^{|T|}} \bigg( \int_{\R} K^{\otimes |T|+1} \star \big( (\nu_{|T|+1}/\eta_{|T|+1}) \Delta_N (T+m) \eta^{\otimes |T|+1} \big) (t,z) \;\rd z_{|T|+1} \bigg)^2 \prod_{n=1}^{|T|} \;\rd z_n
\\
&\quad + \frac{2}{\sigma^2} \int_{\R^{|T|}} \bigg( \int_{\R} K^{\otimes |T|+1} \star \big( (\nu_{|T|+1}/\eta_{|T|+1}) \mathscr{\tilde R}_{N,T+m,|T|+1} \eta^{\otimes |T|+1} \big) (t,z) \;\rd z_{|T|+1} \bigg)^2 \prod_{n=1}^{|T|} \;\rd z_n \Bigg\}.
\end{aligned}
\end{equation}
The remaining integrals terms in \eqref{eqn:energy_Cauchy_Schwartz} can be bounded by first applying Lemma~\ref{lem:firing_rate_difference_tensorized} followed by Proposition~\ref{prop:translation_estimate_weighted}.
For example, consider the first remainder term and write by Lemma~\ref{lem:firing_rate_difference_tensorized},
\begin{equation*}
\begin{aligned}
& \int_{\R^{|T|-1}} \bigg( \int_{\R} K^{\otimes |T|} \star \big( (\nu_m/\eta_m) \mathscr{R}_{N,T,m} \eta^{\otimes |T|} \big)(t,z) \;\rd z_m \bigg)^2 \prod_{n \neq m} \;\rd z_n
\\
&\qquad \leq  C (\alpha)^2 \|\nu\|_{W^{1,\infty}}^2 \|\mathscr{R}_{N,T,m}\|_{H^{-1 \otimes |T|}_\eta}^2.
\end{aligned}
\end{equation*}
Next, apply Proposition~\ref{prop:translation_estimate_weighted} to the right hand side to conclude that
\begin{equation*}
\begin{aligned}
& \int_{\R^{|T|-1}} \bigg( \int_{\R} K^{\otimes |T|} \star \big( (\nu_m/\eta_m) \mathscr{R}_{N,T,m} \eta^{\otimes |T|} \big)(t,z) \;\rd z_m \bigg)^2 \prod_{n \neq m} \;\rd z_n
\\
&\qquad\leq  C (\alpha)^2 \|\nu\|_{W^{1,\infty}}^2 \big[ \exp \big( (2 + 2\alpha) c(w,|T|) \big) - 1 \big] \||\tau_N|(T)\|_{H^{-1 \otimes |T|}_\eta}^2.
\end{aligned}
\end{equation*}
The method applies for the other integrals terms in \eqref{eqn:energy_Cauchy_Schwartz2}, which yields
\begin{equation*}
\begin{aligned}
& \int_{\R^{|T|-1}} \bigg( \int_{\R} K^{\otimes |T|} \star \big( (\nu_m/\eta_m) \Delta_N (T) \eta^{\otimes |T|} \big)(t,z) \;\rd z_m \bigg)^2 \prod_{n \neq m} \;\rd z_n
\\
&\qquad \leq  C (\alpha)^2 \|\nu\|_{W^{1,\infty}}^2 \|\Delta_N (T)\|_{H^{-1 \otimes |T|}_\eta}^2,
\\
& \int_{\R^{|T|}} \bigg( \int_{\R} K^{\otimes |T|+1} \star \big( (\nu_{|T|+1}/\eta_{|T|+1}) \Delta_N (T+m) \eta^{\otimes |T|+1} \big) (t,z) \;\rd z_{|T|+1} \bigg)^2 \prod_{n=1}^{|T|} \;\rd z_n
\\
&\qquad \leq C (\alpha)^2 \|\nu\|_{W^{1,\infty}}^2 \|\Delta_N (T+m)\|_{H^{-1 \otimes (|T|+1)}_\eta}^2,
\end{aligned}
\end{equation*}
together with
\begin{equation*}
\begin{aligned}
& \int_{\R^{|T|}} \bigg( \int_{\R} K^{\otimes |T|+1} \star \big( (\nu_{|T|+1}/\eta_{|T|+1}) \mathscr{\tilde R}_{N,T+m,|T|+1} \eta^{\otimes |T|+1} \big) (t,z) \;\rd z_{|T|+1} \bigg)^2 \prod_{n=1}^{|T|} \;\rd z_n
\\
&\qquad \leq  C (\alpha)^2 \|\nu\|_{W^{1,\infty}}^2 \big[ \exp \big( (2 + 2\alpha) c(w,|T|) \big) - 1 \big] \||\tau_N|(T)\|_{H^{-1 \otimes |T|}_\eta}^2.
\end{aligned}
\end{equation*}
Inserting those bounds into the energy estimate \eqref{eqn:energy_Cauchy_Schwartz2}, we obtain a recursive differential inequality: for all $T \in \mathcal{T}$,
\begin{equation} \label{eqn:differential_inequality_hierarchy}
\begin{aligned}
& \frac{\rd}{\rd t} \|\Delta_N (T) (t,\cdot) \|_{H^{-1 \otimes |T|}_\eta}^2
\leq  \sum_{m = 1}^{|T|} \Bigg\{ \tilde C_0 \| \Delta_N (T) (t,\cdot) \|_{H^{-1 \otimes |T|}_\eta}^2 + \tilde C_1 \| \Delta_N (T+m) (t,\cdot) \|_{H^{-1 \otimes (|T|+1)}_\eta}^2
\\
& \qquad + \varepsilon_0(T) \| |\tau_N| (T) (t,\cdot) \|_{H^{-1 \otimes |T|}_\eta}^2 + \varepsilon_1(T) \| |\tau_N| (T+m) (t,\cdot) \|_{H^{-1 \otimes (|T|+1)}_\eta}^2 \Bigg\},
\end{aligned}
\end{equation}
where we can even provide the explicit expressions for the constants
\begin{equation*}
\begin{aligned}
\;& \tilde C_0 = 4 + \frac{\sigma^2}{2} + 4\bigg(\|\nu\|_{W^{1,\infty}}^2 + \|\mu(\eta'/\eta)\|_{W^{1,\infty}}^2 + \frac{\sigma^2}{2} \|\eta''/\eta\|_{W^{1,\infty}}^2 + \frac{4}{\sigma^2} \|\mu\|_{W^{1,\infty}}^2 + 2\sigma^2 \|(\eta'/\eta)\|_{W^{1,\infty}}^2 \bigg)
\\
\;& \quad\quad + \|K\|_{L^2}^2 \eta(0)^2 C (\alpha)^2 \|\nu\|_{W^{1,\infty}}^2,
\\
\;& \tilde C_1 = \frac{4C(\alpha)^2}{\sigma^2} \|\nu\|_{W^{1,\infty}}^2,
\\
\;& \varepsilon_0(T) = \|K\|_{L^2}^2 \eta(0)^2 C (\alpha)^2 \|\nu\|_{W^{1,\infty}}^2 \big[ \exp \big( (2 + 2\alpha) c(w,|T|) \big) - 1 \big],
\\
\;& \varepsilon_1(T) = \frac{4 C (\alpha)^2}{\sigma^2} \|\nu\|_{W^{1,\infty}}^2 \big[ \exp \big( (2 + 2\alpha) c(w,|T|) \big) - 1 \big].
\end{aligned}
\end{equation*}
We can now restrict the recursion relations by truncating them at any given depth $n \geq 1$, meaning that we only consider the inequalities \eqref{eqn:differential_inequality_hierarchy} for all $T \in \mathcal{T}$ such that $|T| \leq n - 1$.
In such a case, since
\begin{equation*}
\begin{aligned}
c(w,|T|) \leq |T| \big(\max_{i,j}|w_{i,j;N}| \big) \leq n \bar w_N,
\end{aligned}
\end{equation*}
the coefficients $\varepsilon_0$, $\varepsilon_1$ can take the vanishing expression
\begin{equation*}
\begin{aligned}
\;& \varepsilon_0(n) = \|K\|_{L^2}^2 \eta(0)^2 C (\alpha)^2 \|\nu\|_{W^{1,\infty}}^2 \big[ \exp \big( (2 + 2\alpha) n \bar w_N \big) - 1 \big],
\\
\;& \varepsilon_1(n) = \frac{4 C (\alpha)^2}{\sigma^2} \|\nu\|_{W^{1,\infty}}^2 \big[ \exp \big( (2 + 2\alpha) n \bar w_N \big) - 1 \big].
\end{aligned}
\end{equation*}
For a fixed depth $n \geq 1$, $\varepsilon_0(n)$ and $\varepsilon_1(n)$ now vanish as $\bar w_N \to 0$. 

Let us now rescale the energy inequality through some $\lambda^{|T|}$ factor: For all $T \in \mathcal{T}$ such that $|T| \leq n - 1$,
\begin{equation} \label{eqn:recursive_energy_estimate}
\begin{aligned}
& \frac{\rd}{\rd t} \lambda^{|T|} \|\Delta_N (T) (t,\cdot) \|_{H^{-1 \otimes |T|}_\eta}^2
\\
&\quad \leq  \sum_{m = 1}^{|T|} \Bigg\{ \tilde C_0 \lambda^{|T|} \| \Delta_N (T) (t,\cdot) \|_{H^{-1 \otimes |T|}_\eta}^2 + (\tilde C_1/\lambda) \lambda^{|T| + 1} \| \Delta_N (T+m) (t,\cdot) \|_{H^{-1 \otimes (|T|+1)}_\eta}^2
\\
&\qquad + \varepsilon_0(n) \lambda^{|T|} \| |\tau_N| (T) (t,\cdot) \|_{H^{-1 \otimes |T|}_\eta}^2 + (\varepsilon_1(n)/\lambda) \lambda^{|T| + 1} \| |\tau_N| (T+m) (t,\cdot) \|_{H^{-1 \otimes (|T|+1)}_\eta}^2 \Bigg\}.
\end{aligned}
\end{equation}
We also recall the a priori bound \eqref{eqn:hierarchy_boundedness_1} for $\tau_N,\tau_\infty$ assumed in Theorem~\ref{thm:stable_power_norm}:
\begin{equation*} 
\sup_{t\leq t_*} \ \max_{|T| \leq \max(n,\ |T_*|)} \lambda^{\frac{|T|}{2}}\, \left(\||\tau_N|(T,w_N,f_N)(t,\cdot)\|_{H^{-1\otimes |T|}_\eta} 
+\|\tau_\infty(T)(t,\cdot)\|_{H^{-1\otimes |T|}_\eta}\right) \leq C_{\lambda;\eta}, 
\end{equation*}
where $T_* \in \mathcal{T}$ is the tree index in the final estimate \eqref{eqn:stable_power_norm}.
By a triangle inequality, this implies the following uniform bound of $\Delta_N$,
\begin{equation} \label{eqn:uniform_energy_bound}
\begin{aligned}
\sup_{t\leq t_*} \ \max_{|T| \leq \max(n,\ |T_*|)} \lambda^{|T|} \|\Delta_N (T) (t,\cdot) \|_{H^{-1\otimes |T|}_\eta}^2 \leq C_{\lambda;\eta}^2.
\end{aligned}
\end{equation}

Denote
\begin{equation*}
\begin{aligned}
M_k(t) = \;& \max_{|T| \leq k} \lambda^{|T|} \|\Delta_N (T) (t,\cdot) \|_{H^{-1\otimes |T|}_\eta}^2,
\\
C = \;& \tilde C_0 + \tilde C_1/\lambda,
\\
\varepsilon = \;& \big[ \varepsilon_0(n) + \varepsilon_1(n)/\lambda \big] C_{\lambda;\eta}^2,
\\
L = \;& C_{\lambda;\eta}^2,
\\
n = \;& n, \quad n' = |T_*|,
\end{aligned}
\end{equation*}
so that \eqref{eqn:recursive_energy_estimate} and \eqref{eqn:uniform_energy_bound} can be summarized as follows,
\begin{subequations}
\begin{align} \label{eqn:differential_inequality_recursion}
\frac{\rd}{\rd t} M_k(t)
\leq \;& k \Big( C M_{k+1}(t) + \varepsilon \Big), && \forall 1 \leq k \leq n-1,
\\ \label{eqn:bound_inequality_recursion}
M_k(t) \leq \;& L, && \forall 1 \leq k \leq \max(n,\ n'), \; t \in [0,t_*].
\end{align}
\end{subequations}
We now invoke the following result.
\begin{lem} \label{lem:differential_inequality_recursion}
Consider a sequence of non-negative functions $(M_k(t))_{k=1}^\infty$ on $t \in [0,t_*]$ that satisfies the inequalities
\eqref{eqn:differential_inequality_recursion}-\eqref{eqn:bound_inequality_recursion} with $\big[ \varepsilon/CL + (2\theta)^n \big] \leq 1$.
Then
\begin{equation} \label{eqn:convergence_inequality_hierarchy}
\begin{aligned}
\max_{1 \leq k \leq \max(n , \ n')} \big[ \theta^k M_k(t) \big] 
\leq \;& L (Ct/\theta + 2) \,\max\left( \big[\varepsilon/CL + (2\theta)^n \big],\ \max_{1 \leq k \leq n - 1} \big[ \theta^k M_k(0) \big] / L \right)^{\frac{1}{p^{(Ct/\theta + 1)}}},
\end{aligned}
\end{equation}
holds for any $1 < p < \infty$, $0 < \theta < 2^{-p'}$ where $1/p + 1/p' = 1$, and any $t \in [0,t_*]$.
\end{lem}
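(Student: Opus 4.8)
\textbf{Proof proposal for Lemma~\ref{lem:differential_inequality_recursion}.}
The plan is a time–iterated bootstrap built on top of the a priori bound. First I would absorb the geometric weight into the unknowns by setting $u_k(t)\defeq\theta^k M_k(t)$, so that \eqref{eqn:differential_inequality_recursion} becomes $\frac{\rd}{\rd t}u_k\leq\frac{kC}{\theta}u_{k+1}+k\varepsilon\theta^k$ for $1\leq k\leq n-1$, while \eqref{eqn:bound_inequality_recursion} turns into $u_k(t)\leq\theta^k L\leq L$ for all $k\leq\max(n,n')$ and $t\in[0,t_*]$. Integrating this differential inequality in time and substituting it into itself repeatedly, after $j$ steps one obtains a bound for $u_k(t_0+\delta)$ on an interval of length $\delta$ in terms of the values $u_{k+i}(t_0)$ for $0\leq i<j$, a ``fully iterated'' term containing $\sup_{[t_0,t_0+\delta]}u_{k+j}$, and error terms carrying $\varepsilon$; the coefficient attached to the $i$-th term is $\prod_{l=0}^{i-1}(k+l)\cdot\frac{(C\delta/\theta)^i}{i!}=\binom{k+i-1}{i}(C\delta/\theta)^i\leq 2^{k+i-1}(C\delta/\theta)^i$.

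The heart of the argument is a single short–interval estimate, which I would prove by iterating all the way down to the truncation level, i.e.\ taking $j=n-k$, and then invoking the a priori bound $u_n\leq\theta^n L$ for the fully iterated term. Choosing the interval length $\delta=\theta/C$, the coefficients reduce to $\binom{k+i-1}{i}\leq 2^{k+i-1}$; I would estimate each initial–data contribution by $u_{k+i}(t_0)\leq\min\bigl(\max_{j\leq n-1}u_j(t_0),\,\theta^{k+i}L\bigr)\leq\bigl(\max_{j\leq n-1}u_j(t_0)\bigr)^{1/p}(\theta^{k+i}L)^{1/p'}$, the logarithmic convexity of $\min$ being exactly what produces the gain of a power $1/p$. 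The decisive cancellation is that $2^{k+i-1}\theta^{(k+i)/p'}=\tfrac12(2\theta^{1/p'})^{k+i}$, and since $\theta<2^{-p'}$ one has $2\theta^{1/p'}<1$, so the series over $i$ converges; the fully iterated term then contributes $\lesssim(2\theta)^n L$, and the $\varepsilon$–terms contribute $\lesssim(\varepsilon/CL)\,L$. Collecting everything, I expect an estimate of the form
\begin{equation*}
\max_{k\leq\max(n,n')}u_k(t_0+\delta)\;\leq\;C_0\,L\,\Bigl(\tfrac{\varepsilon}{CL}+(2\theta)^n+\tfrac{1}{L}\max_{j\leq n-1}u_j(t_0)\Bigr)^{1/p},
\end{equation*}
where the levels $n\leq k\leq\max(n,n')$ not reached by the iteration are handled directly by $u_k\leq\theta^kL\leq(2\theta)^nL$.

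Finally I would partition $[0,t]$ into roughly $Ct/\theta$ subintervals of length $\delta=\theta/C$, apply the short–interval estimate on each of them, and chain the resulting inequalities. Since the map $x\mapsto\max\bigl(X_0,x\bigr)^{1/p}$ composed with itself $m$ times sends its argument to the power $1/p^{m}$ with $X_0:=\varepsilon/CL+(2\theta)^n$ frozen throughout, the exponent compounds to $1/p^{\,Ct/\theta+1}$; translating back via $M_k=u_k/\theta^k$ and $\theta^kM_k(0)\leq\max_{j\leq n-1}\theta^jM_j(0)$ yields \eqref{eqn:convergence_inequality_hierarchy}. The main obstacle is the bookkeeping of constants: one must tune the interval length and the relation between $p$ and $\theta$ so that (i) the combinatorial factors $\binom{k+i-1}{i}$ generated by iterating the hierarchy are absorbed by the weight $\theta^k$ — this is precisely where the hypothesis $\theta<2^{-p'}$ enters — and (ii) the multiplicative constant $C_0$ per step, together with the accumulated $\varepsilon$– and truncation–contributions, grows only linearly in the number of steps, producing the prefactor $Ct/\theta+2$ rather than a geometric blow-up. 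Keeping careful track of which quantities are $\leq1$ (so that raising them to a power $1/p$ can only increase them) is what makes the chaining close.
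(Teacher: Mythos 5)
Your proposal is correct and follows essentially the same route as the paper: iterate the truncated hierarchy down to level $n$ over time steps of length $\theta/C$, bound the top level by the a priori constant $L$, interpolate the intermediate levels between $L$ and the weighted maximum with exponents $1/p'$ and $1/p$, use $\theta<2^{-p'}$ to sum the binomial coefficients geometrically, treat the levels $n\leq k\leq\max(n,n')$ directly via $\theta^kM_k\leq(2\theta)^nL$, and chain the short-interval estimate so the exponent compounds to $1/p^{Ct/\theta+1}$ while the frozen terms accumulate only linearly. The differences (working with $u_k=\theta^kM_k$, keeping the $\varepsilon$-terms explicit instead of shifting $M_k\mapsto M_k+\varepsilon/C$, and chaining in max-form rather than expanding into a sum) are purely cosmetic.
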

Assume for the time being that Lemma~\ref{lem:differential_inequality_recursion} holds and apply it to \eqref{eqn:recursive_energy_estimate} and \eqref{eqn:uniform_energy_bound}. 
Choose $p = 2$, $\theta = 1/8$ and substitute $\varepsilon, C, L$ by its explicit expression to find that
\begin{equation*}
\begin{aligned}
\varepsilon/CL = \frac{\varepsilon_0(n) + \varepsilon_1(n)/\lambda}{\tilde C_0 + \tilde C_1/\lambda} = C_1 \big[ \exp \big( (2 + 2\alpha) \bar w n \big) - 1 \big],
\end{aligned}
\end{equation*}
where $C_1$ depends only on $\lambda$, the $W^{1,\infty}$-regularity of coefficients $\mu$, $\nu$ and constant $\sigma > 0$ in \eqref{eqn:IF_Liouville_PDE}, but neither on $\bar w_N$ nor on $n$. Choosing $C_0 = C/\theta$, and as $\bar w_N\to 0$ as $N\to\infty$, we deduce that for $N$ large enough
\begin{equation*}
\begin{aligned}
\bar{\varepsilon} = \varepsilon/CL + (2\theta)^n = C_1 \big[ \exp \big( (2 + 2\alpha) n \bar w_N \big) - 1 \big] + (1/4)^n \leq 1.
\end{aligned}
\end{equation*}
The conclusion of Lemma~\ref{lem:differential_inequality_recursion} hence holds, showing that
\begin{equation*}
\begin{aligned}
& \max_{|T| \leq \max(n , \ |T_*|)} (\lambda / 8)^{|T|} \| \tau_N(T,w_N,f_N)(t,\cdot) - \tau_\infty(T)(t,\cdot) \|_{H^{-1\otimes |T|}_\eta}^2
\\
&\quad \leq  C_{\lambda;\eta}^2\, \Big( C_0 t + 2 \Big)\,\max \left( \bar{\varepsilon},\  \max_{|T| \leq n-1} (\lambda / 8)^{|T|} \| \tau_N(T,w_N,f_N)(0,\cdot) - \tau_\infty(T)(0,\cdot) \|_{H^{-1\otimes |T|}_\eta}^2 / C_{\lambda;\eta}^2 \right)^{\frac{1}{2^{(C_0 t + 1)}}}.
\end{aligned}
\end{equation*}
This can be further simplified to \eqref{eqn:stable_power_norm} by relaxing the maximum on the left hand side as $T = T_*$, taking the maximum on the right hand side over $|T| \leq \max(n,\ |T_*|)$, and choosing $C_2$ in $\eqref{eqn:stable_power_norm}$ as $C_2 = \max\big(C_0 t + 2, \ 2^{(C_0 t + 1)} \big)$.

\end{proof}

\subsection{Proof of Lemma~\ref{lem:differential_inequality_recursion} } \label{subsec:differential_inequality_hierarchy}
\begin{proof} [Proof of Lemma~\ref{lem:differential_inequality_recursion}]

Let us restate here the recursive differential inequality \eqref{eqn:differential_inequality_recursion},
\begin{equation*}
\begin{aligned}
\frac{\rd}{\rd t} M_k(t)
\leq \;& k \Big( C M_{k+1}(t) + \varepsilon \Big), && \forall 1 \leq k \leq n-1,
\end{aligned}
\end{equation*}
which directly yields
\begin{equation*}
\begin{aligned}
\frac{\rd}{\rd t} \Big( M_k(t) + (\varepsilon/C) \Big)
\leq k C \Big( M_{k+1}(t) + (\varepsilon/C) \Big), && \forall 1 \leq k \leq n-1.
\end{aligned}
\end{equation*}
For any $1 \leq k \leq n - 1$ and $t \in [0,t_*]$, by inductively integrating the inequalities in time, we obtain that
\begin{equation*}
\begin{aligned}
\Big( M_k(t) + (\varepsilon/C) \Big) \leq \;&
C^{n - k} \int_s^t \begin{pmatrix} n-1 \\ k-1 \end{pmatrix} \frac{(t - r)^{n-k-1}}{n-k-1} \Big( M_n(r) + (\varepsilon/C) \Big) \;\rd r
\\
\;& + \sum_{l = k}^{n-1} C^{l - k} \begin{pmatrix} l-1 \\ k-1 \end{pmatrix} (t - s)^{l-k} \Big( M_l(s) + (\varepsilon/C) \Big),
\end{aligned}
\end{equation*}
We estimate the increase on $M_k$ within time steps of size
\begin{equation*}
\begin{aligned}
t-s = \theta / C.
\end{aligned}
\end{equation*}
First, we bound the constant terms,
\begin{equation*}
\begin{aligned}
& C^{n - k} \int_s^t \begin{pmatrix} n-1 \\ k-1 \end{pmatrix} \frac{(t - r)^{n-k-1}}{n-k-1} (\varepsilon/C) \;\rd r
+ \sum_{l = k}^{n-1} C^{l - k} \begin{pmatrix} l-1 \\ k-1 \end{pmatrix} (t - s)^{l-k} (\varepsilon/C)
\\
&\quad = (\varepsilon/C) \bigg\{ C^{n - k} \begin{pmatrix} n-1 \\ k-1 \end{pmatrix} (t - s)^{n-k-1}
+ \sum_{l = k}^{n-1} C^{l - k} \begin{pmatrix} l-1 \\ k-1 \end{pmatrix} (t - s)^{l-k} \bigg\}
\\
&\quad = (\varepsilon/C) \sum_{l = k}^n C^{l - k} \begin{pmatrix} l-1 \\ k-1 \end{pmatrix} (t - s)^{l-k}
\\
&\quad \leq \theta^{-k} (\varepsilon/C) \sum_{l = k}^n \begin{pmatrix} l-1 \\ k-1 \end{pmatrix} \theta^l,
\end{aligned}
\end{equation*}
where the last inequality uses our choice of time step $(t-s) \leq \theta / C$.

On the other hand, for $\theta \leq 1/2$,
\begin{equation*}
\begin{aligned}
\sum_{l = k}^{\infty} \begin{pmatrix} l-1 \\ k-1 \end{pmatrix} \theta^l = \frac{1}{(\theta^{-1} - 1)^{k}} \leq 1.
\end{aligned}
\end{equation*}
Hence,
\[
C^{n - k} \int_s^t \begin{pmatrix} n-1 \\ k-1 \end{pmatrix} \frac{(t - r)^{n-k-1}}{n-k-1} (\varepsilon/C) \;\rd r
+ \sum_{l = k}^{n-1} C^{l - k} \begin{pmatrix} l-1 \\ k-1 \end{pmatrix} (t - s)^{l-k} (\varepsilon/C)\leq \theta^{-k} \,\frac{\varepsilon}{C}.
\]
We now turn to the terms involving $M_l(s)$ and $M_n(r)$ (with $s \leq r \leq t$).
For $M_n(r)$ we have no choice but to take
\begin{equation*}
\begin{aligned}
M_n(r) \leq L
\end{aligned}
\end{equation*}
But for $M_l(s)$, $k \leq l \leq n-1$, we have
\begin{equation*}
\begin{aligned}
M_l(s) \leq \min\left(L,\ \max_{1 \leq m \leq n-1} \big[ \theta^m M_m(s) \big] \theta^{-l}\right),
\end{aligned}
\end{equation*}
together with any geometric average between the two terms. Choose $\frac{1}{p} + \frac{1}{p'} = 1$ so that
\begin{equation*}
\begin{aligned}
M_l(s) \leq \;& L^{\frac{1}{p'}} \Big( \max_{1 \leq m \leq n-1} \big[ \theta^m M_m(s) \big] \theta^{-l} \Big)^{\frac{1}{p}}
\\
= \;& L^{\frac{1}{p'}} \max_{1 \leq m \leq n-1} \big[ \theta^m M_m(s) \big]^{\frac{1}{p}} \big( \theta^{\frac{1}{p}} \big)^{-l}.
\end{aligned}
\end{equation*}
Then we may write
\begin{equation*}
\begin{aligned}
& C^{n - k} \int_s^t \begin{pmatrix} n-1 \\ k-1 \end{pmatrix} \frac{(t - r)^{n-k-1}}{n-k-1} M_n(r) \;\rd r
+ \sum_{l = k}^{n-1} C^{l - k} \begin{pmatrix} l-1 \\ k-1 \end{pmatrix} (t - s)^{l-k} M_l(s),
\\
&\quad\leq C^{n - k} \begin{pmatrix} n-1 \\ k-1 \end{pmatrix} (t - s)^{n-k} L
+ \sum_{l = k}^{n-1} C^{l - k} \begin{pmatrix} l-1 \\ k-1 \end{pmatrix} (t - s)^{l-k} L^{\frac{1}{p'}} \max_{1 \leq m \leq n-1} \big[ \theta^m M_m(s) \big]^{\frac{1}{p}} \big( \theta^{\frac{1}{p}} \big)^{-l}
\\
&\quad \leq  \theta^{-k} \bigg\{ L \begin{pmatrix} n-1 \\ k-1 \end{pmatrix} \theta^n + L^{\frac{1}{p'}} \max_{1 \leq m \leq n-1} \big[ \theta^m M_m(s) \big]^{\frac{1}{p}} \sum_{l = k}^{n-1} \begin{pmatrix} l-1 \\ k-1 \end{pmatrix} \big( \theta^{\frac{1}{p'}} \big)^{-l} \bigg\},
\end{aligned}
\end{equation*}
where again use our choice of time step $(t-s) \leq \theta / C$ in the last inequality.

Observe that
\begin{equation*}
\begin{aligned}
\begin{pmatrix} n-1 \\ k-1 \end{pmatrix} \theta^n \leq 2^{n-1} \theta^n, \quad \sum_{l = k}^{n-1} \begin{pmatrix} l-1 \\ k-1 \end{pmatrix} \big( \theta^{\frac{1}{p'}} \big)^{-l} \leq \frac{1}{(\theta^{-\frac{1}{p'}} - 1)^{k}} \leq 1
\end{aligned}
\end{equation*}
when choosing $\theta^{\frac{1}{p'}} \leq 1/2$, so that
\[\begin{split}
&C^{n - k} \int_s^t \begin{pmatrix} n-1 \\ k-1 \end{pmatrix} \frac{(t - r)^{n-k-1}}{n-k-1} M_n(r) \;\rd r
+ \sum_{l = k}^{n-1} C^{l - k} \begin{pmatrix} l-1 \\ k-1 \end{pmatrix} (t - s)^{l-k} M_l(s)\\
&\qquad\leq \theta^{-k}\,\left(L (2\theta)^n + L^{\frac{1}{p'}} \max_{1 \leq m \leq n - 1} \big[ \theta^m M_m(s) \big]^{\frac{1}{p}} \right).
\end{split}\]

Combining those bounds, provided that $\theta^{\frac{1}{p'}} \leq 1/2$, we have that, for all $1 \leq k \leq n-1$,
\begin{equation*}
\begin{aligned}
M_k(t) \leq \;& \theta^{-k} \bigg\{(\varepsilon/C) + L (2\theta)^n + L^{\frac{1}{p'}} \max_{1 \leq m \leq n - 1} \big[ \theta^m M_m(s) \big]^{\frac{1}{p}} \bigg\}.
\end{aligned}
\end{equation*}
On the other hand, for $n \leq k \leq \max(n,\ n')$, we simply have $M_k(t) \leq L$. As $\theta^{-k + n} \geq 1$, 
\begin{equation*}
\begin{aligned}
M_k(t) \leq L \leq \theta^{-k} \bigg\{L (2\theta)^n \bigg\},
\end{aligned}
\end{equation*}
and we can combine the two cases to obtain that 
\begin{equation*}
\begin{aligned}
\max_{1 \leq k \leq \max(n,\ n')} \big[ \theta^k M_k(t) \big] \leq \;& (\varepsilon/C) + L (2\theta)^n + L^{\frac{1}{p'}} \max_{1 \leq k \leq n-1} \big[ \theta^k M_k(s) \big]^{\frac{1}{p}}.
\end{aligned}
\end{equation*}
If $t\leq \theta/C$ we are done but otherwise we need to sum up the various bounds. Denote $t_j=j\,\theta/C$ and write that By the fact that , we have that
\begin{equation*}
\begin{aligned}
& \max_{1 \leq k \leq \max(n,\ n')} \big[ \theta^k M_k(t_j) \big]\\
&\quad \leq  (\varepsilon/C) + L (2\theta)^n + L^{\frac{1}{p'}} \max_{1 \leq k \leq n-1} \big[ \theta^k M_k(t_{j-1}) \big]^{\frac{1}{p}}\\
&\quad \leq (\varepsilon/C) + L (2\theta)^n + L^{\frac{1}{p'}} \bigg\{ (\varepsilon/C) + L (2\theta)^n + L^{\frac{1}{p'}} \max_{1 \leq k \leq n-1} \big[ \theta^k M_k(t_{j-2}) \big]^{\frac{1}{p}} \bigg\}^{\frac{1}{p}}\\
&\quad \leq (\varepsilon/C) + L (2\theta)^n + L^{\frac{1}{p'}} \bigg\{ (\varepsilon/C) + L (2\theta)^n \bigg\}^{\frac{1}{p}} + L^{1-\frac{1}{p^2}} \max_{1 \leq k \leq n-1} \big[ \theta^k M_k(t_{j-2}) \big]^{\frac{1}{p^2}}\\ &\qquad \dots \\
&\quad \leq  \sum_{i=0}^{j-1} L^{1-\frac{1}{p^i}} \bigg\{ (\varepsilon/C) + L (2\theta)^n \bigg\}^{\frac{1}{p^i}} \; + \; L^{1-\frac{1}{p^j}} \max_{1 \leq k \leq n-1} \big[ \theta^k M_k(0) \big]^{\frac{1}{p^j}},
\end{aligned}
\end{equation*}
where we use that $(a+b)^{1/p} \leq a^{1/p} + b^{1/p}$ by concavity.

For any $t\geq 0$, we hence have with $j(t)=\bigg\lfloor \frac{C t}{\theta} \bigg\rfloor + 1$,
\begin{equation} \label{eqn:convergence_inequality_hierarchy_tight}
\begin{aligned}
\max_{1 \leq k \leq \max(n, \ n')} \big[ \theta^k M_k(t) \big] \leq \;& \sum_{i=0}^{j(t)-1} L^{1-\frac{1}{p^i}} \bigg\{ \varepsilon/C + L (2\theta)^n \bigg\}^{\frac{1}{p^i}} \; + \; L^{1-\frac{1}{p^{j(t)}}} \max_{1 \leq k \leq n-1} \big[ \theta^k M_k(0) \big]^{\frac{1}{p^{j(t)}}}.
\end{aligned}
\end{equation}
Finally, by the assumption that $\big[ \varepsilon/CL + (2\theta)^n \big] \leq 1$,
\begin{equation*}
\begin{aligned}
\forall i \leq j, \quad L^{1-\frac{1}{p^i}} \bigg\{ \varepsilon/C + L (2\theta)^n \bigg\}^{\frac{1}{p^i}} = L \bigg\{ \varepsilon/CL + (2\theta)^n \bigg\}^{\frac{1}{p^i}} \leq L \bigg\{ \varepsilon/CL + (2\theta)^n \bigg\}^{\frac{1}{p^j}}.
\end{aligned}
\end{equation*}
Hence we can replace every $i$ and every $j(t)$ in \eqref{eqn:convergence_inequality_hierarchy_tight} by $(Ct/\theta + 1)$, which gives the looser bound \eqref{eqn:convergence_inequality_hierarchy}, restated here
\begin{equation*}
\begin{aligned}
\max_{1 \leq k \leq \max(n, \ n')} \big[ \theta^k M_k(t) \big] 
\leq \;& L (Ct/\theta + 2) \max\bigg( \big[\varepsilon/CL + (2\theta)^n \big] , \sup_{1 \leq k \leq n-1} \big[ \theta^k M_k(0) \big] / L \bigg)^{\frac{1}{p^{(Ct/\theta + 1)}}}.
\end{aligned}
\end{equation*}

\end{proof}

\nocite{*}
\bibliography{IF}{} 
\bibliographystyle{siam} 

\end{document}